\documentclass{article}[12pt]

\usepackage{arxiv}
\usepackage{quiver}

\usepackage[utf8]{inputenc} 
\usepackage[T1]{fontenc}    
\usepackage[colorlinks = true, linkcolor = purple, citecolor = purple, urlcolor = purple]{hyperref}       
\usepackage{url}            
\usepackage{booktabs}       
\usepackage{amsfonts}       
\usepackage{nicefrac}       
\usepackage{microtype}      
\usepackage{lipsum}		
\usepackage[normalem]{ulem}
\usepackage{stackengine}
\usepackage{color,graphicx} 

\usepackage{algorithm}%
\usepackage{algorithmicx}%
\usepackage{algpseudocode}%

\usepackage[font={up}]{caption}
\usepackage{subcaption} 
\usepackage{float}
\usepackage{colortbl}
\usepackage{soul} 
\usepackage{tabularx}
\usepackage{cite}
\usepackage{doi}
\usepackage{verbatimbox}
\usepackage[shortlabels]{enumitem}
\usepackage{mathtools}
\usepackage{amscd}
\usepackage{amsmath}
\usepackage{amssymb}
\usepackage{amsthm}
\usepackage{bm,nicematrix}

\newtheoremstyle{exampstyle}
  {3pt} 
  {0pt} 
  {} 
  {0pt} 
  {\bfseries} 
  {.} 
  {.5em} 
  {} 

\theoremstyle{exampstyle}
\newtheorem{theorem}{Theorem}[section]

\newtheorem{corollary}[theorem]{Corollary}

\newtheorem{definition}[theorem]{Definition}
\newtheorem{example}[theorem]{Example}

\newtheorem{lemma}[theorem]{Lemma}

\theoremstyle{remark}

\newtheorem*{remark}{Remark}

\newcommand{\N}{\mathbb{N}}
\newcommand{\R}{\mathbb{R}}
\newcommand{\Z}{\mathbb{Z}}

\newcommand{\B}{\mathcal{B}}

\newcommand{\D}{\mathcal{D}}
\newcommand{\E}{\mathcal{E}}
\newcommand{\F}{\mathcal{F}}
\renewcommand{\P}{\mathcal{P}}

\newcommand{\T}{\mathcal{T}}

\newcommand{\downup}[1][]{ \uparrow_{#1} \downarrow_{#1}}
\newcommand{\updown}[1][]{\downarrow_{#1} \uparrow_{#1}}
\DeclarePairedDelimiter{\ceil}{\lceil}{\rceil}
\newcommand{\lex}[1][]{\underset{#1}{\mathcal{L}}}
\newcommand{\End}{\text{End}}
\newcommand{\op}{\text{op}}

\newcommand{\myceil}[2][k]{\ceil[\Big]{\frac{#2 }{#1}}}

\newcommand{\tuple}[3][1]{#2_{#1}, \dots , #2_{#3}}

\title{A new family of distances over partially ordered sets}

\date{\today} 					

\author{ \href{https://orcid.org/0009-0005-9710-3966}{\includegraphics[scale=0.06]{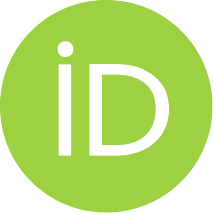}\hspace{1mm}Astrid A.~Olave} \thanks{https://aaolaveh.github.io/} \\
	Michigan State University\\
	\texttt{olaveher@msu.edu} \\
}

\renewcommand{\headeright}{}

\hypersetup{
pdftitle={A new family of distances over partially ordered sets},
pdfsubject={A new family of distances over partially ordered sets},
pdfauthor={Astrid A.~Olave},
pdfkeywords={Partially ordered set (poset), Poset metric, Metric characterization, Interleaving distance},
}

\begin{document}
\maketitle

\begin{abstract}

Order theory is increasingly relevant in applications where data is naturally structured as a partially ordered set (poset), often requiring meaningful notions of distance over posets. In this paper, we introduce a new family of extended metrics on path-connected and fence-connected posets that do not require additional structure. Unlike many existing distances, these metrics are not induced by valuations, but instead arise as a type of shortest-path distance determined by both path length and the number of alternations. For discrete posets, we show that these metrics converge to a type of shortest-fence metric. Our main result establishes that these metrics characterize most discrete path-connected posets up to isomorphism, and up to duality for modular posets. Finally, we prove that this family defines interleaving distances when posets are viewed as thin categories.
    
\end{abstract}

\keywords{Partially ordered set (poset) \and Poset metric \and Metric characterization  \and Interleaving distance}

\section{Introduction}

Partially ordered sets (posets) and the theory of order relations are gaining increasing relevance in applications across multiple fields, including statistics \cite{model_oriented_graphdistance, Arcagni2022}, management sciences \cite{distances_agent_preferences},  socioeconomics \cite{fattore2017partial, Arcagni2022}, environmental sciences, chemistry \cite{Bruggemann1999,partial_order_chemistry_environmental}, phylogenetics \cite{phylogenetic}, social sciences \cite{Day1981, Barthlemy1982,discrete_poset_distances}, information theory \cite{Simovici} and data science \cite{machinelearning}. The relevance of order theory in these areas stems from the fact that posets provide a natural structure for organizing data and representing relationships that are not necessarily linear. In many real-world settings, elements may be comparable only in certain respects, and partial orders offer a mathematical framework to model such situations. In particular, many problems require quantifying the dissimilarity between data points, that is, defining a notion of distance between elements of a poset. Several metrics defined on posets have been proposed for this purpose. For instance:
\begin{itemize}
    \item In statistics, graphs that encode structure among a set of variables define a poset. The distance between two graphs is given by the length of a shortest path between them on the poset of graphs \cite{model_oriented_graphdistance}.
    \item In environmental science and chemistry, chemicals can be partially ordered according to a set of attributes, in what is known as the partial ranking method \cite{Bruggemann1999}. For ranking and prediction tasks, a connectivity operator between two chemicals is defined to be the length of the shortest path between them. \cite{partial_order_chemistry_environmental}.
    \item A partition of a set $A$ is a collection of subsets of $A$ such that every element belongs to exactly one subset. Partitions play an important role in the social sciences in problems involving the classification of objects and finding consensus \cite{Day1981}. In this context, different metrics are used to compare elements in the lattice of partitions. In particular, some of these metrics are induced by real-valued functions called (lower) valuations \cite{monjardet}. 
    \item The degree of kinship between individuals is crucial in legal contexts such as inheritance rules, prohibited degrees of marriage, and legal adoption. This degree of kinship can be modeled as a distance function defined on upper semilattices \cite{discrete_poset_distances}.
    \item In information theory, a generalization of Shannon entropy can be formulated in the context of lattices. Entropies on lattices are defined as a type of lower valuation; as such, they induce metrics on the lattices, thereby opening the possibility of new axiomatizations of entropies starting from axiomatizations of metrics on lattices \cite{Simovici}.
\end{itemize}

While several metrics have been proposed in these contexts, many of them rely on additional structural assumptions on the underlying poset such as the existence of a maximal element, modularity, or a lattice structure. These assumptions restrict the class of posets to which such metrics can be applied when analyzing data. Consequently, it is desirable to develop more general metrics that do not rely on these structural conditions, as this allows the analysis of a broader range of ordered sets and highlights different structural properties of the underlying poset \cite{distances_agent_preferences}.


Motivated by the need for distance measures that apply to general posets, we introduce two new families of extended metrics defined over partially ordered sets, called the \textit{climber} and \textit{diver} metrics. These distances are inspired by the interleaving distance from topological data analysis (TDA) \cite{Flow}, and are constructed through a novel approach in which the distance is determined by counting the minimal number of iterations of certain self-maps on the poset of the power set. In contrast to many existing metrics, these distances are not induced by valuations and do not depend on additional structural assumptions beyond the order relation itself.

We then investigate the relationship between these metrics and structural properties of posets such as linearity, modularity, and lattice structure. In particular, we show that the $1$-climber and $1$-diver metrics together characterize most discrete path-connected posets up to isomorphism, and up to duality in the case of modular posets.

\textbf{Outline} 

The remainder of this article is organized as follows. In Section \ref{Sec:k-distances} we define the climber and diver distances and provide the necessary background to define them. In Section \ref{Sec:notable_posets}, we give explicit formulas for calculating these distances in notable classes of posets, such as linear or modular posets as well as in constructions like direct products. We also compare them with other metrics appearing in the literature. Section \ref{Sec:characterization}, is devoted to characterizing discrete path-connected posets using the 1-climber and 1-diver distances. Finally, in Section \ref{Sec:concluding}, we discuss the connection of these distances with TDA, summarize the main results, and outline possible directions for future research. Additionally, in Appendix \ref{Appendix}, we present a SageMath implementation for computing these metrics. 

\section{Climber and diver metrics}
\label{Sec:k-distances}

In this section, we introduce several definitions and notations used to define the $k$-climber and $k$-diver distances for path-connected posets and the fence-climber and fence-diver distances for fence-connected posets.

\subsection{Definitions and notations}

We give some definitions for poset terminology used in this paper; see \cite{Schroder} for more details in order theory. A \textit{poset} $P$ is a set with a reflexive, antisymmetric, and transitive relation $\leq_P$ or simply $\leq$ if $P$ is clear by context.
We say that $x$ and $y$ in $P$ are \textit{comparable} and write $x \sim y$ if and only if $x \leq y$ or $y \leq x$. For $x,y \in X$ we say that  $x$ is the \textit{lower cover} of  $y$ or $y$ is the \textit{upper cover} of $x$ if $x < y$ and $x \leq z \leq y$ implies that $z =x$ or $z =y$. We denote it $x \prec y$. Points that satisfy such covering relation are called \textit{adjacent}.

The \textit{(Hasse) diagram} of $P$ is the graph whose vertices are elements of $P$ and whose edges are pairs $(x,y)$ where $x$ and $y$ are adjacent. Note that the covering relations is the smallest relation that carries all the information for a given finite order. To visually incorporate the hierarchy if $x \prec y$, we draw $x$ lower than $y$ in the diagram. 

For $x,y \in P$, a \textit{path} $\gamma$ from $x$ to $y$ is a subset $\gamma = \{ \tuple{x}{n}\}$ such that $x_0 = x$, $x_n =y$ and for $ 0 \leq i \leq n$, $x_i$ and $x_{i+1}$ are adjacent. We say that $\gamma$ connects $x$ and $y$. Note that a path of $P$ is a path of the diagram of $P$ in the usual sense of a graph.  The length of $\gamma$, $\ell(\gamma)$, is equal to $n$. If $x_0 \prec x_1$, then we call $\gamma$ an \textit{upward} path, otherwise, $\gamma$ is a \textit{downward} path. We say that $S \subset P$ is a \textit{path-connected component} of $P$ if for all $x,y \in S$ there is a path from $x$ to $y$ and for any $z \notin S$, $S \cup {z}$  is not a path-connected component. We say $P$ is \textit{path-connected} if $P$ is itself a path-connected component. For $P$ a path-connected poset, the shortest-path function $\D$, defined by setting $\D(x,y)$ to be the minimal length of a path from $x$ to $y$ in $P$ is a well-defined metric on $P$.

A \textit{chain} $C$ is a subset of $P$ such that for all $x,y \in C$ we have that $x \sim y$. The \textit{height} of $P$, $h(P)$ is the length of the longest chain in $P$. If there are chains of arbitrary length, then $P$ has infinite height. A subset $S$  of $P$ is called an \textit{antichain} if for all $x,y$ in $S$, $x$ and $y$ are not comparable. We define the \textit{width} of $P$, $w(P)$, to be the size of the largest antichain in $P$ if such set exists, otherwise, $w(P)$ is infinity.

We say the path $\gamma =  \{ \tuple[0]{x}{n}\}$ has an \textit{alternation} at $x_i$ if $x_{i-1} \prec x_i \succ x_{i+1}$ or $x_{i-1} \succ x_i \prec x_{i+1}$. Let $A(\gamma)$ to be the number of alternations on $\gamma$ and take $\{ a_{j}\}_{j=1}^{A(\gamma)}$, the set of alternations of $\gamma$. Set $a_0 = x_0$ and $a_{A(\gamma) + 1} = x_n$ and for $j$ between $0$ and $A(\gamma)$ take the chain $C_j = \{ x_i \mid x_i \text{ 
is comparable to } a_j \text{ and } a_{j+1} \}$. Hence, $\gamma$ is the union of chains, $\gamma = \bigcup_{j=0}^{A(\gamma)} C_{j}$. Let us call $\{\tuple[0]{C}{A(\gamma)}\}$ the chain decomposition of $\gamma$. Given that $C_j$ is a path itself we obtain that 
\begin{equation*}
    \ell(\gamma) = \sum_{j=0}^{A(\gamma)} \ell(C_j) 
\end{equation*}

The subset $F_\gamma := \{\tuple[0]{a}{A(\gamma) + 1} \}$ is a special set called a \textit{fence}. A \textit{fence} $F$ from $x$ to $y$ in $P$ is a subset $\{ \tuple[0]{f}{n} \}$ such that $f_0 = x$, $f_n = y$ and for $i$ odd, $f_{i-1} < f_i > f_{i+1}$ or  $f_{i-1} > f_i < f_{i+1}$. The length of the fence is $n$ and the number of alternations, $A(F)$ is $n-1$. Similarly to paths, if $f_0 < f_1$ then $F$ is an \textit{upward} fence. Otherwise, $F$ is a \textit{downward} fence. We call $F$ a zig-zag path if all order relations are covering relations, i.e. for $i$ odd, $f_{i-1} \prec f_i \succ f_{i+1}$ or  $f_{i-1} \succ f_i \prec f_{i+1}$. We say that $S \subset P$ is a \textit{fence-connected component} of $P$ if for all $x,y \in S$ there is a fence from $x$ to $y$ and for any $z \notin S$, $S \cup \{z\}$  is not a fence-connected component. We say that $P$ is \textit{fence-connected} if $P$ is itself a fence-connected component. For fence-connected posets, the shortest-fence function $\F$, defined by setting $\F(x,y)$ to be the minimal length of a fence is a well-defined metric \cite{belding}.

Note that a path-connected poset is fence-connected as any path $\gamma$ that connects $x,y$ in $P$ defines the fence $F_\gamma$ from $x$ to $y$. However, a poset can be fence-connected without being path-connected. Consider the poset $P = \N \cup \{\omega\}$ with the usual order and $\omega > m$ for all $m \in \N$. The poset $P$ is fence-connected but no path connected, since for any $m \in N$ there is not a path from $m$ to $\omega$ on the diagram of $P$. Nevertheless, adding an extra element $a$ to the poset $P$ (Figure \ref{fig:path_connected_no_discrete}) with the relations $0 \prec a$ and  $\omega \prec a$ makes $P \cup \{a\}$ a path connected poset. 

\begin{figure}[h]
    \centering
        \begin{tikzpicture}[
        node/.style={
        circle, draw=black,
        inner sep=0pt, minimum size=3pt,
        fill=white},
        every edge quotes/.style = {auto, font=\footnotesize, sloped}
        ]
        
        \node (0) [node, label = left: $0$]  at (0,0){};
        \node (w) [node, label = right: $\omega$]  at (4,1){};
        \node (c) [node, label = above: $a$]  at (2,2){};
        
        \draw (0) -- (w) node [midway, fill=white,        sloped, below] {$\cong \mathbb{N} \cup \{\omega\}$}
              (c) -- (0)
              (c) -- (w);
        \end{tikzpicture}
    \caption{Diagram of a poset that is path connected but not discrete.}
    \label{fig:path_connected_no_discrete}
\end{figure}
    
Observe that $\N \cup \{\omega\}$ failed to be path-connected because for any $m \in \N$ the subset $[m,\omega] = \{m ,m+1,m+2, ..., \omega\} $ is infinite. We will call a poset discrete when such a situation does not occur. Formally, a poset is called \textit{discrete} if for all $x \leq y$ in $P$, every maximal chain contained in $[x,y]$ is finite \cite{discrete_poset_distances}. We will refine the definition of the interval $[x,y]$ after the following lemma.

\begin{lemma}
If $P$ is a discrete, then, for any fence $F$ in $P$ from  $x$ to $y$ in $P$, there is a path $\gamma$ from  $x$ to $y$  such that $F_\gamma = F$.
\label{lemma:each F has gamma}
\end{lemma}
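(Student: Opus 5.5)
The plan is to build $\gamma$ one leg of the fence at a time. Write $F=\{f_0,\dots,f_n\}$ with $f_0=x$ and $f_n=y$. Consecutive elements are comparable, say $f_{i}<f_{i+1}$ (or the reverse), and the direction alternates with $i$. For each $i$ we replace the strict relation between $f_i$ and $f_{i+1}$ by a \emph{saturated} chain, that is, one in which every consecutive step is a covering relation. Concatenating these chains gives a path from $x$ to $y$. We then check that its alternations are exactly the interior points $f_1,\dots,f_{n-1}$, so that $F_\gamma=F$.

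The first and main step is to show that for $u<v$ in a discrete poset there is a finite chain $u=z_0\prec z_1\prec\cdots\prec z_m=v$. To obtain it, consider the chain $\{u,v\}\subseteq[u,v]$. By Zorn's lemma (a union of a chain of chains is a chain), $\{u,v\}$ extends to a maximal chain $C$ in the interval $[u,v]=\{z: u\le z\le v\}$. Discreteness says $C$ is finite, so list it as $u=z_0<z_1<\dots<z_m=v$.

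Each step is a cover. Suppose $z_j<w<z_{j+1}$ for some $w$. Then $w\in[u,v]$ and $w$ is comparable to every element of $C$, because it lies strictly between two consecutive elements of a finite linear order. So $C\cup\{w\}$ would be a strictly larger chain in $[u,v]$, contradicting maximality. Hence $z_j\prec z_{j+1}$ for all $j$. The maximal chain $C$ lies in $[u,v]$ as originally defined, so the paper's later refinement of intervals does not affect this argument. This is the only place discreteness is used, and the Zorn step is where care is needed.

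Next we glue the legs. Let $C_i$ be the saturated chain joining $f_i$ and $f_{i+1}$, obtained as above, taken upward or downward as the fence dictates. Their concatenation $\gamma$ is a sequence in which consecutive elements are adjacent, so it is a path from $x$ to $y$.

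Now we identify the alternations. Inside each $C_i$ the steps are monotone, so there is no alternation at an interior point of $C_i$. At a junction $f_i$ with $0<i<n$, the fence condition reverses the direction: $f_{i-1}<f_i>f_{i+1}$ or $f_{i-1}>f_i<f_{i+1}$. The last step of $C_{i-1}$ and the first step of $C_i$ therefore go in opposite directions, so $f_i$ is an alternation. This gives $a_j=f_j$ for $1\le j\le n-1$, together with $a_0=x$ and $a_n=y$. Hence $F_\gamma=F$.

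One subtlety remains: a path is formally a set. If the concatenated sequence revisits a point, the chains could overlap. Two consecutive chains meet only at their common fence point, since they lie in opposite order-directions from it. Any overlap between nonconsecutive chains does not change the alternation count as computed along the sequence $x_0,\dots,x_N$, so we treat $\gamma$ as this sequence, as the paper does implicitly.
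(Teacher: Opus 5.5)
Your construction is the paper's: take a maximal chain in the interval between each pair of consecutive fence points, use discreteness to make it finite, and concatenate. Your extra checks are correct and fill in what the paper leaves implicit. These are: a maximal chain through both endpoints exists (Zorn), a finite maximal chain is saturated, each interval has the right orientation (the paper writes $[f_i,f_{i+1}]$ uniformly), and the alternations occur exactly at $f_1,\dots,f_{n-1}$.

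One claim in your last paragraph is false: consecutive chains need not meet only at their common fence point. If $f_i$ is a peak, then $C_{i-1}\subseteq[f_{i-1},f_i]$ and $C_i\subseteq[f_{i+1},f_i]$ both lie in $\downarrow f_i$, on the same side of $f_i$, and can share points. For example, take $a\prec z\prec c$ and $b\prec z\prec c$ with $a,b$ incomparable, and the fence $a<c>b$. Your construction produces the sequence $a,z,c,z,b$.

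This matters for the scope of the lemma. In that poset the only path from $a$ to $b$ without repeated vertices is $a,z,b$, and its fence is $\{a,z,b\}\neq\{a,c,b\}$. So the statement holds only if paths may revisit vertices. Your choice to treat $\gamma$ as the sequence $x_0,\dots,x_N$ is therefore necessary, not just convenient.

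To fix the write-up, state that convention explicitly and delete the false sentence. Your computation of alternations along the sequence is local and never used it. With that change your proof is complete.
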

\begin{proof}
   Take $P$ a poset that is discrete and fence-connected. Consider $x,y \in P$. Since $P$ is fence-connected there is a fence $F = \{\tuple[0]{f}{n}\}$ such that $f_0 = x$ and $f_n=y$. Take $C_i$ to be a maximal chain contained in $[f_i,f_{i+1}]$ for $ 0\leq i <n$. Given that $P$ is discrete we know that $C_i$ is finite. Hence $\gamma = \bigcup_{i=0}^{n-1} C_i$ is a path that connects $x$ and $y$.
\end{proof}

\begin{corollary}
If $P$ is a discrete and fence-connected poset, then, $P$ is path-connected.
\label{coro: fence+discrete}
\end{corollary}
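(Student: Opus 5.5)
The plan is to derive the corollary directly from Lemma \ref{lemma:each F has gamma}. Fix arbitrary $x,y \in P$. Since $P$ is fence-connected, there is a fence $F=\{f_0,\dots,f_n\}$ with $f_0=x$ and $f_n=y$. Since $P$ is discrete, Lemma \ref{lemma:each F has gamma} gives a path $\gamma$ from $x$ to $y$ with $F_\gamma = F$. Hence every pair of points is joined by a path, so $P$ is itself a path-connected component, i.e.\ path-connected.

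The real content is inside the lemma, and I would re-check its one delicate step. The step is that a maximal chain $C_i$ in $[f_i,f_{i+1}]$, which is finite by discreteness, is genuinely a path in the Hasse diagram. Take the endpoints to be ordered so that $f_i \le f_{i+1}$; the case $f_{i+1}\le f_i$ is symmetric. The chain $C_i$ must contain both $f_i$ and $f_{i+1}$, since otherwise we could add them and $C_i$ would not be maximal.

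Next, list the elements of $C_i$ increasingly as $c_0<c_1<\dots<c_m$. Consecutive elements must satisfy $c_j \prec c_{j+1}$. If some $z$ satisfied $c_j<z<c_{j+1}$, then $z \in [f_i,f_{i+1}]$ and $z$ is comparable to every element of $C_i$, so $C_i\cup\{z\}$ would be a larger chain, contradicting maximality. So each $C_i$ is a finite saturated chain, hence a path from $f_i$ to $f_{i+1}$.

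Finally, concatenate the paths $C_0,\dots,C_{n-1}$ to get a path from $x$ to $y$. This concatenation is all the corollary needs. I expect the only potential obstacle to be the existence of a maximal chain in $[f_i,f_{i+1}]$ containing the endpoints. For that I would invoke Zorn's lemma (Hausdorff maximality) on chains containing $\{f_i,f_{i+1}\}$, and then use discreteness to conclude finiteness.
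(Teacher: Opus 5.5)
Your proposal is correct and follows the paper's route. The paper states the corollary as an immediate consequence of Lemma~\ref{lemma:each F has gamma}, whose proof takes a finite maximal chain in each interval $[f_i,f_{i+1}]$ and concatenates them. Your additional checks fill in details the paper leaves implicit: that a maximal chain contains both endpoints, that consecutive elements are covers, that such a chain exists by Zorn's lemma, and that only the existence of a path (not $F_\gamma=F$) is needed.
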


The converse of Corollary \ref{coro: fence+discrete} is not true. Figure \ref{fig:path_connected_no_discrete} shows a poset that is path-connected but not discrete.

Take $x \in P$. We define the \textit{ filter} or \textit{up-set} of $x$ to be $\uparrow x : = \{y \in P \mid y \geq x \}$ and the \textit{ideal} or \textit{down-set} of $x$ to be $\downarrow x : = \{y \in P \mid y \leq x \}$. Note that the difference between the up-set and the down-set of $x$ is that the one inequality in the definition is reversed.
In general, taking an order-theoretical statement and reversing all inequalities is called dualizing the statement .

For $x \leq y$ in P we define the interval $[x,y]$ as
\begin{equation*}
    [x,y] : = (\uparrow x) \cap (\downarrow y) = \{ z \in P \mid x \leq z \leq y \}
\end{equation*}

Let $Q$ be a poset.  A function $f: P \to Q$ is called an \textit{order-preserving map} or \textit{isotone} if $x \leq_P y$  implies that $f(x) \leq_Q f(y)$. $f$ is \textit{strictly isotone} if $x <_P y$  implies that $f(x) <_Q f(y)$. Dually, $f$ is an \textit{order-reversing map} or \textit{antitone} if $x \leq_P y$  implies that $f(x) \geq_Q f(y)$. Similarly, $f$ is \textit{strictly antitone} if $x <_P y$  implies that $f(x) >_Q f(y)$. We say that $f$ is an \textit{(order) isomorphism} if $f$ is a bijection, and $f$ and  $f^{-1}$ are order preserving maps. In that case we say that $P$ and $Q$ are \textit{isomorphic} posets and we note it $P \cong Q$.  In the case that $f$ and $f^{-1}$ are order-reversing maps we say that $f$ is and \textit{anti-isomorphism}.

\begin{example}
    For a poset $P$ consider $\P(P)$ the power set of $P$.  The set $\P(P)$ forms a poset under set inclusion. We define the order-preserving self map $\uparrow$ on $\P(P)$ given by the union of the up-sets of all elements in a subset. Explicitly, for $R \in \P(P)$,
    
    \[\uparrow R = \bigcup_{x \in R} (\uparrow x)\]. 

    The set $\uparrow R$ is called the up-set of $R$. Dually, the map $\downarrow$ is defined by taking the the union of the down-sets of the elements.  The set $\downarrow R$ is called the down-set of $R$.
\end{example}

Let $P$ and $Q$ be posets equipped with the metric $\mathcal{M}$. We say that $f: P \to Q$ \textit{preserves} $\mathcal{M}$ or that $\mathcal{M}$ is \textit{invariant under} $f$ if for all $x,y \in P$, $\mathcal{M}(x,y) = \mathcal{M}(f(x),f(y))$.

\begin{lemma}
 Let $P$ and $Q$ be posets equipped with the metric $\mathcal{M}$. If $f$ is a bijection that preserves $\mathcal{M}$ then $f^{-1}$ also preserves $\mathcal{M}$.  
 \label{lemma_f-1_preserves}
\end{lemma}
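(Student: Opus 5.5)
This is a direct verification: we unwind the definition of ``preserves $\mathcal{M}$'' and apply it to preimages. Since $f\colon P \to Q$ is a bijection, it has a two-sided inverse $f^{-1}\colon Q \to P$, with $f(f^{-1}(u)) = u$ for every $u \in Q$. To show that $f^{-1}$ preserves $\mathcal{M}$, we must check that $\mathcal{M}(u,v) = \mathcal{M}(f^{-1}(u), f^{-1}(v))$ for arbitrary $u, v \in Q$.

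Fix $u, v \in Q$ and set $x = f^{-1}(u)$ and $y = f^{-1}(v)$, which are elements of $P$. Because $f$ preserves $\mathcal{M}$, we have $\mathcal{M}(x,y) = \mathcal{M}(f(x), f(y))$. Since $f(x) = u$ and $f(y) = v$, this becomes
\[
\mathcal{M}(f^{-1}(u), f^{-1}(v)) = \mathcal{M}(u,v),
\]
which is exactly the required identity. As $u$ and $v$ were arbitrary, $f^{-1}$ preserves $\mathcal{M}$.

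There is no real obstacle. The only point to be careful about is where the bijectivity hypothesis enters: surjectivity of $f$ guarantees that every pair $u, v \in Q$ arises as $f(x), f(y)$, and injectivity makes $f^{-1}$ a well-defined function. No property of $\mathcal{M}$ is used beyond its being a function on pairs of points. Consequently the same argument applies verbatim to extended metrics, and to the climber and diver distances introduced later in the paper.
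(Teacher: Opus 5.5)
Your proof is correct and is essentially the paper's argument: write $u = f(f^{-1}(u))$ and $v = f(f^{-1}(v))$, then apply the preservation property of $f$ to the pair $f^{-1}(u), f^{-1}(v)$. Your remark that only bijectivity and the fact that $\mathcal{M}$ is a function on pairs are used (so it applies equally to extended metrics) is accurate.
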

\begin{proof}
Take $x,y \in Q$ and $f: P \to Q$ a bijection that preserves $M$. Then,
    \begin{equation*}
        \mathcal{M}(x,y) = \mathcal{M}(f(f^{-1}(x)), f(f^{-1}(y))) = \mathcal{M}(f^{-1}(x), f^{-1}(y))
    \end{equation*} 
Therefore, $f^{-1}$ also preserves $\mathcal{M}$.
\end{proof}






\subsection{1-step metrics for path-connected posets}

For $R \in \P(P)$ let us define the set $\uparrow_1 R$, as $R$ with the union of upper covers of all elements in $R$. Explicitly \[\uparrow_1 R := R \cup \{ y \in P \mid \text{ there is } x \in P \text{ such that } y \succ x \}. \]

We define dually $\downarrow_1 R$, the set of $R$ with the lower covers of all elements in $R$. Note that $\uparrow_1$ and $\downarrow_1$ are order preserving self-maps on $\P(P)$. Let us define recursively the self-map on $\P(P)$, $(\updown[1])^{n}$ for $n \in \N$ by taking the set of upper covers and lower covers $n$ times. Explicitly, for $R \in \P(P)$,
\begin{itemize}
    \item $(\updown[1])^0 (R) = R$.
    \item $(\updown[1])^{n+1} (R) = \downarrow_1 \big( \uparrow_1 ( \; (\updown[1])^{n} (R) \;) \big)$.
\end{itemize}

\begin{theorem}
        Let $P$ be a poset. Define the function $\E_1: \P(P)^ 2 \to \R \cup \{ \infty\}$ given by 
            \begin{equation*}
                \E_1(R,S) = \min \{ n \mid R \subseteq (\updown[1])^{n} (S) \text{ and } S \subseteq (\updown[1])^{n} (R)\}.
            \end{equation*}
            and $\E_1(R,S) = \infty$ if such $n$ does not exist.

        Then, $\E_1$ is an extended metric on $\P(P)$.
    \label{th:E1_is_distance}
\end{theorem}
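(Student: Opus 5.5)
We verify the three axioms of an extended metric: $\E_1(R,S)=0$ if and only if $R=S$, symmetry, and the triangle inequality (with the usual conventions for $\infty$). Everything rests on a few properties of the self-map $T:=\updown[1]$ on $\P(P)$, namely $T(R)=\downarrow_1(\uparrow_1 R)$. Here we read the definition of $\uparrow_1 R$ as adjoining the upper covers of elements of $R$, i.e.\ $x\in R$ in the displayed formula.

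First we record the properties of $T$.
\begin{itemize}
\item It is extensive, $R\subseteq T(R)$, since both $\uparrow_1$ and $\downarrow_1$ contain their argument by definition.
\item It is order preserving, because it is a composite of order preserving maps.
\item The iterates satisfy $T^{m}\circ T^{n}=T^{m+n}$, which is immediate from the recursive definition.
\end{itemize}
Extensivity and monotonicity give $T^{n}(R)\subseteq T^{n+1}(R)$ by induction. Hence the set of admissible $n$ in the definition of $\E_1(R,S)$ is upward closed: if $n$ works then every $n'\ge n$ works. This monotonicity of admissibility is the only mildly delicate point, and it is what makes the triangle inequality go through.

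For identity of indiscernibles, $T^{0}(R)=R$, so $\E_1(R,R)=0$. Conversely, $\E_1(R,S)=0$ means $R\subseteq S$ and $S\subseteq R$, so $R=S$. Symmetry holds because the defining condition is symmetric in $R$ and $S$. Nonnegativity is automatic since $n\in\N$.

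For the triangle inequality, let $R,S,U\in\P(P)$. If either $\E_1(R,S)$ or $\E_1(S,U)$ is infinite there is nothing to prove. Otherwise set $n=\E_1(R,S)$ and $m=\E_1(S,U)$. Then $R\subseteq T^{n}(S)$ and $S\subseteq T^{m}(U)$. Applying the monotone map $T^{n}$ to the second inclusion gives
\[
R\subseteq T^{n}(S)\subseteq T^{n}(T^{m}(U))=T^{n+m}(U).
\]
Symmetrically, $U\subseteq T^{m}(S)\subseteq T^{m+n}(R)$. So $n+m$ is admissible for the pair $(R,U)$, and therefore $\E_1(R,U)\le n+m$.

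The only step needing care is confirming that the iterates compose additively and that $T$ is monotone, since the inclusion chain above uses both. These follow directly from the definitions, so no step is a real obstacle. Values $\infty$ are allowed, which is why the result is an extended metric rather than a metric.
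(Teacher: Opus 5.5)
Your proof is correct and follows essentially the same argument as the paper. The paper also reads symmetry, nonnegativity and $\E_1(R,S)=0 \iff R=S$ off the definition, and gets the triangle inequality from the chain $R\subseteq T^{n}(S)\subseteq T^{n+m}(U)$ together with its symmetric counterpart; you make explicit the monotonicity of $T$ and the additivity $T^{n}\circ T^{m}=T^{n+m}$, which the paper uses tacitly. The upward-closedness of the admissible $n$ that you derive is true but not actually needed: the minimum is attained whenever it is finite, and the chain shows directly that $n+m$ is admissible.
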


Intuitively, the operation goes up and down on the poset by 1-step each time and the distance adds up how may times such movement is done. Given that climber is an \textit{\textbf{E}scalador} in spanish, we call $\E_1$ the \textit{1-climber distance}. 

\begin{proof}
    Let us check each property that $\E_1$ has to satisfied to be a metric. By definition, $\E_1$ is a non negative and symmetric function. For $R,S \in \P(P)$, we obtain that $\E_1(R,S) = 0$ if and only if $R \subseteq S$ and $S \subseteq R$ which is equivalent to $R = S$. We only need to prove the triangle inequality.

    Let us prove that for $T \in \P(P)$ we have the triangle inequality,
        \begin{equation*}
            \E_1(R,S) \leq \E_1(R,T) + \E_1(T,S).
            \label{eq:triangle_in}
        \end{equation*}

    Assume that $\E_1(R,S) < \infty$. If either $\E_1(R,T)$ or $\E_1(T,S) $ is $\infty$ we are done. Then, let $\E_1(R,T) = n$ and $\E_1(S,T) = m$. We obtain that 
       \begin{equation}
        R \subseteq (\updown[1])^n(T) \subseteq (\updown[1])^{n+m}(S) \qquad \text{ and } \qquad S \subseteq (\updown[1])^m(T) \subseteq (\updown[1])^{m+n}(R).
        \label{eq:R_S_interleaved}
    \end{equation}

    Then, by definition of $\E_1$, $\E_1(R,S) \leq m+n$ and the triangle inequality holds. 
        
    Let $\E_1(R,S) = \infty$. If for $T \in \P(P)$ we have that $\E_1(R,T) = \infty$, then we are done. Else, if $\E_1(R,T) = n < \infty$ we must have that $\E_1(S,T) = \infty$, otherwise, taking $\E_1(S,T) = m$ we obtain the contradiction $\E_1(R,S) \leq m+n$ using eq. (\ref{eq:R_S_interleaved}). Hence, the triangle inequality holds as well.
\end{proof}

\begin{remark}
    The metric $\E_1$ induces a distance on the poset $P$. If $x,y \in P$, we can define  $\E_1(x,y) := \E_1(\{x\},\{y\})$.
\end{remark}

The metric $\E_1$ on $P$ can be used to calculate $\E_1(R,S)$ for $R,S \in \P(P)$,

\begin{lemma}
    Let $x \in P$ and $S \subseteq P $. Then
    \begin{equation*}
        \E_1(\{x\},S) = \sup_{y \in S} \{ \E_1(x,y) \}.
    \end{equation*}
\end{lemma}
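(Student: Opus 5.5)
The plan is to prove the two inequalities separately, after first establishing three elementary facts about the maps $\uparrow_1$, $\downarrow_1$ and their iterates. Throughout I assume $S \neq \emptyset$. If $S = \emptyset$, then $(\updown[1])^n(\emptyset) = \emptyset$ never contains $x$, so $\E_1(\{x\},S) = \infty$, and the formula holds only under the convention that the supremum over the empty set is $\infty$; I would state this convention explicitly.

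\emph{Preliminary facts.} First, since $R \subseteq \uparrow_1 R$ and $R \subseteq \downarrow_1 R$, the sets $(\updown[1])^n(R)$ increase with $n$. Second, $\uparrow_1$ and $\downarrow_1$ are order preserving and commute with arbitrary unions, because they are defined elementwise. Hence $(\updown[1])^n(S) = \bigcup_{y \in S} (\updown[1])^n(\{y\})$.

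Third, and this is the key point, for points $x,y$ we have $y \in (\updown[1])^n(\{x\})$ if and only if $x \in (\updown[1])^n(\{y\})$. Unwinding the definition, $y \in (\updown[1])^n(\{x\})$ means there is a sequence $x = z_0, z_1, \dots, z_{2n} = y$ with the following properties:
\begin{itemize}
\item for odd $i$, either $z_i = z_{i-1}$ or $z_{i-1} \prec z_i$ (an up-step or stay);
\item for even $i$, either $z_i = z_{i-1}$ or $z_i \prec z_{i-1}$ (a down-step or stay).
\end{itemize}
Reading this sequence backwards gives $y = z_{2n}, z_{2n-1}, \dots, z_0 = x$. In the reversed sequence the first move goes from $z_{2n}$ to $z_{2n-1}$, which was a down-step or stay and therefore becomes an up-step or stay. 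Continuing, the moves again alternate up, down, $\dots$, up, down. So the reversed sequence witnesses $x \in (\updown[1])^n(\{y\})$.

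Consequently, for points the two containment conditions in the definition of $\E_1$ coincide, and
\[
\E_1(x,y) = \min\{ n \mid y \in (\updown[1])^n(\{x\}) \}.
\]
This symmetry is the step I expect to need the most care, since the operator $\updown[1]$ is not obviously self-dual; the reversal argument above is what I would write out in detail.

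\emph{Upper bound.} Let $N = \sup_{y \in S} \E_1(x,y)$ and assume $N < \infty$. For each $y \in S$, the characterization above together with monotonicity in $n$ gives $y \in (\updown[1])^{\E_1(x,y)}(\{x\}) \subseteq (\updown[1])^{N}(\{x\})$. Hence $S \subseteq (\updown[1])^N(\{x\})$.

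For the other containment, pick any $y_0 \in S$. Then $x \in (\updown[1])^{\E_1(x,y_0)}(\{y_0\}) \subseteq (\updown[1])^N(S)$, using monotonicity in $n$ and in the argument. Therefore $\E_1(\{x\},S) \le N$.

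\emph{Lower bound.} Suppose $\E_1(\{x\},S) = n < \infty$. Then $S \subseteq (\updown[1])^n(\{x\})$, so every $y \in S$ satisfies $y \in (\updown[1])^n(\{x\})$. By the characterization of $\E_1$ on points, $\E_1(x,y) \le n$ for all $y \in S$, which gives $\sup_{y\in S} \E_1(x,y) \le n$.

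Combining the two bounds gives equality whenever either side is finite. If both sides are infinite, equality holds trivially.
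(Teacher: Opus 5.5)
Your proof is correct and follows essentially the same route as the paper's: monotonicity of $(\updown[1])^n$ in $n$ gives $\E_1(\{x\},S) \le \sup_{y\in S}\E_1(x,y)$, and the containment $S \subseteq (\updown[1])^n(\{x\})$ gives the reverse bound, which is the paper's minimality step. The main addition is that you prove explicitly, by reversing the up/down sequence, the symmetry $y \in (\updown[1])^n(\{x\}) \iff x \in (\updown[1])^n(\{y\})$, which the paper uses without proof in both its infinite case and its minimality step. You also correctly flag that $S=\emptyset$ needs the convention $\sup\emptyset=\infty$, and you cover the case of a supremum that is infinite although each $\E_1(x,y)$ is finite, which the paper's case split skips.
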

\begin{proof}

    Notice that $\E_1(x,y) =  \infty$ if and only there is no $n$ such that $y \in (\updown[1])^n(\{x\})$. Hence, if for some $y \in S$, $\E_1(x,y) =  \infty$, then, there is no $n$ such that $S \subseteq (\updown[1])^n(\{x\})$. Therefore, $\E_1(x,S) = \infty$.

    Now, let $n = \sup\{ \E_1(x,y) \mid  y \in S \} < \infty$. Thus, for all $y \in S$, 
    \begin{equation}
        x \in (\updown[1])^{\E_1(x,y)}(\{y\}) \subseteq (\updown[1])^{n}(\{y\}), \qquad y \in (\updown[1])^{\E_1(x,y)}(\{x\}) \subseteq (\updown[1])^{n}(\{x\}).
        \label{eq:x_S}
    \end{equation}
    
    Which implies that $x \in (\updown[1])^{n}(S) $ and $S \subseteq (\updown)^{n}(\{x\})$. Moreover, by definition of supremum, for any $m< n$ there is $y \in S$ such that $y \notin  (\updown[1])^{m}(\{x\})$. Hence, $n$ is the minimum such that eq. (\ref{eq:x_S}) holds. Therefore $n = \E_1(x,S) $.
\end{proof}

\begin{theorem}
    Let $R,S \subseteq P $. Then
    \begin{equation*}
        \E_1(R,S) = \sup\{ \E_1(x,y) \mid  x \in R, y \in S \}.
    \end{equation*}
    \label{th:distance_subsets}
\end{theorem}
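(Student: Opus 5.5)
The plan is to follow the proof of the preceding lemma (the case $R=\{x\}$) and to reduce everything to singletons. The main tool is that each of $\uparrow_1$, $\downarrow_1$, and hence every iterate $(\updown[1])^{n}$, commutes with arbitrary unions. Since $\uparrow_1 R$ is $R$ together with the upper covers of its elements, it equals $\bigcup_{x\in R}\uparrow_1\{x\}$, and dually for $\downarrow_1$. An induction on $n$ then gives
\begin{equation*}
(\updown[1])^{n}(R)=\bigcup_{x\in R}(\updown[1])^{n}(\{x\}).
\end{equation*}
I would also record the symmetry $y\in(\updown[1])^{n}(\{x\})\iff x\in(\updown[1])^{n}(\{y\})$, which I expect to follow by reversing the zig-zag of covers. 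With it, $\E_1(x,y)$ is simply the least $n$ with $y\in(\updown[1])^{n}(\{x\})$. Finally, I would use monotonicity in $n$: $(\updown[1])^{m}(R)\subseteq(\updown[1])^{n}(R)$ for $m\le n$, because both maps are inflationary.

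\textbf{Upper bound.} Let $n=\sup\{\E_1(x,y)\mid x\in R,\ y\in S\}$. If $n=\infty$ there is nothing to prove, so assume $n<\infty$. For each $x\in R$ and $y\in S$, monotonicity gives $y\in(\updown[1])^{n}(\{x\})$ and $x\in(\updown[1])^{n}(\{y\})$. Taking unions and using the distributivity identity yields $S\subseteq(\updown[1])^{n}(R)$ and $R\subseteq(\updown[1])^{n}(S)$. Hence $\E_1(R,S)\le n$. The same argument shows that if some pair has $\E_1(x,y)=\infty$, the finiteness side has to be handled by the lower bound rather than here.

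\textbf{Lower bound.} Suppose $m<n$. I would pick $x\in R$ and $y\in S$ with $\E_1(x,y)>m$, and try to show that $R\not\subseteq(\updown[1])^{m}(S)$ or $S\not\subseteq(\updown[1])^{m}(R)$. This mirrors the last step of the preceding lemma. There, with $R=\{x\}$, the witness $y\notin(\updown[1])^{m}(\{x\})$ immediately breaks $S\subseteq(\updown[1])^{m}(R)$.

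I expect this step to be the main obstacle. For general $R$, the point $y$ may still lie in $(\updown[1])^{m}(\{x'\})$ for some other $x'\in R$. The distributivity identity only yields $\E_1(R,S)\ge\sup_{y\in S}\inf_{x\in R}\E_1(x,y)$ and the symmetric term, not the full supremum over pairs. In fact, taking $R=S$ equal to two points at positive distance gives $\E_1(R,S)=0$, so the lower bound as stated fails without further restriction.

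I would therefore first try to close the gap by using the preceding lemma one variable at a time. That lemma gives $\E_1(\{x\},S)=\sup_{y\in S}\E_1(x,y)$, and the triangle inequality of Theorem~\ref{th:E1_is_distance} would then be used to compare $\E_1(R,S)$ with $\E_1(\{x\},S)$. If this does not go through, I would state the lower bound under the hypothesis that makes the argument work, namely that one of $R$, $S$ is a singleton, which is exactly the preceding lemma. I would then note that in general only the inequality $\le$ holds.
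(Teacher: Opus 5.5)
Your main conclusion is correct: the statement is false as written, so no proof of it exists, and your example already settles this. With $R=S=\{a,b\}$ and $a\prec b$, the right-hand side equals $\E_1(a,b)=1$, while $\E_1(R,R)=0$ by the identity of indiscernibles in Theorem~\ref{th:E1_is_distance}.

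The failure is not limited to $R=S$. In the chain $a_0\prec a_1\prec a_2\prec a_3$, take $R=\{a_0,a_2\}$ and $S=\{a_1,a_3\}$. Both $(\updown[1])(R)$ and $(\updown[1])(S)$ are the whole chain, so $\E_1(R,S)=1$, while $\E_1(a_0,a_3)=3$.

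The paper's proof claims $\E_1(R,S)=\sup_{y\in S}\E_1(R,\{y\})$ by substituting $R$ for $\{x\}$ in the preceding lemma. That substitution fails at exactly the minimality step you flag. In the lemma, $\E_1(x,y)>m$ gives $y\notin(\updown[1])^m(\{x\})$, which breaks $S\subseteq(\updown[1])^m(\{x\})$. With $R$ in place of $\{x\}$, one only learns $R\not\subseteq(\updown[1])^m(\{y\})$, and $y$ may still be reached from another point of $R$.

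Two changes would sharpen your write-up.

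\textbf{Drop the triangle-inequality rescue.} Your own counterexample rules out every argument for the lower bound, so leaving it as an open attempt only weakens the proposal.

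\textbf{Your distributivity identity proves an equality, not just a lower bound.} Symmetry does hold. One application of $\updown[1]$ to $\{x\}$ adds exactly the points adjacent to $x$ or sharing an upper cover with it, which is a symmetric relation, and $n$ applications compose it. With symmetry and monotonicity, $S\subseteq(\updown[1])^n(R)$ holds iff $\min_{x\in R}\E_1(x,y)\le n$ for every $y\in S$. Hence, for all $R,S\subseteq P$, with $\min\emptyset=\infty$,
\[
\E_1(R,S)=\max\Bigl\{\sup_{y\in S}\min_{x\in R}\E_1(x,y),\ \sup_{x\in R}\min_{y\in S}\E_1(x,y)\Bigr\},
\]
which is the Hausdorff distance of $\E_1$. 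This is the correct replacement for the theorem. It contains the singleton lemma as a special case and implies your bound $\E_1(R,S)\le\sup_{x\in R,\,y\in S}\E_1(x,y)$.

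That bound still needs $R,S\neq\emptyset$. If exactly one of them is empty, $\E_1(R,S)=\infty$ while the supremum ranges over an empty set. The later analogues for $\E_{k,l}$ and $\E$, which the paper proves by reference to this theorem, need the same correction.
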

\begin{proof}
   Analogous to the previous proof, we can prove that 
        $\displaystyle \E_1(R,S) = \sup_{y \in S}\{ \E_1(R,\{y\})\}$ interchanging $\{ x \}$ by $R$ in the argument. Then, the result follows.
\end{proof}

Given that we only need to find $\E_1$ on $P$ to obtain $\E_1$ on $\P(P)$ let us show an explicit formula to calculate $\E_1(x,y)$ for $x,y \in P$.

\begin{lemma}
    Take $x,y \in P $ such that there is a path $\gamma = \{ \tuple[0]{x}{n}\}$ with one alternation at $x_e$ of the form $x_{e-1} \prec x_e \succ x_{e+1}$. Then, $\E_1(x,y) \leq \ell(\gamma)-1$
    \label{lemma:1-1_one}
\end{lemma}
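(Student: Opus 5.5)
The plan is to argue directly from the definition of $\E_1(x,y)=\E_1(\{x\},\{y\})$. It suffices to exhibit both $y \in (\updown[1])^{n-1}(\{x\})$ and $x \in (\updown[1])^{n-1}(\{y\})$, where $n=\ell(\gamma)$.

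\textbf{Basic facts.} Both $\uparrow_1$ and $\downarrow_1$ are inflationary ($R\subseteq \uparrow_1 R$ and $R \subseteq \downarrow_1 R$) and order preserving. Two consequences follow.
\begin{enumerate}[(i)]
\item One application of $\updown[1]$ lets an element do any of four things: stay put, move one step up along a cover, move one step down along a cover, or move up one cover and then down one cover.
\item The sets $(\updown[1])^m(R)$ increase with $m$.
\end{enumerate}
I would record both as one-line observations. I read the definition of $\uparrow_1 R$ as adding the upper covers of elements of $R$, which is clearly what is intended.

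\textbf{Shape of $\gamma$.} Since $\gamma$ has exactly one alternation, at $x_e$ with $x_{e-1}\prec x_e \succ x_{e+1}$, the path is $x_0\prec x_1\prec\cdots\prec x_e\succ x_{e+1}\succ\cdots\succ x_n$. Here $1\le e\le n-1$.

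\textbf{From $x$ to $y$.} By induction on $i$, $x_i \in (\updown[1])^{i}(\{x\})$ for $0\le i\le e-1$: in each application, $\uparrow_1$ moves up one cover and $\downarrow_1$ is used only to stay.
\begin{itemize}
\item At application $e$, $\uparrow_1$ reaches $x_e$ and $\downarrow_1$ immediately descends to $x_{e+1}$. So $x_{e+1}\in(\updown[1])^{e}(\{x\})$.
\item Each further application descends one more cover via $\downarrow_1$, with $\uparrow_1$ used only to stay. This gives $x_{e+j}\in (\updown[1])^{e+j-1}(\{x\})$.
\end{itemize}
Taking $j=n-e$ yields $y\in(\updown[1])^{n-1}(\{x\})$.

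\textbf{From $y$ to $x$.} Read $\gamma$ backwards: climb $n-e$ covers from $x_n$ to $x_e$, then descend $e$ covers to $x_0$. The same merging of the peak step into one application gives $x\in (\updown[1])^{(n-e)+e-1}(\{y\}) = (\updown[1])^{n-1}(\{y\})$.

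Combining the two directions, $n-1$ belongs to the set whose minimum defines $\E_1(x,y)$, so $\E_1(x,y)\le \ell(\gamma)-1$.

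The only delicate point is the saving of one step. It relies on the alternation being a peak ($\prec\ \succ$) rather than a valley, because $\updown[1]$ applies $\uparrow_1$ before $\downarrow_1$. This is why the hypothesis specifies the form $x_{e-1}\prec x_e\succ x_{e+1}$, and I would make that ordering explicit in the induction.
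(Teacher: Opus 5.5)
Your proposal is correct and follows essentially the same route as the paper. You track the path through the iterates $(\downarrow_1\uparrow_1)^j(\{x\})$ and merge the last climb to the peak with the first descent into a single application. For the other containment you apply the same argument to the reversed path, which is again peak-shaped; this is the paper's ``by symmetry.'' Your bookkeeping $x_{e+j}\in(\downarrow_1\uparrow_1)^{e+j-1}(\{x\})$ is exactly the paper's statement that $\{x,x_1,\dots,x_{j+1}\}$ lies in the $j$-th iterate for $j\geq e$.
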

\begin{proof}
    We have
    \begin{align*}
        \{ x, \tuple[1]{x}{j}\} \subseteq (\updown[1])^{j}(\{x\}), & \quad  \text{when } j < e  \\
         \{ x, \tuple[1]{x}{e}, ..., x_{j+1}\} \subseteq (\updown[1])^{j}(\{x\}), & \quad  \text{when } j \geq e 
    \end{align*}
    Hence $y \in (\updown[1])^{n-1}(\{x\})$ and by symmetry $x \in \E_1^{n-1}(y)$. Therefore, $\E_1(x,y) \leq n- 1 = \ell(\gamma)-1$.
\end{proof}

Lemma \ref{lemma:1-1_one} gives how many steps we need to "climb and descend a peak" from $x$ to $y$, thus, we can iterate this process as many times as needed to walk all peaks between $x$ and $y$.

\begin{definition}
    Take $x,y \in P$ and $\gamma$ a path from $x$ to $y$. Define $L_1(\gamma)$, the \textit{1-climber length} of $\gamma$ by 
    \begin{equation*}
        L_1(\gamma) =  \left\{ 
        \begin{array}{cc}
            \ell(\gamma) - \frac{A(\gamma)}{2} &  \text{ if } A(\gamma) \text{ even}\\
            \ell(\gamma) - \frac{A(\gamma)+1}{2} &  \text{ if } A(\gamma) \text{ odd}, \gamma \text{ upward}\\
            \ell(\gamma) - \frac{A(\gamma)-1}{2} &  \text{ if } A(\gamma) \text{ odd}, \gamma \text{ downward}.\\
        \end{array}
        \right.
    \end{equation*}
\end{definition}

\begin{theorem}
    Take $x,y \in P$ such that the set of paths from $x$ to $y$ is not empty. Then, \[\E_1(x,y) = \min \{ L_1(\gamma) \mid \gamma \text { is a  path from } x \text{ to } y\}. \]
    In words, the 1-climber distance between $x$ and $y$ is the minimal 1-climber length over all paths from $x$ to $y$.
    \label{th:1climber}
\end{theorem}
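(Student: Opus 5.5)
We will prove the equality by describing the sets $(\updown[1])^{n}(\{x\})$ explicitly through the up/down pattern of paths. To a path $\gamma=\{x_0,\dots,x_m\}$ we attach its \emph{step word} $w(\gamma)=w_1\cdots w_m$ in the letters $u,d$, where $w_i=u$ if $x_{i-1}\prec x_i$ and $w_i=d$ if $x_{i-1}\succ x_i$. Alternations of $\gamma$ are exactly the places where the letter changes. Call an alternation a \emph{peak} if it is of the form $ud$ and a \emph{valley} if it is of the form $du$, and write $p(\gamma)$ for the number of peaks. A direct check of the three cases in the definition of $L_1$ gives $L_1(\gamma)=\ell(\gamma)-p(\gamma)$:
\begin{itemize}
\item if $A(\gamma)$ is even, peaks and valleys alternate and there are $A(\gamma)/2$ of each;
\item if $A(\gamma)$ is odd and $\gamma$ is upward, the alternations start and end with a peak, so $p(\gamma)=(A(\gamma)+1)/2$;
\item if $A(\gamma)$ is odd and $\gamma$ is downward, they start and end with a valley, so $p(\gamma)=(A(\gamma)-1)/2$.
\end{itemize}
So the goal becomes $\E_1(x,y)=\min_\gamma \bigl(\ell(\gamma)-p(\gamma)\bigr)$.

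\textbf{Step 1 (reachability).} By induction on $n$, we will show that $z\in(\updown[1])^{n}(\{x\})$ if and only if there is a walk from $x$ to $z$ whose step word is a subsequence of $(ud)^n$. Each application of $\uparrow_1$ either stays put or takes one upward cover step, and each application of $\downarrow_1$ either stays put or takes one downward cover step.

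\textbf{Step 2 (minimal $n$ for a word).} Fix a word $w$ of length $\ell$ with $p$ occurrences of $ud$ as consecutive letters. Embed $w$ greedily into $(ud)^n$, placing each factor $ud$ of $w$ in a single block and every other letter in its own block. This uses exactly $\ell-p$ blocks. A lone $d$ can take the $d$-slot of a fresh block, and a lone $u$ the $u$-slot, so the order of blocks is respected. For the matching lower bound, a block $ud$ can absorb two letters of $w$ only when those letters are consecutive in $w$ and read $ud$. Distinct peaks are disjoint pairs of positions, since $uud\,d$-type overlaps are impossible for a single letter to be in two $ud$ factors as both $u$ and $d$. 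Hence at least $\ell-p$ blocks are needed. Thus $y\in(\updown[1])^{n}(\{x\})$ if and only if some walk $\gamma$ from $x$ to $y$ has $\ell(\gamma)-p(\gamma)\le n$.

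\textbf{Step 3 (symmetry).} If $\gamma$ goes from $x$ to $y$, its reversal $\bar\gamma$ goes from $y$ to $x$, and $w(\bar\gamma)$ is $w(\gamma)$ reversed with $u$ and $d$ swapped. Under this operation a factor $ud$ is sent to $ud$ again, so $\ell(\bar\gamma)=\ell(\gamma)$ and $p(\bar\gamma)=p(\gamma)$. Consequently the two conditions $y\in(\updown[1])^{n}(\{x\})$ and $x\in(\updown[1])^{n}(\{y\})$ are the same condition. It follows that $\E_1(x,y)$ is the minimum of $\ell-p$ over walks from $x$ to $y$, and it is $\infty$ exactly when no walk, equivalently no path, exists.

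\textbf{Main obstacle.} The remaining issue, which I expect to be the main obstacle, is passing from walks to paths in the paper's sense, where vertices should not repeat. If the paper's notion of path is a sequence that allows repetitions, nothing more is needed. Otherwise, I will show that deleting a closed subwalk $x_i=x_j$ (with $i<j$) never increases $\ell-p$. The deleted segment has length $k=j-i$ and contains at most $\lfloor k/2\rfloor$ complete $ud$ factors. Gluing the two ends can create at most one new peak and destroy at most the two peaks straddling $x_i$ and $x_j$. The delicate case is a short loop such as a single $ud$ back-and-forth. Here I would argue directly that the net change in $\ell-p$ is $-k+(\text{peaks lost})-(\text{peaks gained})\le 0$, using that a loop of length $k\ge 2$ returning to its start must contain at least one $u$ and one $d$. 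Iterating this removal yields a genuine path with $L_1$ no larger than that of the walk, which completes the proof.
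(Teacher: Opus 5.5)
Your proof is correct, but it is organized differently from the paper's.

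The paper proves $\E_1(x,y)\le L_1(\gamma)$ by splitting $\gamma$ into its chain decomposition. It applies Lemma~\ref{lemma:1-1_one} to each consecutive up--down pair of chains and runs a case analysis on the parity of $A(\gamma)$ and the orientation of $\gamma$. For the reverse inequality, it greedily builds a sequence from $x$ to $y$ through the sets $(\updown[1])^i(\{x\})$, adding one or two new vertices per iteration, and then counts alternations.

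You instead reduce the three-case definition to the single identity $L_1(\gamma)=\ell(\gamma)-p(\gamma)$. You then characterize $(\updown[1])^n(\{x\})$ exactly, as the endpoints of walks whose step word embeds in $(ud)^n$. Your block-counting argument gives both inequalities at once: a block absorbs two letters only if they are consecutive and read $ud$, and such factors are pairwise disjoint. Symmetry becomes the one-line remark that reversal with $u\leftrightarrow d$ fixes $ud$.

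This buys rigor where the paper is thinnest. Its lower bound needs that a sequence of length $n+j$ built in $n$ iterations has at least $j$ peaks. That is exactly your observation that any iteration contributing two vertices contributes a peak. The paper's stated bound via $A(\gamma)\ge j+1$ and the ensuing arithmetic do not establish this as written. The paper also never addresses that the greedily built sequence may revisit vertices, which your loop deletion handles.

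That step can be shortened. The $ud$ factors destroyed by cutting out the closed subwalk from $x_i$ to $x_j$ are pairwise disjoint, and each uses at least one of the $k=j-i$ steps of the loop. So the peak count drops by at most $k$ while the length drops by exactly $k$, and a peak created at the junction only helps.

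In the other direction, the paper's chain-decomposition bookkeeping is what it reuses verbatim for $L_k$, replacing $\ell(C_i)$ by $\lceil \ell(C_i)/k\rceil$. In your framework the analogous step would be to recompute the minimal number of blocks needed to embed a word in $(u^k d^k)^n$.
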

\begin{proof}
    Take $\gamma$ a path from $x$ to $y$ in $P$ with chain decomposition $\{ \tuple[0]{C}{A(\gamma)}\}$. Let us prove that $\E_1(x,y) \leq L_1(\gamma)$. For the following cases, we are calculating $N$ such that $ y \in (\updown[1])^ N (\{x\})$ using Lemma \ref{lemma:1-1_one}. If $C_1$ is an upward path and $A(\gamma)$ is odd. Then,  
    \begin{equation*}
         N= \sum_{i=0}^{\frac{A(\gamma)-1}{2}} ((\ell(C_{2i}) + \ell(C_{2i+1})) -1) = \sum_{i=0}^{A(\gamma)} \ell(C_i) - \frac{A(\gamma) +1}{2} = \ell(\gamma) - \frac{A(\gamma) +1}{2}.
    \end{equation*}
    In the same way, if $A(\gamma)$ is even we have 
    \begin{equation}
         N =\sum_{i=0}^{\frac{A(\gamma)-2}{2}} (\ell(C_{2i}) + \ell(C_{2i+1}) -1) + \ell(C_{A(\gamma)})  = \sum_{i=0}^{A(\gamma)} \ell(C_i) - \frac{A(\gamma)}{2} = \ell(\gamma) - \frac{A(\gamma)}{2}.
         \label{eq:even_alt_down}
    \end{equation}

    On the other hand, if $C_1$ is a downward path and  $A(\gamma)$ even we obtain the same expression of equation (\ref{eq:even_alt_down}) since 
    \[
    N =\sum_{i=1}^{\frac{A(\gamma)}{2}} (\ell(C_{2i-1}) + \ell(C_{2i}) -1) + \ell(C_{0}).
    \]

    Ultimately, if $A(\gamma)$ is odd, we obtain
    \begin{equation*}
         N =\sum_{i=1}^{\frac{A(\gamma)-1}{2}} (\ell(C_{2i-1}) + \ell(C_{2i}) - 1) + \ell(C_{0}) + \ell(C_{A(\gamma)})  = \sum_{i=0}^{A(\gamma)} \ell(C_i) - \frac{A(\gamma) -1}{2} = \ell(\gamma) - \frac{A(\gamma) -1}{2}.
    \end{equation*}

    So, for each case $N = L_1(\gamma)$. Thus, $ y \in (\updown[1])^{L_1(\gamma)} (\{x\})$
    Next, consider the reverse path $\tilde{\gamma} = \{ y, x_{n-1}, ..., x_1,x\}$. We have that $A(\gamma) =  A (\tilde{\gamma}) $ and notice that if $A(\gamma)$ is odd and $\gamma$ is upward (downward) , then, $\tilde{\gamma}$ is also upward (downward).  Hence, $L_1(\gamma) = L_1(\tilde{\gamma})$. Using this fact and the argument from above we obtain \mbox{$x \in (\updown[1])^{L_1(\tilde{\gamma})}(\{y\}) = (\updown[1])^{L_1(\gamma)}(\{y\})$.} In this way, $\E_1(x,y) \leq L_1(\gamma)$. In particular 
    \[\E_1(x,y) \leq \min \{ L_1(\gamma) \mid \gamma \text { is a path that connects } x \text{ and } y\}.\]

    Note that the argument above proves that $\E_1(x,y) < \infty$. Thus, assume that $\E_1(x,y) = n$. Then, by, definition, $y \in (\updown[1])^n(\{x\})$. This implies that we can create a path $\gamma = \{\tuple[0]{x}{\ell(\gamma)}\}$ from $x$ to $y$ iteratively selecting elements from  $(\updown[1])^i(\{x\})$ recursively. More explicitly, choose $x_1 \in \; \uparrow_1 \{x\}$ if possible, and choose $x_2 \in \; \downarrow_1( \{x_1\} ) \subset \; \downarrow_1( \; \uparrow_1(\{x\}) \; )$ if possible, such that $x \succ x_1 \prec x_2$. 
    It may happen that either $\uparrow_1 \{x\}$ is empty or $ \downarrow_1( \{x_1\} )$ is empty. In such a case, step $i=1$ contributes only one new element (either $x_1$ or $x_2$) yielding a segment of length 1 or 2. However, both sets cannot be empty, for that would contradict the assumption that the minimal 1-climber distance from $x$ to $y$ is $n$.
    
    Proceed inductively: at step $i$, choose the next one or two elements (whenever possible) from $(\updown[1])^i(\{x\})$. By construction, each step introduces at least one new element not contained in $(\updown[1])^{i-1}(\{x\})$, otherwise, the climber distance would be strictly less than $n$ contradicting minimality. The process continues until reaching  $(\updown[1])^{n}(\{x\})$, where $y$ appears. Hence, at each step, we add either one or two new elements to the path. Consequently, $ n \leq \ell(\gamma) \leq 2n$.
    
    We have that if $A(\gamma) = 0 $ it implies that we only add one element at each step. So, $\ell(\gamma) = n $. 
    Hence, we obtain that
    \begin{equation*}
        L_1(\gamma) = n  - \frac{0}{2} = n.
    \end{equation*}
    Else, if $\gamma$ is not a chain, we can check that if $\ell(\gamma) = n + j$ for $ 0 \leq j \leq n$ then $A(\gamma) \geq j+1$. For instance, for $x,y$ in $P$ with  $\E_1(x,y) = 2$, the process may create the path $\gamma: x \succ x_1 \prec y$. Then, $\ell(\gamma) = 2 $ and $A(\gamma) = 1$.
    Hence, 
    \begin{equation*}
        L_1(\gamma) \leq (n + j) - \frac{(j+1)-1}{2} = n - \frac{j}{2} \leq n. 
    \end{equation*}

    Then, we have proved that 
    \begin{equation*}
        \min \{ L_1(\gamma) \mid \gamma \text { is a path that connects } x \text{ and } y\} \leq L_1(\gamma) \leq n = \E_1(x,y).
    \end{equation*}

    Since $\E_1(x,y) \leq \min \{ L_1(\gamma) \mid \gamma \text { is a path that connects } x \text{ and } y\} \leq \E_1(x,y)$ the proof is complete. 
\end{proof}

\begin{corollary}
    Take $x,y \in P$. $\E_1(x,y) < \infty$ if and only if $x,y$ are in the same path-connected component. 
    \label{coro:finite_if_connected_1}
\end{corollary}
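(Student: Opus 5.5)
The corollary has two directions, and I would prove them separately, using Theorem \ref{th:1climber} for one and the definition of $(\updown[1])^n$ for the other.

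\emph{If $x,y$ lie in the same path-connected component, then $\E_1(x,y)<\infty$.} By definition of a path-connected component there is a path $\gamma$ from $x$ to $y$. The set of paths is therefore nonempty, so Theorem \ref{th:1climber} applies and gives $\E_1(x,y)\le L_1(\gamma)$. Since $L_1(\gamma)\le \ell(\gamma)$ is finite, this direction is done.

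\emph{If $\E_1(x,y)=n<\infty$, then $x,y$ lie in the same component.} Here I would not invoke Theorem \ref{th:1climber}, because its hypothesis already assumes a path exists. Instead I would prove directly, by induction on $m$, that every element of $(\updown[1])^m(\{x\})$ is joined to $x$ by a path.
\begin{itemize}
\item The base case $m=0$ is trivial.
\item For the inductive step, take $z\in \downarrow_1(\uparrow_1(W))$ with $W=(\updown[1])^m(\{x\})$. Then either $z\in \uparrow_1 W$, or $z$ is a lower cover of some $u\in\uparrow_1 W$.
\item In turn, $u$ is either in $W$ or is an upper cover of some $w\in W$.
\item In every case $z$ is reached from some $w\in W$ by at most two covering steps. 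Appending these to the path from $x$ to $w$ gives a path from $x$ to $z$.
\end{itemize}
Here I read $\uparrow_1 R$ as $R$ together with the upper covers of elements of $R$, which is the evident intended meaning of the displayed formula. Applying the induction with $m=n$ and $y\in(\updown[1])^n(\{x\})$ shows that $x$ and $y$ are connected by a path, so they lie in the same path-connected component.

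The only real subtlety is this second direction: Theorem \ref{th:1climber} cannot simply be quoted there, and the induction above replaces it. Alternatively, the path-construction part of that theorem's proof could be reused, since it only used $y\in(\updown[1])^n(\{x\})$ and not the prior existence of a path.
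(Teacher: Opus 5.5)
Your proof is correct and follows the paper's argument: the forward direction is the bound $\E_1(x,y)\le L_1(\gamma)<\infty$ from Theorem \ref{th:1climber}, and for the converse the paper only says that a path from $x$ to $y$ ``can be constructed'' from $y\in(\updown[1])^n(\{x\})$, which is the path-construction step in that theorem's proof. Your induction on $m$ is a cleaner, fully explicit version of that step that does not use minimality of $n$, and you are right to read $\uparrow_1 R$ as $R$ together with the upper covers of elements of $R$.
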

\begin{proof}
    If $x$ and $y$ are in the same path-connected component, there is a path in $P$ between them. The existence of such path implies that $\E_1(x,y) < \infty$. If $\E_1(x,y) < \infty$, then a path from $x$ and $y$ can be construct, so they are in the same path-connected component.
\end{proof}

\begin{corollary}
    If $P$ is a path-connected poset, then the $1$-climber distance is a metric on $P$ and $\P(P)$.
\end{corollary}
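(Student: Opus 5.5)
The plan is to combine Theorem \ref{th:E1_is_distance} with Corollary \ref{coro:finite_if_connected_1} and Theorem \ref{th:distance_subsets}. Theorem \ref{th:E1_is_distance} already shows that $\E_1$ is an extended metric on $\P(P)$ for any poset $P$. It is nonnegative and symmetric, vanishes exactly on the diagonal, and satisfies the triangle inequality, including when some values are $\infty$. By the Remark, the restriction $\E_1(x,y)=\E_1(\{x\},\{y\})$ inherits all these axioms on $P$. So the only thing left to establish is finiteness.

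On $P$: since $P$ is path-connected, any $x,y\in P$ lie in the same path-connected component. Corollary \ref{coro:finite_if_connected_1} then gives $\E_1(x,y)<\infty$. Equivalently, Theorem \ref{th:1climber} bounds $\E_1(x,y)$ by $L_1(\gamma)\le \ell(\gamma)$ for any path $\gamma$ from $x$ to $y$. Hence $\E_1$ is a genuine (finite) metric on $P$.

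On $\P(P)$, the claim must be read in the same sense as Theorem \ref{th:E1_is_distance}, i.e.\ as an extended metric whose values on points are finite. Theorem \ref{th:distance_subsets} writes $\E_1(R,S)=\sup\{\E_1(x,y)\mid x\in R,\,y\in S\}$, so the subset distance is governed entirely by the point metric. It is finite, for instance, whenever $R,S$ are nonempty and $R\cup S$ has bounded $\E_1$-diameter, which includes every pair of nonempty finite subsets.

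The main obstacle is interpretive. $\E_1(\emptyset,S)=\infty$ for nonempty $S$, since $(\updown[1])^n(\emptyset)=\emptyset$. For unbounded $S$ the supremum can also be infinite. So literal finiteness on all of $\P(P)$ fails, and I would state explicitly that "metric on $\P(P)$" means the extended metric of Theorem \ref{th:E1_is_distance}, finite on nonempty bounded subsets.
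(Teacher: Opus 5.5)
Your argument is correct, and on $P$ it is the paper's proof: the metric axioms come from Theorem~\ref{th:E1_is_distance} and finiteness from Corollary~\ref{coro:finite_if_connected_1}. On $\P(P)$, however, the paper simply asserts $\E_1(S,R)=\sup\{\E_1(x,y)\mid x\in R,\ y\in S\}<\infty$ for all $R,S$. That assertion fails for exactly the reasons you give. One failure is $R=\emptyset$. Another is $P=\Z$ with $R=\{0\}$ and $S=\N$, where $(\updown[1])^m(\{0\})=\{-m,\dots,m\}$ never contains $\N$. So your caveat corrects the paper rather than departing from it.

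One refinement: do not use Theorem~\ref{th:distance_subsets} as an equality, because it is false in general. For $R=S=\{a,b\}$ with $a\neq b$, the left side is $0$ while the right side is $\E_1(a,b)\geq 1$. The true value is the Hausdorff-type quantity $\max\{\sup_{x\in R}\min_{y\in S}\E_1(x,y),\ \sup_{y\in S}\min_{x\in R}\E_1(x,y)\}$.

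Your finiteness claim only needs the inequality $\E_1(R,S)\le N:=\sup_{x\in R,\,y\in S}\E_1(x,y)$ for nonempty $R,S$, and this holds directly. Since $T\subseteq\updown[1](T)$, the iterates are increasing. So for $x\in R$ and any $y\in S$ you get $x\in(\updown[1])^{\E_1(x,y)}(\{y\})\subseteq(\updown[1])^{N}(S)$, and symmetrically $S\subseteq(\updown[1])^{N}(R)$.

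Likewise, state the unbounded counterexample with $R$ a singleton. There the lemma preceding Theorem~\ref{th:distance_subsets}, which is valid for nonempty $S$, does give $\E_1(\{x\},S)=\sup_{y\in S}\E_1(x,y)$.
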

\begin{proof}
    If $P$ is a path-connected poset, by Corollary \ref{coro:finite_if_connected_1}, we obtain that for any $x,y \in P$, $\E_1(x,y) < \infty$. Similarly for any $S,R \in \P(P)$, we obtain that $\E_1(S,R) = \sup\{\E_1(x,y) \mid x \in R, y \in S\} < \infty$.
\end{proof}

 By duality, for $R \in P$ we can define the set $(\downup[1])^n(R)$ by iterating the $\downarrow_1$ and $\uparrow_1$ operations on $R$, $n$ times. Then, we can define the extended metric on $\P(P)$, $\B_1: \P(P)^ 2 \to \R \cup \{ \infty\}$ given by 
    \begin{equation*}
        \B_1(R,S) = \min \{ n \mid R \subseteq (\downup[1])^{n} (S) \text{ and } S \subseteq (\downup[1])^{n} (R)\}.
    \end{equation*}
    and $\B_1(R,S) = \infty$ if such $n$ does not exist.

We call we call $\B_1$ the $1$-diver distance given that diver is a \textit{\textbf{B}uceador} in spanish.

All the previous results hold for $\B_1$ dually. In particular, by this dual correspondence we obtain the following definition and theorem.

\begin{definition}
    Take $x,y \in P$ and $\gamma$ a path from $x$ to $y$. Define $L^*_1(\gamma)$, the \textit{1-diver length} of $\gamma$ by 
     \begin{equation*}
        L^*_1(\gamma) =  \left\{ 
        \begin{array}{cc}
            \ell(\gamma) - \frac{A(\gamma)}{2} &  \text{ if } A(\gamma) \text{ even}\\
            \ell(\gamma) - \frac{A(\gamma)+1}{2} &  \text{ if } A(\gamma) \text{ odd}, \gamma \text{ downward }\\
            \ell(\gamma) - \frac{A(\gamma)-1}{2} &  \text{ if } A(\gamma) \text{ odd}, \gamma \text{ upward}.\\
        \end{array}
        \right.
    \end{equation*}
\end{definition}
\begin{theorem}
    Take $x,y \in P$ such that the set of paths from $x$ to $y$ is not empty. Then, the 1-diver distance between $x$ and $y$ is the minimal 1-diver length over all paths from $x$ to $y$. This means \[\B_1(x,y) = \min \{ L^*_1(\gamma) \mid \gamma \text { is a  path from } x \text{ to } y\}. \]
    \label{th:1diver}
\end{theorem}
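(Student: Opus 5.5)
The plan is to derive Theorem \ref{th:1diver} from Theorem \ref{th:1climber} by duality rather than repeating the argument. Let $P^{\op}$ be the dual poset, with $x \leq_{P^{\op}} y$ if and only if $y \leq_P x$. The first step is to record how the operators and path data transform under this passage.

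Covering relations reverse: $x \prec y$ in $P$ if and only if $y \prec x$ in $P^{\op}$. Consequently $\uparrow_1$ computed in $P^{\op}$ equals $\downarrow_1$ computed in $P$, and vice versa. By induction on $n$, $(\updown[1])^n$ in $P^{\op}$ agrees with $(\downup[1])^n$ in $P$ as self-maps of $\P(P)=\P(P^{\op})$. Comparing the defining formulas then gives $\B_1^{P}(R,S)=\E_1^{P^{\op}}(R,S)$ for all $R,S$, with the infinite case included since the conditions coincide for every $n$.

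Next I compare paths in $P$ and $P^{\op}$. They are the same sets, because the Hasse diagrams have the same edges. A path $\gamma$ keeps its length $\ell(\gamma)$. Its alternations also stay the same, since $x_{i-1}\prec x_i\succ x_{i+1}$ simply becomes $x_{i-1}\succ x_i\prec x_{i+1}$, so $A(\gamma)$ is unchanged. The only change is that an upward path in $P$ becomes downward in $P^{\op}$ and conversely. Inspecting the two case definitions shows $L_1^{*,P}(\gamma)=L_1^{P^{\op}}(\gamma)$: the even case is literally identical, and the two odd cases swap exactly as upward and downward swap.

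Finally, Theorem \ref{th:1climber} applied to $P^{\op}$ is legitimate because the set of paths from $x$ to $y$ is nonempty in $P^{\op}$ whenever it is in $P$. It gives $\B_1(x,y)=\E_1^{P^{\op}}(x,y)=\min_\gamma L_1^{P^{\op}}(\gamma)=\min_\gamma L_1^{*}(\gamma)$, with the minimum over the same set of paths.

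The only step needing care, and the main potential obstacle, is the operator identity. Iterates compose as $\downarrow_1\circ\uparrow_1$ in the climber case, so I must check that dualizing really yields $\uparrow_1\circ\downarrow_1$ in the matching order, and not merely the same operators up to reordering. This follows by a one-line induction from the base identities $\uparrow_1^{P^{\op}}=\downarrow_1^{P}$ and $\downarrow_1^{P^{\op}}=\uparrow_1^{P}$. The rest is bookkeeping.
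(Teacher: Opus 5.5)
Your proposal is correct and follows the paper's approach, which proves this theorem by duality, writing only ``analogous to Theorem~\ref{th:1climber}''. Passing to $P^{\op}$ is a precise way to carry out that duality: you identify $(\updown[1])^n$ computed in $P^{\op}$ with $(\downup[1])^n$ computed in $P$, and you note that $L^*_1$ in $P$ equals $L_1$ in $P^{\op}$. This also gives $\B_1^{P}=\E_1^{P^{\op}}$ straight from the definitions, whereas the paper's later ``Opposite posets'' discussion derives that identity from the path formulas themselves.
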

\begin{proof}
    Analogous to Theorem \ref{th:1climber}
\end{proof}

\begin{example}
     Consider the following poset represented in its Hasse diagram. To calculate the $1$-climber distance between $a$ and $f$ we want to find a path $\gamma$ with the minimum 1-climber length. This implies we want to choose the path with the minimum length but the maximum  number of alternations and if possible upwards. Some paths are
         \begin{align*}
          \gamma_1&:a,c,f. \quad \ell(\gamma_1) = 2, A(\gamma_1) =  1  &\gamma_2:a,b,c,f. \quad \ell(\gamma_2) = 3, A(\gamma_2) =  1 \\
          \gamma_3&:a,d,f. \quad \ell(\gamma_3) = 3, A(\gamma_3) =  1 &\gamma_4:a,b,c,e,f. \quad \ell(\gamma_4) = 4, A(\gamma_4) =  3
         \end{align*}
     
     \begin{minipage}{0.33\textwidth}
         \includegraphics[width = \textwidth]{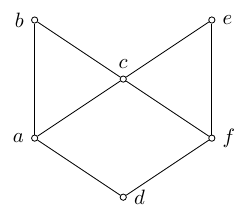}
     \end{minipage}
     \hfill
     \begin{minipage}{0.62\textwidth}
        
         We can check that $L_1(\gamma_1) = 1$ and $L_1(\gamma_2) = L_1(\gamma_3) = L_1(\gamma_4) = 2$. Then, \[\E_1(a,f) = 1 . \] Note that $\gamma_3$ has the same number of alternations but is downwards, which does not guarantee the minimum to find $\E_1(a,f)$, however, it is the path with optimal 1-diver length. Explicitly,
         \[
           \B_1(a,f) = L^*_1(\gamma_3) = \ell(\gamma_3) - \frac{A(\gamma_3)+1}{2} = 1.
         \]
          
     \end{minipage}
    \label{eg:poset_gato}
\end{example}






\subsection{Extension of 1-step metrics}

We can define more extended metrics taking compositions of the maps $\uparrow_1$ and $\downarrow_1$. Take $R \in \P(P)$ and let  $k,l \in \N \setminus \{0\}$. Define recursively $(\downarrow_l \uparrow_k)^n (R)$ by 
\begin{itemize}
    \item $(\downarrow_l \uparrow_k)^0(R) = R$.
    \item $(\downarrow_l \uparrow_k)^{n+1}(R) = (\downarrow_1)^l \big\{ (\uparrow_1)^k \{(\downarrow_l \uparrow_k)^{n}(R) \}\big\}$.
\end{itemize}

where $(\uparrow_1)^k$ denotes the $k$-fold composition of $\uparrow_1$ and $(\downarrow_1)^l$ denotes the $l$-fold composition of $\downarrow_1$.

Note that for $x,y \in P$, $y \in (\uparrow_1)^k(\{x\})$ if and only if there exists a chain of length at most $k$ from $x$ to $y$.

\begin{theorem}
        Let $P$ be a poset. Define the function $\E_{k,l}: \P(P)^ 2 \to \R \cup \{ \infty\}$ given by 
            \begin{equation*}
                \E_{k,l}(R,S) = \min \{ n \mid R \subseteq (\downarrow_l \uparrow_k)^{n} (S) \text{ and } S \subseteq (\downarrow_l \uparrow_k)^{n} (R) \}.
            \end{equation*}
            and $\E_{k,l}(R,S) = \infty$ if such $n$ does not exist.

        Then, $\E_{k,l}$ is an extended metric on $\P(P)$.
\end{theorem}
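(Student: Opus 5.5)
The plan is to mirror the proof of Theorem \ref{th:E1_is_distance} exactly, replacing the self-map $\updown[1]$ by $\Phi := (\downarrow_1)^l \circ (\uparrow_1)^k$. This is legitimate because $(\downarrow_l \uparrow_k)^n = \Phi^n$ by the recursive definition. The proof of Theorem \ref{th:E1_is_distance} only used three facts about $\updown[1]$, and I will check each for $\Phi$.

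The first fact is that $\Phi$ is order-preserving on $\P(P)$. Since $\uparrow_1$ and $\downarrow_1$ are order-preserving, so are their iterates and compositions, hence so is every $\Phi^n$.

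The second fact is that $\Phi$ is extensive, i.e. $R \subseteq \Phi(R)$. This holds because $R \subseteq \uparrow_1 R$ and $R \subseteq \downarrow_1 R$ by definition (both maps adjoin covers to $R$). Consequently the chain $R \subseteq \Phi(R) \subseteq \Phi^2(R) \subseteq \cdots$ is increasing, so $\Phi^m(R) \subseteq \Phi^n(R)$ whenever $m \le n$.

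The third fact is $\Phi^n \circ \Phi^m = \Phi^{n+m}$, which is immediate from the recursion.

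With these in hand the metric axioms follow as before.

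\emph{Non-negativity and symmetry.} These are clear from the definition of $\E_{k,l}$.

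\emph{Identity of indiscernibles.} Since $\Phi^0 = \mathrm{id}$, we have $\E_{k,l}(R,S)=0$ iff $R\subseteq S$ and $S \subseteq R$, i.e. iff $R=S$.

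\emph{Triangle inequality.} Suppose $\E_{k,l}(R,T)=n$ and $\E_{k,l}(T,S)=m$ are both finite. Then $R \subseteq \Phi^n(T)$ and $T \subseteq \Phi^m(S)$. Applying the monotone map $\Phi^n$ to the second inclusion gives
\[
R \subseteq \Phi^n(T) \subseteq \Phi^{n}(\Phi^m(S)) = \Phi^{n+m}(S).
\]
Symmetrically, $S \subseteq \Phi^{m+n}(R)$. Hence $\E_{k,l}(R,S)\le n+m$. If either term on the right is $\infty$ the inequality is trivial. The case $\E_{k,l}(R,S)=\infty$ is handled by contradiction exactly as in Theorem \ref{th:E1_is_distance}: if both $\E_{k,l}(R,T)$ and $\E_{k,l}(T,S)$ were finite, the argument just given would make $\E_{k,l}(R,S)$ finite.

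The only point needing care, which I regard as the mild main obstacle, is the monotonicity in $n$. Because of it, the set of admissible $n$ in the definition is upward closed, so the minimum is well defined whenever the set is nonempty. It also means the condition ``$R\subseteq\Phi^n(S)$ and $S\subseteq\Phi^n(R)$'' can be witnessed with a common exponent, even when the two inclusions are first achieved at different exponents (e.g. $n$ and $m$ above, then enlarged to $n+m$). Extensivity gives this immediately.
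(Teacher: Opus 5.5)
Your proposal is correct and follows the paper's route: the paper's proof says only to repeat the argument of Theorem \ref{th:E1_is_distance} with $\updown[1]$ replaced by $\downarrow_l \uparrow_k$, and you have spelled out the monotonicity and composition facts that make this substitution valid. One small remark is that extensivity is not actually needed, because the minimum of a nonempty subset of $\N$ exists by well-ordering, and your triangle-inequality chain already gives both inclusions at the common exponent $n+m$ using only monotonicity and $\Phi^n\circ\Phi^m=\Phi^{n+m}$.
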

\begin{proof}
    The proof is analogous to Theorem \ref{th:E1_is_distance} replacing $\updown[1]$ by $\downarrow_l \uparrow_k$.
\end{proof}

As before, we can define the metric $\E_{k,l}$ on $P$ by $\E_{k,l}(x,y) := \E_{k,l}(\{x\},\{y\})$. Then we obtain the following theorem.
 
\begin{theorem}
    Let $R,S \subseteq P $. Then
    \begin{equation*}
        \E_{k,l}(R,S) = \sup\{ \E_{k,l}(x,y) \mid  x \in R, y \in S \}.
    \end{equation*}
\end{theorem}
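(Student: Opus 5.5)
The plan is to copy the two-step argument used for $\E_1$: first prove the point-to-set formula $\E_{k,l}(\{x\},S)=\sup_{y\in S}\E_{k,l}(x,y)$, then replace $\{x\}$ by $R$. The one structural fact about $\Phi:=\downarrow_l\uparrow_k$ that we need is that $\Phi^n$ is a union-preserving (join-preserving) self-map of $\P(P)$, i.e.
\[
\Phi^n(R)=\bigcup_{x\in R}\Phi^n(\{x\}).
\]
This holds because $\uparrow_1$ and $\downarrow_1$ are each defined elementwise as unions over the elements of their argument. A union of union-preserving maps is union-preserving, and so are composites of such maps, so $(\uparrow_1)^k$, $(\downarrow_1)^l$ and all iterates $\Phi^n$ are union-preserving. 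We also use monotonicity in $n$: $\Phi^m(R)\subseteq\Phi^n(R)$ for $m\le n$. This follows because $R\subseteq\uparrow_1R$ and $R\subseteq\downarrow_1R$, so $R\subseteq\Phi(R)$, and each $\Phi$ is order preserving.

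With these facts, the membership conditions split pointwise. For any $n$,
\[
S\subseteq\Phi^n(R)\iff\forall y\in S\ \exists x\in R:\ y\in\Phi^n(\{x\}),
\qquad
R\subseteq\Phi^n(S)\iff\forall x\in R\ \exists y\in S:\ x\in\Phi^n(\{y\}).
\]
Write $N=\sup\{\E_{k,l}(x,y)\mid x\in R,y\in S\}$.

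\textbf{Upper bound $\E_{k,l}(R,S)\le N$.} If $N=\infty$ there is nothing to prove. Otherwise, for every pair $x\in R$, $y\in S$ we have $y\in\Phi^{\E_{k,l}(x,y)}(\{x\})\subseteq\Phi^N(\{x\})$ by monotonicity in $n$, and symmetrically $x\in\Phi^N(\{y\})$. By the pointwise criterion, $S\subseteq\Phi^N(R)$ and $R\subseteq\Phi^N(S)$.

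\textbf{Lower bound $\E_{k,l}(R,S)\ge N$.} Suppose $\E_{k,l}(R,S)=n<\infty$. The pointwise criterion only gives, for each $y\in S$, \emph{some} $x\in R$ with $y\in\Phi^n(\{x\})$. It does not say $y\in\Phi^n(\{x'\})$ for every $x'\in R$. So the proof has to establish $\E_{k,l}(x',y)\le n$ for every pair, and I expect this to be the main obstacle.

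\textbf{Main step: control an arbitrary pair $(x',y)$.} The plan is to apply the triangle inequality, already proved for $\E_{k,l}$ on $\P(P)$, to singletons:
\[
\E_{k,l}(x',y)\le\E_{k,l}(\{x'\},S)+\E_{k,l}(S,\{y\}).
\]
This bound is too large, so for singletons I would instead work directly from the definition. I would also check whether the statement really intends $\sup$ over all pairs, as Theorem~\ref{th:distance_subsets} does for $k=l=1$, and mirror that proof. Concretely, I would first prove the singleton version $\E_{k,l}(\{x\},S)=\sup_{y\in S}\E_{k,l}(x,y)$, using that with $R=\{x\}$ both pointwise conditions involve only the pair $(x,y)$. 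This makes the singleton case exactly the two bullet computations above. I would then follow the paper's phrase ``interchanging $\{x\}$ by $R$'' to obtain $\E_{k,l}(R,S)=\sup_{y\in S}\E_{k,l}(R,\{y\})$, and finally combine the two formulas.
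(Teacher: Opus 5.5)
\textbf{The lower bound is false, so the step you defer to cannot close the gap.} Your preliminary facts are correct: union-preservation and monotonicity of $\Phi^n$, the pointwise criterion, and the upper bound $\E_{k,l}(R,S)\le N$. You also located the obstacle correctly. But you then plan to get past it by mirroring the step ``interchange $\{x\}$ by $R$''. That is exactly what the paper does, via Theorem~\ref{th:distance_subsets}, and it fails there too.

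Take $a\prec b$ and $R=S=\{a,b\}$. Then $\E_{k,l}(R,S)=0$, because $\Phi^0$ is the identity, as it must be for $\E_{k,l}$ to be a metric. On the other hand $N\ge\E_{k,l}(a,b)=1$. So the stated formula would give $\E_{k,l}(R,R)=\sup_{x,y\in R}\E_{k,l}(x,y)>0$.

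The intermediate identity $\E_{k,l}(R,S)=\sup_{y\in S}\E_{k,l}(R,\{y\})$ breaks for the same reason. The condition $\E_{k,l}(R,\{y\})\le n$ requires $R\subseteq\Phi^n(\{y\})$ for each single $y$. The condition $\E_{k,l}(R,S)\le n$ only requires $R\subseteq\Phi^n(S)=\bigcup_{y\in S}\Phi^n(\{y\})$. What your pointwise criterion actually proves, for nonempty $R,S$ and $d(x,y):=\min\{n\mid y\in\Phi^n(\{x\})\}$, is a Hausdorff-type formula:
\[
\E_{k,l}(R,S)=\max\Bigl\{\sup_{y\in S}\,\min_{x\in R} d(x,y),\ \sup_{x\in R}\,\min_{y\in S} d(y,x)\Bigr\}.
\]

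\textbf{Your singleton step is also not right as stated.} With $R=\{x\}$, the condition $\{x\}\subseteq\Phi^n(S)$ is still existential: it says $x\in\Phi^n(\{y\})$ for some $y\in S$. So it does not ``involve only the pair $(x,y)$''.

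For $k=l$ this is rescued by symmetry. We have $y\in\downarrow_k\uparrow_k(\{x\})$ iff some $z$ is reachable from both $x$ and $y$ by upward chains of at most $k$ covering steps, which is a symmetric relation. Hence $d(x,y)=d(y,x)=\E_k(x,y)$, and $S\subseteq\Phi^n(\{x\})$ alone forces $\E_k(x,y)\le n$ for every $y\in S$.

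For $k\neq l$ the one-step relation is not symmetric, and even the singleton formula fails. In the chain $x\prec z_1\prec z_2$ with $(k,l)=(2,1)$, one computes $\E_{2,1}(x,z_2)=2$ but $\E_{2,1}(\{x\},\{x,z_2\})=1$.

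So what can be proved is the formula displayed above. For $k=l$ it is the Hausdorff distance induced by $\E_k$, and it reduces to the stated supremum when one of the sets is a singleton. The statement as written, for arbitrary $R,S$, is false and has no proof.
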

\begin{proof}
    The proof is similar to  Theorem \ref{th:distance_subsets}.
\end{proof}

Without loss of generality, assume that $k \leq l$. Hence for any $x,y \in P$ we can check that 
\[ \E_{k,k}(x,y) \leq \E_{k,l}(x,y) \leq \E_{l,l}(x,y).\] Consequently for any $R,S \in \P(P)$ we have that 
\[ \E_{k,k}(R,S) \leq \E_{k,l}(R,S) \leq \E_{l,l}(R,S).\]

Hence, let us focus on $\E_{k,l}$ when $k = l$. In this case we reduce the notation from $\E_{k,k}$ to $\E_{k}$. 

\begin{definition}
    For $k = l$ we reduce the notation from $\E_{k,k}$ to $\E_{k}$.  We call $\E_{k}$ the \textit{k-climber distance}. 
\end{definition}

 By duality, for $R \in P$ we can define the set $(\uparrow_l\downarrow_k)^n(R)$ by iterating the $\uparrow_1$ and $\downarrow_1$ operations on $R$, $n$ times. Then, we can define the extended metric on $\P(P)$, $\B_{k,l}: \P(P)^ 2 \to \R \cup \{ \infty\}$ given by 
    \begin{equation*}
        \B_{k,l}(R,S) = \min \{ n \mid R \subseteq (\uparrow_l\downarrow_k)^{n} (S) \text{ and } S \subseteq (\uparrow_l\downarrow_k])^{n} (R) \}.
    \end{equation*}
    and $\B_{k,l}(R,S) = \infty$ if such $n$ does not exist.

We let $\B_{k}$ to be the \textit{k-diver distance}. We call both $\E_{k}$ and $\B_{k}$ a $k$-step distance. All the following results in this section hold for $\B_k$ by duality.

\begin{definition}
    Take $x,y \in P$ and $\gamma$ a path from $x$ to $y$. For $k \neq 0$ a natural number define $L_k(\gamma)$, the \textit{k-climber length} of $\gamma$ by 
    \begin{equation*}
        L_k(\gamma) = \left\{ 
        \begin{array}{cc}
            \sum\limits_{i=0}^{A(\gamma)} \myceil{\ell(C_i)}- \frac{A(\gamma)}{2} &  \text{ if } A(\gamma) \text{ even} \\
            \sum\limits_{i=0}^{A(\gamma)} \myceil{\ell(C_i)}- \frac{A(\gamma)+1}{2} &  \text{ if } A(\gamma) \text{ odd}, \gamma \text{ upward}\\
            \sum\limits_{i=0}^{A(\gamma)} \myceil{\ell(C_i)}- \frac{A(\gamma)-1}{2} &  \text{ if } A(\gamma) \text{ odd}, \gamma \text{ downward}.\\
        \end{array}
        \right.
    \end{equation*}
    where $\ceil{\cdot}$ is the ceiling function and $\{\tuple[0]{C}{A(\gamma)}\}$ is the chain decomposition of $\gamma$.
\end{definition}

\begin{lemma}
    Take $x,y \in P $ and $\gamma$ an upward path that connects $x,y$ with chain decomposition $\{C_0,C_1\}$. Then $\E_k(x,y) \leq L_k({\gamma}) = \myceil{\ell(C_0)} + \myceil{\ell(C_1)} -  1$. 
    \label{lemma_k-k_one}
\end{lemma}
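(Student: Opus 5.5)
The plan mirrors the proof of Lemma \ref{lemma:1-1_one}, except that each application of $\downarrow_k \uparrow_k$ now moves up to $k$ covering steps upward and then up to $k$ covering steps downward. Write $\gamma = \{x_0,\dots,x_n\}$ with its single alternation at $x_e$, so that $x_0 \prec x_1 \prec \dots \prec x_e \succ x_{e+1} \succ \dots \succ x_n$. Then $C_0 = \{x_0,\dots,x_e\}$ has length $a := e$, and $C_1 = \{x_e,\dots,x_n\}$ has length $b := n-e$. The equality $L_k(\gamma) = \myceil{a} + \myceil{b} - 1$ is immediate from the definition of $L_k$ with $A(\gamma)=1$ and $\gamma$ upward, so the content is the inequality $\E_k(x,y) \le \myceil{a}+\myceil{b}-1$. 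Two facts will be used throughout. First, every iterate $(\updown[k])^j(R)$ contains $R$, since $\uparrow_1 R \supseteq R$ and $\downarrow_1 R \supseteq R$, so the iterates increase in $j$. Second, as noted in the paper, $(\uparrow_1)^k\{z\}$ contains every element reachable from $z$ by an upward chain of at most $k$ covers, and dually for $(\downarrow_1)^k$.

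To show $y \in (\updown[k])^N(\{x\})$ with $N = \myceil{a}+\myceil{b}-1$, set $p = \myceil{a}$. In the first $p-1$ iterations I only use the $\uparrow_k$ part, climbing $k$ covers each time, so that $x_{(p-1)k} \in (\updown[k])^{p-1}(\{x\})$. This is valid because $(p-1)k < a$. In iteration $p$ I climb the remaining $a-(p-1)k \le k$ covers to reach the peak $x_e$, and then use the $\downarrow_k$ part of the same iteration to descend $k$ covers, or fewer if $b<k$. The remaining $b-k$ covers of $C_1$ need $\myceil{b}-1$ further iterations, using only their $\downarrow_k$ parts. The total is $p + \myceil{b} - 1 = N$. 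When $b \le k$ the descent finishes within iteration $p$, and $\myceil{b}-1 = 0$ matches this. If the path reaches $y$ before the last iteration, monotonicity of the iterates keeps $y$ in all later ones.

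For the reverse containment $x \in (\updown[k])^N(\{y\})$, I read $\gamma$ backwards from $y$. It is still an upward path with one alternation, now with chain lengths $b$ then $a$, so the same count gives $\myceil{b}+\myceil{a}-1 = N$. The symmetry argument from the end of the proof of Lemma \ref{lemma:1-1_one} applies verbatim. Together the two containments give $\E_k(x,y) \le N$.

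The only delicate point is the index bookkeeping in the peak iteration: I must check that climbing the leftover $a-(p-1)k \in \{1,\dots,k\}$ covers and then descending up to $k$ covers both fit into a single application of $\downarrow_k\uparrow_k$. This is exactly where the saving of one iteration, the $-1$ in the bound, comes from. A second check is that the ceiling arithmetic still holds in the edge cases $a \le k$ or $b \le k$.
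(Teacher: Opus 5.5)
Your proposal is correct and follows essentially the same route as the paper: you climb $k$ covers per iteration of $\downarrow_k\uparrow_k$ until the peak $x_e$ is reached in iteration $\lceil \ell(C_0)/k\rceil$, you descend $k$ covers per iteration after that, and you obtain $x \in (\downarrow_k\uparrow_k)^N(\{y\})$ by reading $\gamma$ backwards. Your explicit checks, namely $(p-1)k<\ell(C_0)$ and the monotonicity of the iterates, are somewhat cleaner than the paper's index bookkeeping.
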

\begin{proof}
    Assume that $\gamma$ has the alternation at $x_e$. Then
    \begin{align*}
        \{ x, \tuple[1]{x}{kj}\} \subseteq (\updown[k])^{j}(\{x\}), & \quad  \text{when } j < \myceil{\ell(C_0)}, \\
         \{ a, \tuple[1]{x}{e}, ..., x_{e + km}\} \subseteq (\updown[k])^{j}(\{x\}), & \quad  \text{when } j \geq \myceil{\ell(C_0)}  \text{ and } m = j - \myceil{\ell(C_0)} + 1.
    \end{align*}
    
    Hence $y \in (\updown[k])^{j}(a)$ where $j = \myceil{\ell(C_0)} + \myceil{\ell(C_1)} -  1$ given that $e + \ell(C_1)  \leq e + k\myceil{\ell(C_1)}$.
    
   By symmetry we obtain that $x \in (\updown[k])^{j}(\{y\})$ for the same $j$. 
\end{proof}

Similarly to the 1-climber distance, the lemma gives rise to the following theorem.

\begin{theorem}
   Consider $k \neq 0$ a natural number. Take $x,y \in P $ such that the set of paths from $x$ to $y$ is not empty. Then, the k-climber distance between $x$ and $y$ is the minimal k-climber length over all paths from $x$ to $y$. This means,\[\E_k(x,y) = \min \{ L_k(\gamma) \mid \gamma \text { is a path that connects } x \text{ and } y\}. \]
    \label{th:kclimber}
\end{theorem}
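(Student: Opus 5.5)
The proof will follow Theorem \ref{th:1climber}: first show $\E_k(x,y) \leq L_k(\gamma)$ for every path $\gamma$ from $x$ to $y$, then construct a path $\gamma$ with $L_k(\gamma) \leq \E_k(x,y)$.

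\textbf{Upper bound.} Fix a path $\gamma$ with chain decomposition $\{C_0,\dots,C_{A(\gamma)}\}$. We group consecutive chains into "peaks" $C_{2i}\cup C_{2i+1}$ (an upward chain followed by a downward one), exactly as in the four cases of the proof of Theorem \ref{th:1climber}. Each peak is traversed using Lemma \ref{lemma_k-k_one}, at a cost of $\myceil{\ell(C_{2i})}+\myceil{\ell(C_{2i+1})}-1$ iterations of $\downarrow_k\uparrow_k$. The two kinds of chains are handled differently.
\begin{itemize}
\item A leftover initial downward chain, which occurs when $\gamma$ is downward, costs $\myceil{\ell(C_0)}$ iterations. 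The first $\uparrow_k$ of each of these iterations is wasted, and we descend $k$ steps per iteration.
\item A leftover final upward chain costs $\myceil{\ell(C_{A(\gamma)})}$ iterations, one per block of $k$ upward covers.
\end{itemize}
We then concatenate, using monotonicity of $\downarrow_k\uparrow_k$ and the composition $(\downarrow_k\uparrow_k)^a\circ(\downarrow_k\uparrow_k)^b=(\downarrow_k\uparrow_k)^{a+b}$. Summing over the parity and direction cases gives $y\in(\downarrow_k\uparrow_k)^{L_k(\gamma)}(\{x\})$. For the reverse inclusion we argue as before: the reversed path $\tilde\gamma$ has the same chain lengths, the same number of alternations, and the same direction when $A(\gamma)$ is odd. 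Hence $L_k(\tilde\gamma)=L_k(\gamma)$, and so $x\in(\downarrow_k\uparrow_k)^{L_k(\gamma)}(\{y\})$.

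\textbf{Lower bound.} Suppose $\E_k(x,y)=n<\infty$, so $y\in(\downarrow_k\uparrow_k)^n(\{x\})$. We unwind this membership backwards. There are elements $x=z_0,w_1,z_1,\dots,w_n,z_n=y$ such that $w_j$ is reached from $z_{j-1}$ by an upward chain of at most $k$ covers, and $z_j$ is reached from $w_j$ by a downward chain of at most $k$ covers. Some of these chains may be empty, because $\uparrow_1$ and $\downarrow_1$ keep $R$.

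Concatenating all $2n$ chains gives a path $\gamma$, after deleting backtracking and repeated vertices; deleting them only shortens chains and lowers the cost. The maximal monotone runs of $\gamma$ are its chains $C_i$. A run of length $\ell(C_i)$ that is assembled from $m$ consecutive blocks of size at most $k$ has $\myceil{\ell(C_i)}\leq m$.

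We then count the blocks. There are $2n$ blocks in total, and each alternation of $\gamma$ lies at a block boundary. If an up block and the next down block are both nonempty and belong to different runs, they are charged once to a single iteration; this accounts for the $-A(\gamma)/2$ correction together with its parity and direction adjustments. The resulting inequality is
\[
\sum_i \myceil{\ell(C_i)} - (\text{correction}) \leq n,
\]
that is, $L_k(\gamma)\leq n$. Combining this with the upper bound proves the theorem.

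The main obstacle is this block-counting step: showing rigorously that the correction term in $L_k$ never exceeds the savings from pairing up and down blocks in the same iteration. The case analysis also has to cover runs that span several iterations and empty blocks, including the cases where $\gamma$ begins downward or ends upward. I would handle it by induction on $n$. Given the decomposition for $n-1$ iterations, adding one iteration (an up block then a down block) can increase $\sum_i\myceil{\ell(C_i)}$ by at most $2$. When the increase is exactly $2$, the iteration must create a new peak alternation, and that new peak reduces the correction by $1$. Checking the parity and direction cases then gives that $L_k$ increases by at most $1$ per iteration.
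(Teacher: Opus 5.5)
Your proposal is correct and follows the paper's route. For the upper bound, you chain Lemma~\ref{lemma_k-k_one} over the peaks $C_{2i}\cup C_{2i+1}$, handle any leftover initial downward or final upward chain separately, and use the reversed path for the other inclusion; for the lower bound, you unwind $y\in(\downarrow_k\uparrow_k)^n(\{x\})$ into $n$ iterations, each an upward block followed by a downward block of at most $k$ covers. Your induction (each iteration raises $\sum_i\lceil \ell(C_i)/k\rceil$ by at most $2$, and by $2$ only when it creates a new peak, which adds $1$ to the subtracted term) and your observation that deleting loops only truncates blocks supply bookkeeping the paper leaves implicit under ``analogous''.
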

\begin{proof}
    The proof is analogous to Theorem \ref{th:1climber}. On one hand, to prove that for a path $\gamma$ from $x$ to $y$ in $P$, $\E_k(x,y) \leq L_k(\gamma) $ , we take the same summations in the proof and replace $\ell(C_i)$ by  $\myceil{\ell(C_i)}$. On the other hand, to show that $\E_k(x,y) \geq L_k(\gamma)$, for some $\gamma$, we create a path $\gamma$ from $x$ to $y$ iteratively selecting elements from $(\updown[k])^i(\{x\})$ recursively,
    by taking two chains, one upward and the other downward of at most length $k$ in $(\updown[k])^i(\{x\})$.
\end{proof}

As expected, the formulas for $L_1$ and $L_k$ coincide when $k=1$.

\begin{corollary}
    $\E_k(x,y) < \infty$ if and only if $x,y$ are in the same path-connected component. 
    \label{coro_finite_if_connected_1}
\end{corollary}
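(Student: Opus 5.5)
The plan is to imitate the proof of Corollary \ref{coro:finite_if_connected_1}, with Theorem \ref{th:kclimber} playing the role that Theorem \ref{th:1climber} played there. There are two directions.

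For the forward direction, suppose $x$ and $y$ lie in the same path-connected component. Then there is a path $\gamma$ from $x$ to $y$, and it has a finite chain decomposition $\{C_0,\dots,C_{A(\gamma)}\}$. Each term $\myceil{\ell(C_i)}$ is finite, so $L_k(\gamma)$ is a finite number. By the upper-bound half of Theorem \ref{th:kclimber}, obtained by iterating Lemma \ref{lemma_k-k_one} peak by peak, we get $\E_k(x,y)\le L_k(\gamma)<\infty$. As an alternative that avoids the iterated-lemma argument, one can compare directly with the $1$-step operator. Since $\uparrow_1 R\subseteq (\uparrow_1)^k R$ and $\downarrow_1 R\subseteq(\downarrow_1)^k R$, and all these maps are monotone on $\P(P)$, induction on $n$ gives $(\updown[1])^n(R)\subseteq(\updown[k])^n(R)$. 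Hence $\E_k(x,y)\le \E_1(x,y)$, and the latter is finite by Corollary \ref{coro:finite_if_connected_1}.

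For the converse, suppose $\E_k(x,y)=n<\infty$, so that $y\in(\updown[k])^n(\{x\})$. I would show by induction on $j$ that every element of $(\updown[k])^j(\{x\})$ is joined to $x$ by a path. The base case $j=0$ is trivial. For the inductive step, it is enough to note that one application of $\uparrow_1$ or $\downarrow_1$ only adds elements that are upper or lower covers of elements already present. A cover is adjacent to its partner, so a path to the old element extends by one edge to a path to the new one. Composing $k$ applications of $\uparrow_1$ followed by $k$ applications of $\downarrow_1$ therefore preserves the property. It follows that $y$ is in the path-connected component of $x$.

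I do not expect a real obstacle. The only point needing care is the monotonicity and inclusion argument, that is, checking that $\uparrow_1$ and $\downarrow_1$ are order-preserving and inflationary ($R\subseteq\uparrow_1R$). This is what makes the comparison with $\E_1$, and the induction in the converse, go through cleanly.
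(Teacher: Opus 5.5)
Your proposal is correct and follows essentially the paper's route. The corollary is read off from Theorem \ref{th:kclimber} just as Corollary \ref{coro:finite_if_connected_1} is read off from Theorem \ref{th:1climber}: a path gives $\E_k(x,y)\le L_k(\gamma)<\infty$, and finiteness gives a path built from the iterates $(\updown[k])^j(\{x\})$. Your induction (every element added by $\uparrow_1$ or $\downarrow_1$ is a cover of an element already present) is a cleaner version of the paper's path-building step, since it avoids the minimality bookkeeping, and the comparison $\E_k\le\E_1$ is a valid optional shortcut.
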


\begin{corollary}
    If $P$ is a path-connected poset, then the k-climber distance is a metric on $P$ and $\P(P)$.
\end{corollary}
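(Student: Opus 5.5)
The plan is to mirror the proof of the corresponding corollary for $\E_1$, now using Corollary \ref{coro_finite_if_connected_1} and the theorem expressing $\E_k$ on subsets as a supremum over points.

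First, recall that $\E_k=\E_{k,k}$ was already shown to be an extended metric on $\P(P)$: it is non-negative and symmetric, it vanishes exactly on equal sets, and it satisfies the triangle inequality, with the argument of Theorem \ref{th:E1_is_distance} applied to $\downarrow_k\uparrow_k$. The same holds for its restriction to singletons, which is the distance on $P$. So the only remaining issue is finiteness.

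For points, let $x,y\in P$. Since $P$ is path-connected, $x$ and $y$ lie in the same path-connected component, and Corollary \ref{coro_finite_if_connected_1} gives $\E_k(x,y)<\infty$. More concretely, Theorem \ref{th:kclimber} bounds $\E_k(x,y)$ by $L_k(\gamma)$ for any path $\gamma$ from $x$ to $y$, and this value is a finite natural number.

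For subsets $R,S\in\P(P)$, the earlier theorem gives $\E_k(R,S)=\sup\{\E_k(x,y)\mid x\in R,\,y\in S\}$. Each term in this supremum is finite. The main obstacle is whether the supremum itself is finite. It is certainly finite when $R$ and $S$ are finite, or more generally whenever $\E_k$ is bounded on $R\times S$, for example when $P$ has finite diameter with respect to $\E_k$.

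For infinite subsets of an unbounded path-connected poset, however, the supremum can be infinite. A natural example is $P=\Z$ with $R=\{0\}$ and $S=\N$. Therefore I would phrase the statement on $\P(P)$ in the same sense as the $\E_1$ corollary: $\E_k$ is an extended metric on $\P(P)$ that is finite-valued on points. It is a genuine metric on $\P(P)$ whenever $\E_k$ is bounded on $P$, in which case every supremum is bounded by $\sup_{x,y\in P}\E_k(x,y)<\infty$.
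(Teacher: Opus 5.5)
Your point-level argument is the paper's (finiteness from Corollary \ref{coro_finite_if_connected_1}). You are also right that the paper's step for subsets, which reads finiteness off $\E_k(R,S)=\sup\{\E_k(x,y)\mid x\in R,\ y\in S\}$, does not work. Your example $P=\Z$, $R=\{0\}$, $S=\N$ gives $\E_k(R,S)=\infty$, so the $\P(P)$ half of the statement is false as written.

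The gap is in your repaired version, which overlooks the empty set. Since $\uparrow_1$ and $\downarrow_1$ only adjoin covers of elements already in the set, $(\downarrow_k\uparrow_k)^n(\emptyset)=\emptyset$ for every $n$. So $S\subseteq(\downarrow_k\uparrow_k)^n(\emptyset)$ forces $S=\emptyset$, and $\E_k(\emptyset,S)=\infty$ for every nonempty $S$. Two of your claims therefore already fail for a two-element chain: that $\E_k(R,S)$ is ``certainly finite when $R$ and $S$ are finite'', and that $\E_k$ is ``a genuine metric on $\P(P)$ whenever $\E_k$ is bounded on $P$''. The supremum formula hides this, because a supremum over the empty index set $\emptyset\times S$ is not $\infty$.

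Moreover, that formula cannot be used as an identity. It would give $\E_k(R,R)=\sup_{x,y\in R}\E_k(x,y)>0$ for any $R$ with two points, contradicting $\E_k(R,R)=0$. Two facts hold: $\downarrow_k\uparrow_k$ commutes with unions, and $y\in(\downarrow_k\uparrow_k)^n(\{x\})$ iff $x\in(\downarrow_k\uparrow_k)^n(\{y\})$. Unwinding the definition with these shows that, for nonempty $R,S$, $\E_k$ is the Hausdorff distance
\[\E_k(R,S)=\max\Bigl\{\sup_{x\in R}\min_{y\in S}\E_k(x,y),\ \sup_{y\in S}\min_{x\in R}\E_k(x,y)\Bigr\}.\]
What your argument actually needs is only the inequality $\E_k(R,S)\le\sup_{x\in R,\,y\in S}\E_k(x,y)$ for nonempty $R,S$, and that does hold. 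If $n$ is that supremum, every $x\in R$ lies in $(\downarrow_k\uparrow_k)^n(\{y\})\subseteq(\downarrow_k\uparrow_k)^n(S)$ for any $y\in S$, and symmetrically for $S$.

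With this in place, the correct conclusion is:
\begin{itemize}
\item $\E_k$ is a metric on $P$ and an extended metric on $\P(P)$;
\item it is finite, hence a metric, on the nonempty finite subsets of $P$;
\item it is finite on all nonempty subsets exactly when $P$ has finite $\E_k$-diameter.
\end{itemize}
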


\begin{definition}
    Consider $k \neq 0$ a natural number. Take $x,y \in P $ and $\gamma$ a path from $x$ to $y$. Define $L^*_k(\gamma)$, the \textit{k-diver length} by
    \begin{equation*}
        L^*_k(\gamma) = \left\{ 
        \begin{array}{cc}
            L_k(\gamma) &  \text{ if } A(\gamma) \text{ even}\\
            L_k(\gamma) - 1 &  \text{ if } A(\gamma) \text{ odd}, \gamma \text{ downward}\\
            L_k(\gamma) +1  &  \text{ if } A(\gamma) \text{ odd}, \gamma \text{ upward}.\\
        \end{array}
        \right.
    \end{equation*}
    
\end{definition}

\begin{theorem}
   Consider $k \neq 0$ a natural number. Take $x,y \in P $ such that the set of paths from $x$ to $y$ is not empty. Then, \[\B_k(x,y) = \min \{ L^*_k(\gamma) \mid \gamma \text { is a path that from } x \text{ to } y\} \]
   So, the k-diver distance between $x$ and $y$ is found by minimizing the  k-diver length over all paths connecting $x$ to $y$. 
    \label{th:kdiver}
\end{theorem}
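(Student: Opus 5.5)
The plan is to deduce this from Theorem \ref{th:kclimber} by duality rather than repeat the path-construction argument. Let $P^{\op}$ be the dual poset, with the same elements and $x \leq_{P^{\op}} y$ iff $y \leq_P x$. Covering relations reverse, so the operator $\uparrow_1$ on $\P(P^{\op})$ is exactly $\downarrow_1$ on $\P(P)$, and vice versa. Hence $(\updown[k])^n$ computed in $P^{\op}$ coincides with $(\downup[k])^n$ computed in $P$. Comparing the defining minimizations gives $\B_k^{P}(x,y) = \E_k^{P^{\op}}(x,y)$ for all $x,y$.

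Next, compare paths. The Hasse diagrams of $P$ and $P^{\op}$ are the same graph, so the two posets have the same paths from $x$ to $y$. For such a path $\gamma$, the length $\ell(\gamma)$, the alternation count $A(\gamma)$ and the chain decomposition $\{C_0,\dots,C_{A(\gamma)}\}$ (with the lengths $\ell(C_i)$) are unchanged, since alternations $\prec\succ$ and $\succ\prec$ simply swap. The only change is orientation: $\gamma$ is upward in $P^{\op}$ iff it is downward in $P$. Applying Theorem \ref{th:kclimber} in $P^{\op}$ then gives
\[\B_k(x,y) = \min_\gamma L_k^{P^{\op}}(\gamma),\]
where $L_k^{P^{\op}}(\gamma)$ is the $k$-climber formula with ``upward'' and ``downward'' interchanged.

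The remaining step, which is the only real check, is to verify $L_k^{P^{\op}}(\gamma) = L^*_k(\gamma)$ case by case. Write $\Sigma = \sum_i \myceil{\ell(C_i)}$.
\begin{itemize}
\item If $A(\gamma)$ is even, both equal $\Sigma - A(\gamma)/2 = L_k(\gamma)$.
\item If $A(\gamma)$ is odd and $\gamma$ is downward in $P$, then $\gamma$ is upward in $P^{\op}$, so $L_k^{P^{\op}}(\gamma) = \Sigma - \frac{A+1}{2}$. In $P$ we have $L_k(\gamma) = \Sigma - \frac{A-1}{2}$, so the value is $L_k(\gamma) - 1$.
\item If $A(\gamma)$ is odd and $\gamma$ is upward in $P$, the symmetric computation gives $L_k(\gamma) + 1$.
\end{itemize}
These match the definition of $L^*_k$, and taking the minimum over paths completes the argument. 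For $k=1$ this also agrees with Theorem \ref{th:1diver}, which serves as a consistency check.

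The main obstacle is making the duality step rigorous: one must confirm that ``upward'' refers to the first step of the path and is therefore intrinsic to $\gamma$. One must also confirm that the iteration order $\downarrow\uparrow$ versus $\uparrow\downarrow$ transforms correctly under $P \mapsto P^{\op}$. If one preferred not to invoke duality, the fallback is to redo the proof of Theorem \ref{th:kclimber} directly. That would mean a diver analogue of Lemma \ref{lemma_k-k_one} for downward paths with one alternation, followed by pairing chains $(C_{2i},C_{2i+1})$ starting with a downward chain.
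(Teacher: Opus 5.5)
Your proposal is correct and follows the paper's route. The paper also proves this by duality from Theorem \ref{th:kclimber}: it swaps ``upward'' and ``downward'' and notes that $L^*_k(\gamma)=L_k(\gamma)\mp 1$ in the odd-alternation cases, whereas you pass explicitly through $P^{\op}$, reading $\B_k^{P}=\E_k^{P^{\op}}$ directly off the operators, which is a cleaner way of making that appeal to duality precise.
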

\begin{proof}
    For $\gamma$ a path from $x$ to $y$ in $P$  let us define $L^*_k(\gamma)$ by interchanging the words upward and downward in the definition of $L_k(\gamma)$. Then, the proof is analogous to Theorem \ref{th:kclimber} based on the duality of $\E_k$ and $\B_k$. Notice that if $A(\gamma)$ is odd and $\gamma$ is a downward path, then, $L^*_k(\gamma) = L_k(\gamma) - 1$ and if $\gamma$ is upward then  $L^*_k(\gamma) = L_k(\gamma) + 1$. 
   
\end{proof}

Returning to Example \ref{eg:poset_gato} we obtain for $k=2$
\begin{align*}
    L_2(\gamma_1) &= \myceil[2]{1} +  \myceil[2]{1} - \frac{1+1}{2} =  1,  &L_2(\gamma_2) &= \myceil[2]{1} +  \myceil[2]{2} - \frac{1+1}{2} =  1 \\
    L_2(\gamma_3) &= \myceil[2]{1} +  \myceil[2]{1} - \frac{1-1}{2} =  2 &L_2(\gamma_4) &= \myceil[2]{1} +  \myceil[2]{1} + \myceil[2]{1} +  \myceil[2]{1} - \frac{3+1}{2} =  2
\end{align*}

In this way, $\E_2(a,f) = 1$. In contrast with $\E_1$, $\gamma_2$ also minimizes $L_2$. Similarly, $\gamma_3$ is the optimal path for the 2-diver distance.

Note that the values of $L_1$ and $L_2$ coincide on $\gamma_1$, $\gamma_3$ and $\gamma_4$ since the maximum chain length in their respective chain decompositions is $1$ . We formalize this observation in the following lemma.

\begin{lemma}
    Take $\gamma$ a path on $P$ with chain decomposition $\{ \tuple[0]{C}{A(\gamma)}\}$. Then $\left( L_k(\gamma) \right)_{k=1}^\infty$ is a non increasing sequence that stabilizes for $k \geq N$ with  
    \[
    N = \max \{\ell(C_i) \mid 0 \leq i \leq A(\gamma) \}
    \]
    Then, $\left( L_k(\gamma) \right)_{k=1}^\infty$ converges to $L_{\infty}(\gamma)$ given by 
    \begin{equation*}
        \arraycolsep=1.6pt\def\arraystretch{2}
        L_\infty(\gamma) = \left\{ 
        \begin{array}{lc}
            0 & \text{ if } \ell(\gamma) = 0 \\
           \frac{A(\gamma) + 2}{2} &  \text{ if } \ell(\gamma) > 0, \;A(\gamma) \text{ even}\\
            \frac{A(\gamma)+1}{2} &  \text{ if } A(\gamma) \text{ odd}, \gamma \text{ upward}\\
            \frac{A(\gamma)+3}{2}&  \text{ if } A(\gamma) \text{ odd}, \gamma \text{ downward}.\\
        \end{array}
        \right.
    \end{equation*}
    \label{lemma: L_convergent}
\end{lemma}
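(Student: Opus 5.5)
The plan is to work directly from the definition of $L_k(\gamma)$. The only part that depends on $k$ is the sum $\sum_{i=0}^{A(\gamma)} \myceil{\ell(C_i)}$. The subtracted term ($\frac{A(\gamma)}{2}$, $\frac{A(\gamma)+1}{2}$ or $\frac{A(\gamma)-1}{2}$, according to parity and direction) depends only on $\gamma$. So the whole statement reduces to understanding the sequence $k \mapsto \myceil{\ell(C_i)}$ for each fixed chain $C_i$ and then summing.

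\textbf{Step 1: monotonicity.} For a fixed integer $m = \ell(C_i) \geq 0$, the map $k \mapsto \lceil m/k \rceil$ is non-increasing in $k \geq 1$, because $m/k \geq m/(k+1)$ and the ceiling is order preserving. A finite sum of non-increasing sequences, minus a constant, is again non-increasing, so $(L_k(\gamma))_{k\ge 1}$ is non-increasing.

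\textbf{Step 2: stabilization.} If $\ell(\gamma)>0$, then every chain in the decomposition has $1 \leq \ell(C_i) \leq N$, since consecutive points $a_j, a_{j+1}$ of the decomposition are distinct. Hence $\lceil \ell(C_i)/k\rceil = 1$ for all $k \geq N$. The sum therefore equals $A(\gamma)+1$ for every $k \ge N$, and the sequence is constant from $N$ on, which gives convergence. If $\ell(\gamma)=0$, then $\gamma=\{x\}$, $A(\gamma)=0$, there is a single chain of length $0$, and $L_k(\gamma)=0$ for all $k$. This gives the first case of $L_\infty$, and here $N = 0$ trivially works.

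\textbf{Step 3: evaluate the limit.} Substitute the sum value $A(\gamma)+1$ into the three cases. For $A(\gamma)$ even, $A+1-\frac{A}{2} = \frac{A+2}{2}$. For $A$ odd and $\gamma$ upward, $A+1-\frac{A+1}{2} = \frac{A+1}{2}$. For $A$ odd and $\gamma$ downward, $A+1-\frac{A-1}{2}=\frac{A+3}{2}$. These match the displayed formula for $L_\infty$.

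The only delicate point is justifying that each $\ell(C_i)\ge 1$ when $\ell(\gamma)>0$, i.e.\ that no chain in the decomposition is degenerate. This follows because alternations are interior points of $\gamma$ at which the direction changes, so consecutive elements $a_j, a_{j+1}$ of the fence $F_\gamma$ are distinct and each $C_j$ contains at least one covering step. Everything else is routine arithmetic.
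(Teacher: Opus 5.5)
Your proposal is correct and follows the paper's proof essentially step for step: monotonicity from $\lceil m/k\rceil \ge \lceil m/(k+1)\rceil$, then the observation that every ceiling equals $1$ once $k \ge N$, so the sum becomes $A(\gamma)+1$ (or $0$ when $\ell(\gamma)=0$), and finally direct substitution into the three cases. Your explicit check that each $\ell(C_i)\ge 1$ when $\ell(\gamma)>0$ is a point the paper uses only implicitly, so it is a welcome clarification rather than a different route.
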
 
\begin{proof}
    Take $\gamma$ a path on $P$ with chain decomposition $\{ \tuple[0]{C}{A(\gamma)}\}$. Note that for any chain $C_i$ and for any $k$ we have that \[\myceil{\ell(C_i)} \geq \myceil[k+1]{\ell(C_i)}.\] 
    
    Hence, $L_{k}(\gamma) \geq L_{k+1}(\gamma)$. Moreover, if  $ k \geq N$, where $N$ is the maximum length of the chains we obtain that 

    \[1 =  \myceil[N]{\ell(C_i)} \geq \myceil{\ell(C_i} \geq 1. \] 
        
    Therefore, $L_k(\gamma) =  L_N(\gamma)$ and  $\left( L_k(\gamma) \right)_{k=1}^\infty$ converges to $L_{N}(\gamma)$.

    Note that if $\ell(\gamma) =0$, $\displaystyle \sum\limits_{i=0}^{A(\gamma)} \myceil[N]{\ell(C_i)} = 0$. Meanwhile, if $\ell(\gamma) > 0$,  $\displaystyle \sum\limits_{i=0}^{A(\gamma)} \myceil[N]{\ell(C_i)} =  \sum\limits_{i=0}^{A(\gamma)} 1  = A(\gamma)+1 $ 
    
    Thus, 
    \begin{equation*}
            \arraycolsep=1.4pt\def\arraystretch{2}
            L_N(\gamma) = \left\{ 
            \begin{array}{lc}   
                0 - \frac{0}{2} = 0 & \text{ if } \ell(\gamma)=0 \\ 
                A(\gamma) + 1- \frac{A(\gamma)}{2} = \frac{A(\gamma) + 2}{2} &  \text{ if } \ell(\gamma)=0, \; A(\gamma) \text{ even}\\
                A(\gamma) + 1- \frac{A(\gamma)+1}{2} = \frac{A(\gamma)+1}{2} &  \text{ if } A(\gamma) \text{ odd}, \gamma \text{ upward}\\
                A(\gamma) + 1- \frac{A(\gamma)-1}{2} = \frac{A(\gamma)+3}{2}&  \text{ if } A(\gamma) \text{ odd}, \gamma \text{ downward}.\\
            \end{array}
            \right.
        \end{equation*}
\end{proof}

\begin{corollary}
    If $P$ is a poset such that $h(P) < \infty$, then for all paths $\gamma$ in $P$,  $\left( L_k(\gamma) \right)_{k=1}^\infty$ stabilizes for $k \geq h(P)$.
\end{corollary}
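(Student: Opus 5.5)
The corollary follows from Lemma \ref{lemma: L_convergent} once we show that every chain in the chain decomposition of any path has length at most $h(P)$. Fix a path $\gamma$ in $P$ with chain decomposition $\{\tuple[0]{C}{A(\gamma)}\}$. By Lemma \ref{lemma: L_convergent}, the sequence $\left(L_k(\gamma)\right)_{k=1}^\infty$ is non-increasing and constant for $k \geq N$, where $N = \max\{\ell(C_i) \mid 0 \leq i \leq A(\gamma)\}$. If $N \leq h(P)$, then every $k \geq h(P)$ also satisfies $k \geq N$, so the sequence stabilizes from $h(P)$ on.

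It remains to check $\ell(C_i) \leq h(P)$ for each $i$. By construction, $C_i$ consists of the consecutive path elements between the alternations $a_i$ and $a_{i+1}$. Since no alternation occurs strictly inside this stretch, all its steps are covering relations in the same direction. Hence $C_i$ is a strictly monotone sequence $a_i = z_0 \prec z_1 \prec \cdots \prec z_m = a_{i+1}$, or the dual, with $m = \ell(C_i)$. Its elements are pairwise distinct and pairwise comparable by transitivity, so $C_i$ is a chain in $P$ with $m+1$ elements and length $m$. Since $h(P)$ is the length of the longest chain in $P$, we get $\ell(C_i) = m \leq h(P)$.

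The only point needing care is that the paper's notions of "length of a chain" and "height" use the same convention, namely the number of covering steps, which equals the number of elements minus one. Under that convention the inequality $\ell(C_i) \leq h(P)$ is immediate. If $h(P)$ were instead counted by elements, the bound would only improve, so the conclusion holds either way.

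A stronger uniform statement would follow from the same bound, but only the per-path stabilization is claimed here, so no further work is needed.
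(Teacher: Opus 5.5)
Your proof is correct and follows the paper's argument. Each $C_i$ in the chain decomposition is a chain of $P$, so $\ell(C_i)\le h(P)$; hence $N\le h(P)$, and the preceding lemma gives stabilization for $k\ge h(P)$. The paper states this in one line (``by definition $h(P)$ is the length of the longest chain''), and your justification of why each $C_i$ is a chain of length $\ell(C_i)$ just spells that step out.
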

\begin{proof}
    By definition $h(P)$ is the length of the longest chain of $P$.
\end{proof}

\begin{definition}
    Take $P$ a path-connected poset and $x,y \in P$. The $\infty$-climber distance between $x$ and $y$ is the minimum length $L_\infty$ over all paths from $x$ to $y$. So,
    \[\E_\infty(x,y) = \min \{ L_{\infty}(\gamma) \mid \gamma \text { is a path that connects } x \text{ and } y\}. \]
    
\end{definition}

The next theorem proves that $\E_\infty(x,y)$ is the limit of the sequence $\left( \E_k(x,y) \right)_{k=1}^\infty$. Explicitly,
    \begin{equation*}
        \E_\infty(x,y) = \lim_{k \to \infty } \E_k(x,y).
    \end{equation*}

\begin{theorem}
    Take $P$ a path-connected poset. For any $x,y$ in $P$, $\left( \E_k(x,y) \right)_{k=1}^\infty$ is a convergent non increasing sequence. 
    \label{thm:E_convergent}
\end{theorem}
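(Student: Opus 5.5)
The plan is to reduce everything to the path-wise statements already proved. By Theorem \ref{th:kclimber}, for each $k \geq 1$ we have $\E_k(x,y) = \min\{ L_k(\gamma) \mid \gamma \text{ a path from } x \text{ to } y\}$. Since $P$ is path-connected, this set of paths is nonempty and every $\E_k(x,y)$ is a finite natural number.

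\textbf{Monotonicity.} By Lemma \ref{lemma: L_convergent}, $L_{k+1}(\gamma) \leq L_k(\gamma)$ for every path $\gamma$. Let $\gamma_k$ be a path realizing $\E_k(x,y)$. Then
\[
\E_{k+1}(x,y) \leq L_{k+1}(\gamma_k) \leq L_k(\gamma_k) = \E_k(x,y).
\]
So the sequence is non-increasing.

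\textbf{Convergence.} Each $L_k(\gamma)$ is an integer, since it counts iterations, and it is bounded below by $0$ (indeed by $1$ when $x \neq y$). Hence $\left(\E_k(x,y)\right)_{k=1}^\infty$ is a non-increasing sequence of natural numbers. It is therefore eventually constant, and in particular convergent.

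\textbf{Identifying the limit.} I would also check that the limit equals $\E_\infty(x,y)$, as announced before the statement.
\begin{itemize}
\item \emph{Upper bound.} Let $\gamma^*$ realize $\E_\infty(x,y)$. By Lemma \ref{lemma: L_convergent}, $L_k(\gamma^*) = L_\infty(\gamma^*)$ for all $k \geq N(\gamma^*)$. Thus $\lim_k \E_k(x,y) \leq \E_\infty(x,y)$.
\item \emph{Lower bound.} Because the sequence stabilizes, there is $K$ with $\E_k(x,y) = m$ for all $k \geq K$. Pick $k \geq K$ and a minimizing path $\gamma_k$. Each ceiling $\ceil{\ell(C_i)/k}$ is at least $1$ when $\ell(\gamma_k) > 0$. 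So $L_k(\gamma_k) \geq L_\infty(\gamma_k)$ directly from the formula, comparing case by case with the computation in Lemma \ref{lemma: L_convergent}. Hence $m \geq \E_\infty(x,y)$.
\end{itemize}

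\textbf{Main obstacle.} The delicate point is the lower bound. The minimizing paths $\gamma_k$ may change with $k$, and their maximal chain lengths $N(\gamma_k)$ need not be bounded. So one cannot simply take $k$ larger than a fixed $N$. The termwise inequality $\ceil{\ell(C_i)/k} \geq 1$ avoids any uniformity in $k$, and that is the approach I would rely on.
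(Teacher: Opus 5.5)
Your proof is correct and follows essentially the same route as the paper. Monotonicity comes from the same chain $\E_{k+1}(x,y) \leq L_{k+1}(\gamma_k) \leq L_k(\gamma_k) = \E_k(x,y)$, and convergence from the values being integers, which forces eventual constancy. For identifying the limit, the paper produces a single path $\gamma_N$ minimizing $L_k$ for all $k \geq N$, while your two-sided bound uses $L_k(\gamma) \geq L_\infty(\gamma)$ termwise; this is slightly cleaner and explicitly justifies the equality $L_\infty(\gamma_N) = \E_\infty(x,y)$, which the paper asserts without proof.
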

\begin{proof}
        Take $x,y$ in $P$. For $k \geq 1$  set $\gamma_k$ to be the path from $x$ to $y$ that minimizes the $k$-climber length $L_k$. Then, 
        \begin{equation*}
            \E_{k+1}(x,y) \leq L_{k+1}(\gamma_k) \leq L_{k}(\gamma_k) =  \E_k(x,y)
        \end{equation*}

    Let us show there is $N \geq 1$ such that $\gamma_N$ minimizes $L_k$ for all $k \geq N$.
    If $\gamma_1$ satisfies such property we are done. Otherwise there is $k_1 > 1$ such that
    \[
    L_{k_1}(\gamma_{k_1}) < L_{k_1}(\gamma_1) \leq L_1(\gamma_1)
    \]
    If $\gamma_{k_1}$ satisfies the property, then we are done. Otherwise, we can find $k_2 > k_1$ such that $ L_{k_2}(\gamma_{k_2}) <  L_{k_1}(\gamma_{k_1})$. Iterating this process we have a decreasing sequence
    \[
    L_1(\gamma_1) > L_{k_1}(\gamma_{k_1}) > \cdots > L_{k_n}(\gamma_{k_n}) > \cdots
    \]
    Since the values of the i-climber length are positive integers, the sequence must stabilize in at most $L_1(\gamma_1)$ steps. Hence, there is $N$ that satisfies the desired property. In this way,

    \[
     \lim\limits_{k \rightarrow\infty} \E_k(x,y) = \lim\limits_{k \rightarrow\infty} L_k(\gamma_N) = L_\infty(\gamma_N) = \E_\infty(x,y)
    \]
    \end{proof}

\subsection{Metrics for fence-connected posets}

In the previous section, we have established that for any path $\gamma$ the sequence of $k$-climber lengths $L_k(\gamma)$ converges to $L_\infty(\gamma)$, a function depending only on the alternations of the path. Consequently, $L_\infty(\gamma)$ can be regarded as a function of $F_\gamma$, the fence induced by $\gamma$. This observation naturally leads us to define a climber (diver) metric $\E$ ($\B$) for fence-connected posets which turns out to be the limit of the climber (diver) metrics $\E_k$ ($\B_k$) when $P$ is discrete.

Let us define recursively the order preserving self-map on $\P(P)$, $(\updown)^{n}$ for $n \in \N$ by taking the up-set and the down-set of a subset, $n$ times. Explicitly, for $R \in \P(P)$,
\begin{itemize}
    \item $(\updown)^0 (R) = R$
    \item $(\updown)^{n+1} (R) = \downarrow \big\{ \uparrow \{(\updown)^{n} (R) \} \big\}$
\end{itemize}

\begin{theorem}
        Let $P$ be a poset. Define the function $\E: \P(P)^ 2 \to \R \cup \{ \infty\}$ given by 
            \begin{equation*}
                \E(R,S) = \min \{ n \mid R \subseteq (\updown)^{n} (S) \text{ and } S \subseteq (\updown)^{n} (R) \}
            \end{equation*}
            and $\E(R,S) = \infty$ if such $n$ does not exist. Then, $\E$ is an extended metric on $\P(P)$.
\end{theorem}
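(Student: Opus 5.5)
The argument follows the proof of Theorem \ref{th:E1_is_distance}, with the map $\updown[1]$ replaced by $\updown$. The only properties used there were three. First, the map $\updown$ is order preserving on $\P(P)$. Second, it is extensive, i.e.\ $R \subseteq \updown(R)$. Third, its iterates compose additively, $(\updown)^{n}\circ(\updown)^{m} = (\updown)^{n+m}$. I would check these first.

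Extensivity holds because $R\subseteq\uparrow R$ by reflexivity of $\leq$, and likewise $\uparrow R \subseteq \downarrow(\uparrow R)$. Monotonicity holds because $\uparrow$ and $\downarrow$ are unions of up-sets (resp.\ down-sets) over the elements of the set, so $R\subseteq S$ gives $\uparrow R\subseteq \uparrow S$, and similarly for $\downarrow$. Additivity is immediate from the recursive definition. Extensivity then gives the chain $(\updown)^{n}(R)\subseteq(\updown)^{n+1}(R)$, so the set of admissible $n$ in the definition of $\E$ is upward closed. Hence the minimum is well defined whenever that set is nonempty.

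With these facts, the metric axioms go as follows. Non-negativity and symmetry are immediate from the definition, which is symmetric in $R$ and $S$. Since $(\updown)^0$ is the identity, $\E(R,S)=0$ holds iff $R\subseteq S$ and $S\subseteq R$, i.e.\ iff $R=S$.

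For the triangle inequality, suppose $\E(R,T)=n$ and $\E(T,S)=m$ are both finite. Applying monotonicity and additivity gives
\[
R\subseteq(\updown)^n(T)\subseteq(\updown)^n\big((\updown)^m(S)\big)=(\updown)^{n+m}(S),
\]
and symmetrically $S\subseteq(\updown)^{m+n}(R)$. Therefore $\E(R,S)\le n+m$. If either $\E(R,T)$ or $\E(T,S)$ is infinite, the inequality is trivial. This includes the case $\E(R,S)=\infty$: there, finiteness of both other terms would contradict the bound just proved, exactly as in the earlier proof.

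I expect no real obstacle. The only point needing care is the step $(\updown)^n(T)\subseteq(\updown)^{n+m}(S)$, which requires monotonicity of the iterated map, and this follows by induction on $n$ from monotonicity of $\uparrow$ and $\downarrow$.
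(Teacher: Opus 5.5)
Your proposal is correct and matches the paper's proof, which simply reruns the argument of Theorem~\ref{th:E1_is_distance} with $\updown$ in place of $\updown[1]$; you also make explicit the monotonicity and additivity of the iterates, which the paper uses without comment. One small remark: extensivity is not actually needed, since a nonempty subset of $\N$ has a minimum whether or not it is upward closed, and the triangle inequality uses only monotonicity and additivity.
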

\begin{proof}
   Using the same proof of Theorem $\ref{th:E1_is_distance}$ by considering the operation $\updown$ instead of $\updown[1]$.
\end{proof}

\begin{definition}
   The extended metric $\E$ is called the fence-climber distance. 
\end{definition}

Like before, $\E$ induces a distance on the poset $P$. If $x,y \in P$, we define  $\E(x,y) := \E(\{x\},\{y\})$. Then, the following theorem holds.

\begin{theorem}
    Let $R,S \subseteq P $. Then
    \begin{equation*}
        \E(R,S) = \sup\{ \E(x,y) \mid  x \in R, y \in S \}.
    \end{equation*}
\end{theorem}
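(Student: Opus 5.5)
The plan is to follow the two-step argument used for $\E_1$: first establish the one-point version
\[
\E(\{x\},S)=\sup_{y\in S}\E(x,y),
\]
and then replace $\{x\}$ by an arbitrary $R$. The only properties of $\updown[1]$ used in the proofs of the Lemma preceding Theorem \ref{th:distance_subsets} and of that theorem are the following, and I will check that $\updown$ has each of them.

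\begin{enumerate}[(i)]
\item $(\updown)^n$ is order preserving on $\P(P)$, since both $\uparrow$ and $\downarrow$ are.
\item $(\updown)^n$ is extensive, $R\subseteq(\updown)^n(R)$, because $x\in\uparrow x$ and $x\in\downarrow x$.
\item $(\updown)^n(R)\subseteq(\updown)^m(R)$ for $n\le m$. This follows from (i) and (ii) by induction.
\item $(\updown)^n(R)=\bigcup_{x\in R}(\updown)^n(\{x\})$. Each of $\uparrow$ and $\downarrow$ is defined as a union over points and so commutes with arbitrary unions; hence so does their composite, and by induction so does every iterate.
\end{enumerate}

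With these properties in hand, the one-point case goes as follows.

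\begin{itemize}
\item If some $y\in S$ has $\E(x,y)=\infty$, then there is no $n$ with $y\in(\updown)^n(\{x\})$ and $x\in(\updown)^n(\{y\})$ simultaneously. By monotonicity (iii), having each containment at possibly different levels can be upgraded to a common level. So one of the two containments fails for every $n$, and $\E(\{x\},S)=\infty$.
\item If instead $n=\sup_{y\in S}\E(x,y)<\infty$, then by (iii) every $y\in S$ satisfies
\[
y\in(\updown)^n(\{x\})\quad\text{and}\quad x\in(\updown)^n(\{y\})\subseteq(\updown)^n(S),
\]
using (i) for the last inclusion. This gives $\E(\{x\},S)\le n$.
\item For the reverse inequality, let $m<n$. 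Some $y\in S$ has $\E(x,y)>m$, so either $y\notin(\updown)^m(\{x\})$, in which case $S\not\subseteq(\updown)^m(\{x\})$, or $x\notin(\updown)^m(\{y\})$.
\end{itemize}

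The second alternative in the last step is the only delicate point, and it is the step I expect to be the main obstacle. Here $x\in(\updown)^m(S)$ might still hold via a different point $y'\in S$. The paper's earlier proof glosses over this. The repair is a symmetry statement:
\[
x\in(\updown)^m(\{y\})\iff y\in(\updown)^m(\{x\}).
\]
To prove it, observe that $y\in\downarrow\uparrow\{x\}$ means there is $z$ with $z\ge x$ and $z\ge y$, a condition symmetric in $x$ and $y$. Iterating, $y\in(\updown)^m(\{x\})$ says there is an ``up--down'' fence from $x$ to $y$ with at most $m$ peaks. Reversing that fence gives the same kind of fence from $y$ to $x$, again with peaks as its odd-position points. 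Granting this symmetry, $\E(x,y)>m$ forces $y\notin(\updown)^m(\{x\})$ in either case. Hence $S\not\subseteq(\updown)^m(\{x\})$, so $\E(\{x\},S)>m$ for every $m<n$, and therefore $\E(\{x\},S)=n$.

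For general $R,S$, the same argument applies with $R$ in place of $\{x\}$, using the union property (iv) together with the symmetry, and gives
\[
\E(R,S)=\sup_{y\in S}\E(R,\{y\}).
\]
Applying the one-point case to each $\E(R,\{y\})=\E(\{y\},R)$ then turns this into $\sup_{x\in R,\,y\in S}\E(x,y)$, which completes the proof.
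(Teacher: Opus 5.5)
Your one-point case is correct. The symmetry $x\in(\updown)^m(\{y\})\iff y\in(\updown)^m(\{x\})$ holds because the relation ``$x$ and $y$ have a common upper bound'' is symmetric, hence so are its $m$-fold iterates. It is exactly what the paper's one-point argument uses tacitly.

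The gap is in your last paragraph. The identity $\E(R,S)=\sup_{y\in S}\E(R,\{y\})$ does not follow by running the same argument with $R$ in place of $\{x\}$. For the reverse inequality, suppose $\E(R,\{y\})>m$ because $R\not\subseteq(\updown)^m(\{y\})$. Symmetry then gives one point $x\in R$ with $y\notin(\updown)^m(\{x\})$. This forces neither $y\notin(\updown)^m(R)$, since another point of $R$ may reach $y$, nor $x\notin(\updown)^m(S)$, since another point of $S$ may reach $x$. When $R=\{x\}$ the first conclusion is automatic; for general $R$ it is not, because symmetry is a statement about singletons and does not pass to unions.

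This cannot be repaired, because the statement is false as written. The paper's proof defers to Theorem~\ref{th:distance_subsets} and its ``interchange $\{x\}$ by $R$'' step, so it has exactly the same defect. Take the two-element chain $a<b$, so $\E(a,b)=1$, and let $R=S=\{a,b\}$. Then $\E(R,S)=0$ (already $n=0$ works, as it must for a metric on $\P(P)$), while $\sup_{x\in R,\,y\in S}\E(x,y)=1$.

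What your properties (i)--(iv) together with symmetry actually give is the following: $R\subseteq(\updown)^n(S)$ if and only if every $x\in R$ has some $y\in S$ with $\E(x,y)\le n$. Hence, for nonempty $R$ and $S$,
\[
\E(R,S)=\max\Big\{\sup_{x\in R}\min_{y\in S}\E(x,y),\ \sup_{y\in S}\min_{x\in R}\E(x,y)\Big\},
\]
which is the Hausdorff distance induced by $\E$ on points. This reduces to your formula $\E(\{x\},S)=\sup_{y\in S}\E(x,y)$ when one set is a singleton. In general, however, $\sup_{x\in R,\,y\in S}\E(x,y)$ is only an upper bound for $\E(R,S)$. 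Nonemptiness is also needed: for $R\neq\emptyset=S$ one has $\E(R,S)=\infty$.
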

\begin{proof}
   Analogous to the proof of Theorem \ref{th:distance_subsets}
\end{proof}

\begin{definition}
    Take $x,y \in P$ and consider $F$ to be a fence from $x$ to $y$. Define $L(F)$ the fence-climber length of $F$ by
    \begin{equation*}
        L(F) = \left\{ 
        \def\arraystretch{1.7}
        \begin{array}{cl}
            0 & \text{ if } F = \{x\}\\
            \frac{A(F) +2}{2} & \text{ if } x \neq y,  A(F) \text{ even} \\
            \frac{A(F) +1}{2} & \text{ if } A(F) \text{ odd}, F \text{ upward}\\
            \frac{A(F) +3}{2} & \text{ if } A(F) \text{ odd}, F \text{ downward}
        \end{array}
        \right.
    \end{equation*}
\end{definition}

\begin{theorem}
   Take $x,y \in P$ and consider the set of fences from $x$ to $y$ to be non-empty. Then, the fence-climber distance between $x$ and $y$ is the minimal fence-climber length over all fences from $x$ to $y$. This means, 
        
    \[\E(x,y)  = \min \{ L(F) \mid F \text { is a fence connecting } x \text{ and } y\} \]. 
    \label{th_E_is_metric}
\end{theorem}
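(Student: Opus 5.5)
The argument splits into two inequalities, in parallel with the proof of Theorem \ref{th:1climber}, with the operators $\uparrow_1,\downarrow_1$ replaced by $\uparrow,\downarrow$. The key observation is that one application of $\updown$ to a singleton realizes exactly the two-leg fences starting upward: $z \in \updown(\{x\})$ if and only if there is $w$ with $x \leq w \geq z$. Iterating, $z \in (\updown)^n(\{x\})$ if and only if there are $w_1,z_1,\dots,w_n$ with
\[
x \leq w_1 \geq z_1 \leq w_2 \geq \cdots \leq w_n \geq z.
\]
Call this a sequence of type $(\ast)$. I would prove this description first, by induction on $n$, directly from the definition of the up-set and down-set of a subset.

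For the upper bound $\E(x,y) \leq L(F)$, take a fence $F=\{f_0,\dots,f_m\}$ from $x$ to $y$ with $A(F)=m-1$. Pad $F$ into a sequence of type $(\ast)$, inserting equalities where necessary, and count the blocks $(w_i,z_i)$ needed. If $F$ is upward, $F$ is $f_0<f_1>f_2<\cdots$, so consecutive pairs give one block each. If $F$ is downward, an initial trivial up-step $w_1=x$ is needed. When $y$ is reached by a final up-step, a trivial last down-step $z=w_n$ is needed. The count should come out as:
\begin{itemize}
\item $\lceil m/2\rceil = \frac{A(F)+1}{2}$ when $A(F)$ is odd and $F$ is upward ($m$ even);
\item $\frac{A(F)+2}{2}$ when $A(F)$ is even ($m$ odd);
\item $\frac{A(F)+3}{2}$ when $A(F)$ is odd and $F$ is downward.
\end{itemize}
The trivial fence $F=\{x\}$ gives $n=0$. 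The same count applied to the reversed fence gives $x \in (\updown)^{L(F)}(\{y\})$, once we check that reversing preserves $A(F)$ and, when $A(F)$ is odd, the upward or downward type, exactly as for $\tilde\gamma$ in Theorem \ref{th:1climber}. Hence $\E(x,y) \leq L(F)$ for every fence $F$.

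For the reverse inequality, suppose $\E(x,y)=n<\infty$, so $y \in (\updown)^n(\{x\})$ and there is a sequence of type $(\ast)$. Remove the equalities and merge consecutive comparisons in the same direction, using transitivity. This yields a genuine fence $F$ from $x$ to $y$ whose number of alternations is at most what the padded count allows, so $L(F)\leq n$. Concretely:
\begin{itemize}
\item if no step collapses, $F$ is upward with $2n$ legs, so $A(F)=2n-1$ and $L(F)=n$;
\item if only the final down-step collapses, $A(F)=2n-2$ is even and $L(F)=n$;
\item if only the initial up-step collapses, $F$ is downward with $A(F)=2n-2$ and $L(F)=n$;
\item if both collapse, $A(F)=2n-3$ with $F$ downward and $L(F)=n$.
\end{itemize}
Collapsing interior steps shrinks the fence further, and $L$ is monotone in $A(F)$ within each parity and direction class. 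The main obstacle is exactly this bookkeeping. Merging can change whether the resulting fence is upward or downward, and can change the parity of $A(F)$. So I would check that $L$ never exceeds $n$ in every case, including degenerate ones such as $y$ comparable to $x$, where $L(F)=1$, and $y=x$. It may be cleaner to prove a monotonicity lemma: if a fence arises from another by deleting a leg and merging, then its $L$ does not increase. Then only the uncollapsed sequence needs to be computed.

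Combining both inequalities gives the equality of the statement. The minimum is attained because $L$ takes values in $\N$.
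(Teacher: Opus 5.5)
Your proposal is correct. It has the same two-inequality structure as the paper, and your upper bound is the paper's argument: pad $F$ into up--down blocks, and use the reversed fence to get $x \in (\updown)^{L(F)}(\{y\})$.

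The lower bound is organized differently.
\begin{itemize}
\item \emph{The paper:} it selects $f_{2i-1}, f_{2i}$ from successive iterates $(\updown)^i(\{x\})$. It appeals to minimality of $n=\E(x,y)$ to assert that every round contributes a genuine alternation. The output is one of four shapes $F_1,\dots,F_4$ with $L(F_i)=n$ exactly.
\item \emph{Your route:} you take a witness chain $x \le w_1 \ge z_1 \le \cdots \le w_n \ge y$, which exists simply because $y \in (\updown)^n(\{x\})$. You collapse equalities, merge same-direction legs, and claim only $L(F)\le n$.
\end{itemize}
Your route buys two things. First, it never uses minimality of $n$. That is a two-sided condition for $\E$, so invoking it to control the one-sided membership $y\in(\updown)^n(\{x\})$ is not immediate. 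Second, the chain is tied to $y$ from the start rather than grown forward from $x$.

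The bookkeeping you flag as the main obstacle disappears with one observation. Write the legs of a fence $F$ as a word in the letters $U,D$. Then $L(F)$ is exactly the least $n$ such that this word is a subsequence of $(UD)^n$; this can be checked directly against the four cases in the definition of $L$, the empty word giving $0$.

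Deleting an equality leg, or merging two consecutive same-direction legs by transitivity, only deletes letters. So the word of the collapsed fence is still a subsequence of $(UD)^n$, and $L(F)\le n$ follows. This covers every pattern of collapse at once, including changes of orientation or parity and the degenerate cases $x\sim y$ and $x=y$. It is also precisely the monotonicity lemma you proposed, which is true.
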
   
\begin{proof}
     Take $F = \{\tuple[0]{f}{n+1}\}$ a fence that connects $x$ and $y$. Let us prove that $\E(x,y) \leq L(F)$. Suppose that $F$ is an upward fence. Then, for any $k$  we have  $\{ x,f_1,...,f_{2k} \} \subseteq (\updown)^k(x) $. Hence, if $A(F)$ is odd, we obtain 
    \begin{equation*}
         y \in \{ x, f_1, ..., f_{A(F) + 1} \} \subseteq (\updown)^{\frac{A(F) +1}{2}}(\{x\}). 
    \end{equation*}
       
    And if $A(F)$ is even,
    \begin{equation}
         y \in \{ x, f_1, ..., f_{A(F) + 1} \} \subseteq (\updown)^{\frac{A(F) + 2}{2}}(\{x\}).
         \label{eq:even_alt}
    \end{equation}
       
    On the other hand, if $F$ is downward, for any $k$ we have that $\{ x,f_1, ... , f_{2k-1} \} \subseteq (\updown)^k(x)$. Hence, if  $A(F)$ is even we obtain (\ref{eq:even_alt}) again. If $A(F)$ is odd we get 
    \begin{equation*}
          y \in \{ x, f_1, ..., f_{A(F) + 1} \} \subseteq (\updown)^{\frac{A(F) +3}{2}}(\{x\}).
    \end{equation*}

    Hence, we have proved that $y \in (\updown)^{L(F)}(\{x\})$. Now, consider the fence $\tilde{F} = \{y , f_{n}, \cdots, f_1 , x \}$. We have that $A(F) =  A (\tilde{F}) $ and notice that if $A(F)$ is odd and $F$ is upward (downward) , then, $\tilde{F}$ is also upward (downward).  Hence, $L(F) = L(\tilde{F})$. Using this fact and the proof from above we obtain $x \in (\updown)^{L(F)}(\{y\})$.  
    
    In this way, $\E(x,y) \leq L(F)$. In particular $\E(x,y) \leq \min \{ L(F) \mid F \text { is a fence connecting } x \text{ and } y\}$

   Thus, we have proved that $\E(x,y) < \infty $. Then, assume that $\E(x,y) = n $. This implies that $y \in (\updown)^n(\{x\})$. Let us create iteratively a fence from $x$ to $y$ selecting elements from $(\updown)^i(\{x\})$. If $\uparrow x \neq \emptyset$, take $f_1 \in \uparrow x$ and $f_2 \in (\updown)(\{x\})$ such that $x < f_1 > f_2$. Note that such $f_2$ must exist, otherwise $\uparrow ((\updown)(\{x\})) = \uparrow \{x\}$ and contradicts the minimality of $n$. Inductively, we choose $f_{2i-1}$ and $f_{2i}$ from $(\updown)^i(\{x\})$. Hence Hence we can find a fence of one of the following two forms:
   
    \[ F_1: x < f_1 > f_2 < \cdots > f_{2n-2} < f_{2n-1} > y, \quad \quad F_2:  x < f_1 > f_2 < \cdots > f_{2n-2} <  y. \]

    In a similar fashion, if $\uparrow x = \emptyset$, then we can find a fence of one of the following two forms:

    \[ F_3: x > f_1 <  \cdots > f_{2n-3} < f_{2n-2} > y \quad \text{or} \quad   F_4: x > f_1 <  \cdots > f_{2n-3} < y. \]

    We can check that $L(F_i) = n $ for all $i$ between 1 and 4.  Hence,
    \[
    \E(x,y)  =  L(F) \geq \min \{L(F) \mid F \text { is a fence connecting } x \text{ and } y\}. 
    \]
\end{proof}

\begin{corollary}
    $\E(x,y) < \infty$ if and only if $x,y$ are in the same fence-connected component.
    \label{coro_finite_if_connected}
\end{corollary}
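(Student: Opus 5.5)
The plan is to prove the two directions separately, mirroring the proof of Corollary \ref{coro:finite_if_connected_1}, and to use Theorem \ref{th_E_is_metric} in one direction and the definition of $(\updown)^n$ in the other.

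\emph{Connected implies finite.} Suppose $x$ and $y$ lie in the same fence-connected component. Then, by definition of a fence-connected component, there is a fence $F$ from $x$ to $y$. In particular, the set of fences from $x$ to $y$ is nonempty, so Theorem \ref{th_E_is_metric} applies. It gives $\E(x,y) = \min_F L(F) \le L(F)$. From the definition of the fence-climber length, $L(F)$ is a finite natural number, bounded by $(A(F)+3)/2$. Hence $\E(x,y) < \infty$.

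\emph{Finite implies connected.} Suppose $\E(x,y) = n < \infty$. Then $y \in (\updown)^n(\{x\})$. I will show by induction on $i$ that every $z \in (\updown)^i(\{x\})$ is joined to $x$ by a fence.
\begin{itemize}
\item For $i=0$ we have $z = x$, and the trivial fence $\{x\}$ works.
\item For the inductive step, take $z \in (\updown)^{i+1}(\{x\})$. By definition there are $w \in (\updown)^i(\{x\})$ and $u$ with $w \le u \ge z$.
\end{itemize}
By the induction hypothesis there is a fence from $x$ to $w$. Appending $u$ and $z$, and discarding repeated points when an inequality is an equality, gives a chain of comparabilities from $x$ to $z$.

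The main obstacle is that this concatenation need not alternate strictly: consecutive relations may point in the same direction. I will handle it with a reduction step. Whenever $f_{j-1} < f_j < f_{j+1}$ (or the dual), transitivity gives $f_{j-1} < f_{j+1}$, so $f_j$ can be deleted. Likewise, any equalities can be collapsed. Since the sequence is finite, repeating this terminates in a genuine fence from $x$ to $z$. Applying the claim to $z = y$ shows that $x$ and $y$ are fence-connected.

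Finally, the notion of ``same fence-connected component'' needs a short justification, because components are defined via maximality. I will note that the relation ``there is a fence between $a$ and $b$'' is an equivalence relation. Reflexivity holds via the trivial fence. Symmetry holds by reversing the fence. Transitivity holds by concatenating two fences and applying the same reduction. Fence-connected components are then exactly its equivalence classes. Therefore the existence of a fence from $x$ to $y$ is equivalent to $x$ and $y$ lying in the same component, which completes both directions.
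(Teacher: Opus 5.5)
Your proposal is correct and is essentially the paper's intended argument. The forward direction is the bound $\E(x,y)\le L(F)<\infty$ from Theorem \ref{th_E_is_metric}, and your induction over $(\updown)^i(\{x\})$ with the monotone-triple reduction is a tidier version of the fence construction in the second half of that theorem's proof, mirroring Corollary \ref{coro:finite_if_connected_1}; the one touch-up is that the reduction can leave a sequence that revisits a point (e.g.\ $a<b>c<b$), so if fences must consist of distinct elements, also cut out loops $f_i=f_j$ and reduce again, which still terminates because every step shortens the sequence.
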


\begin{corollary}
    If $P$ is a fence-connected poset, then the fence-climber distance is a metric on $P$ and $\P(P)$.
    \label{coro:fence_metric}
\end{corollary}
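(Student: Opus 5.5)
The plan is to mirror the proof of the corresponding corollary for the $1$-climber distance, with Theorem \ref{th_E_is_metric} and Corollary \ref{coro_finite_if_connected} taking the place of Theorem \ref{th:1climber} and Corollary \ref{coro:finite_if_connected_1}. The earlier theorem of this subsection already shows that $\E$ is an extended metric on $\P(P)$: it is non-negative and symmetric, it vanishes exactly on the diagonal, and it satisfies the triangle inequality. Restricting to singletons via $\E(x,y) := \E(\{x\},\{y\})$ shows that $\E$ is an extended metric on $P$ as well. So the only thing left to prove, both on $P$ and on $\P(P)$, is that $\E$ never takes the value $\infty$.

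\textbf{Finiteness on $P$.} Let $x,y \in P$. Since $P$ is fence-connected, $P$ is itself a single fence-connected component. By Corollary \ref{coro_finite_if_connected}, $\E(x,y) < \infty$. Concretely, choose any fence $F$ from $x$ to $y$. The first half of the proof of Theorem \ref{th_E_is_metric} gives $y \in (\updown)^{L(F)}(\{x\})$ and $x \in (\updown)^{L(F)}(\{y\})$. Hence $\E(x,y) \le L(F)$, and $L(F)$ is a finite number because $F$ has finitely many alternations. So $\E$ is a genuine metric on $P$.

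\textbf{Finiteness on $\P(P)$.} For $R,S \in \P(P)$ I use the theorem stated just before Theorem \ref{th_E_is_metric}:
\[
\E(R,S) = \sup\{ \E(x,y) \mid x \in R,\ y \in S\}.
\]
By the previous paragraph every term in this supremum is finite. The conclusion $\E(R,S) < \infty$ then follows exactly as in the corollary for $\E_1$. Together with the extended-metric axioms already proved, this makes $\E$ a metric on $\P(P)$.

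\textbf{Main obstacle.} The delicate step is this passage from pointwise finiteness to finiteness of the supremum. For finite $R$ and $S$ the supremum is a maximum over finitely many finite values, so it is automatically finite. For infinite $R$ or $S$ one needs the values $\E(x,y)$ to be uniformly bounded. I would follow the paper's convention from the $\E_1$ case and invoke the supremum formula directly. If more care is needed, I would bound the fence-climber lengths $L(F)$ over the pairs in $R \times S$, for instance by a bound on the number of alternations required to connect points of $P$.
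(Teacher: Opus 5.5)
Your treatment of $P$ is correct and is what the paper intends. The corollary is stated there without proof, and its $\E_1$ analogue is proved exactly as you propose. The gap is the $\P(P)$ step ``$\E(R,S)<\infty$ then follows exactly as in the corollary for $\E_1$''. A supremum of finite numbers need not be finite, and no argument can close this step, because the claim on $\P(P)$ is false as stated. The paper's $\E_1$ argument has the same flaw. Two counterexamples:
\begin{itemize}
\item Since $\uparrow\emptyset=\downarrow\emptyset=\emptyset$, we have $(\updown)^n(\emptyset)=\emptyset$ for every $n$. Hence $\E(\emptyset,\{x\})=\infty$ for every $x$ in any nonempty $P$.
\item Even among nonempty subsets it fails. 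Take the infinite zig-zag poset $P=\{x_i\}_{i\in\Z}$, which is fence-connected. Every comparability there is a cover between consecutive indices, so each application of $\uparrow$ or $\downarrow$ moves indices by at most one. Hence $(\updown)^n(\{x_0\})\subseteq\{x_i : |i|\le 2n\}$, so $P\not\subseteq(\updown)^n(\{x_0\})$ for every $n$, i.e.\ $\E(P,\{x_0\})=\infty$.
\end{itemize}
The same example rules out the fallback you suggest. There is no uniform bound on the number of alternations needed, since $\E(x_0,x_i)$ grows like $|i|/2$ (compare the zig-zag formula in Section~\ref{Sec:notable_posets}).

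What can actually be proved is the following. $\E$ is a metric on $P$, and on $\P(P)$ it is only an extended metric, which is already established. It is finite on nonempty subsets whenever $\sup_{x,y\in P}\E(x,y)<\infty$, for instance when $w(P)<\infty$ by Lemma~\ref{lemma_infty_diam}. It is also finite on nonempty finite subsets of any fence-connected $P$.

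For these restricted statements, do not rely on the supremum formula you quote. Since $(\updown)^n$ commutes with unions, $R\subseteq(\updown)^n(S)$ only requires each $x\in R$ to be within $\E$-distance $n$ of \emph{some} $y\in S$. So $\E(R,S)$ is really the Hausdorff distance
\[
\max\Bigl\{\sup_{x\in R}\,\inf_{y\in S}\E(x,y),\ \sup_{y\in S}\,\inf_{x\in R}\E(x,y)\Bigr\}.
\]
The supremum formula already contradicts $\E(R,R)=0$ when $R=\{a,b\}$ with $a\neq b$. What you need, and what does hold for nonempty $R,S$, is just the inequality $\E(R,S)\le\sup_{x\in R,\,y\in S}\E(x,y)$.
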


 By duality, for $R \in P$ we can define the map $(\downup)^n$  and create the fence-diver distance $\B$. All the previous results hold for $\B$ dually. In particular, the following theorem holds.
 
\begin{theorem}
   Take $x,y \in P$ and consider $F$ to be a fence from $x$ to $y$. Define the fence-diver length, $L^*(F)$, by 
        \begin{equation*}
        L^*(F) = \left\{ 
        \def\arraystretch{1.7}
        \begin{array}{cl}
            0 & x=y \\
            \frac{A(F) +2}{2} & a \neq b,  A(F) \text{ even} \\
            \frac{A(F) +1}{2} & A(F) \text{ odd}, F \text{ downward}\\
            \frac{A(F) +3}{2} & A(F) \text{ odd}, F \text{ upward}.
        \end{array}
        \right.
    \end{equation*}
    Then $\B(x,y)  = \min \{ L^*(F) \mid F \text { is a fence connecting } x \text{ and } y\} $. 
    \label{th_B_is_metric}
\end{theorem}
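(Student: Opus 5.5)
The quickest route is duality. Let $P^{\op}$ be the dual poset, with the same underlying set and $x \leq_{P^{\op}} y$ iff $y \leq_P x$. Up-sets and down-sets swap under this passage: $\uparrow_{P^{\op}} R = \downarrow_P R$ and $\downarrow_{P^{\op}} R = \uparrow_P R$. By induction on $n$,
\[
(\downarrow_{P^{\op}}\uparrow_{P^{\op}})^n(R) = (\uparrow_P\downarrow_P)^n(R)
\]
for every $R \in \P(P)$. Hence $\B_P(R,S) = \E_{P^{\op}}(R,S)$ for all $R,S$, and in particular $\B_P(x,y) = \E_{P^{\op}}(x,y)$.

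Next I compare fences in $P$ and in $P^{\op}$. A fence $F$ from $x$ to $y$ in $P$ is the same set, in the same order, as a fence in $P^{\op}$. It has the same length and the same number of alternations $A(F)$, but an upward fence in $P$ is downward in $P^{\op}$ and vice versa. Comparing the two case definitions, the $x=y$ case and the even case coincide, and the two odd cases are exchanged. Therefore
\[
L^*_P(F) = L_{P^{\op}}(F).
\]
Applying Theorem \ref{th_E_is_metric} to $P^{\op}$ then gives
\[
\B_P(x,y) = \E_{P^{\op}}(x,y) = \min\{L_{P^{\op}}(F)\} = \min\{L^*_P(F)\},
\]
with both minima taken over fences from $x$ to $y$. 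Fences in $P$ and in $P^{\op}$ are the same objects, so the index sets agree.

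If a self-contained argument is wanted instead, I would mirror the proof of Theorem \ref{th_E_is_metric} with the roles of $\uparrow$ and $\downarrow$ exchanged. For the upper bound, a downward fence $x > f_1 < f_2 > \cdots$ gives $\{x, f_1, \dots, f_{2k}\} \subseteq (\downup)^k(\{x\})$. An upward fence gives only $\{x, f_1, \dots, f_{2k-1}\} \subseteq (\downup)^k(\{x\})$, since the first application of $\downarrow$ is wasted. This yields $y \in (\downup)^{L^*(F)}(\{x\})$, and reversing the fence gives the symmetric containment. For the lower bound, set $n = \B(x,y)$ and build a fence greedily, choosing $f_{2i-1} \in \downarrow$ and $f_{2i} \in \uparrow$ from the successive sets $(\downup)^i(\{x\})$. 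Minimality of $n$ forces strict alternation, and one checks that each of the four resulting fence shapes has $L^* = n$.

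The only point needing care is the orientation bookkeeping: an odd fence in $P$ that is upward must be matched with a downward one in $P^{\op}$, and this swap is exactly what turns $L$ into $L^*$. One should also note that the even case of $L^*$ is written with ``$a \neq b$'', which I read as $x \neq y$. No other step seems to be a real obstacle.
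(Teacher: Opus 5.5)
Your proof is correct and is the paper's own approach: the paper proves this theorem only by the remark that all previous results hold for $\B$ dually. Your operator identity $(\downarrow_{P^{\op}}\uparrow_{P^{\op}})^n=(\uparrow_P\downarrow_P)^n$, combined with Theorem \ref{th_E_is_metric} applied to $P^{\op}$, is a clean way to make that precise. Deriving $\B_P=\E_{P^{\op}}$ at the operator level is also non-circular, whereas the paper's later ``Opposite posets'' identity already relies on the fence formula. One small nit: the first cases read ``$F=\{x\}$'' for $L$ but ``$x=y$'' for $L^*$, so the pointwise identity $L^*_P(F)=L_{P^{\op}}(F)$ can fail for a nontrivial fence from $x$ back to $x$. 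This does not affect the conclusion, since both minima are $0$ in that case.
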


Let us consider the same poset from Example \ref{eg:poset_gato}. To calculate the fence-climber distance between $a$ and $f$ we want to 
     
     \begin{minipage}{0.33\textwidth}
         \includegraphics[width = \textwidth]{Images/poset_gato_tikz.pdf}
     \end{minipage}
     \hfill
     \begin{minipage}{0.65\textwidth}
        find a fence $F$ with the minimum $L(F)$, i.e, the fence $F$ with the the minimum number of alternations and if possible  upwards. Let us take the fences from $a$ to $f$, $F_i = F_{\gamma_i}$ 
        \begin{align*}
          F_1&:a,c,f. \quad A(F_1) =  1  &F_2:a,b,f. \quad  A(F_2) =  1 \\
          F_3&:a,d,f. \quad A(F_3) =  1 &F_4:a,b,c,e,f. \quad A(F_4) =  3
         \end{align*}
        
         We can check that $L(F_1) = L_1(F_2) = 1$ and $ = L_1(F_3) = L_1(F_4) = 2$. Then, \[\E(a,f) = 1. \] Note that $F_3$ has the same number of alternations but is downwards, which does not guarantee the minimum to find $\E(a,f)$, however, it is the optimal path for the 1-diver distance. Explicitly,
         \[
           \B(a,f) = L^*(F_3) = \frac{A(F_3)+1}{2} = 1
         \]
     \end{minipage}

Recall that $F_1 = F_{\gamma_1}$. Since the the longest chain in  $\gamma_1$ has length 1, for every $k$ we obtain $L(F_1) = L_k(\gamma_1)$. More generally, for any path $\gamma$ in $P$,  the sequence $\left( L_k(\gamma) \right)_{k=1}^\infty$ converges to $L(F_\gamma)$ since  $L_\infty(\gamma) = L(F_\gamma)$. This follows directly from the definitions of $L_\infty$ and $L$. Then, we have the following theorem.

\begin{theorem}
    Let $P$ be fence-connected poset that is discrete. Then for any $x,y \in P$, $\E_{\infty}(x,y) = \E(x,y)$.
    \label{th:E_k_converges_E}
\end{theorem}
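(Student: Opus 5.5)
By Corollary \ref{coro: fence+discrete}, $P$ is path-connected, so $\E_\infty(x,y)$ is defined as a minimum of $L_\infty$ over paths. We already know $\E(x,y)=\min\{L(F)\}$ over fences (Theorem \ref{th_E_is_metric}). The plan is to compare these two minima directly through the correspondence $\gamma\mapsto F_\gamma$, using the identity $L_\infty(\gamma)=L(F_\gamma)$ noted just before the statement.

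First I will check that identity carefully. The alternations of $\gamma$ are exactly the interior points of $F_\gamma$, so $A(\gamma)=A(F_\gamma)$. Both objects start with the same step $x_0\prec x_1$, respectively $f_0<f_1$, so they are upward or downward together. The degenerate case $\ell(\gamma)=0$ corresponds to $F_\gamma=\{x\}$. Comparing the case lists in Lemma \ref{lemma: L_convergent} and in the definition of $L$ then gives $L_\infty(\gamma)=L(F_\gamma)$.

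For the inequality $\E(x,y)\le \E_\infty(x,y)$, take a path $\gamma$ attaining $\E_\infty(x,y)$. Then $F_\gamma$ is a fence from $x$ to $y$, so $\E(x,y)\le L(F_\gamma)=L_\infty(\gamma)=\E_\infty(x,y)$.

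For the reverse inequality, take a fence $F$ attaining $\E(x,y)$. Since $P$ is discrete, Lemma \ref{lemma:each F has gamma} gives a path $\gamma$ with $F_\gamma=F$, so $\E_\infty(x,y)\le L_\infty(\gamma)=L(F)=\E(x,y)$.

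The main obstacle is checking that Lemma \ref{lemma:each F has gamma} really yields $F_\gamma=F$, with the same alternations and the same direction, rather than only some fence. The construction concatenates maximal chains $C_i\subseteq[f_i,f_{i+1}]$, or $[f_{i+1},f_i]$. Inside each chain the steps are covers of a single direction, and at each $f_i$ the direction genuinely switches because $F$ alternates. So the alternations of $\gamma$ are exactly $f_1,\dots,f_{n-1}$.

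A degenerate fence could have a repeated or non-strict step. I would handle this by observing that minimal fences may be assumed to have strict relations, since otherwise the fence can be shortened without increasing $L$. I expect this bookkeeping to be the only delicate point; the rest is direct.

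Combining the two inequalities and invoking Theorem \ref{thm:E_convergent} also gives $\lim_k \E_k(x,y)=\E(x,y)$.
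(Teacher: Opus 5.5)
Your proof is correct and follows the paper's argument. Both inequalities come from the correspondence $\gamma \mapsto F_\gamma$ together with $L_\infty(\gamma)=L(F_\gamma)$, and Lemma \ref{lemma:each F has gamma} supplies a path for a minimizing fence; the only differences are that you take a minimizer of $L_\infty$ directly from the definition of $\E_\infty$ rather than through Theorem \ref{thm:E_convergent}, and that you write out the checks ($A(\gamma)=A(F_\gamma)$, matching orientation, $F_\gamma=F$) which the paper leaves implicit.
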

\begin{proof}
    Let $P$ be a discrete fence-connected poset. By corollary \ref{coro: fence+discrete}, it follows that $P$ is a path-connected poset. Hence, by Theorem \ref{thm:E_convergent}, for any $x,y \in P$, $\E_{\infty}(x,y) $ exists and is equal to $L_\infty(\gamma)$ where $\gamma$ is a path from $x$ to $y$ that minimizes $L_k$ for all $k \geq N$ for some $N$. Thus, one one hand we have that 
    \begin{equation*}
        \E_{\infty}(x,y) = L_\infty(\gamma) = L(F_\gamma) \geq \E(x,y).
    \end{equation*}

    On the other hand, let $F$ be a fence from $x$ to $y$ that minimizes $L(F)$. By lemma \ref{lemma:each F has gamma}, choose $\gamma$ from $x$ to $y$ such that $F_{\gamma} = F$. Then,
    \begin{equation*}
        \E(x,y) =  L(F_\gamma) = L_\infty(\gamma) \geq  \E_{\infty}(x,y).
    \end{equation*}

    Therefore, $ \E_{\infty}(x,y) =\E(x,y)$.
\end{proof}

The result of Theorem \ref{th:E_k_converges_E} does not hold if the poset is not discrete.  Let us consider the poset whose diagram is shown  

    \begin{minipage}{0.57\textwidth}
        at right. We observe that $P$  is fence-connected poset but is not discrete.  The elements $\omega_1$ and $\omega_2$ are connected by $\gamma$, an upward path with three alternations and by the fence $F = \{ \omega_1, 0 , \omega_2 \}$ where the intervals $[0,\omega_1]$ and $[0,\omega_2]$ are isomorphic to $\N \cup \omega$. We see that for all $k$, \mbox{$\E_k(\omega_1,\omega_2) = L_k(\gamma) = 2$}. Hence 
        \[
             \E_{\infty}(x,y) = 2.
        \]
             
        On the other hand, $x,y$ are connected by two fences $F$ and $F_\gamma$. We see that $L(F) = 1 < 2 = L(F_\gamma)$. Then, we obtain that $\E(x,y) = 1$. Therefore,  $\E_{\infty}(x,y) $ and $\E(x,y) $ are not equal.\\
         \end{minipage} 
         \hfill
    \begin{minipage}{0.38\textwidth}
        \includegraphics[width = \textwidth]{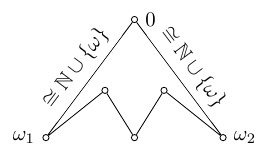}
    \end{minipage}

\begin{corollary}
    If $P$ is a path-connected poset, for any $x,y \in P$, $\E_{\infty}(x,y) \geq \E(x,y)$.
\end{corollary}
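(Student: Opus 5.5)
The plan is to reuse the first half of the proof of Theorem \ref{th:E_k_converges_E}, which never used discreteness. Fix $x,y \in P$. Since $P$ is path-connected, Theorem \ref{thm:E_convergent} applies. It gives a path $\gamma$ from $x$ to $y$ and an index $N$ such that $\gamma$ minimizes $L_k$ for all $k \geq N$, and hence
\[
\E_\infty(x,y) = \lim_{k\to\infty} \E_k(x,y) = L_\infty(\gamma).
\]
In particular $\E_\infty(x,y)$ is a well-defined finite number. Note also that $P$ is fence-connected, because every path $\gamma$ induces the fence $F_\gamma$. Therefore, by Corollary \ref{coro:fence_metric}, $\E$ is a genuine metric on $P$, and Theorem \ref{th_E_is_metric} expresses $\E(x,y)$ as a minimum of $L(F)$ over fences $F$ from $x$ to $y$.

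The key step is the identity $L_\infty(\gamma) = L(F_\gamma)$ for every path $\gamma$. This is read off directly by comparing the formula for $L_\infty$ in Lemma \ref{lemma: L_convergent} with the definition of $L$, case by case:
\begin{itemize}
\item $A(F_\gamma) = A(\gamma)$, since the interior points of the fence $F_\gamma$ are exactly the alternations of $\gamma$;
\item $F_\gamma$ is upward if and only if $\gamma$ is upward, since $x_0 \prec x_1$ forces $a_0 < a_1$;
\item $\ell(\gamma) = 0$ if and only if $F_\gamma = \{x\}$.
\end{itemize}
With these three facts, each case of one formula matches the corresponding case of the other. Combining this with Theorem \ref{th_E_is_metric},
\[
\E_\infty(x,y) = L_\infty(\gamma) = L(F_\gamma) \geq \min\{L(F) \mid F \text{ a fence from } x \text{ to } y\} = \E(x,y).
\]

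The only subtlety is the degenerate edge case. One should check that a path with $\ell(\gamma) > 0$ and $x = y$ (a closed walk) is handled consistently. Here $L_\infty(\gamma) \geq 1$, while $\E(x,x) = 0$, so the inequality still holds; it is not even needed, since for $x=y$ both sides are $0$ by taking the trivial path. Since no discreteness was used, and discreteness was required only for the reverse inequality via Lemma \ref{lemma:each F has gamma}, the corollary follows.
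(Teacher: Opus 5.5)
Your proposal is correct and takes the paper's approach: the corollary is exactly the first half of the proof of Theorem \ref{th:E_k_converges_E}, namely $\E_\infty(x,y) = L_\infty(\gamma) = L(F_\gamma) \geq \E(x,y)$, which uses only path-connectedness and not discreteness. Two small remarks: treating $x=y$ separately is worthwhile, because the definition of $L$ does not cover closed walks from $x$ to $x$ with an even, positive number of alternations; and since $\E_\infty$ is defined as $\min_\gamma L_\infty(\gamma)$, you could drop the appeal to Theorem \ref{thm:E_convergent} and simply apply $L_\infty(\gamma) = L(F_\gamma) \geq \E(x,y)$ to every path $\gamma$ from $x$ to $y$.
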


We conclude this section by exploring how the fence climber and diver metrics relate to the width of the poset.

\begin{lemma}
 Take $P$ to be fence-connected. Let $D_{\E} = \sup \{ \E(x,y) \mid x,y \in P \}$ be the diameter of $P$ with the fence-climber distance. Then \hbox{$D_{\E} \leq w(P)$}.
 \label{lemma_infty_diam}
\end{lemma}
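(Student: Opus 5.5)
The plan is to bound $\E(x,y)$ for each pair $x,y$ by exhibiting an antichain of size $\E(x,y)$. By Theorem \ref{th_E_is_metric} this reduces to finding an antichain of size $L(F)$ for a suitable fence $F$ from $x$ to $y$. If $w(P)=\infty$ there is nothing to prove, and if $x=y$ then $\E(x,y)=0$, so assume $x\neq y$.

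First I choose the fence. Among the fences from $x$ to $y$ minimizing $L$, pick one $F=\{f_0,\dots,f_n\}$ of minimal length $n$. Call $f_i$ a \emph{top} of $F$ if it is greater than each of its neighbours in $F$; this includes the endpoints $f_0$ or $f_n$ when they are above their single neighbour. I then check, case by case from the definition of $L$, that the number of tops equals $L(F)$.
\begin{itemize}
\item Upward, $A(F)$ odd: the tops are $f_1,f_3,\dots,f_{A(F)}$, giving $\frac{A(F)+1}{2}$ of them.
\item Upward, $A(F)$ even: here $n=A(F)+1$ is odd and $f_n$ is a top, giving $\frac{A(F)+2}{2}$.
\item Downward, $A(F)$ odd: the tops are $f_0,f_2,\dots,f_n$, giving $\frac{A(F)+3}{2}$.
\item Downward, $A(F)$ even: the tops are $f_0,f_2,\dots,f_{n-1}$, giving $\frac{A(F)+2}{2}$.
\end{itemize}

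Next I show the tops form an antichain by a shortcut argument. Suppose $f_i\le f_j$ with $i<j$ both tops. When $f_i$ is an interior top, replace $f_{i-1}<f_i\le f_j$ by $f_{i-1}<f_j$ and delete $f_i,\dots,f_{j-1}$. When $f_j\le f_i$, note that $f_i\ge f_j>f_{j+1}$, so we may use $f_i>f_{j+1}$ and delete $f_{i+1},\dots,f_j$. The endpoint cases are handled the same way:
\begin{itemize}
\item If $f_j=y$ with $y\le f_i$, stop the fence at $f_i$ and append $y$.
\item If $f_j=y$ with $y\ge f_i$, replace $f_i$ by $y$ directly.
\item If $f_i=x$ is the top, the cases are symmetric.
\end{itemize}
In every case the result is a fence from $x$ to $y$ that is strictly shorter. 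Since $i$ and $j$ have the same parity, the deleted block has even size, or the modification keeps the type of the ends. So the direction of the fence and the parity of $A$ are preserved, and $A$ does not increase. It follows that $L$ does not increase either, contradicting the choice of $F$ as a shortest $L$-minimizer. Equality $f_i=f_j$ is handled as a special case of comparability.

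Hence $\E(x,y)=L(F)$ equals the size of an antichain, so $\E(x,y)\le w(P)$, and taking the supremum over all $x,y$ gives $D_{\E}\le w(P)$.

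The main obstacle is the bookkeeping in the shortcut step. I must verify in every case, including interior versus endpoint tops, that the modified sequence is still a genuine fence (strict alternation, same direction) and that $L$ does not increase. If the fence degenerates to a comparability $x<y$ or $x>y$, then $L\le 2$; this case should be checked separately but follows the same pattern.
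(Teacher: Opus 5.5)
Your argument is correct, but it follows a different route from the paper's.

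\textbf{How the two routes differ.} The paper takes the even-indexed points $\{f_{2i}\}$ of an $L$-minimizing fence. These are the bottoms of an upward fence and the tops of a downward one. It argues case by case that they are pairwise incomparable, switching to a downward fence in its case d), and then counts at least $L(F)$ of them. You instead use the tops in every case, prove the identity $L(F)=\#\{\text{tops of }F\}$ for every fence with $x\neq y$, and break ties by length.

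\textbf{What your choice buys.} The bottoms of an upward minimizer with $A(F)$ odd need not form an antichain, and the paper's case a) never treats $f_{2j}=y$. For example, take the poset generated by $x<a$, $b<a$, $b<y<c$. The fence $x<a>b<c>y$ has $L=2=\E(x,y)$, yet its bottoms satisfy $b<y$. Since $w(P)=2$, the paper's claimed antichain of size $\frac{A(F)+3}{2}=3$ does not exist. Your tops $\{a,c\}$ have exactly $L(F)$ elements and work.

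\textbf{One sentence needs repair.} You claim the shortcuts preserve the direction of the fence and the parity of $A$. That is false in two endpoint cases:
\begin{itemize}
\item In ``stop the fence at $f_i$ and append $y$'', the deleted block $f_{i+1},\dots,f_{n-1}$ has odd size, so the parity of $A$ flips.
\item When $x=f_0$ is a top and $x<f_j$, the new fence $x<f_j>\cdots$ is upward, whereas $F$ was downward.
\end{itemize}

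\textbf{The step still holds, by either of two fixes.}
\begin{itemize}
\item For $x\neq y$ one always has $\frac{A+1}{2}\le L\le\frac{A+3}{2}$. Since $F'$ is strictly shorter, $A(F')<A(F)$, so $L(F')\le \frac{A(F)+2}{2}\le L(F)+\frac12$. Integrality of $L$ then gives $L(F')\le L(F)$.
\item More cleanly, note that each shortcut deletes a top, or turns an endpoint top into a bottom, and never creates a new top. By your identity this gives $L(F')<L(F)$ strictly. So the tops of \emph{any} $L$-minimizer already form an antichain, and the minimal-length tie-break is unnecessary.
\end{itemize}
A minor correction: when $A(F)=0$ one has $L=1$ exactly, not merely $L\le 2$.
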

\begin{proof}
    For $x,y \in P$ arbitrary, take a fence $F = \{\tuple[0]{f}{n}\}$ from $x$ to $y$ such that $L(F) = \E(x,y)$. If $A(F) = 0$, then $x \sim y$. Hence $P$ is a chain and $w(P) = 1$. We obtain $\E(x,y) = 1 = w(P)$.  
    
    Take $A(F) > 0$. The end goal is to prove that $\{f_{2i}\}$ is an antichain of $P$. To do so, let us show there is no $i$ and $j > i$ such that $f_{2i} \sim f_{2j}$. Let us proceed by contradiction. We begin with the case that $F$ is an upward fence. Thus $f_{2k} < f_{2k+1} > f_{2k+2}$ for any $k$.
    \begin{enumerate}[a)]
        \item  Suppose that $f_{2i} < f_{2j}$. Then, $F' = \{ x, ..., f_{2i}, f_{2j+1}, ... ,y\}$ is a upward fence from $x$ to $y$ with \mbox{$A(F') \leq A(F) - 2$}. Then $L(F') < L(F)$, contradicting the minimality of $F$.
        \item  Suppose that $f_{2i} > f_{2j}$ with $i > 0$. Then we can form a new fence $F' = \{ x, f_1, ..., f_{2i-1}, f_{2j}, ... ,y\}$ from $x$ to $y$. Then, $F'$ has at least two alternations less than $F$. Hence, $L(F') < L(F)$ contradicting the minimality of $F$.
        \item  Suppose that $f_{2i} > f_{2j}$ with $i =0$ and $j>1$ . Then $f_{2j} < f_{2i} = x$. Hence we can form the downward fence $F' = \{ x, f_{2j}, f_{2j+1}, ..., y \}$ with $A(F') \leq A(F) - 3$. Then
         \begin{align*}
            L(F') \leq \dfrac{A(F') + 3}{2} \leq \dfrac{A(F)- 3 + 3}{2} < \dfrac{A(F)+2}{2} = L(F) &  \text{ if } A(F) \text{ is even}. \\
            L(F') \leq \dfrac{A(F') + 2}{2} \leq \dfrac{A(F) -3 + 2}{2} < \dfrac{A(F) +1}{2} =  L(F) &  \text{ if } A(F) \text{ is odd}.
        \end{align*}
        That yields the contradiction $L(F') < L(F)$.
        
        \item Suppose that $f_{2i} > f_{2j}$, $i =0$ and $j=1$. Thus, $x > f_2$. In this case, we obtain the downward fence $F'= \{ x , f_2, ..., y\}$ with one less alternation. Then
        \begin{align*}
            L(F') = \dfrac{A(F') + 3}{2} =  \dfrac{A(F) + 2}{2} = L(F) &  \text{ if } A(F) \text{ is even}. \\
            L(F') = \dfrac{A(F') + 2}{2} =  \dfrac{A(F) + 1}{2} = L(F) &  \text{ if } A(F) \text{ is odd}.
        \end{align*}
     In this case, we switch $F$ by $F'$ as the minimal path we are choosing from the start, so the minimal fence is a downward fence.
    \end{enumerate}
    
   Consider the case that $F$, the minimal path, is a downward fence. Then, $f_{2k} > f_{2k+1} < f_{2k+2}$ for any $k$ in this case. We obtain that,
   \begin{enumerate}[a)]
        \item If $f_{2i} > f_{2j}$. Then, $F' = \{ x, ..., f_{2i}, f_{2j+1}, ... ,y\}$ is a downward fence from $x$ to $y$ with $A(F') \leq A(F) - 2$. Hence, we have the contradiction $L(F') < L(F)$.
        \item If $f_{2i} < f_{2j}$ and $i > 0$, we have the downward fence $F' = \{ x, f_1, ..., f_{2i-1}, f_{2j}, ... ,y\}$ from $x$ to $y$. Similar to before, we have removed two alternations, so $L(F') < L(F)$.
        \item If $f_{2i} < f_{2j}$ and $i = 0$ we have $x < f_{2j}$. Hence, we obtain the upward fence $F' = \{ x, f_{2j}, f_{2j+1}, ..., y \}$ with at least one less alternation. Then, 
        \begin{align*}
            L(F') \leq \dfrac{A(F') + 1}{2} \leq  \dfrac{A(F) -1 + 1}{2} < \dfrac{A(F)+ 2}{2} = L(F) &  \text{ if } A(F) \text{ is even}. \\
            L(F') = \dfrac{A(F') + 2}{2} \leq  \dfrac{A(F) -1  + 2}{2} < \dfrac{A(F)+ 3}{2} = L(F) &  \text{ if } A(F) \text{ is odd}. 
        \end{align*}
        
        Which contradicts the minimality of $F$.
        
    \end{enumerate}

    Since assuming that there are $i < j$ such that $f_{2i} \sim f_{2j}$, leads to a contradiction, we can conclude that $\{f_{2i}\}$ is an antichain of $P$. 
    Hence, if $A(F)$ is odd, $\{x , x_2, ..., x_{A(F)-1}, y\}$ is an antichain of $P$ of size $\frac{A(F) + 3 }{2}$. Therefore, \[L(F) \leq \frac{A(F) + 3 }{2} \leq w(P).\] If $F$ is even, $\{x , x_2, ..., x_{A(F)}\}$ is an antichain of size $\frac{A(F) + 2 }{2}$. Similarly 
    \[ L(F) = \frac{A(F) + 2 }{2} \leq w(P) .\] 

In this way we can conclude that for any $x$ and $y$ in $P$, $\E(x,y) = L(F) \leq w(P)$. So, $D_\E \leq w(P)$.
\end{proof}

By duality we obtain that  \hbox{$D_{\B} \leq w(P)$} where $D_{\B}$ is the diameter of $P$ with the fence-diver distance.


\section{Calculation of distances and comparisons}
\label{Sec:notable_posets}

In this section, we present explicit formulas for computing the climber and diver distances in some notable classes of posets, including linear orders and lattices. We also describe how these distances behave under standard constructions, such as direct product or lexicographic sums of posets.  We conclude the section comparing these metrics with the classic shortest-path and shortest-fence metrics, the Chebyshev distance, as well as establishing that our metrics are not induced by any valuation on the poset. For this section, let $\delta_{x,y}$ denote the Kronecker delta of $x$ and $y$.

\subsection{Distances on classes of posets}

\subsubsection*{Linear posets}

A poset $P$ is called \textit{linear} if $P$ is itself a chain i.e. all elements in $P$ are comparable. All linear posets are fence-connected posets. For any $x \leq y \in P$ we have that $\{x,y\}$ is a fence with zero alternations from $x$ to $y$. Then, we obtain that for the fence distances: 
\begin{equation*}
    \E(x,y) = \B(x,y) = 1 - \delta_{x,y}.
\end{equation*}

If $|[x,y]| < \infty$, this chain is the only path in $P$ from $x$ to $y$ with zero alternations. Hence for any $k$:

\begin{equation*}
    \E_k(x,y) = \B_k(x,y) = \left\{ 
    \begin{array}{cc}
        \myceil{\ell([x,y])} & \text{ if }|[x,y]| < \infty\\
         \infty & \text{else}.
    \end{array}
    \right. 
\end{equation*}

\subsubsection*{Zig-zag posets} 

A poset is called a \textit{zig-zag} poset if $P = \{x_i\}_{i \in A \subseteq \Z}$ and for all $i$ even, $x_{i-1} \prec x_i \succ x_{i+1}$ or $x_{i-1} \succ x_i \prec x_{i+1}$ and those are all covering relations of $P$. Notice that  for $i \leq j$, $j- i-1$ is the number of alternations of the path on $P$ from $x_i$ to $x_j$. Then we obtain for the climber distances:

\begin{equation*}
    \E_k(x_i,x_j) = \E(x_i,x_j) = \left\{ 
    \def\arraystretch{1.7}
    \begin{array}{cl}
        0 & i=j \\
        \frac{j-i +1 }{2} &  j-i \text{ odd} \\
        \frac{j-i}{2} & j-i \text{ even}, x_i \prec x_{i+1}\\
        \frac{j-i+2}{2} & j-i \text{ even}, x_i \succ x_{i+1}. \\
    \end{array}
    \right.
\end{equation*}

We have a similar formula for $ \B_k(x_i,x_j) = \B(x_i,x_j)$ interchanging $x_i \prec x_{i+1}$ and $x_i \succ x_{i+1}$ on the last two conditions.


\subsubsection*{Crown posets} 

Let $n \in \N$ even. A poset  is $P = \{ \tuple[0]{x}{2n-1}\}$ is called a \textit{$n$-crown poset} if \[x_0 \prec x_1 \succ x_2 \prec \cdots \succ x_{2n-2} \prec x_{2n-1} \succ x_0\]
such that these are all covering relations of $P$.  The diagram of a $n$-crown poset is the diagram of a zig-zag poset with the endpoints joined, in this way $P$ is a path-connected poset. Let $i \leq j$ and take $I,J \geq 0 $ the numbers between 0 and $2n-1$ such that $I \equiv i-j \mod 2n$ and $J \equiv j-i \mod 2n $. Then
    \begin{equation*}
        \E_k(x_i,x_j) = \E(x_i,x_j) = \left\{ 
        \def\arraystretch{1.7}
        \begin{array}{cl}
            \min \{ \frac{I +1 }{2}, \frac{J +1 }{2} \} &  j-i \text{ odd} \\
            \min \{ \frac{I }{2}, \frac{J}{2} \}  & j-i \text{ even and }  x_i \prec x_{i+1}\\
            \min \{ \frac{I +2 }{2}, \frac{J+2 }{2} \}  & j-i \text{ even and } x_i \succ x_{i+1}\\
        \end{array}
        \right.
    \end{equation*}

Similarly to zig-zag posets, the formula for $ \B_k(x_i,x_j) = \B(x_i,x_j)$ are the same interchanging $x_i \prec x_{i+1}$ and $x_i \succ x_{i+1}$ on the last two conditions.

\subsubsection*{Common bounds}

For $x,y$ in a poset $P$, $z$ is a \textit{common upper bound} of $x$ and $y$ if $x \leq z$ and $y \leq z$. Dually, $w$  is a \textit{common lower bound} of $x$ and $y$ if $w \leq x$ and $w \leq y$. A poset $P$ has the upper (or lower) filtering property if any two elements have a common upper (or lower, respectively) bound. 

Take $P$ to have the upper filtering property. For any $x$ and $y$ distinct elements in $P$ take $z$ a common upper bound. Then, there exists the upward fence with one alternation $x < z > y$. Then $\E(x,y) = 1$. Hence, $\E(x,y) = 1 - \delta_{x,y}$ for any $x,y \in P$. In a similar way, if $P$ has the lower filtering property  for any $x$ and $y$ distinct elements, there exists the downward fence with one alternation $x > w < y$ where $w$ is a common lower-bound. Hence, $\B(x,y) = 1 - \delta_{x,y}$. If $P$ has both properties this implies that for any $x,y \in P$ we have  \[\E(x,y) = \B(x,y) = 1 - \delta_{x,y}.\]


\subsubsection*{Modular posets}
 
A poset $P$ is called \textit{upper semi-modular} if and only if for all $x \neq y \in P$ with $w$ a common lower cover of $x$ and $y$ i.e $w \prec x \text{ and } w \prec y$ there exists $z \in P$ such that $x \prec z \text{ and } y \prec z$ . Dually, a poset is \textit{lower semi-modular} if the existence of a common upper cover implies the existence of a common lower cover. A poset is called \textit{modular} if is both upper and lower semi-modular \cite{monjardet}.

\begin{lemma}
    Let $P$ be a lower semi-modular poset. Let $x,y$ in $P$ and suppose there exists an upward path $\gamma = \{\tuple[0]{x}{n}\}$ from $x$ to $y$ with only one alternation at $x_i$ for some $0 <i < n$. Then, there exists a downward path $\gamma^*= \{\tuple[0]{x^*}{n}\}$ from  $x$ to $y$ with $\ell(\gamma^*) = \ell(\gamma)$ and only on alternation at $x^*_{n-i}$. 
    \label{lemma:upward_to_downward_path}
 \end{lemma}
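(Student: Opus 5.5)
The plan is a grid argument: apply lower semi-modularity one square at a time to push the peak of $\gamma$ down into a valley. Write
\[
\gamma:\; x=x_0\prec x_1\prec\cdots\prec x_i\succ x_{i+1}\succ\cdots\succ x_n=y .
\]
Set $m=n-i$. I will build elements $a_{p,q}$ for $0\le p\le i$ and $0\le q\le m$ satisfying
\[
a_{p,q}\prec a_{p+1,q}\quad\text{and}\quad a_{p,q}\prec a_{p,q-1}
\]
whenever the indices make sense. The boundary values are $a_{p,0}=x_p$ for $0\le p\le i$, the rising chain of $\gamma$, and $a_{i,q}=x_{i+q}$ for $0\le q\le m$, the falling chain of $\gamma$. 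Both boundaries meet at the peak $a_{i,0}=x_i$.

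The cells are filled one unit square at a time. Suppose $a_{p+1,q-1}$, $a_{p,q-1}$ and $a_{p+1,q}$ are already defined. The last two are both lower covers of $a_{p+1,q-1}$, so lower semi-modularity gives an element covered by both, and this element is $a_{p,q}$. The first square has corner $a_{i,0}=x_i$, whose lower covers are $x_{i-1}$ and $x_{i+1}$. A valid order is to sweep $p=i-1,i-2,\dots,0$ and, for each $p$, to take $q=1,\dots,m$ in increasing order. At each step the three required neighbours then already exist.

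Once the grid is complete, define
\[
\gamma^*:\; x=a_{0,0}\succ a_{0,1}\succ\cdots\succ a_{0,m}\prec a_{1,m}\prec\cdots\prec a_{i,m}=x_n=y .
\]
Every step is a covering relation, so $\gamma^*$ is a downward path. It has $m+i=n$ steps, hence $\ell(\gamma^*)=\ell(\gamma)$. Its only change of direction is at the $m$-th vertex $a_{0,m}=x^*_{n-i}$, which gives exactly one alternation, at $x^*_{n-i}$, as the lemma claims. The two arms are nondegenerate because $0<i<n$.

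The main obstacle is the distinctness hypothesis: lower semi-modularity only applies to two \emph{distinct} elements with a common upper cover. For the first square this is harmless, since $x_{i-1}\ne x_{i+1}$ is part of what it means for $\gamma$ to be a genuine path (it does not backtrack at its alternation), and I will record this convention explicitly. For later squares I would first try to show that a coincidence $a_{p,q-1}=a_{p+1,q}$ cannot occur. I would attempt this by contradiction, using that such an element would be simultaneously below $a_{p+1,q}$-type and $a_{p,q-1}$-type neighbours along two distinct covering chains that share both endpoints. If that fails in full generality, the fallback is to handle the coincidence directly: choose any lower cover of the coincident element for $a_{p,q}$, or observe that a coincidence would give a shorter path. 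Either route needs some care, and I expect this bookkeeping to be the only delicate part of the proof.
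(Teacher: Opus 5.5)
The gap is exactly the coincidence case you flagged, and it cannot be closed, because the lemma is false as stated. Take the diamond $u\prec a\prec v$, $u\prec b\prec v$, which is modular and hence lower semi-modular. Take the path $\gamma: u\prec a\prec v\succ b$ from $x=u$ to $y=b$. It has distinct vertices, is upward, and has a single alternation at $x_2$ with $0<2<3$. But $u$ is minimal, so no downward path starts at $u$ at all.

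In your sweep, the square at $(p,q)=(1,1)$ forces $a_{1,1}=u$, the only common lower cover of $a$ and $b$. The next square, at $(0,1)$, then has $a_{0,0}=a_{1,1}=u$. So a coincidence does occur, and your planned contradiction argument cannot work. Your first fallback, ``choose any lower cover'', is impossible because $u$ has none. Your second fallback also fails: the shortcut the coincidence produces is $u\prec b$, of length $1$ rather than $3$, so it is not a path of the required shape.

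Your grid is essentially what the paper's induction builds, and the paper's proof has the same hole. This example falls under its Case A, where lower semi-modularity is applied to $x\prec x_1\succ y_1$ with $y_1$ supplied by induction; here $y_1=u=x$, so the two lower covers coincide.

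What does work is to add a hypothesis such as $\ell(\gamma)=\D(x,y)$. Your second fallback then becomes the missing step. Suppose $a_{p,q-1}=a_{p+1,q}=c$ occurs in your sweep order. The cells already built give walks
\[
x=a_{0,0}\prec\cdots\prec a_{p,0}\succ\cdots\succ a_{p,q-1}=c,\qquad c=a_{p+1,q}\prec\cdots\prec a_{i,q}\succ\cdots\succ a_{i,m}=y
\]
of lengths $p+q-1$ and $(i-p-1)+(m-q)$. Hence $\D(x,y)\le n-2<\ell(\gamma)$, a contradiction.

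With coincidences excluded, every square is a legitimate use of lower semi-modularity, and your construction goes through as written. Moreover, $\gamma^*$ has no repeated vertices, since it is again a shortest path. So your argument proves this corrected version, not the lemma as stated. Any later application of the lemma would have to supply the extra hypothesis.
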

\begin{proof}
    Let $P$ be a lower semi-modular poset and take $x,y$ in $P$. Let $\gamma = \{\tuple[0]{x}{n}\}$ be a path from $x$ to $y$ with only one alternation at $x_i$ for some $0 <i < n$. Let us prove by induction on the length $\ell(\gamma)$ that there exists a downward path $\gamma^*= \{\tuple[0]{x^*}{n}\}$ from  $x$ to $y$ with $\ell(\gamma^*) = \ell(\gamma)$ and only on alternation at $x^*_{n-i}$. 
    
    Suppose $\ell(\gamma) = 2$. Then $\gamma = \{ x \prec x_1 \succ y\}$. Lower semi-modularity ensures that  there exists an element $x_1^* \in P$ such that $x \succ x^*_1 \prec y$. Then, $\gamma^*= \{ x, x^*_1, y \}$ satisfies the claim. Assume that for a fixed $n \geq 2$, if $\gamma$ has elements  $\{\tuple[0]{x}{k}\}$ for $k \leq n$ and has one alternation at $x_i$ for some $0 < i < k$, this implies there exists a downward path $\gamma^*= \{\tuple[0]{x^*}{k}\}$ from $x$ to $y$ with only on alternation at $x^*_{k-i}$.

    Take an upward path $\gamma = \{\tuple[0]{x}{n+1}\} $ from $x$ to $y$ with one alternation at $x_i$ for some $0 < i < n+1$. We have two cases as illustrated in Figure \ref{fig:upward_to_downward_path}. In the first case, \textbf{A},  if $i = n$, then, as shown in the  diagram, the truncated path $\{\tuple[1]{x}{n+1}\}$ from $x_1$ to $y$ satisfies the induction hypothesis and has one alternation at position $n-1$. Hence, there exists a downward path \mbox{$\gamma' = \{\tuple[0]{y}{n}\}$} from $x_1$ to $y$ with with a single alternation at  $y_{n - (n-1) } = y_1$. Thus, 
    \[ \gamma': x_1 \succ y_1 \prec y_{2} \prec \cdots \prec y_{n-1} \prec y.\]

    Then, we have $x \prec x_1 \succ y_1$. Lower semi-modularity ensures that there exists an element $x^*_1 \in P$ such that $x \succ x^*_1 \prec y_1$. Then, the path $\gamma^*= \{\tuple[0]{x^*}{n+1}\}$ with  $x^*_0 = x$ and $x^*_{j} = y_{j-1}$ for $ 1 < j \leq n+1$ is a downward path from $x$ to $y$ with one alternation at $x^*_1 = x^{*}_{(n+1) - n}$ as desired.
    \begin{align*}
         x \succ x^*_1 \prec y_1 \prec y_{2} \prec \cdots \prec y_{n-1} \prec y \\
       \gamma^*:  x^*_0  \succ x^*_1 \prec x^*_2 \prec x^*_3 \prec \cdots \prec x^*_n \prec y \\
    \end{align*}

    \begin{figure}[h]
        \centering
        \includegraphics[width=0.75\textwidth]{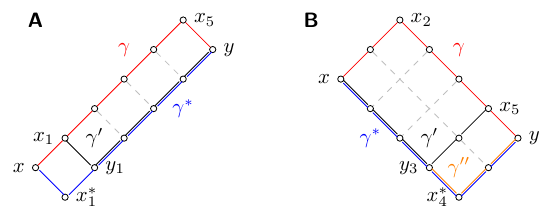}
        \caption{Example of upwards paths $\gamma =\{\tuple[0]{x}{n+1}\}$ from $x$ to $y$ with only one alternation at $x_i$ of length $n+1 =6$ and the resulting downward path $\gamma^*$ from $x$ to  $y$ with alternation at $x^*_{n-i}$. \textbf{A}. The path $\gamma$ has an alternation at $x_5$. The downward path $\gamma^*$ has an alternation at $x^*_1$. \textbf{B}. The path $\gamma$ has an alternation at $x_2$. The downward path $\gamma^*$ has an alternation at $x^*_4$.}
        
        \label{fig:upward_to_downward_path}
    \end{figure}
    
    In the second case, \textbf{B}, assume that $i < n$. As depicted in Figure \ref{fig:upward_to_downward_path}, the truncated path  $\{\tuple[0]{x}{n}\}$ from $x$ to $x_{n}$ satisfies the induction hypothesis: it is an upward path of length $n$ with a single alternation occurring at position $i$. Hence, there exists a downward path \mbox{$\gamma' = \{\tuple[0]{y}{n}\}$} from $x$ to $x_{n}$ with only one alternation at  $y_{n-i}$. Thus, 
    \[\gamma': x \succ y_1 \succ \cdots \succ y_{n-i} \prec y_{n-i + 1} \prec \cdots \prec y_{n-1} \prec x_n.\]
    
    Next, consider the path $\{ y_{n-i}, y_{n-i + 1}, ...,  y_n = x_n, y\}$. This is an upward  path of length $i + 1 \leq n$, with a single alternation at the penultimate element $x_n$, since \mbox{$ y_{n-i} \prec \cdots \prec y_{n-1} \prec y_n = x_n \succ y$}. Applying the inductive hypothesis again, there is a downward path, \mbox{$\gamma'' = \{\tuple[0]{z}{i + 1}\}$} from $y_{n-i}$ to $y$ with only one alternation at  $z_1$. Thus, 
    \[\gamma'': y_{n-i} \succ z_1 \prec z_2 \prec \cdots \prec z_i \prec y.\]

    Finally, we concatenate the downward chain of $\gamma'$ with $\gamma''$ to define $\gamma^*= \{\tuple[0]{x^*}{n+1}\}$. Specifically by 
    
    \[
    x^*_{j} = y_{j} \; \text{ for } \; 0 \leq j \leq n-i,  \qquad x^*_{j} = z_{j-(n-i)} \text{ for }\; n-i + 1 \leq j \leq n+1
    \]  

    Hence,
    \begin{equation*}
        \begin{array}{rcccl}
             x &\succ  y_1 \succ \cdots \succ y_{n-i} \succ &  z_1 &\prec z_2 &\prec \cdots \prec z_{i} \prec y \\
             \gamma^*:  x &\succ x^*_1 \succ \cdots \succ x^*_{n-i} \succ & x^*_{n-i+1} & \prec x^*_{n-i+2} &\prec \cdots \prec x^*_{n} \prec y
        \end{array}
    \end{equation*}
    
    Therefore, $\gamma^*$ is the required downward path from $x$ to $y$ of length $n+1$  with one alternation at \mbox{ $x^*_{(n+1)-i} = z_{n-i+1-(n-i)} = z_1$.}
\end{proof}

\begin{lemma}
   Let $P$ be an upper semi-modular poset. Let $x,y$ in $P$ and suppose there exists a downward path $\gamma = \{\tuple[0]{x}{n}\}$ from $x$ to $y$ with only one alternation at $x_i$ for some $0 <i < n$. Then, there exists an upward path $\gamma^*= \{\tuple[0]{x^*}{n}\}$ from  $x$ to $y$ with $\ell(\gamma^*) = \ell(\gamma)$ and only on alternation at $x^*_{n-i}$. 
    \label{lemma:downward_to_upward_path}
\end{lemma}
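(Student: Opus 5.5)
This statement is the order dual of Lemma \ref{lemma:upward_to_downward_path}, and the fastest route is a duality argument. Let $P^{\op}$ be the dual poset. Covering relations reverse in $P^{\op}$: $a \prec b$ in $P$ if and only if $b \prec a$ in $P^{\op}$. Hence a path in $P$ is a path in $P^{\op}$ with the same underlying sequence, the same length and the same alternation positions, but with upward and downward exchanged. Likewise, a common lower cover in $P$ is a common upper cover in $P^{\op}$. So $P$ is upper semi-modular exactly when $P^{\op}$ is lower semi-modular.

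The argument then runs in three steps.
\begin{enumerate}[1)]
    \item Regard the given downward path $\gamma = \{\tuple[0]{x}{n}\}$ of $P$, with its single alternation at $x_i$, as an upward path of $P^{\op}$ with a single alternation at $x_i$.
    \item Since $P^{\op}$ is lower semi-modular, apply Lemma \ref{lemma:upward_to_downward_path} in $P^{\op}$. This gives a downward path of $P^{\op}$ from $x$ to $y$ of length $n$, with its only alternation at position $n-i$.
    \item Read that path back in $P$. It is an upward path $\gamma^*$ of length $n$ with its only alternation at $x^*_{n-i}$, as required.
\end{enumerate}

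If one prefers not to invoke $P^{\op}$, the same proof can be rewritten directly by reversing every inequality in the proof of Lemma \ref{lemma:upward_to_downward_path}. The induction is on $\ell(\gamma)$.
\begin{itemize}
    \item \emph{Base case} $x \succ x_1 \prec y$. Upper semi-modularity supplies $x^*_1$ with $x \prec x^*_1 \succ y$.
    \item \emph{Case A} ($i=n$). Apply the induction hypothesis to the tail from $x_1$ to $y$, obtaining $x_1 \prec y_1 \succ \cdots$. Then use upper semi-modularity on $x \succ x_1 \prec y_1$ to get $x \prec x^*_1 \succ y_1$.
    \item \emph{Case B} ($i<n$). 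Apply the hypothesis to the initial segment from $x$ to $x_n$. Then apply it again to the upward segment from the new alternation point to $y$, and concatenate the two pieces.
\end{itemize}

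The only point needing care is checking that the dualization exchanges exactly the right things. Alternations are preserved, since $x_{i-1}\succ x_i\prec x_{i+1}$ becomes $x_{i-1}\prec x_i\succ x_{i+1}$, which is still an alternation. The index $i$ is untouched, and the up/down label of the path flips. All of this follows directly from the definitions, so there is no genuine obstacle.
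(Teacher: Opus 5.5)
\textbf{The statement is false, so the proposal has a gap.} Your reduction is the same as the paper's proof, which consists only of the remark that this lemma is the dual of Lemma~\ref{lemma:upward_to_downward_path}. Your dualization bookkeeping is also correct: semi-modularity flips, orientation flips, and the alternation index is unchanged. But duality only transfers the claim, and the claim fails in both forms. So your closing assertion that there is ``no genuine obstacle'' is wrong.

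Take the diamond $a \prec b \prec d$, $a \prec c \prec d$, which is modular. Then $d \succ c \succ a \prec b$ is a downward path from $d$ to $b$ with $n=3$ and its only alternation at $x_2=a$. Yet no upward path starts at the top element $d$. Dually, $a \prec b \prec d \succ c$ refutes Lemma~\ref{lemma:upward_to_downward_path}, since nothing lies below $a$. Step 2 of your argument therefore invokes a false result.

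Your direct rewrite shows where the induction breaks. In this example, Case A applies the base case to the tail $c \succ a \prec b$, which produces $c \prec d \succ b$. So $y_1 = d = x$, and ``upper semi-modularity on $x \succ x_1 \prec y_1$'' is being applied to $d \succ c \prec d$. That is not a pair of distinct elements, so the hypothesis does not apply. In general, nothing in the argument stops elements supplied by semi-modularity, or by the induction hypothesis, from coinciding with vertices already on the path.

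Requiring $x$ and $y$ to be incomparable does not repair this. Take the poset with covers $p\prec q$, $p\prec r$, $q\prec s$, $r\prec s$, $r\prec t$, $s\prec u$, $t\prec u$.
\begin{itemize}
\item It is upper semi-modular: the only pairs with a common lower cover are $\{q,r\}$ and $\{s,t\}$, covered by $s$ and $u$ respectively.
\item The path $s \succ q \succ p \prec r \prec t$ is downward, has its single alternation at $x_2$, and has incomparable endpoints.
\item The required path $s \prec x^*_1 \prec x^*_2 \succ x^*_3 \succ t$ would force $x^*_1=u$, the only upper cover of $s$, and then needs an element covering $u$. There is none.
\end{itemize}
In this example your Case B also collides. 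The unique upward replacement of the initial segment $s\succ q\succ p\prec r$ is $s\prec u\succ t\succ r$, which already passes through $y=t$.

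So the lemma needs an additional hypothesis, plus an argument that explicitly rules out these collisions; no duality argument can supply either. In Theorem~\ref{th:E1_B1_equal_modular} the lemma is only applied to pieces of $L_k$-minimal paths, so a restricted version might be what is really needed there. That version would need its own proof.
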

\begin{proof}
    This proof is analogous to the one before as this lemma is the dual of Lemma \ref{lemma:upward_to_downward_path} 
\end{proof}

\begin{theorem}
    If $P$ is a modular poset then for all $x,y \in P$ and for any $k$ we have $\E_k(x,y) = \B_k(x,y)$. If $P$ is also discrete then $\E(x,y) = \B(x,y)$.
    \label{th:E1_B1_equal_modular}
\end{theorem}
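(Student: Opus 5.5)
Since $\E_k(x,y) = \min_\gamma L_k(\gamma)$ (Theorem \ref{th:kclimber}) and $\B_k(x,y) = \min_\gamma L^*_k(\gamma)$ (Theorem \ref{th:kdiver}), both being minima over the same (nonempty, if $x,y$ lie in a common path-connected component) set of paths, it suffices to show the following. For every path $\gamma$ from $x$ to $y$ there is a path $\gamma'$ from $x$ to $y$ with $L^*_k(\gamma') \leq L_k(\gamma)$, and symmetrically one with $L_k(\gamma') \leq L^*_k(\gamma)$. If $x,y$ are in different components, both distances are $\infty$ by Corollary \ref{coro_finite_if_connected_1} and its dual, so there is nothing to prove.

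By the definition of $L^*_k$, whenever $A(\gamma)$ is even we have $L^*_k(\gamma) = L_k(\gamma)$, and whenever $A(\gamma)$ is odd and $\gamma$ is downward we have $L^*_k(\gamma) = L_k(\gamma) - 1$. So the only problematic case for the first inequality is $A(\gamma)$ odd with $\gamma$ upward, where $L^*_k(\gamma) = L_k(\gamma)+1$. In that case I would modify $\gamma$ near its start. Write the chain decomposition $\{C_0,\dots,C_{A(\gamma)}\}$ and look at the initial segment $C_0 \cup C_1$: an upward path from $x$ to $a_2$ with one alternation at $a_1$. Lemma \ref{lemma:upward_to_downward_path}, using lower semi-modularity, replaces it by a downward path from $x$ to $a_2$ of the same length, with one alternation at position $\ell(C_1)$. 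Its chain decomposition is a descending chain of length $\ell(C_1)$ followed by an ascending chain of length $\ell(C_0)$. Splicing this in gives a downward path $\gamma'$. Two points need checking:
\begin{itemize}
\item If $A(\gamma)\ge 2$, the junction at $a_2$ was a peak ($C_1$ descends into $a_2$ and $C_2$ ascends out of it in $\gamma$). In $\gamma'$ the new segment now ascends into $a_2$ and $C_2$ ascends out, so the alternation at $a_2$ disappears. The new ascending chain of length $\ell(C_0)$ merges with $C_2$, and $A(\gamma') = A(\gamma)-1$, which is even.
\item If $A(\gamma)=1$, then $\gamma'$ is downward with $A(\gamma')=1$ and the same multiset of chain lengths.
\end{itemize}

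Next compute. In the case $A(\gamma)=1$, $L^*_k(\gamma') = \myceil{\ell(C_0)}+\myceil{\ell(C_1)}-1 = L_k(\gamma)$. In the merging case, subadditivity of the ceiling gives
\[
\myceil{\ell(C_0)+\ell(C_2)} \le \myceil{\ell(C_0)}+\myceil{\ell(C_2)}.
\]
Hence
\[
L^*_k(\gamma') = L_k(\gamma') \le \sum_i \myceil{\ell(C_i)} - \frac{A(\gamma)-1}{2}.
\]
This overshoots $L_k(\gamma)$ by $1$, so the naive splice loses exactly the unit we wanted to save. I expect this to be the main obstacle.

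To fix this, I would also apply Lemma \ref{lemma:upward_to_downward_path} at the other end, or more systematically convert every upward peak $C_{2j}\cup C_{2j+1}$ into a valley. The cleanest route is probably an induction on $A(\gamma)$: show directly that $\B_k(x,y) \le \E_k(x,y)$ by comparing the set iterations. One proves $(\updown[k])^n(\{x\}) \subseteq (\downup[k])^{n}(\{x\})$ up to boundary effects, using Lemma \ref{lemma:upward_to_downward_path} to turn each "up $\le k$, down $\le k$" move into a "down $\le k$, up $\le k$" move with the lengths swapped. If $u \in \downarrow_k\uparrow_k\{v\}$ via an up-chain of length $p\le k$ and a down-chain of length $q\le k$, the lemma (a one-peak path) gives a valley path with a down-chain of length $q$ and an up-chain of length $p$. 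So $u \in \uparrow_k\downarrow_k\{v\}$. Hence the operators agree on singletons, and by order preservation and induction on $n$, $(\updown[k])^n = (\downup[k])^n$ on all subsets. The dual lemma gives the reverse inclusion; in fact the one lemma already gives equality of the one-step operators, since both directions are needed only for symmetry. Applying this to the definitions gives $\E_k=\B_k$ on $\P(P)$, and in particular on points. This set-level argument avoids the bookkeeping of $L_k$ entirely, so I would adopt it as the main proof.

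Finally, for the discrete case, Theorem \ref{th:E_k_converges_E} and its dual give $\E(x,y)=\lim_k \E_k(x,y)=\lim_k\B_k(x,y)=\B(x,y)$, using Corollary \ref{coro: fence+discrete} to ensure path-connectedness when the poset is fence-connected. If $x,y$ lie in different fence-connected components, both distances are $\infty$.
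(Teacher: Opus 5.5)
Your final, operator-level argument is correct in outline, and it takes a genuinely different route from the paper.

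\textbf{The paper's route.} The paper works with paths. It fixes an $L_k$-minimizing path $\gamma$. The cases where $A(\gamma)$ is even, or odd and downward, follow immediately from the definition of $L^*_k$. In the remaining case it splits $\gamma$ into the one-peak segments $C_{2i}\cup C_{2i+1}$ and converts each with Lemma~\ref{lemma:upward_to_downward_path}. Concatenating the results gives a downward path with the same number of alternations and pairwise swapped chain lengths, so $L^*_k(\gamma^*)=L_k(\gamma)$. The reverse inequality is dual. This is exactly the ``convert every upward peak'' repair you mention and then drop; your single splice does indeed lose a unit.

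\textbf{Your route, compared.} You instead show that the generating maps coincide. This gives $\E_k=\B_k$ on all of $\P(P)$, infinite values included, and it never uses Theorems~\ref{th:kclimber} and~\ref{th:kdiver}. The paper's route has the advantage of exhibiting explicit optimal paths.

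\textbf{Three points to tighten.}

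First, you do not need Lemma~\ref{lemma:upward_to_downward_path}, and you are better off without it.
\begin{itemize}
\item The inclusion $\downarrow_1\uparrow_1R\subseteq\uparrow_1\downarrow_1R$ is literally lower semimodularity. The only nontrivial elements are lower covers $u\neq r$ of an upper cover of some $r\in R$, and each of these shares a lower cover with $r$.
\item The reverse inclusion is upper semimodularity. So $\downarrow_1$ and $\uparrow_1$ commute, and $(\downarrow_1)^k(\uparrow_1)^k=(\uparrow_1)^k(\downarrow_1)^k$ follows at once.
\item The exact-length swap asserted in the lemma fails in degenerate configurations. In the diamond $0\prec a\prec 1$, $0\prec b\prec 1$, the upward path $0,a,1,b$ has a single alternation, yet no downward path starts at the minimal element $0$.
\item Hence your step ``the lemma gives a valley path with lengths $q$ and $p$'' is false for $v=0$, $u=b$, $k=2$. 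The conclusion $b\in(\uparrow_1)^2(\downarrow_1)^2\{0\}$ still holds, but only via $0\prec b$.
\item The paper's own proof is exposed to the same issue, since $0,a,1,b$ is an $L_2$-minimizer from $0$ to $b$.
\end{itemize}

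Second, your remark that one lemma ``already gives equality'' is wrong. Lower semimodularity alone yields only $\B_k\le\E_k$. Take the poset with $0\prec a$ and $0\prec b$, which is vacuously lower semimodular. There $\downarrow_1\uparrow_1\{a\}=\{0,a\}\neq\{0,a,b\}=\uparrow_1\downarrow_1\{a\}$, and indeed $\E_1(a,b)=2\neq 1=\B_1(a,b)$.

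Third, passing from singletons to arbitrary subsets uses that $\uparrow_1$ and $\downarrow_1$ are computed elementwise, i.e.\ they commute with unions. Order preservation alone would not suffice.

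The discrete part is handled as in the paper.
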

\begin{proof}
    Take $P$ a modular poset and $x,y$ in $P$. If $x$ and $y$ are in a different path-connected component then for any $k$, $\E_k(x,y) = \infty = \B_k(x,y)$ and we are done.  Assume that $x,y$ are in the same path-connected component and for a fixed $k$, let $\gamma$ be a path from $x$ to $y$  such that $L_k(\gamma) = \E_{k}(x,y)$ .  Let us prove that $\B_k(x,y) \leq \E_k(x,y)$. We have three cases 
    \begin{enumerate}[a)]
        \item Suppose that $A(\gamma)$ is even. Then, by definition of $L^*_k$, we have that  $L^*_k(\gamma) =  L_k(\gamma)$. Thus, as, desired, $\B_k(x,y) \leq L^*_k(\gamma) =  L_k(\gamma) =  \E_k(x,y)$.
         \item Suppose that $A(\gamma)$ is odd and $\gamma$ is an upward path with chain decomposition $\{ \tuple[0]{C}{A(\gamma)} \}$. Take the paths $\gamma_{i} = C_{2i} \cup C_{2i+1}$ for each integer $i$ between $0$ and $\frac{A(\gamma)-1}{2}$.
        
        \begin{minipage}{0.32\textwidth}
            \begin{center}
            \includegraphics[width=0.95\textwidth]{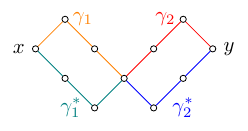}
        \end{center}
        \end{minipage}
        \begin{minipage}{0.6\textwidth}
             Each path $\gamma_i$ is an upward path with exactly one alternation. Since $P$ is lower semi-modular, Lemma \ref{lemma:upward_to_downward_path} guarantees the existence of a downward path $\gamma^*_i$ connecting the same endpoints as $\gamma_i$ with only one alternation. Take the downward path from $x$ to $y$
        \[\gamma^* = \gamma^*_1 \cup \gamma^*_2 \cup \cdots \cup \gamma^*_{\frac{A(\gamma)-1}{2}} \]
        \end{minipage}

       Let  $\{\tuple[0]{C^*}{A(\gamma*)}\}$ denote the chain decomposition of $\gamma^*$. By construction of each $\gamma^*_i$, $A(\gamma^*) = A(\gamma)$ and for all $i$, $\ell(C^*_{2i}) = \ell(C_{2i+1}) \text{ and } \ell(C^* _{2i+1}) = \ell(C_{2i})$. Then we obtain that

         \begin{align*}
            L_k(\gamma) &= \sum\limits_{i=0}^{A(\gamma)} \myceil{\ell(C_i)} - \frac{A(\gamma)+1}{2}             =\sum\limits_{i=0}^{\frac{A(\gamma)-1}{2}} (\myceil{\ell(C_{2i})} + \myceil{\ell(C_{2i+1})} -1) \\ &=\sum\limits_{i=0}^{\frac{A(\gamma)}{2} - 1} (\myceil{\ell(C^*_{2i+1})} + \myceil{\ell(C^*_{2i})} -1) = \sum\limits_{i=0}^{A(\gamma)} \myceil{\ell(C^* _i)} - \frac{A(\gamma)+1}{2} = L^*_k(\gamma^*).
        \end{align*}
        
        Hence, $\B_k(x,y) \leq L^*_k(\gamma) =  L_k(\gamma) =  \E_k(x,y)$. Thus, $\E_k(x,y)$ upper bounds $\B_k(x,y)$.

        \item Suppose that $A(\gamma)$ is odd and  $\gamma$ is a downward path. Then, by definition of $L^*_k$, we have that  $L^*_k(\gamma) =  L_k(\gamma) - 1$. Thus,  $\B_k(x,y) \leq L^*_k(\gamma) < L_k(\gamma) =  \E_k(x,y)$ as desired.
    \end{enumerate}

    In a dual way, we can prove that $\E_k(x,y) \leq \B_k(x,y)$. Take $\gamma^*$ to be a path from $x$ to $y$ such that $L^*_k(\gamma^*) = \B_{k}(x,y)$. We again have three cases
    \begin{enumerate}[a)]
        \item Suppose that $A(\gamma^*)$ is even. Then we have that  $L_k(\gamma^*) =  L^*_k(\gamma^*)$. Thus, we obtain that $\E_k(x,y) \leq L_k(\gamma^*) =  L^*_k(\gamma^*) = \B_k(x,y)$, so $\B_k(x,y)$ upper bounds $\E_k(x,y)$.
        
        \item Suppose that $A(\gamma^*)$ is odd and  $\gamma^*$ is a downward path. Analogous to the case b) above, by Lemma \ref{lemma:downward_to_upward_path} there exists an upward path $\gamma$ such that $L_k(\gamma) =  L^*_k(\gamma^*)$. Hence, $\E_k(x,y) \leq L_k(\gamma^*) =  L^*_k(\gamma^*) = \B_k(x,y)$ as desired.
        \item Suppose that $A(\gamma^*)$ is odd and $\gamma^*$ is an upward path. Hence, as $\gamma$ is an upward path,  $L^*_k$, we have that  $L_k(\gamma^*) =  L^*_k(\gamma^*) - 1$. Thus,  $\E_k(x,y) \leq L_k(\gamma^*) <  L^*_k(\gamma^*) = \B_k(x,y)$ as we wanted.        
    \end{enumerate}

    In this way, $\E_k(x,y) = \B_k(x,y)$ if $x,y$ are in the same path-connected component.

    For the second part, assume now that $P$ is also a discrete poset. If $x,y$ are in a different fence-connected components, then $\E(x,y) = \infty = \B(x,y)$. Assume instead that $x$ and $y$ are in the same fence-connected component, then, they also lie in the same path-connected component (Corollary \ref{coro: fence+discrete}). From the previous part, we therefore have $\E_k(x,y) = \B_k(x,y)$ for every $k$. By theorem \ref{th:E_k_converges_E} and its dual, the sequences $(\E_k(x,y))_{k=1}^\infty$ and $(\B_k(x,y))_{k=1}^\infty$ converges respectively to $\E(x,y) $ and $\B(x,y)$. Since the two sequences agree term-by-term, their limits must coincide. Hence, $\E(x,y) = \B(x,y)$.
\end{proof}

\subsection{Distances under constructions of posets}

In this subsection we describe how to create new posets from existing ones and characterize the climber and diver distances in these new posets. We will use the notation $\E^P$ and $\B^P$ and $\E_k^P$ and $\B_k^P$ to emphasize that each metric is computed with respect to a specific underlying poset $P$. 

\subsubsection*{Subposets}

We can create a new poset from an existing one by restricting to subsets. If $P$ is a poset and $Q \subset P$, then $Q$ becomes a poset by inheriting the order from $P$: whenever $x,y \in Q $ satisfy $x \leq_P y$, we set $x \leq_Q y$. Because the order is inherited, every path or fence that lies in $Q$ is contained in $P$ as well. 

Take $k$ fixed but arbitrary. Let $\gamma$ be a path in $Q$ from $x$ to $y$ such that $L_k(\gamma) = \E^Q_k(x,y)$. Since $\gamma \subset P$ we obtain that
\[ \E^P_k(x,y) \leq L_k(\gamma) = \E^Q_k(x,y). \]

Thus, the $k$-climber distance can only increase. The same monotonicity holds for the fence-climber distance. If $F$ is a fence from $x$ to $y$ in $Q$ that minimizes $\E^Q$ we have that 

\[ \E^P(x,y) \leq L(F) = \E^Q(x,y). \]

By the same argument we conclude as well that 

\[ \B^P_k(x,y) \leq \B^Q_k(x,y)  \quad \text{and} \quad \B^P(x,y) \leq \B^Q(x,y).\]

\subsubsection*{Opposite posets}

Let $P$ be a poset. The \textit{opposite poset} of $P$, $P^{\op}$, has the same underlying set with the order reversed: $x \leq_{P^{\op}} y$ if and only if $y \leq_P x$. Then, the set identity function $\text{Id}$ is a natural anti-isomorphism $\text{Id}: P \to P^{\op}$ from a poset to its opposite. 

Consider $\gamma$ a path in $P$ from $x$ to $y$. The path $\gamma^{\op}$ in $P^{\op}$ is obtained by taking the same elements of $\gamma$. Then, $\gamma^{\op}$ is a path from $x$ to $y$ that has the same length, the same number of alternations but with opposite orientation, i.e, if $\gamma$ is an upward path, then, $\gamma^{\op}$ is a downward path and viceversa. Thus, 
\[
L_k(\gamma) =  L^*_k(\gamma^{op}) \quad \text{and} \quad L^*_k(\gamma) =  L_k(\gamma^{op}).
\]

Moreover, $\gamma$ minimizes $L_k$ in $P$ if and only if $\gamma^{op}$ minimizes $L^*_k$ in $P^{\op}$. In a similar way, $\gamma$ minimizes $L^*_k$ in $P$ if and only if $\gamma^{op}$ minimizes $L_k$ in $P^{\op}$. Hence, for any $x,y$ in $P$ 
\[ \B^{P^{op}}_k(x,y) = \E^P_k(x,y)  \quad \text{and} \quad \E^{P^{op}}_k(x,y) = \B^P_k(x,y).   \]

The same correspondence holds for fences. If $F$ is a fence from $x$ to $y$ in $P$, then the fence $F^{\op}$ in $P^{\op}$ obtained by taking same elements of $F$, has the same number of alternations but with opposite orientation. Consequently,  

\[ \B^{P^{op}}(x,y) = \E^P(x,y)  \quad \text{and} \quad \E^{P^{op}}(x,y) = \B^P(x,y).   \]

In particular, we have the following result for modular posets.

\begin{corollary}
    Let $P$ be a modular poset. For any $k$ we have that for any $x,y \in P$,
    \[ \E^P_k(x,y) =  \E^{P^{\op}}_k(x,y) = \B^P_k(x,y) =\B^{P^{\op}}_k(x,y).\]
    If $P$ is also discrete we have 
    \[ \E^P(x,y) = \E^{P^{\op}}(x,y) =  \B^P(x,y)   =\B^{P^{\op}}(x,y).\]
    \label{cor:modular_posets_distance}  
\end{corollary}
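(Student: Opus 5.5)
The corollary follows by chaining Theorem \ref{th:E1_B1_equal_modular} with the opposite-poset identities established just above. There is one preliminary observation: modularity is self-dual, so the theorem can be applied to $P^{\op}$ as well as to $P$. By definition, $P$ is modular when it is both upper and lower semi-modular. Reversing the order swaps upper and lower covers, so a common lower cover in $P^{\op}$ is a common upper cover in $P$, and conversely. Hence upper semi-modularity of $P^{\op}$ is exactly lower semi-modularity of $P$, and vice versa. Therefore $P^{\op}$ is modular. Likewise, if $P$ is discrete then so is $P^{\op}$: intervals $[x,y]$ in $P^{\op}$ are the intervals $[y,x]$ of $P$ with the order reversed, and reversing an order leaves chains, and hence their finiteness, unchanged.

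For the $k$-step statement, fix $k$ and $x,y\in P$.
\begin{itemize}
\item Theorem \ref{th:E1_B1_equal_modular} applied to $P$ gives $\E^P_k(x,y)=\B^P_k(x,y)$.
\item The opposite-poset discussion gives $\E^{P^{\op}}_k(x,y)=\B^P_k(x,y)$ and $\B^{P^{\op}}_k(x,y)=\E^P_k(x,y)$.
\end{itemize}
Combining these yields
\[
\E^P_k(x,y)=\B^P_k(x,y)=\E^{P^{\op}}_k(x,y) \quad\text{and}\quad \B^{P^{\op}}_k(x,y)=\E^P_k(x,y),
\]
so all four quantities coincide. As a consistency check, Theorem \ref{th:E1_B1_equal_modular} applied directly to $P^{\op}$ gives the same equality $\E^{P^{\op}}_k=\B^{P^{\op}}_k$.

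The fence statement is handled the same way. The second part of Theorem \ref{th:E1_B1_equal_modular} needs $P$ discrete, and it gives $\E^P(x,y)=\B^P(x,y)$. The fence identities for opposite posets, $\B^{P^{\op}}=\E^P$ and $\E^{P^{\op}}=\B^P$, hold without any discreteness assumption. Combining them gives
\[
\E^P(x,y)=\E^{P^{\op}}(x,y)=\B^P(x,y)=\B^{P^{\op}}(x,y).
\]

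There is no real obstacle here. The only point needing care is the self-duality of modularity and discreteness, which is a one-line check: the two semi-modularity conditions are swapped by reversing the order.
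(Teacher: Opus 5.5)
Your proof is correct and is essentially the paper's argument: both combine Theorem~\ref{th:E1_B1_equal_modular} with the opposite-poset identities $\E^{P^{\op}}_k=\B^P_k$ and $\B^{P^{\op}}_k=\E^P_k$, together with their fence analogues. The only difference is bookkeeping: the paper applies the theorem to both $P$ and $P^{\op}$ (asserting without proof that $P^{\op}$ is modular), whereas you apply it only to $P$ and use both cross identities, so your self-duality checks for modularity and discreteness are correct but not actually needed.
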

\begin{proof}
    Let $P$ be a modular poset. It is known that if $P$ is a modular poset, then, $P^{\op}$ is also modular. Hence, by Theorem \ref{th:E1_B1_equal_modular} we obtain that 
    \[ \E^P_k(x,y) =  \B^P_k(x,y) \quad \text{ and } \quad \E^{P^{\op}}_k(x,y) =  \B^{P^{\op}}_k(x,y). \]
    
    By the duality of $P$ and $P^{\op}$ we conclude that 
    
    \[ \E^P_k(x,y) =  \B^P_k(x,y) = \E^{P^{\op}}_k(x,y) =\B^{P^{\op}}_k(x,y).\]

    Assume that $P$ is discrete. A similar argument  proves that 
     \[ \E^P(x,y) =  \B^P(x,y) = \E^{P^{\op}}(x,y) =\B^{P^{\op}}(x,y).\]
\end{proof}


\subsubsection*{Direct product of posets}

Take $\{P_i\}_{i=1}^n$ to be a collection of posets. The \textit{direct product} of this collection is the cartesian product  \mbox{$ \displaystyle \bigtimes_{i=1}^n P_i =  P_1 \times P_2 \cdots \times P_{n}$} with the order relation 
\begin{equation*}
    (\tuple{x}{n}) \leq_{\bigtimes_{i=1}^n P_i} (\tuple{y}{n}) \; \text{ if and only if } \; x_i \leq_{P_i} y_i\text{ for all } 1 \leq i \leq n.
\end{equation*}

\begin{lemma}
    Let $P$ and $Q$ be posets. Then, for any $(x,y), (x',y') \in P \times Q$ we obtain that 
    \begin{center}
        $\E^{P \times Q}((x,y), (x',y')) = \max\{ \E^P(x,x') , \E^Q(y,y')\}$.
    \end{center}
    \label{lemma:E_direct_product_of_2}
\end{lemma}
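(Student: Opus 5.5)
The plan is to work directly with the defining operator $\updown$ on $\P(P\times Q)$ rather than with fences. I would show that this operator acts coordinatewise on "rectangles" $A\times B$. Then the minimal $n$ in the definition of $\E^{P\times Q}$ splits into the two minimal $n$'s for $P$ and $Q$.

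\textbf{Step 1 (up-sets and down-sets of rectangles).} For $A\subseteq P$ and $B\subseteq Q$, I would check that
\[
\uparrow(A\times B)=\,\uparrow A\times\uparrow B \quad\text{and}\quad \downarrow(A\times B)=\,\downarrow A\times\downarrow B .
\]
For the first identity, note that $(p,q)\in\,\uparrow(A\times B)$ iff there is $(a,b)\in A\times B$ with $a\le p$ and $b\le q$. This holds iff $p\in\,\uparrow A$ and $q\in\,\uparrow B$, because the witnesses $a$ and $b$ can be chosen independently. If $A$ or $B$ is empty, both sides are empty. The identity for $\downarrow$ is dual.

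\textbf{Step 2 (iterating).} By induction on $n$, Step 1 gives
\[
(\updown)^n(\{(x,y)\})=(\updown)^n(\{x\})\times(\updown)^n(\{y\}).
\]
Hence $(x',y')\in(\updown)^n(\{(x,y)\})$ iff $x'\in(\updown)^n(\{x\})$ and $y'\in(\updown)^n(\{y\})$. The same holds with the roles of the two points exchanged.

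\textbf{Step 3 (minimisation).} Since $R\subseteq\,\downarrow\uparrow R$ by reflexivity, the sets $(\updown)^n(\{x\})$ increase with $n$. So the set of admissible $n$ in the definition of $\E^P(x,x')$ is upward closed, namely $\{n\ge \E^P(x,x')\}$, and it is empty when $\E^P(x,x')=\infty$; similarly for $Q$. By Step 2, the admissible set for $\E^{P\times Q}((x,y),(x',y'))$ is the intersection of the two admissible sets. The minimum of an intersection of two rays $[a,\infty)$ and $[b,\infty)$ in $\N$ is $\max\{a,b\}$, and the intersection is empty exactly when one of the rays is empty. This yields the formula, including the value $\infty$.

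The only point needing care is Step 1, specifically the independence of the witnesses and the empty-set edge case. The rest is bookkeeping, so I do not expect a genuine obstacle. The monotonicity in Step 3 is what allows replacing "simultaneous minimal $n$" by the maximum of the separate minima.
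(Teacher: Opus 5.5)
Your proof is correct, and it takes a genuinely different route from the paper's.

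The paper proves the lemma through the fence characterization of $\E$ (Theorem~\ref{th_E_is_metric}). For ``$\le$'' it splices optimal fences $F\subseteq P$ and $G\subseteq Q$ into a fence $H$ of $P\times Q$ with $L(H)\le\max\{L(F),L(G)\}$. This needs a case analysis on how $A(F)$ compares with $A(G)$ and on the two orientations. For ``$\ge$'' it projects an optimal fence of $P\times Q$ onto each factor and extracts a sub-fence whose fence-climber length is no larger.

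You bypass fences and work directly from the operator definition of $\E$. The identity $\uparrow(A\times B)=\,\uparrow A\times\uparrow B$ and its dual, together with monotonicity of the iterates, do all the work.

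What your route buys:
\begin{itemize}
\item It needs no parity or orientation bookkeeping.
\item It avoids the projection step, which needs care because repeated coordinates can collapse alternations or flip the orientation.
\item It treats $x=x'$ and infinite distances uniformly.
\item By duality it gives $\B^{P\times Q}=\max\{\B^P,\B^Q\}$ at no extra cost.
\item It gives the $n$-fold product theorem in one step rather than by induction on the number of factors. It even extends to arbitrary products, with $\sup$ in place of $\max$.
\end{itemize}

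It also shows exactly why the formula is special to $\E$. The operator $\uparrow_1$ does not commute with products, since a cover in $P\times Q$ moves only one coordinate. Indeed, in the product of two $2$-element chains the bottom and top are at $\E_1$-distance $2$, while both coordinate distances are $1$.

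What the paper's route offers in exchange is explicit information about the shape of optimal fences in $P\times Q$, which your argument does not provide.
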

\begin{proof}
    Take $(x,y)$ and $(x',y')$ in $P \times Q$. Suppose there is a fence $F=\{\tuple[0]{f}{m+1}\}$ with $m$ alternations in $P$  from $x$ to $x'$ such that  $L(F) = \E^{P}(x, x')$. Similarly, assume there is a fence $G=\{\tuple[0]{g}{n+1}\}$ in $Q$  from $y$ to $y'$ with $n$ alternations such that  $L(G) = \E^{Q}(y, y')$. Let us prove that we can find a fence $H$ in $P \times Q$ from $(x,y)$ to $(x',y')$ with $L(H) \leq \max\{L(F),L(G)\}$. Then, we obtain the inequality
    \begin{equation*}
        \E^{P \times Q}((x,y), (x',y')) \leq L(H) \leq \max\{L(F),L(G)\} = \max\{ \E^P(x,x') , \E^Q(y,y')\}.
    \end{equation*}
    
    We have three cases for the fences $F$ and $G$: $A(F) < A(G)$, $A(F) = A(G)$ and $A(F) > A(G)$.
    \begin{enumerate}[a)]
        \item Suppose that $A(F) < A(G)$, so, $m < n$. If $F$ and $G$ have the same orientation, then, take the fence $H$ given by 
        \begin{equation*}
            H = \{ (x,y) , (f_1,g_1) ,  ... , (f_{m}, g_m) , (f_{m+1}, g_{m+1}), ...,  (f_{m+1}, g_{n}) = (x',y')\}.
        \end{equation*}
        If $F$ and $G$ have different orientations consider $H$ to be 
        \begin{equation*}
            H = \{ (x,y) , (x,g_1) , (f_1,g_2)  ... , (f_{m}, g_{m+1}) , (f_{m+1}, g_{m+2}), ...,  (f_{m+2}, g_{n}) = (x',y')\}.
        \end{equation*}

        In both cases, the fence $H$ has the same orientation and same number of alternations as $G$. Thus, \mbox{$L(H) = L(G) \leq \max\{L(F),L(G)\}$.}

        \item Suppose that  $A(F) = A(G)$, so, $m = n$.  In the case that $F$ and $G$ have the same orientation, then the fence 
        \[H = \{ (x,y) , (f_1,g_1) ,  ... , (f_{m}, g_m) , (f_{m+1}, g_{m+1}) = (x',y')\} \]
        
        has the same orientation and same number of alternations as both. Thus, \mbox{$L(H) = \max\{L(F),L(G)\}$.}
        
        In the counter case when $F$ and $G$ have different orientations, we use a different strategy to create the fence $H$. If $F$ is the upward fence, so $G$ is downward, and consider
        \begin{equation*}
            H = \{ (x,y) , (f_1,y) , (f_2,g_1)  ... , (f_{m+1}, g_{m}) , (f_{m+1}, g_{m+1}) = (x',y')\}.
        \end{equation*}
        If $G$ is the upward fence instead, take 
        \begin{equation*}
            H = \{ (x,y) , (x,g_1) , (f_1,g_2)  ... , (f_{m}, g_{m+1}) , (f_{m+1}, g_{m+1}) = (x',y')\}.
        \end{equation*}

        In this way, $H$ is an upward fence with $m+1$ alternations. When $m$ is even we obtain $L(H) = \frac{(m + 1)+ 1}{2} = L(F) = L(G)$. If instead $m$ is odd, then \mbox{$L(H) = \frac{(m + 1)+ 2}{2} = L(\cdot)$} where "$\cdot$" denotes whichever of $F$ and $G$ is the downward fence. Thus, \mbox{ $ L(H) \leq \max\{L(F),L(G)\}$}
        
        \item Assume that  $A(F) = A(G)$, then, $m > n$. With an analogous argument to a) we can find a fence $H$ that has the same orientation and same number of alternations of $F$. Thus, \mbox{$L(H) = L(F) \leq \max\{L(F),L(G)\}$.}
    \end{enumerate}

    Hence, we have proved that if $x$ and $x'$ and $y$ and $y$ are fence-connected respectively, then \begin{equation}
        \E^{P \times Q}((x,y), (x',y')) \leq  \max\{ \E^P(x,x') , \E^Q(y,y')\}.
        \label{eq:E_product_bounded}
    \end{equation}

    Nevertheless, \eqref{eq:E_product_bounded} still holds if either of the pairs are not fence-connected because in that case $\max\{ \E^P(x,x') , \E^Q(y,y')\} = \infty$.

    Now, let us prove that $\max\{ \E^P(x,x') , \E^Q(y,y')\} \leq \E^{P \times Q}((x,y), (x',y'))$. In this way, together with \eqref{eq:E_product_bounded} we can conclude that 
    \begin{equation*}
        \E^{P \times Q}((x,y), (x',y')) =  \max\{ \E^P(x,x') , \E^Q(y,y')\}.
    \end{equation*}

    If $(x,y)$ and $ (x',y')$ are not fence-connected we trivially obtain that 
    \[\max\{ \E^P(x,x') , \E^Q(y,y')\} \leq \infty = \E^{P \times Q}((x,y), (x',y')) .\] 
    
    Thus, assume there is a fence $H = \{ (x,y), (x_1,y_1), \ldots, (x_n,y_n), (x',y')\} $ from $(x,y)$ to $(x',y')$ such that \mbox{$L(H) = \E^{P \times Q}((x,y), (x',y'))$}. Consider the projection of $H$ onto $P$, namely $ \{x,\tuple{x}{n},x'\}$. If this set is itself a fence, then it has the same number of alternations and the same orientation as $H$. Thus $L(\{x,\tuple{x}{n},x'\}) = L(H)$. If instead, the projection fails to be a fence, then it necessarily contains a proper subset, $F$, that is a fence from $x$ to $x'$ with $A(F) < n$. Hence, we can check that $L(F) \leq L(H)$. In either case, 
    \[
        \E^P(x,x') \leq L(F) \leq \E^{P \times Q}((x,y), (x',y')).
    \]

    In a similar way, we can prove that $\E^Q(y,y') \leq \E^{P \times Q}((x,y), (x',y')$ considering the projection of $H$ onto Q, $\{y,\tuple{y}{n},y'\}$. Therefore, we have proved that
    \begin{equation*}
            \max\left\{ \E^P(x,x') , \E^Q(y,y') \right\} \leq \E^{P \times Q}((x,y), (x',y')).
    \end{equation*} 
\end{proof}

\begin{theorem}
    Let $(\tuple{x}{n}),(\tuple{y}{n}) \in \displaystyle \bigtimes_{i=1}^n P_i$. Then 
    \begin{equation*}
        \E^{\bigtimes_{i=1}^n P_i}((\tuple{x}{n}),(\tuple{y}{n})) = \max_{1 \leq i \leq n} \{ \E^{P_i}(x_i,y_i) \}. 
    \end{equation*}
\end{theorem}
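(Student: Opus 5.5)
The plan is induction on $n$, using Lemma \ref{lemma:E_direct_product_of_2} as the inductive step. For $n=1$ there is nothing to prove. The case $n=2$ is exactly Lemma \ref{lemma:E_direct_product_of_2}.

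For $n \geq 3$ I will use the canonical identification
\[
\bigtimes_{i=1}^n P_i \;\cong\; \Big(\bigtimes_{i=1}^{n-1} P_i\Big) \times P_n,
\qquad (x_1,\dots,x_n) \mapsto ((x_1,\dots,x_{n-1}),x_n).
\]
First I check that this bijection is an order isomorphism, which is immediate because both orders are componentwise. Next I observe that $\E$ is invariant under order isomorphisms. An isomorphism sends fences to fences and preserves length, number of alternations and orientation, so it preserves $L(F)$. Since $\E(x,y)=\min L(F)$ by Theorem \ref{th_E_is_metric}, $\E$ is preserved. The same conclusion also follows directly from the definition, since an isomorphism commutes with $\uparrow$ and $\downarrow$ on $\P(P)$. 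Disconnected pairs give $\infty$ on both sides, so they need no separate treatment.

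With this in hand, apply Lemma \ref{lemma:E_direct_product_of_2} with $P=\bigtimes_{i=1}^{n-1}P_i$ and $Q=P_n$:
\[
\E^{\bigtimes_{i=1}^n P_i}(x,y) = \max\Big\{\E^{\bigtimes_{i=1}^{n-1} P_i}((x_1,\dots,x_{n-1}),(y_1,\dots,y_{n-1})),\; \E^{P_n}(x_n,y_n)\Big\}.
\]
The induction hypothesis rewrites the first term as $\max_{1\le i\le n-1}\E^{P_i}(x_i,y_i)$, and combining the two maxima gives the formula. The convention $\max\{a,\infty\}=\infty$ handles the case where some pair $x_i,y_i$ is not fence-connected, consistent with how the lemma was stated.

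The only real content beyond bookkeeping is the invariance of $\E$ under isomorphism together with the associativity of the product; I expect that to be the main (though minor) obstacle. Everything else is already contained in Lemma \ref{lemma:E_direct_product_of_2}.
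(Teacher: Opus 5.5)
Your proposal is correct and follows the paper's proof essentially step for step. It uses induction on $n$ with the identification of $P_1\times\cdots\times P_n$ with $(P_1\times\cdots\times P_{n-1})\times P_n$, notes that fences correspond with the same length, number of alternations and orientation (so $\E$ is unchanged), and then applies Lemma~\ref{lemma:E_direct_product_of_2} together with the induction hypothesis; your second justification of isomorphism-invariance, that an order isomorphism commutes with $\uparrow$ and $\downarrow$ on $\P(P)$, is a slightly cleaner way to get that same step.
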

\begin{proof}
    Assume all the hypothesis and let us prove the claim for the fence-climber distance by induction on $n$. If $n=1$, then $\bigtimes_{i=1}^n P_i = P_1$ and the claim is trivially true.
    
    Suppose that for a certain $n$, 
    \begin{equation*}
        \E^{\bigtimes_{i=1}^{n-1} P_i}((\tuple{x}{n-1}),(\tuple{y}{n-1})) = \max_{1 \leq i \leq n-1} \{ \E^{P_i}(x_i,y_i) \}
    \end{equation*}

    and consider the direct product $\bigtimes_{i=1}^{n} P_i$. Take $P = \bigtimes_{i=1}^{n-1} P_i$ and $Q = P_n$. Note that 
    \[ (\tuple{x}{n}) <_{\bigtimes P_i} (\tuple{y}{n}) \; \text { if and only if }\; ((\tuple{x}{n-1}) , x_n) <_{P \times Q} ((\tuple{y}{n-1}) , y_n)\]

    Hence, we can check that if $F = \{ (\tuple{x^i}{n}) \mid 0 \leq i \leq m\}$ is a fence in $\bigtimes_{i=1}^{n} P_i$ from $(\tuple{x}{n})$ to $(\tuple{y}{n})$, then the fence $F'  = \{ ((\tuple{x^i}{n-1}), x^i_n) \mid 0 \leq i \leq m\}$ is a fence in $P \times Q$ from $((\tuple{x}{n-1}),x_n)$ to $((\tuple{y}{n-1}),y_n)$ with the same length, same number of alternations and same orientation. Hence $L(F') = L(F)$. Moreover, $F$ minimizes $L$ in $\bigtimes_{i=1}^{n} P_i$ if and only if $F'$ minimizes $L$ in $P \times Q$. Thus, 
    \[
    \E^{\bigtimes_{i=1}^{n} P_i}((\tuple{x}{n}),(\tuple{y}{n})) = \E^{P \times Q} \big( ((\tuple{x}{n-1}),x_n), ((\tuple{y}{n-1}), y_n) \big).
    \]
    
    Then, by Lemma \ref{lemma:E_direct_product_of_2} and the induction hypothesis, 
    \begin{align*}
        \E^{\bigtimes_{i=1}^{n} P_i}((\tuple{x}{n}),(\tuple{y}{n})) &= \E^{P \times Q} \big( ((\tuple{x}{n-1}),x_n), ((\tuple{y}{n-1}), y_n) \big)\\
        &=\max \{ \E^P((\tuple{x}{n-1}),(\tuple{y}{n-1})) , \E^Q(x_n,y_n)\} \\
        &= \max \{ \max_{1 \leq i \leq n-1} \{ \E^{P_i}(x_i,y_i) \} , \E^{P_n}(x_n,y_n)\} \\
        &= \max_{1 \leq i \leq n} \{ \E^{P_i}(x_i,y_i) \}.
    \end{align*}
    Thus, concluding the proof. 
\end{proof}

\subsubsection*{Lexicographic sum of posets}

Let $T$ be a poset and take $\{P_t\}_{t \in T}$  a be a family of pairwise disjoint nonempty posets indexed by $T$ that are all disjoint from $T$.  The  \textit{lexicographic sum} $\displaystyle \lex[t \in T] P_t$ of this family is the set with elements $\displaystyle \bigcup_{t \in T} P_t$ and order relation $x \leq_{\lex} y$ if 1) $x \leq_{P_s} y$ for some $s \in T$ or 2) $x \in P_s$ and $y \in P_t$ with $s \leq_T t$.

The \textit{disjoint union} $\displaystyle \bigsqcup_{i=1}^n P_i$ is a special case of lexicographic sum with $T$ the set of $n$ elements not comparable pairwise. The \textit{ordinal sum} $\displaystyle \sum_{i=1}^n P_i$ is a lexicographic sum with $T = \{1,2, ... n\}$ with the usual order. 

\begin{lemma}
    Take $x,y$ in $\lex[t \in T] P_t$, the lexicographic sum of $\{P_t\}_{t \in T}$. Then,
    \begin{equation*}
        \E^{\lex} (x,y) = \left\{ 
        \def\arraystretch{1.5}
        \begin{array}{cc}
            1 - \delta_{x,y} & x,y \in P_s \text{ for some } s \text{ upper bounded in } T \\ 
            \min\{\E^{P_s}(x,y) , 2 \} & x,y \in P_s \text{ for some } s \text{ maximal and lower bounded in } T \\
            \E^{P_s}(x,y) & x,y \in P_s \text{ for some } s \text{ not comparable in } T \\
            \E^{T}(s,t) &  x\in P_s , y \in P_t.
        \end{array}
        \right.
    \end{equation*}
\end{lemma}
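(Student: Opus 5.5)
The plan is to compare fences in $\lex[t \in T] P_t$ with fences in $T$ and in the $P_s$, using Theorem \ref{th_E_is_metric} throughout: the fence-climber distance is the minimum of $L(F)$ over fences $F$. Throughout, we use the basic observation about comparabilities in the lexicographic sum. If $x \in P_s$ and $y \in P_t$ with $s \neq t$, then $x < y$ iff $s <_T t$. If both lie in $P_s$, they are comparable iff they are comparable in $P_s$. The proof splits into the four cases of the statement, handled in that order.

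\textbf{Same block.} Let $x,y \in P_s$ with $x \neq y$.

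First, suppose $s$ has a strict upper bound $t$ in $T$. Pick any $z \in P_t$, which is nonempty. Then $x < z > y$ is an upward fence with one alternation, so $L = 1$. Since $x \neq y$ forces $\E \geq 1$, this gives $\E^{\lex}(x,y) = 1 - \delta_{x,y}$.

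Next, suppose $s$ is maximal but has a strict lower bound $t$. Any $w \in P_t$ gives the downward fence $x > w < y$, so $\E^{\lex}(x,y) \leq 2$. Also $\E^{\lex}(x,y) \leq \E^{P_s}(x,y)$ by the subposet monotonicity already shown. For the reverse inequality, assume $\E^{\lex}(x,y) = 1$ and derive $\E^{P_s}(x,y) \leq 1$. A fence with $L=1$ is either a single comparability $x \sim y$, or an upward fence $x < z > y$. Since $s$ is maximal, any $z$ above $x$ must lie in $P_s$, so the fence lives in $P_s$. Hence $\E^{\lex}(x,y) = \min\{\E^{P_s}(x,y), 2\}$.

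Finally, suppose $s$ is comparable to nothing in $T$. Then no element outside $P_s$ is comparable to an element of $P_s$. So every fence starting at $x$ stays in $P_s$, and the fences in the two posets coincide. This gives $\E^{\lex}(x,y) = \E^{P_s}(x,y)$.

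\textbf{Different blocks.} Let $x \in P_s$ and $y \in P_t$ with $s \neq t$.

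For "$\leq$", take a fence $s = t_0, t_1, \dots, t_{m} = t$ in $T$ realizing $\E^T(s,t)$. Lift it by choosing arbitrary $f_i \in P_{t_i}$, with $f_0 = x$ and $f_m = y$. Comparabilities between distinct indices lift exactly, so this is a fence with the same alternations and orientation, and $L$ is unchanged.

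For "$\geq$", take a fence $F$ in the lexicographic sum realizing $\E^{\lex}(x,y)$ and project it to the sequence of indices. Collapse maximal runs of consecutive elements lying in the same block. Each relation between distinct blocks projects to the same strict relation in $T$. The result is a zig-zag-type sequence in $T$ from $s$ to $t$, possibly with non-alternating consecutive steps, which we shorten to a genuine fence as in the projection argument of Lemma \ref{lemma:E_direct_product_of_2}.

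\textbf{Main obstacle.} The hardest step is showing that this collapsing and shortening never increases $L$. Merging a same-block run, or deleting a non-alternation, can reduce $A$ by one and may flip the orientation. A flip could a priori increase $L$ by one, for instance from an odd upward fence to an even one. I would handle this exactly as in cases c), d) of Lemma \ref{lemma_infty_diam}: check that each elementary reduction either lowers $A$ by at least two, or lowers it by one in a way that keeps $L$ non-increasing. A short case table on the parity of $A$ and the orientation should settle it.

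If some case fails, the fallback is the other direction. We would use the set-iteration definition directly: show that the block-index map sends $(\updown)^n(\{x\})$ into the union of blocks indexed by $(\updown)^n(\{s\})$. This follows since $\uparrow$ and $\downarrow$ in the lexicographic sum of a set touching block $r$ cover whole blocks strictly above or below $r$, plus part of $P_r$. The inequality $\E^T(s,t) \leq \E^{\lex}(x,y)$ then follows immediately from the definition of $\E$.
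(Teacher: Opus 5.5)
Your proof is correct. It shares the paper's skeleton: the same four cases, the same one-alternation fences $x<z>y$ and $x>w<y$ through a neighbouring block, and the same block-by-block lifting of a minimal fence of $T$.

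Where you differ is in the two lower bounds, which the paper essentially leaves unproved.

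\emph{Maximal and lower bounded case.} The paper only shows $\E^{\lex}(x,y)\le 2$. Your remark that every $z>x$ lies in $P_s$ (by maximality of $s$) shows that $\E^{\lex}(x,y)=1$ forces $\E^{P_s}(x,y)\le 1$. That is exactly what the $\min\{\E^{P_s}(x,y),2\}$ formula needs.

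\emph{Different-blocks case.} Here the paper merely asserts that minimality transfers between $T$ and the sum, so your argument supplies something the paper lacks. Your worry about orientation flips is unfounded. An upward fence with $n$ steps has $L=\lceil n/2\rceil$ and a downward one has $L=\lfloor n/2\rfloor+1$. Deleting the first step is the only reduction that flips orientation, and it sends $\lceil n/2\rceil$ to $\lfloor (n-1)/2\rfloor+1=\lceil n/2\rceil$, or $\lfloor n/2\rfloor+1$ to $\lceil (n-1)/2\rceil=\lfloor n/2\rfloor$. Deleting a middle same-block step makes its two neighbouring steps point the same way; they merge, so two alternations disappear. Deleting the last step keeps the orientation. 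So your case table does close.

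Even so, your fallback is the cleaner primary argument. For $a\in P_r$ one has $\uparrow a=(\uparrow a\cap P_r)\cup\bigcup_{u>_T r}P_u$. Hence the block-index map $\pi$ satisfies $\pi(\uparrow R)\subseteq\,\uparrow\pi(R)$, and dually for $\downarrow$. By induction, $\pi\bigl((\updown)^n(R)\bigr)\subseteq(\updown)^n(\pi(R))$, and $\E^T(s,t)\le\E^{\lex}(x,y)$ follows straight from the definition. This needs no case table, and it covers the disconnected case $\E^T(s,t)=\infty$ automatically, which the paper handles separately.
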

\begin{proof}
    Take $x,y$ in $\lex[t \in T] P_t$, the lexicographic sum of $\{P_t\}_{t \in T}$. Assume that $x,y$ are in the same poset $P_s$. First, suppose there is $t \in T$ such that $s < t$. Then there is an element $z \in P_t$ such that $x <_{\lex} z >_{\lex} y$. Hence $\{ x,z,y\}$ forms an upward fence from $x$ to $y$ with one alternation. This implies that $\E^{\lex}(x,y) = 1$. This establishes the first case.
    
    Second, suppose $s$ is maximal and there is $r \in T$ such that $r<s$. Hence, there is $w \in P_r$ such that $x >_{\lex} w <_{\lex} y$. Thus, $\{ x,w,y\}$ is a downward fence with one alternation. This implies that $\E^{\lex}(x,y) \leq 2$. This verifies the second case. Note that we require $s$ to be maximal to ensure that this case excludes the first one.

    Finally, in the case that $s$ is not comparable to any other element in $T$, the possible fences from $x$ to $y$ in $\lex[t \in T] P_t$ are exactly the fences in $P_s$. Thus $\E^{\lex}(x,y) = \E^{P_s}(x,y)$.
    
    Now assume that  $x$ in $P_s$ and $y$ in $P_t$ for some $s,t \in T$. If $s,t$ are in the same fence-connected component there is a fence $F = \{\tuple[0]{s}{n+1}\}$ from $s$ to $t$ such that $L(F) = \E^T(s,t)$. Then, if we choose $x_i \in P_{s_i} $ for each $i$ between 1 and $n$ we obtain that $F' = \{x, \tuple{x}{n},y\}$ is a fence from $x$ to $y$ with $L(F') = L(F)$. We can check that $L(F')$ is minimal in $\lex[t \in T] P_t$ if and only if $L(F)$ is minimal in $T$.  Hence, 
    \[  \E^{\lex} (x,y) = L(F') = L(F) =  \E^T(s,t). \]
    
    If $s$ and $t$ are not fence-connected, then $x$ and $y$ are not fence connected. Otherwise, it yields a contradiction. If there is a fence $F' = \{x, \tuple{x}{n},y\}$ from $x$ to $y$, the set $\{ s_i \mid x_i \in P_{s_i}\} $ would contain a fence from $s$ to $t$. Then, \[\E^{\lex} (x,y) = \infty = \E^T(s,t). \]
    This proves the last case.   
\end{proof}

\begin{corollary}
    Take $x,y$ in $\bigsqcup_{i=1}^n P_i$. Then, $\E^{\bigsqcup}(x,y) = \E^{P_i} $ if $x$ and $y$ are in $P_i$ and $\E^{\bigsqcup}(x,y) = \infty$ otherwise.
\end{corollary}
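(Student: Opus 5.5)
The plan is to read this corollary off the preceding lemma on lexicographic sums, specialized to the disjoint union, where the index poset $T = \{1, \dots, n\}$ is an antichain. Since no two distinct indices are comparable in $T$, every $s \in T$ is neither upper bounded by another element nor (maximal and) lower bounded by another element. So the first two cases of the lemma never occur. This leaves two situations: $x,y$ lie in the same summand, or they lie in different summands.

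\textbf{Same summand.} If $x,y \in P_i$, then $i$ is not comparable in $T$ to any other index. The third case of the lemma gives $\E^{\bigsqcup}(x,y) = \E^{P_i}(x,y)$ directly. Equivalently, in the disjoint union no element of $P_i$ is comparable to an element of $P_j$ for $j \neq i$. Hence every fence from $x$ to $y$ stays inside $P_i$, and the fence-climber lengths are computed in $P_i$; Theorem \ref{th_E_is_metric} then yields the equality.

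\textbf{Different summands.} If $x \in P_i$ and $y \in P_j$ with $i \neq j$, the fourth case of the lemma gives $\E^{\bigsqcup}(x,y) = \E^T(i,j)$. Since $T$ is an antichain, the only fences in $T$ are trivial, so $i$ and $j$ lie in different fence-connected components of $T$. By Corollary \ref{coro_finite_if_connected}, $\E^T(i,j) = \infty$.

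For a self-contained check of this case, note that consecutive elements of a fence must be comparable. Comparability never crosses summands, so any fence starting at $x$ remains in $P_i$ and cannot reach $y$. Thus the set of fences from $x$ to $y$ is empty, and the distance is $\infty$ by definition.

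No step here is a real obstacle. The only point needing care is confirming that the first two cases of the lemma are excluded: an isolated point of $T$ is neither upper bounded nor lower bounded by a \emph{different} element. If the lemma's phrasing ``upper bounded'' were read as allowing $s$ to bound itself, one would instead use the direct fence argument above, which avoids the issue.
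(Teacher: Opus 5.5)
Your proposal is correct and follows the paper's route: the paper gives this corollary no separate proof and reads it off the lexicographic-sum lemma with an antichain index poset $T$. A common summand falls under the third case, and distinct summands fall under the fourth case with $\E^T(i,j)=\infty$. Your direct fence argument is a sound supplement, but the closing ``$\infty$ by definition'' needs a justification: either Corollary~\ref{coro_finite_if_connected}, or the observation that $(\updown)^n(\{x\})\subseteq P_i$ for every $n$. Theorem~\ref{th_E_is_metric} alone does not suffice, since it only applies when some fence exists.
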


\begin{corollary}
    Take $x,y$ in $\sum_{i=1}^n P_i$. Then, $\E^{\sum}(x,y) = 1 - \delta_{x,y}$ if $x \in P_i$ and $y \in P_j$ for some $i< n$ and $j < n$ and $\E^{\sum}(x,y) = \min \{\E^{P_n}(x,y), 2\}$ if $x$ and $y$ are in $P_n$.
\end{corollary}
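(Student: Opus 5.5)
This corollary specializes the preceding lemma to the ordinal sum, so the plan is to take $T=\{1,2,\dots,n\}$ with its usual order and check which case of the lexicographic-sum lemma applies to each pair $x,y$. In this chain every $s<n$ is upper bounded, since $s<n$. The element $n$ is maximal, and for $n\ge 2$ it is lower bounded by $1$. No element of $T$ is incomparable to all others, so the third case of the lemma never occurs.

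\textbf{Both points in the same summand.} Suppose $x,y\in P_s$.

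\begin{itemize}
\item If $s<n$, the first case of the lemma gives $\E^{\sum}(x,y)=1-\delta_{x,y}$.
\item If $s=n$ and $n\ge 2$, the second case gives $\E^{\sum}(x,y)=\min\{\E^{P_n}(x,y),2\}$.
\item If $n=1$, the sum is just $P_1$, so the distance is $\E^{P_1}(x,y)$. Here the stated formula should be read with the convention that $\min\{\cdot,2\}$ applies only when $n\ge2$; I would note this degenerate case separately.
\end{itemize}

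\textbf{Points in different summands.} Suppose $x\in P_i$ and $y\in P_j$ with $i\neq j$. The fourth case of the lemma gives $\E^{\sum}(x,y)=\E^T(i,j)$. Since $T$ is a chain, the linear-poset computation gives $\E^T(i,j)=1-\delta_{i,j}=1$. This agrees with $1-\delta_{x,y}$, because $x\ne y$.

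The statement as written covers $i,j<n$. Pairs with one point in $P_n$ and the other in $P_i$, $i<n$, also have distance $1$ by the same computation, and I would remark that the first formula extends to that situation.

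\textbf{Where care is needed.} The only delicate point is the bookkeeping of which lemma case applies, in particular that $s<n$ falls under ``upper bounded'' rather than under the maximal case. As a sanity check I would verify one instance directly, without the lemma. For $x\in P_i$ and $y\in P_j$ with $i\le j$, the two points are comparable, so $\{x,y\}$ is a fence with zero alternations and has fence-climber length $1$. This confirms distance $1$ and completes the argument.
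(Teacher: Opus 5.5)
Your proof is correct and follows the paper's route: the corollary is the lexicographic-sum lemma specialized to the chain $T=\{1,\dots,n\}$. Your two side remarks are also right. The second formula needs $n\ge 2$, since for $n=1$ one only gets $\E^{P_1}(x,y)$, which can exceed $2$. Mixed pairs $x\in P_i$, $y\in P_n$ with $i<n$ are comparable and so also have distance $1$.

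One small slip: in your sanity check the condition should be $i<j$, not $i\le j$, because two points of the same summand need not be comparable.
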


\subsection{Comparison with other metrics}

Let us compare the $k$-climber and $k$-diver distances to other known metrics defined on path-connected posets.

\subsubsection*{Shortest path and shortest fence metrics}

Let $P$ a path-connected poset and take $x,y \in P$. Recall that $\D$ denotes the shortest-path metric on $P$, defined by taking $\D(x,y)$ to be the minimal length of a path from $x$ to $y$ in $P$. That is,   
\begin{equation*}
    \D(x,y) = \min \{ \ell(\gamma) \mid \gamma \text { is a path that from } x \text{ to } y \}.
\end{equation*}

Given that for any path $\gamma$ and $k \geq 1$ we have $L_k(\gamma) \leq L_1(\gamma) \leq \ell(\gamma) $ we can conclude that 
\begin{equation*}
    \E_k(x,y) \leq \D(x,y)
\end{equation*}

for all $k \geq 1$. In a similar way we obtain that for any $k \geq 1$,
\begin{equation*}
    \B_k(x,y) \leq \D(x,y).
\end{equation*}

Therefore, the k-climber and k-diver metrics are bounded above by the shortest path metric.

In like manner,  for $P$ a fence-connected poset, $\F$ denotes the shortest-fence metric given by  
\begin{equation*}
    \F(x,y) = \min \{ n \mid \{ \tuple[0]{f}{n} \} \text { is a fence from } x \text{ to } y \}.
\end{equation*}
for $x,y \in P$.

We will prove that for any fence $F = \{ \tuple[0]{f}{n}\}$, the inequality $L(F) \leq n$ holds. In this way, by definition of both, the shortest fence metric and the fence climber metric we conclude that for any $x,y \in P$
\begin{equation*}
    \E(x,y) \leq \F(x,y).
\end{equation*}

By an analogous argument we can prove that $ \D(x,y) \leq \F(x,y)$. Hence, the fence-climber and fence-diver metrics are bounded above by the shortest fence metric.

Take $F = \{x\}$, then $n=0$ and $L(F) = 0 = n$. If instead $F = \{x,y\}$, then $n=1$ and $L(F) = 1 = n$.

Take $F = \{ \tuple[0]{f}{n} \}$ with $n \geq 2$. Recall that $A(F) = n-1 $. Then, for any $* \in \{1,2,3\}$ 

\begin{align*}
    * - 1 &\leq n \\
    n + * - 1 &\leq 2n \\
    \frac{n + * - 1}{2} &\leq n \\
    L(F) &\leq n
\end{align*}

\subsubsection*{Metrics induced by real valued functions}

Take a function $v: P \to \R$. If $P$ is a path-connected poset, then, the function $v$ induces a metric on $P$ in the following way \cite{remarques}. Take $x,y \in P$ and let $\gamma = \{ \tuple{x}{n}\}$  be a path from $x$ to $y$. We define the $v$-length of $\gamma$ by 
\begin{equation*}
    \tilde{v}(\gamma) =  \sum_{i = 1}^n | v(x_{i}) - v(x_{i-1})|  .
\end{equation*}

The $v$-distance of $x,y$ is defined by the minimal $v$-length over all paths from $x$ and $y$, i.e
\begin{equation*} 
    d_v(x,y) = \min \{ \tilde{v}(\gamma) \mid \gamma \text { is a  path from } x \text{ to } y \}.
\end{equation*}

\begin{lemma}
    Let $P$ a path-connected poset and $v:P \to \R$. Then, for all $x < y \in P$, $d_v(x,y) = |v(y) - v(x)|$.
\end{lemma}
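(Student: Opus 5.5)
The proof has two inequalities: a lower bound valid for every path, and an upper bound realised by one explicit path.

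\textbf{Lower bound.} Take any path $\gamma=\{x_0,\dots,x_n\}$ from $x$ to $y$, with $x_0=x$ and $x_n=y$. The differences telescope:
\[
v(y)-v(x)=\sum_{i=1}^n \bigl(v(x_i)-v(x_{i-1})\bigr).
\]
The triangle inequality for $|\cdot|$ on $\R$ then gives
\[
|v(y)-v(x)|\leq \sum_{i=1}^n |v(x_i)-v(x_{i-1})| = \tilde v(\gamma).
\]
This uses nothing about the order and nothing about $v$. Taking the minimum over all paths from $x$ to $y$ yields $|v(y)-v(x)|\leq d_v(x,y)$. The minimum exists because $P$ is path-connected, so the set of paths is nonempty, and it is the same quantity used in the definition of $d_v$.

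\textbf{Upper bound.} I would exhibit a single path whose $v$-length is exactly $|v(y)-v(x)|$, using the hypothesis $x<y$. The natural candidate is the comparable-pair path $\gamma_0=\{x,y\}$. It has only one step, so
\[
\tilde v(\gamma_0)=|v(y)-v(x)|,
\]
and hence $d_v(x,y)\leq \tilde v(\gamma_0)=|v(y)-v(x)|$. Together with the lower bound this proves $d_v(x,y)=|v(y)-v(x)|$, for arbitrary $v$.

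\textbf{Main obstacle.} The delicate step is admitting $\gamma_0$ as a path in the sense used for $d_v$ in \cite{remarques}, namely as a sequence in which consecutive elements are comparable, with $x<y$ being exactly such a step. If one instead insisted that consecutive elements be covers, the argument still goes through verbatim when $x\prec y$.

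For a general $x<y$ under the cover-only reading, the only route I see is to replace $\gamma_0$ by a maximal chain $x=c_0\prec c_1\prec\cdots\prec c_m=y$ in $[x,y]$. For this chain to qualify as a path it would need to be finite, and for its $v$-length to telescope exactly to $|v(y)-v(x)|$ the differences $v(c_i)-v(c_{i-1})$ would all need the same sign.

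So the plan is to read the statement with comparable steps allowed, as in the valuation-induced setting it is cited from. In that reading the one-step path $\{x,y\}$ closes the argument with no assumption on $v$.
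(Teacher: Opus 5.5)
Your argument is the paper's argument: you use the triangle-inequality lower bound over all paths, then the one-step path $\{x,y\}$ to attain it.

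The obstacle you flag is real, and the paper's proof passes over it silently. In this paper a path must have covering steps, so $\{x,y\}$ is a path only when $x\prec y$. Under that definition the lemma is false for general $v$. Take the chain $x\prec z\prec y$ with $v(x)=v(y)=0$ and $v(z)=1$. Every path from $x$ to $y$ goes through $z$, so $d_v(x,y)=2\neq 0=|v(y)-v(x)|$.

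So allowing comparable steps is a change to the paper's definition, not a reading of it. With the paper's cover-step paths, the statement survives in two cases. The first is covering pairs $x\prec y$, which is all Corollary~\ref{coro:dv_splits} needs. The second, as you observe, is isotone $v$ when $[x,y]$ contains a finite maximal chain, e.g.\ when $P$ is discrete.
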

\begin{proof}
    Let $\gamma = \{ \tuple{x}{n}\}$ a path from $x$ to $y$. Then 
    \begin{equation*}
        \tilde{v}(\gamma) = \sum_{i = 1}^n | v(x_{i}) - v(x_{i-1})|  \geq \left| \sum_{i = 1}^n  (v(x_{i}) - v(x_{i-1}) )\right| = |v(y) - v(x)| = \tilde{v}(\{x,y\}).
    \end{equation*}
    Hence the path $\{x,y\}$ has the minimal $v$-length which implies that $d_v(x,y) = |v(y) - v(x)|$.
\end{proof}

\begin{corollary}
    Let $\gamma = \{ \tuple{x}{n}\}$ a path from $x$ to $y$ such that $d_v(x,y) = \tilde{v}(\gamma)$. Then, 
    \begin{equation*}
        d_v(x,y) =  d_v(x,x_1) + d_v(x_1,x_2)+ \ldots + d_v(x_{n-1},y).
    \end{equation*}
    \label{coro:dv_splits}
\end{corollary}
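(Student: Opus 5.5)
Given an optimal path $\gamma = \{x_0, x_1, \ldots, x_n\}$ with $x_0 = x$, $x_n = y$ and $d_v(x,y) = \tilde{v}(\gamma)$, the plan is to prove the two inequalities separately and combine them.

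\textbf{Lower bound.} Apply the triangle inequality for the metric $d_v$ repeatedly along $\gamma$:
\[ d_v(x,y) \leq \sum_{i=1}^n d_v(x_{i-1},x_i). \]
This only requires that $d_v$ satisfies the triangle inequality. That is immediate from its definition as a minimum of $v$-lengths over paths: concatenating a path from $a$ to $b$ with a path from $b$ to $c$ gives a path from $a$ to $c$ whose $v$-length is the sum of the two.

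\textbf{Upper bound.} Consecutive elements of $\gamma$ are adjacent, so $x_{i-1}$ and $x_i$ are comparable. The preceding lemma (applied in whichever direction the comparison goes, using the symmetry of $|\cdot|$) gives
\[ d_v(x_{i-1},x_i) = |v(x_i)-v(x_{i-1})|. \]
Summing over $i$,
\[ \sum_{i=1}^n d_v(x_{i-1},x_i) = \tilde v(\gamma) = d_v(x,y). \]
Together with the lower bound this gives equality, which is the statement.

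There is no real obstacle. The one point needing care is that the lemma is stated for $x<y$, so the case $x_{i-1}>x_i$ must be handled by swapping the roles of the two endpoints. In fact the upper bound alone already yields the identity directly, so the triangle inequality is needed only as a consistency check.
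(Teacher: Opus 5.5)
Your proposal is correct and follows the same route the paper has in mind: consecutive elements of $\gamma$ are adjacent, so the preceding lemma gives $d_v(x_{i-1},x_i)=|v(x_i)-v(x_{i-1})|$, and summing these recovers $\tilde v(\gamma)=d_v(x,y)$, which makes the triangle-inequality step superfluous, as you observe. You apply the lemma only to adjacent pairs, and that is exactly the case where its proof works, because $\{x_{i-1},x_i\}$ is then a genuine path of length one.
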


Let $d: P \times P \to R$ be a distance defined on $P$. If $d$ is induced by a real valued function $v$, that is, $d = d_v$, then Corollary \ref{coro:dv_splits} provides a necessary condition that $d$ must satisfy. Explicitly, for every pair $x,y \in P$, there must exist a path $\gamma = \{ \tuple{x}{n}\} $ from $x$ to $y$ such that 

\begin{equation*}
    d(x,y) =  d(x,x_1) + d(x_1,x_2)+ \ldots + d(x_{n-1},y).
\end{equation*}

If no such path exists for some $x,y \in P$, then the distance $d$ cannot arise from any real valued function on $P$.

Using this result, in general, the $k$-climber and $k$-diver distances do not arise from real valued functions on $P$. Take $P$ a poset with elements $x,y,z$  such that $x \prec z \succ y$. We have that $\E_k(x,y) = \E_k(x,z) = \E_k(y,z) = 1$. The only path from $x$ to $y$ is $\{x,z,y\}$ but
\[ \E_k(x,y)  \neq  \E_k(x,z) + \E_k(y,z).\]

Similarly, the $k$-diver distance fails the condition when we consider  $w,x,y$ in $P$ such that $x \succ w \prec y$.

In particular, this implies that these metrics are not induced by valuations, a special type of real valued functions on $P$.

\subsubsection*{The Chebyshev distance}

Take $P$ a discrete poset. For $x \leq y$ in $P$ define the \textit{height} of $y$ above $x$, denoted $h(x,y)$ the smallest length of a finite maximal chain in $[x,y]$. We say that $P$ is a \textit{join semi-lattice} if for any $x,y \in P$ there exists a least common upper bound, denoted by $x \vee y$. In this way, for any $z \in P$ such that $x \leq z$ and $z \geq y$ we have $x \vee y \leq z$. 

The Chebyshev distance between $x$ and $y$ in $P$ is then defined by \[d(x,y) = \max\{ h(x, x \vee y), h(y, x \vee y) \}.\] This function defines a metric on $P$ when $P$ is a discrete upper semi-modular join semi-lattice poset \cite{discrete_poset_distances}. We will prove that for any $x,y \in P$,
    \[1 = \E(x,y) \leq d(x,y) \leq \E_1(x,y). \]

\begin{lemma}
     Let $P$ be an upper semi-modular poset and take $x,y \in P$. Let $\gamma$ be a path from $x$ to $y$ with chain decomposition $\{ \tuple[0]{C}{A(\gamma)}\}$. There exists an upward path $\hat{\gamma}$ from $x$ to $y$ in $P$ with chain decomposition $\{\hat{C}_0, \hat{C}_1\}$ such that \[\ell(\hat{C} _0) = \sum\limits_{i \text{ even}} \ell(C_i) \quad \text{and} \quad \ell(\hat{C}_1) = \sum\limits_{i \text{ odd}} \ell(C_i)\]
     if $\gamma $ is an upward path and 

    \[\ell(\hat{C} _0) = \sum\limits_{i \text{ odd}} \ell(C_i) \quad \text{and} \quad \ell(\hat{C}_1) = \sum\limits_{i \text{ even}} \ell(C_i)\]
    if $\gamma $ is a downward path.
    \label{lemma:path_to_one_upward_path} 
\end{lemma}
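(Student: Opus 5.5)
We argue by induction on the number of alternations $A(\gamma)$, using Lemma \ref{lemma:downward_to_upward_path} (the upper semi-modular version of the swap lemma) as the only tool. Every application of that lemma replaces a downward valley of lengths $(a,b)$ by an upward peak of lengths $(b,a)$. So chain lengths are only moved around, never destroyed. The parity bookkeeping of the statement should then follow by tracking which original chains are ascending and which are descending.

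\textbf{Base cases.} If $A(\gamma)=0$, then $\gamma$ is a single chain. For an upward chain, take $\hat{C}_0=C_0$ and $\hat{C}_1$ empty. For a downward chain, the statement puts $C_0$ into $\hat{C}_1$, which is read as an upward path whose first segment has length $0$. If $A(\gamma)=1$ and $\gamma$ is upward, there is nothing to do. If $A(\gamma)=1$ and $\gamma$ is downward, Lemma \ref{lemma:downward_to_upward_path} gives an upward path of the same length with the alternation at position $n-i$. Hence $\ell(\hat{C}_0)=\ell(C_1)$ and $\ell(\hat{C}_1)=\ell(C_0)$, as claimed.

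\textbf{Inductive step.} Let $A(\gamma)\ge 2$.
\begin{enumerate}[a)]
\item Suppose $\gamma$ is upward. Then $\gamma$ contains the sub-path $C_1\cup C_2$, which is a downward path with one alternation (a valley).
\item Apply Lemma \ref{lemma:downward_to_upward_path} to $C_1\cup C_2$. This replaces it by an upward peak whose ascending part has length $\ell(C_2)$ and whose descending part has length $\ell(C_1)$.
\item Merge the pieces. The ascending part joins $C_0$ into one longer ascending chain of length $\ell(C_0)+\ell(C_2)$. The descending part joins $C_3$ (if it exists) into one descending chain of length $\ell(C_1)+\ell(C_3)$.
\item Count. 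The new path has two fewer alternations, is still upward, and its even-indexed chains still sum to $\sum_{i \text{ even}}\ell(C_i)$ while its odd-indexed chains sum to $\sum_{i \text{ odd}}\ell(C_i)$. Induction finishes this case.
\end{enumerate}
If $\gamma$ is downward, first apply the lemma to the initial valley $C_0\cup C_1$. This yields an upward path whose chains, in order, are $C_1$ (ascending), then $C_0\cup C_2$ merged (descending), then $C_3,\dots$. Now the ascending chains are the odd-indexed originals and the descending chains are the even-indexed ones. Applying the upward case to this path gives exactly the sums stated for downward $\gamma$.

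\textbf{Main obstacle.} The delicate step is the merging in c). One must check that concatenating the new ascending part with $C_0$ (respectively, the descending part with $C_3$) introduces no alternation at the junction. It also has to remain a path in the sense of covering relations, so that the chain decomposition and the value $A$ are as claimed. Since both pieces go in the same direction at the junction point, this holds. One must also note that Lemma \ref{lemma:downward_to_upward_path} needs the alternation to be interior, which is true because the chains are nonempty.

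Finally, the degenerate case where the final decomposition has an empty $\hat{C}_1$ (or empty $\hat{C}_0$) should be stated as allowed. This matches the convention already used in the base case.
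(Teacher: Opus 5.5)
Your proof is correct and is essentially the paper's argument: both induct on $A(\gamma)$ and use Lemma~\ref{lemma:downward_to_upward_path} to turn a valley into a peak with the two chain lengths swapped. The paper removes the last chain, applies the induction hypothesis, and swaps the final valley when re-appending creates one; you instead swap the first valley, merge on both sides, and reduce the downward case to the upward one with a single preliminary swap. One small slip: when $A(\gamma)=2$ there is no $C_3$, so your step removes only one alternation rather than two, but the result is already an upward path with a single alternation, so the induction is unaffected.
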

\begin{proof}
    Let $P$ be an upper semi-modular poset and take $x,y \in P$. Let $\gamma$ be a path from $x$ to $y$ with chain decomposition $\{ \tuple[0]{C}{A(\gamma)}\}$. We have two cases, if $\gamma$ is is an upward path or if $\gamma$ is a downward path. We will prove the statement for each case by induction.

    For the first case, assume that $\gamma$ is an upward path. For the base case, if $A(\gamma) = 1$, then $\hat{\gamma}= \gamma$. For hypothesis of induction, suppose that for some $k$, if $A(\gamma) = k$, then there exists  $\hat{\gamma}$ with chain decomposition $\{\hat{C}_0, \hat{C}_1 \}$ such that 
    \[\ell(\hat{C}_0) = \sum\limits_{i \text{ even}} \ell(C_i) \quad \text{and} \quad \ell(\hat{C}_1) = \sum\limits_{i \text{ odd}} \ell(C_i).\]

    Now, consider $\gamma$ to be a path with $k+1$ alternations with chain decomposition $\{ \tuple[0]{C}{k+1}\}$. The truncated path $\gamma' = C_0 \cup \cdots \cup C_k$ is a path with $k$ alternations, then, by hypothesis of induction, there is $\hat{\gamma'}$ with chain decomposition $\{\hat{C'}_0, \hat{C'}_1 \}$ such that 
    
    \[\ell(\hat{C'}_0) = \sum\limits_{\substack{0 \leq i \leq k \\ i \text{ even}}} \ell(C_i) \quad \text{and} \quad \ell(\hat{C'}_1) = \sum\limits_{\substack{0 \leq i \leq k \\ i \text{ odd}}} \ell(C_i).\]

    \begin{figure}[h]
        \centering
        \includegraphics[width=0.7\textwidth]{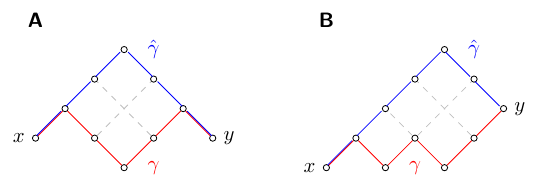}
         \caption{Example of upwards paths $\gamma =\{\tuple[0]{x}{n+1}\}$ from $x$ to $y$ with $k+1$ alternations and the resulting upward path $\hat{\gamma}$ from $x$ to $y$ with one alternation. \textbf{A}. The path $\gamma$ has an odd number of alternations, so $k$ is even. \textbf{B}. The path $\gamma$ has an even number of alternations, so $k$ is odd.}
        
        \label{fig:path_to_only_one_alternation}
    \end{figure}

    Take the path $\hat{\gamma'} \cup C_{k+1}$. If $k$ is even (Figure \ref{fig:path_to_only_one_alternation}, \textbf{A}), then $\hat{\gamma'} \cup C_{k+1}$ has only one alternation. Thus, take $\hat{\gamma} = \hat{\gamma'} \cup C_{k+1}$ and with chain decomposition $\{\hat{C}_0, \hat{C}_1 \}$ with $\hat{C}_0 = \hat{C'}_0$ and $ \hat{C}_1 = \hat{C'}_1 \cup C_{k+1}$. Hence,
    \[\ell(\hat{C}_0) = \sum\limits_{\substack{0 \leq i \leq k \\ i \text{ even}}} \ell(C_i) = \sum\limits_{\substack{0 \leq i \leq k+1 \\ i \text{ even}}} \ell(C_i) \]
    and 
    \[\ell(\hat{C}_1) = \sum\limits_{\substack{0 \leq i \leq k \\ i \text{ odd}}} \ell(C_i) + \ell(C_{k+1}) =  \sum\limits_{\substack{0 \leq i \leq k+1 \\ i \text{ odd}}} \ell(C_i)\]
    as desired.

    If $k$ is odd, then $\hat{\gamma'} \cup C_{k+1}$ has two alternations. In particular, $\hat{C'}_1 \cup C_{k+1}$ is a downward path with one alternation. By Lemma \ref{lemma:downward_to_upward_path}, since $P$ is an upper semi-modular poset, there is an upward path $\gamma^*$ with chain decomposition $\{C^*_0, C^*_1\}$ such that it has the same endpoints of $\hat{C'}_1 \cup C_{k+1}$ and
    \[ \ell(C^*_0) = \ell(C_{k+1}) \quad \text{and} \quad \ell(C^*_1) = \ell({\hat{C'}_1}).\]

    Hence, $\hat{\gamma} =  \hat{C'}_0 \cup C^*_0 \cup C^*_1$ is an upward path with chain decomposition $\{\hat{C}_0, \hat{C}_1 \}$ where $\hat{C}_0 = \hat{C'}_0 \cup C^*_0$ and $ \hat{C}_1 = C^*_1 $. In this way
    \[ 
    \ell(\hat{C}_0 ) = \ell(\hat{C'}_0) + \ell(C^*_0) = \sum\limits_{\substack{0 \leq i \leq k \\ i \text{ even}}} \ell(C_i) + \ell(C_{k+1}) = \sum\limits_{\substack{0 \leq i \leq k+1 \\ i \text{ even}}} \ell(C_i)\]
    and 
    \[\ell(\hat{C}_1 ) = \ell(C^*_1) = \ell(\hat{C'}_1) = \sum\limits_{\substack{0 \leq i \leq k \\ i \text{ odd}}} \ell(C_i)  = \sum\limits_{\substack{0 \leq i \leq k+1 \\ i \text{ odd}}} \ell(C_i)\]

    For the second case, assume that $\gamma$ is a downward path. If $A(\gamma) = 1$, take $\hat{\gamma}=\gamma^* $ the upward path from Lemma \ref{lemma:downward_to_upward_path} when considering $\gamma$. Hence,
    \[\ell(\hat{C}_0) = \ell(C^*_0) = \ell(C_1) \quad \text{and} \quad \ell(\hat{C}_1) = \ell(C^*_1) = \ell(C_0). \]
    
    Then, the induction process is analogous to the first case. 
\end{proof}

\begin{lemma}
    (Jordan-Dedekind condition) Let $P$ be a discrete upper semi-modular poset. Then in any given interval $[x, y]$ in $P$ all maximal chains have the same number of elements.
    \label{lemma:Jordan-Dedekind}
\end{lemma}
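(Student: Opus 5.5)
The plan is to prove the Jordan--Dedekind condition by strong induction on the length of a shortest maximal chain in $[x,y]$. Since $P$ is discrete, every maximal chain in $[x,y]$ is finite, and so its length is well defined. Let $h(x,y)$ denote the minimal length of a maximal chain in $[x,y]$, as in the definition of the Chebyshev distance. I will show that every maximal chain in $[x,y]$ has length exactly $h(x,y)$.

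For the base cases: if $h(x,y)=0$, then $x=y$ and the only chain is $\{x\}$. If $h(x,y)=1$, then $x\prec y$, so $[x,y]=\{x,y\}$ and the claim holds trivially.

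For the inductive step, fix a maximal chain $C: x=c_0\prec c_1\prec\cdots\prec c_n=y$ of minimal length $n=h(x,y)$, and let $D: x=d_0\prec d_1\prec\cdots\prec d_m=y$ be any other maximal chain. Note that maximality in $[x,y]$ means consecutive elements are covers. I will show $m=n$.

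\emph{Case $c_1=d_1$.} The tails $\{c_1,\dots,c_n\}$ and $\{d_1,\dots,d_m\}$ are both maximal chains in $[c_1,y]$. The first has length $n-1$, so $h(c_1,y)\le n-1<n$. Applying the induction hypothesis to $[c_1,y]$ gives $m-1=n-1$.

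\emph{Case $c_1\neq d_1$.} Both $c_1$ and $d_1$ are upper covers of $x$, so upper semi-modularity provides $z$ with $c_1\prec z$ and $d_1\prec z$. I also need $z\le y$, and this is the main obstacle. The paper's definition of upper semi-modularity only asserts that some common upper cover exists, not that one lies below $y$. I would first try to use the form of the definition standard in the cited literature (for example \cite{discrete_poset_distances}), where $z$ may be chosen in $[x,y]$ whenever $c_1,d_1\le y$. Equivalently, I would apply the hypothesis locally within the interval, since covers in $[x,y]$ coincide with covers in $P$. If the intended definition is really the global one, I would state this local version explicitly as the standing assumption, because the lemma can fail otherwise.

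Granting $z\in[x,y]$, discreteness lets me extend $\{z\}$ to a maximal chain $E$ of $[z,y]$, which is finite, say of length $r$.
\begin{itemize}
\item In $[c_1,y]$ there are two maximal chains: $\{c_1,\dots,c_n\}$ of length $n-1$, and $\{c_1\}\cup E$ of length $r+1$. Since $h(c_1,y)\le n-1<n$, the induction hypothesis gives $r+1=n-1$.
\item In $[d_1,y]$ there are two maximal chains: $\{d_1\}\cup E$ of length $r+1=n-1$, and $\{d_1,\dots,d_m\}$ of length $m-1$. Now $h(d_1,y)\le n-1<n$, so the induction hypothesis gives $m-1=n-1$.
\end{itemize}
Hence $m=n$ in this case as well.

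In both cases every maximal chain in $[x,y]$ has length $n$, and therefore the same number of elements, $n+1$. This completes the induction.
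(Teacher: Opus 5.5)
Your proof is correct once you assume the interval-local form of upper semi-modularity. It also takes a genuinely different route from the paper, which gives no argument of its own and only invokes Theorem 3 of Haskins and Gudder ("an upper semi-modular poset without infinite chains satisfies the Jordan--Dedekind condition").

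\textbf{What each route buys.}
\begin{itemize}
\item Your strong induction on $h(x,y)$, passing a common upper cover $z\in[x,y]$ through both tails, is self-contained and uses only discreteness. It therefore also covers discrete posets with infinite chains, such as $\mathbb{Z}$-like posets and the infinite diamond--pentagon ladders. The price is that the common upper cover must lie below $y$.
\item The cited route assumes instead that there are no infinite chains. Under that hypothesis even the paper's global definition suffices, by an induction downward from maximal elements. One shows that every maximal chain of $\uparrow t$ has the same length $\rho(t)$, using any maximal chain above a common upper cover $z$ of two upper covers of $t$. Then every maximal chain of $[x,y]$ has length $\rho(x)-\rho(y)$.
\end{itemize}

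\textbf{Your caveat is substantive.} With the paper's global definition and mere discreteness, the statement is false. The citation does not literally cover it, because discreteness does not exclude infinite chains. A counterexample:
\begin{itemize}
\item Start from the pentagon $0\prec a\prec b\prec 1$, $0\prec c\prec 1$.
\item Repeatedly, for each pair of distinct upper covers of a common element that lacks a common upper cover, adjoin a new maximal element covering exactly that pair.
\item A down-set never changes once its element is created. Hence covers are stable, and the union satisfies the paper's upper semi-modularity.
\item Every down-set is finite, so the union is discrete.
\item The interval $[0,1]$ is still the pentagon, with maximal chains of lengths $2$ and $3$.
\end{itemize}

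\textbf{The paper's actual application is safe.} The lemma is used only for join semi-lattices, in the comparison with the Chebyshev distance, and there your local hypothesis holds automatically. Suppose $c_1\neq d_1$ are upper covers of $x$ with $c_1,d_1\le y$, and $z$ covers both. Then $c_1<c_1\vee d_1\le z$, which forces $z=c_1\vee d_1\le y$. With this one-line remark, your argument fully justifies that application.
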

\begin{proof}
    Direct consequence of Theorem 3 by Haskins and Gudder \cite{haskins1971semimodular}, which proves that an upper semi-modular poset without infinite chains satisfies the Jordan-Dedekind chain condition.
\end{proof}

\begin{theorem}
    Let $P$ be a discrete upper semi-modular join semi-lattice poset. Then for any $x,y \in P$,
    \[1 - \delta_{x,y} = \E(x,y) \leq d(x,y) \leq \E_1(x,y). \]
\end{theorem}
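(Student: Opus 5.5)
The proof splits into three independent parts: the equality $\E(x,y) = 1-\delta_{x,y}$, the inequality $\E(x,y)\le d(x,y)$, and the inequality $d(x,y)\le \E_1(x,y)$. Throughout, the case $x=y$ is trivial, since all three quantities vanish there; note that $h(x,x\vee x)=h(x,x)=0$. So assume $x\neq y$.

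\emph{The equality.} A join semi-lattice has the upper filtering property, since $x\vee y$ is a common upper bound. By the discussion of common bounds in Section \ref{Sec:notable_posets}, the upward fence $x\le x\vee y\ge y$ then gives $\E(x,y)=1-\delta_{x,y}$. One small point needs care when $x\sim y$: the fence degenerates to $\{x,y\}$ with no alternations, whose length $L$ is $\frac{0+2}{2}=1$. Either way $\E(x,y)=1$.

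\emph{The inequality $\E(x,y)\le d(x,y)$.} With $x\ne y$, at least one of $x,y$ differs from $x\vee y$. The corresponding height is therefore at least $1$, which gives $d(x,y)\ge 1$.

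\emph{The inequality $d(x,y)\le \E_1(x,y)$.} This is the substantive part, and I would argue it as follows.
\begin{enumerate}
\item By Theorem \ref{th:1climber}, pick a path $\gamma$ from $x$ to $y$ with $L_1(\gamma)=\E_1(x,y)$.
\item Apply Lemma \ref{lemma:path_to_one_upward_path} to obtain an upward path $\hat\gamma$ with a single peak $z$ and chain decomposition $\{\hat C_0,\hat C_1\}$. Its two leg lengths are the even-indexed and odd-indexed sums of the $\ell(C_i)$, in an order that depends on the orientation of $\gamma$.
\item Since $z\ge x$ and $z\ge y$, we have $x\vee y\le z$.
\item $\hat C_0$ is a maximal chain in $[x,z]$, because it consists of covers. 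By Lemma \ref{lemma:Jordan-Dedekind}, any maximal chain of $[x,x\vee y]$ extends to one of $[x,z]$ with the same total length. Hence $h(x,x\vee y)\le h(x,z)=\ell(\hat C_0)$, and likewise $h(y,x\vee y)\le \ell(\hat C_1)$.
\item Combining these, $d(x,y)\le \max\{\ell(\hat C_0),\ell(\hat C_1)\}$.
\end{enumerate}

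The remaining task is to compare $\max(\sum_{\text{even}}\ell(C_i),\sum_{\text{odd}}\ell(C_i))$ with $L_1(\gamma)=\ell(\gamma)-\lceil A/2\rceil$, with the adjustment by orientation. This is the step I expect to be the main obstacle, and it does not follow from the max alone. When the number of alternations is large, $L_1(\gamma)$ can be smaller than either sum; for a zig-zag path with $A$ alternations and all $\ell(C_i)=1$, one sum is about $A/2$ and $L_1\approx A/2$.

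So I would not bound by both legs naively. Instead I would redo the merging in Lemma \ref{lemma:path_to_one_upward_path} more carefully: each application of semi-modularity (Lemma \ref{lemma:downward_to_upward_path}) to a peak-valley pair should save one step in the larger leg. Concretely, I would prove by induction on $A(\gamma)$ that an upward single-peak path $\hat\gamma$ exists with $\max(\ell(\hat C_0),\ell(\hat C_1))\le L_1(\gamma)$, or alternatively that $h(x,x\vee y)$ and $h(y,x\vee y)$ are each at most $L_1(\gamma)$.

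The induction step would handle one climb-and-descend block $C_{2i}\cup C_{2i+1}$, which costs $\ell(C_{2i})+\ell(C_{2i+1})-1$ in $L_1$, exactly as in the proof of Theorem \ref{th:1climber}. The task is to show that the Jordan--Dedekind heights to the new join grow by at most that amount. This uses the fact that in an upper semi-modular lattice-like setting, $h(a, a\vee b)\le h(a\wedge\ldots)$-type rank inequalities hold; more precisely, with a rank function $r$ from Lemma \ref{lemma:Jordan-Dedekind} I would prove the submodular-type estimate $r(a\vee b)-r(a)\le r(b)-r(c)$ for $c\le a,b$.

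The cases $A(\gamma)$ even, odd upward, and odd downward then need checking separately against the three formulas in the definition of $L_1$.
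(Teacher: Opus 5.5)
\textbf{Gap: the last inequality is not proved.} Your steps 1--5 are sound and match the paper's argument. They give $d(x,y)\le\max\{\ell(\hat C_0),\ell(\hat C_1)\}$, where the two leg lengths are $E=\sum_{i\text{ even}}\ell(C_i)$ and $E'=\sum_{i\text{ odd}}\ell(C_i)$ in some order. The problem is what comes next. You assert that $L_1(\gamma)$ can be smaller than both $E$ and $E'$. On that basis you abandon the direct comparison and substitute an induction you never carry out. That induction rests on a rank estimate $r(a\vee b)-r(a)\le r(b)-r(c)$, which you neither prove nor tie to the blocks of $\gamma$. As written, $d(x,y)\le\E_1(x,y)$ is therefore not established.

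\textbf{The assertion is false, and your own example refutes it.} In the zig-zag case with all $\ell(C_i)=1$:
\begin{itemize}
\item if $A$ is even, or $A$ is odd and $\gamma$ is upward, then $L_1(\gamma)=\max\{E,E'\}$;
\item if $A$ is odd and $\gamma$ is downward, then $L_1(\gamma)=\max\{E,E'\}+1$.
\end{itemize}

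\textbf{The missing step is a short count.} Write $L_1(\gamma)=E+E'-c$, where $c$ equals $A/2$, $(A+1)/2$ or $(A-1)/2$ according to the three cases in the definition of $L_1$. Every chain $C_i$ in the decomposition has length at least $1$. For $A$ even there are $A/2+1$ even-indexed chains and $A/2$ odd-indexed ones. For $A$ odd there are $(A+1)/2$ of each. Hence $E\ge c$ and $E'\ge c$ in every case, and so
\[
L_1(\gamma)-E=E'-c\ge 0 \quad\text{and}\quad L_1(\gamma)-E'=E-c\ge 0 .
\]
This gives $\max\{E,E'\}\le L_1(\gamma)=\E_1(x,y)$; the case $A=0$, where $E'=c=0$, is included.

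This is exactly how the paper concludes. Nothing beyond Lemma \ref{lemma:path_to_one_upward_path} and Lemma \ref{lemma:Jordan-Dedekind} is needed: no further semi-modularity argument and no rank submodularity.
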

\begin{proof}
    Let $P$ a discrete upper semi-modular join semi-lattice poset and take $x,y \in P$. Given that $x,y$ have $x \vee y$ as a common upper bound it is clear that 
    \[1 - \delta_{x,y} = \E(x,y) \leq d(x,y)\]
    
    Now, choose a path $\gamma$ from $x$ to $y$ with chain decomposition $\{ \tuple[0]{C}{A(\gamma)}\}$ such that $L_1(\gamma) = \E_1(x,y)$. By Lemma \ref{lemma:path_to_one_upward_path}, there is a path $\hat{\gamma}$ from $x$ to $y$ with chain decomposition $\{\hat{C}_0, \hat{C}_1 \}$ such that

    \[\ell(\hat{C}_0) = \sum\limits_{C_i \text{ is upward}} \ell(C_i) \quad \text{and} \quad \ell(\hat{C}_1) = \sum\limits_{C_i \text{ is downward}} \ell(C_i).\]

    Take $z \in P$ such that $\hat{\gamma}$ has an alternation at $z$. Then $x < z$ and $z > y$. On one hand,  $z > x \vee y$. Thus, given that $P$ satisfies the Jordan-Dedekind condition (Lemma \ref{lemma:Jordan-Dedekind}), $h(x,z) = h(x,x \vee y ) + h(x \vee y,z)$. In particular, this implies that 
    \begin{equation}
        d(x,y) = \max\{h(x,x \vee y), h(y,x \vee y)\} \leq \max\{h(x,z) , h(y,z)\}.
        \label{eq:h_h}
    \end{equation}

     On the other hand, since $\hat{C_0}$ is a path from $x$ to $z$ and $\hat{C_1}$ is a path from $z$ to $y$, then 
    \begin{equation}
        \max\{h(x,z), h(y,z)\} \leq \max\{\ell(\hat{C}_0) , \ell(\hat{C}_1)\}.
        \label{eq:h_l}
    \end{equation}
 
    If we prove that 
    \begin{equation}
        \max\{\ell(\hat{C}_0) , \ell(\hat{C}_1)\} \leq \E_1(x,y)
        \label{eq:l_l}
    \end{equation}

Then, inequalities \eqref{eq:h_h}, \eqref{eq:h_l} and \eqref{eq:l_l} together imply the desired inequality
\[ d(x,y) \leq \E_1(x,y).\]
as desired.

Hence, let us prove inequality \eqref{eq:l_l}. Let $E = \sum\limits_{i \text{ even}} \ell(C_i)$ and $E' = \sum\limits_{i \text{ odd}} \ell(C_i)$. Thus $E + E' = \ell(\gamma)$.

Suppose that $A(\gamma)$ is even. Then $\frac{A(\gamma)}{2} < E$ and $\frac{A(\gamma)}{2} \leq E'$. 

Hence,
\begin{align}
    0 & \leq  E' - \frac{A(\gamma)}{2} \nonumber\\
    E & \leq E + E'- \frac{A(\gamma)}{2} = L_1(\gamma).
    \label{eq:even_chains}
\end{align}

In a similar way,
\begin{equation}
    E' \leq E + E'- \frac{A(\gamma)}{2} = L_1(\gamma).
    \label{eq:odd_chains}
\end{equation}
    
Then, inequalities \eqref{eq:even_chains}  and \eqref{eq:odd_chains} together imply \eqref{eq:l_l} given that
\[ \max\{\ell(\hat{C}_0) , \ell(\hat{C}_1)\} = \max\{ E, E'  \} \leq L_1(\gamma) = \E_1(x,y). \]

Analogously, if $A(\gamma)$ is odd then it can be proved that $\frac{A(\gamma)+1}{2}$ is bounded by $E$ and $E$'. Therefore $E$ and $E'$ are both bounded by $L_1(\gamma)$.

\end{proof}


\section{Characterization of finite posets}
\label{Sec:characterization}

Let $P$ and $Q$ be posets equipped with (extended) metrics, and let $f: P \to Q$ be a bijective isometry. It is natural to ask whether $f$ must then be an order isomorphism. This is already false in very simple cases: let $P$ be a three element chain and let $Q$ and $R$  be the three-element zig-zag posets (Figure \ref{fig:3posets}). Equip each poset with the shortest-path metric $\D$. The map $f:P \to Q$ defined by $f(a) = a'$, $f(b) = b' $ and $f(c) = c' $ is a bijective isometry, but $f$ is not an order isomorphism because $b \prec c$ while $f(b) \succ f(c)$. 

\begin{figure}[h]
\begin{center}
    \begin{tikzpicture}[
        node/.style={
        circle, draw=black,
        inner sep=0pt, minimum size=3pt,
        fill=white},
        ]

        \node at (-0.5,0.5){$P:$};
        
        \node (a) [node, label = right: $a$ ]  at (0,0){};
        \node (b) [node, label = right: $b$ ]  at (1,0.5){};
        \node (c) [node, label = right: $c$]  at (2,1){};
        
        \draw (a) -- (b) (b) -- (c);
\end{tikzpicture}
\hspace{1em}
\begin{tikzpicture}
    \node at (0,.3) {$f$};
    \draw[->] (-0.5,0) -- (0.5,0);
    \node at (0,-.5){};
\end{tikzpicture}
\hspace{1em}
\begin{tikzpicture}[
        node/.style={
        circle, draw=black,
        inner sep=0pt, minimum size=3pt,
        fill=white},
        ]

         \node at (-0.5,0.5){$Q:$};
         
        \node (a) [node, label = right: $a'$ ]  at (0,0){};
        \node (b) [node, label = right: $b'$ ]  at (1,1){};
        \node (c) [node, label = right: $c'$]  at (2,0){};
        
        \draw (a) -- (b) (b) -- (c);
\end{tikzpicture}
\hspace{1em}
\begin{tikzpicture}
    \node at (0,.3) {$g$};
    \draw[->] (-0.5,0) -- (0.5,0);
    \node at (0,-.5){};
\end{tikzpicture}
\hspace{1em}
\begin{tikzpicture}[
        node/.style={
        circle, draw=black,
        inner sep=0pt, minimum size=3pt,
        fill=white},
        ]

         \node at (-0.5,0.5){$R:$};
         
        \node (a) [node, label = right: $a''$ ]  at (0,1){};
        \node (b) [node, label = right: $b''$ ]  at (1,0){};
        \node (c) [node, label = right: $c''$]  at (2,1){};
        
        \draw (a) -- (b) (b) -- (c);
\end{tikzpicture}
\end{center}

\caption{Example of posets with cardinality 3.  We can set the bijective functions $f: P \to Q$ and $g: Q \to R$ given by $f(a) = a'$, $f(b) = b' $ and $f(c) = c' $ and $g(a') = a''$, $g(b') = b'' $ and $g(c') = c'' $.}
\label{fig:3posets}
\end{figure}

Nevertheless, Belding \cite{belding} proved that ``path-connected posets are characterized to within isomorphism and duality by their distance and fence values''. Concretely, for path-connected  posets $(P, \leq_P)$ and $(Q, \leq_Q)$,  a bijection $f: P \to Q$ preserves both the shortest-path metric $\D$ and the shortest-fence metric $\F$ if and only if $f$ is either an isomorphism or an anti-isomorphism. 

Belding’s theorem therefore classifies path-connected posets up to duality. The appearance of anti-isomorphisms in the conclusion arises from a fundamental limitation: the metrics $\D$ and $\F$ are insensitive to orientation. They cannot distinguish a poset from its opposite. For instance, the zig-zag posets $Q$ and $R$ in Figure \ref{fig:3posets} are not isomorphic, yet they have identical distance and fence metrics. Indeed, the natural bijection $g: Q \to R$ defined by $g(a') = a''$, $g(b') = b'' $ and $g(c') = c'' $ preserves $\D$ and $\F$, while reversing the order; thus $g$ is an anti-isomorphism.

In contrast, the metrics $\E_1$ and $\B_1$ are orientation sensitive. There are no bijections between $Q$ and $R$ that preserve $\E_1$ and $\B_1$ because $\E_1(a',c') = 1 \neq 2 = \E_1(a'',c'') $ and $\B_1(a',c') = 2 \neq 1 = \B_1(a'',c'')$. This is because $\E_1$ and $\B_1$ are orientation dependent.

One might therefore expect that preserving $\E_1$ and $\B_1$ forces a bijection to be an isomorphism, However, this fails for modular posets: if $P$ modular, then, the bijection $f: P \to P^{\op}$ defined by $f(x) = x$ preserves $\E_1$ and $\B_1$. 

In fact, by Corollary \ref{cor:modular_posets_distance}, for any $x,y \in P$ we obtain that 

\[\E_1^P(x,y) = \E_1^{P^{\op}}(x,y) \quad \text{ and } \quad \B_1^P(x,y) = \B_1^{P^{\op}}(x,y), \] 
yet, $f$ is an anti-isomorphism. Figure \ref{fig:modular_non_connected} illustrates this phenomenon for two opposite modular posets  $\{a,b,c,d,e\}$ and $\{a',b',c',d',e'\}$.

\begin{figure}[h]
    \centering
    \includegraphics[width=0.6\textwidth]{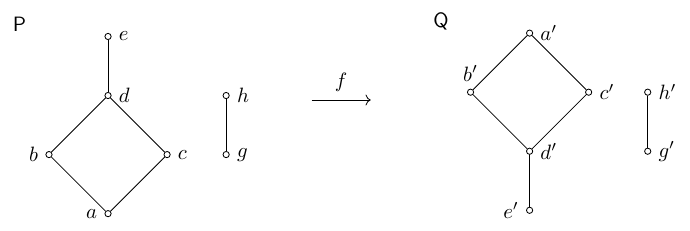}
    \caption{Example of two non path-connected modular posets. The function $f: P \to Q$ defined by $f(x) = x'$ is a bijection that preserves  $\E_1$ and $\B_1$ but is not an isomorphism neither an anti-isomorphism.}
    \label{fig:modular_non_connected}
\end{figure}

Moreover, as in Belding’s result, the hypothesis of path-connectedness is essential. Take $P = \{a,b,c,d,e\} \sqcup \{g,h\}$ with $g \prec h$ and $Q = \{a',b',c',d',e'\} \sqcup \{g',h'\}$ with $g' \prec h'$. The bijection $f: P \to Q$ defined by $f(x) = x'$ is a bijection that preserves  $\E_1$ and $\B_1$ but is not an isomorphism (since $a \prec b$ but $f(a) \succ f(b)$) nor an anti-isomorphism ($g \prec h$ but $f(g) \succ f(h)$).

\subsection{Main Result}

Motivated by Belding’s characterization and by the fact that the metrics $\E_1$ and $\B_1$ do distinguish orientation, our main result (Theorems \ref{th:f_iso_preserves_EB} and \ref{th:f_anti_preserves_EB}) show that the 1-climber and 1-diver metrics characterize most discrete path-connected posets up to isomorphism, with a single exception: modular posets, for which anti-isomorphisms continue to preserve $\E_1$ and $\B_1$. 

We emphasize “most” path-connected posets because of an obstruction arising from the class of infinite diamond–pentagon ladders, a family of posets that will be described later.

\begin{theorem}
    Let $P$ and $Q$ be discrete path-connected posets, neither of which contains a subposet isomorphic to an infinite diamond–pentagon ladder, and let $f: P \to Q$ be a bijection. The following are equivalent:
    \begin{enumerate}[a)]
        \item $f$ is an isomorphism.
        \item The metrics $\E_1$ and $\B_1$ are invariant under $f$ and $f$ preserves at least one covering relation. 
        \item The maps $f$ and $f^{-1}$ preserve covering relations.
    \end{enumerate}

    Here, $f$ is said to preserve covering relations if $x \prec y$ implies that $f(x) \prec f(y)$.
    \label{th:f_iso_preserves_EB}
\end{theorem}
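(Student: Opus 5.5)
The implications (a)$\Rightarrow$(b) and (a)$\Rightarrow$(c) are routine, (c)$\Rightarrow$(a) is a short argument using discreteness, and the real work is (b)$\Rightarrow$(c). I would prove the cycle (a)$\Rightarrow$(b)$\Rightarrow$(c)$\Rightarrow$(a).

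\emph{(a)$\Rightarrow$(b).} An isomorphism $f$ maps covering relations to covering relations. Hence it sends a path $\gamma$ to a path $f(\gamma)$ with the same length, the same alternations and the same orientation. So $L_1(f(\gamma))=L_1(\gamma)$ and $L^*_1(f(\gamma))=L^*_1(\gamma)$. Since $f^{-1}$ is also an isomorphism, this correspondence between paths is bijective. Theorems \ref{th:1climber} and \ref{th:1diver} then give that $\E_1$ and $\B_1$ are invariant, and every covering relation is preserved. The same remark about covers gives (a)$\Rightarrow$(c).

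\emph{(c)$\Rightarrow$(a).} In a discrete poset, $x<y$ holds if and only if there is a finite chain of covers $x=x_0\prec x_1\prec\cdots\prec x_m=y$: take a maximal chain in $[x,y]$, which is finite by discreteness. If $f$ preserves covers, it therefore preserves $<$. Applying the same argument to $f^{-1}$ shows that $f^{-1}$ preserves $<$, so $f$ is an isomorphism.

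\emph{(b)$\Rightarrow$(c).} This is the main step. First I would translate metric data into local order data using Theorems \ref{th:1climber} and \ref{th:1diver}.
\begin{itemize}
\item $\E_1(x,y)=1$ if and only if $x,y$ are adjacent or $x\prec z\succ y$ for some $z$.
\item Dually, $\B_1(x,y)=1$ if and only if $x,y$ are adjacent or $x\succ w\prec y$ for some $w$.
\item Consequently, $\E_1(x,y)=\B_1(x,y)=1$ holds exactly for adjacent pairs and for the two "diagonals" of a diamond (pairs with both a common upper cover and a common lower cover).
\item Similarly, the values of $\E_1$ and $\B_1$ on triples distinguish $x\prec y\prec z$ (where $\E_1(x,z)=\B_1(x,z)=2$ unless a diamond or pentagon shortcut exists) from $x\prec y\succ z$ (where $\E_1(x,z)=1$ and $\B_1(x,z)=2$ unless $x,z$ also have a common lower cover).
\end{itemize}

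With this dictionary, I would run a propagation argument. Start from the covering relation $a\prec b$ that $f$ preserves. Show that if $f$ preserves a cover $x\prec y$, then it preserves every cover sharing an endpoint with it, with the correct orientation. Do this by comparing the invariants $\E_1,\B_1$ on the triple formed by the two covers and on their images. The case analysis splits according to whether the new neighbour $z$ forms a chain or a peak/valley with $x\prec y$. It also uses Lemmas \ref{lemma:upward_to_downward_path} and \ref{lemma:downward_to_upward_path} to handle situations where modular-type squares make two configurations metrically indistinguishable. Path-connectedness then spreads preservation of covers to all of $P$. The same argument applied to $f^{-1}$, which preserves $\E_1,\B_1$ by Lemma \ref{lemma_f-1_preserves} and preserves $f(a)\prec f(b)$, shows that $f^{-1}$ preserves covers as well.

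I expect the main obstacle to be the ambiguous configurations: adjacent pairs versus diamond diagonals, and chains versus peaks that are short-circuited by diamonds or pentagons. In these, a single local step does not determine orientation, and one must look one step further along the structure. My plan is to resolve each such ambiguity by following a nearby square until a non-square configuration forces the orientation. The only way this search can fail indefinitely is along an infinite chain of alternating diamonds and pentagons, which is precisely the excluded infinite diamond–pentagon ladder. So the hypothesis is used exactly to guarantee that every ambiguous local step is eventually resolved.
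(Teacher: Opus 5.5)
Your (a)$\Rightarrow$(b), (c)$\Rightarrow$(a) and the overall cycle match the paper. However, (b)$\Rightarrow$(c) has a genuine gap: you never prove that $f$ sends adjacent elements to adjacent elements, and nothing in your local dictionary can supply this. For a cover $u\prec v$ you only learn $\E_1(f(u),f(v))=\B_1(f(u),f(v))=1$. That means $f(u),f(v)$ are either adjacent, or incomparable with both a common upper and a common lower cover. So next to a preserved cover $x\prec y$ you cannot even conclude that $f(a)$ and $f(y)$ are comparable for a neighbour $a$ of $y$, and comparing invariants on triples does not exclude the second option.

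This is exactly where the ladder hypothesis is used, and the paper isolates it as Theorem~\ref{th:covers_preserved}. Its engine is Lemma~\ref{lemma:diamond_pentagon_shape}: if $x\prec z$, $y\prec z$ and $f(x)\prec f(y)$, then $\E_1$-invariance forces $f(y),f(z)$ to have a common upper cover $w$. Moreover, $\{f(x),f(y),f(z),w\}$, possibly with one more element, forms a diamond or an up concave pentagon. The paper starts from a cover $x\prec y$ whose images are not adjacent, so they have a common upper cover $z_1$. It then applies this lemma alternately to $f^{-1}$ and $f$, both of which preserve $\E_1$ by Lemma~\ref{lemma_f-1_preserves}. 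This produces $w_1,z_2,w_2,\dots$, each new rung glued to the previous one along a cover, and hence an infinite up diamond--pentagon ladder in $P$ (dually, with $\B_1$, a down ladder). Your sentence about ``following a nearby square until a non-square configuration forces the orientation'' states the desired conclusion without an argument. You do not say what the next square is, why it must exist, why its elements are new, or why the resulting structure is a diamond--pentagon ladder rather than something else.

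Two further points. First, Lemmas~\ref{lemma:upward_to_downward_path} and \ref{lemma:downward_to_upward_path} assume lower, respectively upper, semimodularity, which is not a hypothesis here, so you cannot invoke them. In any case they transform paths inside a single poset and say nothing about how $f$ behaves.

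Second, the ``chains versus peaks short-circuited by diamonds or pentagons'' ambiguity does not exist. If $x\prec y\prec z$, then $x<z$ are non-adjacent and cannot have a common upper or lower cover, so $\E_1(x,z)=\B_1(x,z)=2$ always. Consequently, once adjacency is preserved, the orientation step is purely local and needs no look-ahead. Suppose $x\prec y$ and $f(x)\prec f(y)$.
\begin{itemize}
\item If $a\prec y$ with $a\neq x$, then $f(y)\prec f(a)$ would give $\E_1(f(x),f(a))=2\neq 1=\E_1(x,a)$.
\item If $y\prec a$, then $f(a)\prec f(y)$ would give $\E_1(f(x),f(a))=1\neq 2=\E_1(x,a)$.
\item The two kinds of cover at $x$ are handled dually with $\B_1$.
\end{itemize}
These are the paper's cases i)--iv). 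After them, connectivity of the Hasse diagram spreads preservation to every cover; the paper does this by induction over $\E_1$/$\B_1$-balls around $y$.
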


\begin{theorem}
  Let $P$ and $Q$ be discrete path-connected posets, neither of which contains a subposet isomorphic to an infinite diamond–pentagon ladder. The following are equivalent:
    \begin{enumerate}[a)]
         \item There exists a bijection $f: P \to Q$ that preserves $\E_1$ and $\B_1$ and reverse at least one covering relation. 
        \item There exists an anti-isomorphism $f: P \to Q$ that preserves $\E_1$ and $\B_1$.
        \item $P$ is a modular poset and $ P^{op} \cong Q$. 
    \end{enumerate}

    Here, $f$ is said to reverse covering relations if $x \prec y$ implies that $f(x) \succ f(y)$.
    \label{th:f_anti_preserves_EB}
\end{theorem}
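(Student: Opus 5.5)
The plan is to prove the cycle c) $\Rightarrow$ b) $\Rightarrow$ a) $\Rightarrow$ c). Throughout I use two facts that follow from Theorems \ref{th:1climber} and \ref{th:1diver}. First, an order isomorphism preserves $\E_1$ and $\B_1$, because it sends paths to paths with the same length, the same alternations and the same orientation. Second, for $u \neq v$ we have $\E_1(u,v)=1$ if and only if either $u,v$ are adjacent, or they are non-adjacent and have a common upper cover. Indeed, $L_1(\gamma)=1$ forces $\gamma$ to be a single covering edge or an upward path $u \prec w \succ v$. Dually, $\B_1(u,v)=1$ if and only if $u,v$ are adjacent or have a common lower cover.

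\textbf{c) $\Rightarrow$ b).} Let $g: P^{\op} \to Q$ be an isomorphism and set $f = g \circ \text{Id}$, where $\text{Id}: P \to P^{\op}$. Then $f$ is an anti-isomorphism. Since $P$ is modular, Corollary \ref{cor:modular_posets_distance} gives $\E_1^P = \E_1^{P^{\op}}$ and $\B_1^P = \B_1^{P^{\op}}$ pointwise. Because $g$ is an isomorphism, it preserves both metrics, and so $f$ preserves $\E_1$ and $\B_1$.

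\textbf{b) $\Rightarrow$ a).} This is immediate: an anti-isomorphism reverses every covering relation. A covering relation exists because $P$ is path-connected; the one-point case is degenerate and I would treat it separately.

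\textbf{a) $\Rightarrow$ c).} This is the main part, and it splits into two steps.

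\emph{Step 1: every cover is reversed.} Let $f$ preserve $\E_1,\B_1$ and reverse some cover. I will show that $f$ is an anti-isomorphism. I would mirror the argument used for Theorem \ref{th:f_iso_preserves_EB}, b) $\Rightarrow$ c), with orientations swapped. Equivalently, I would apply that argument to the composite $P \to Q \to Q^{\op}$, tracking that $\E_1^{Q^{\op}} = \B_1^Q$ and $\B_1^{Q^{\op}} = \E_1^Q$.

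The propagation works as follows. Adjacent pairs are detected metrically: they are exactly the pairs with $\E_1=\B_1=1$ that lie on no common-bound configuration. Given a reversed cover $x \prec y$, consider a neighbouring cover $y \succ z$ or $y \prec z$. The values of $\E_1$ and $\B_1$ on the pair $\{x,z\}$ separate an upward from a downward one-alternation configuration. This forces $f$ to reverse the neighbouring cover as well, unless the local picture is a diamond or a pentagon in which those values coincide. Path-connectedness then spreads the reversal to all covers. In a discrete poset, $u < v$ holds exactly when there is an upward chain of covers from $u$ to $v$, so reversing every cover in both directions makes $f$ an anti-isomorphism, and hence $Q \cong P^{\op}$.

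I expect this propagation to be the main obstacle. Chains of ambiguous diamonds and pentagons could allow orientation flips that never get detected. This is precisely where the hypothesis excluding infinite diamond--pentagon ladders is needed: a finite ladder must terminate in a configuration that fixes the orientation, and I would reuse the corresponding lemma from the proof of Theorem \ref{th:f_iso_preserves_EB}.

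\emph{Step 2: $P$ is modular.} Take $y \neq z$ in $P$ with a common lower cover $x$, so $x \prec y$ and $x \prec z$. Then $y,z$ are incomparable and not adjacent. Since $f$ reverses covers, $f(x)$ is a common upper cover of $f(y)$ and $f(z)$, and therefore $\E_1^Q(f(y),f(z))=1$. By invariance, $\E_1^P(y,z)=1$. Since $y,z$ are not adjacent, the characterization above gives a common upper cover of $y$ and $z$ in $P$, so $P$ is upper semi-modular. The dual argument with $\B_1$, starting from a common upper cover, shows that $P$ is lower semi-modular. Hence $P$ is modular, which together with $Q \cong P^{\op}$ gives c).
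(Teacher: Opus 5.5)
Your proposal is correct and follows essentially the paper's route, with only the order of the implication cycle rearranged. The ingredients match: every cover is reversed via Theorem~\ref{th:covers_preserved} together with the mirrored local case analysis and discreteness, modularity comes from transporting a one-alternation configuration through $\E_1$ and $\B_1$, and the converse uses Corollary~\ref{cor:modular_posets_distance}.

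Two small remarks. First, adjacency is not detected by the metric values alone; that is exactly what Theorem~\ref{th:covers_preserved} and the ladder hypothesis supply, and once you have it the local argument has no diamond or pentagon exceptions. Second, your one-point caveat is real: there b) and c) hold while a) fails, so that case has to be excluded rather than treated separately.
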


In this way, our results strengthen Belding’s classification: any bijection preserving $\E_1$ and $\B_1$ must be an isomorphism unless the poset is modular. However, the exclusion of non-discrete posets and infinite diamond–pentagon ladders is essential, as the conclusion fails in both settings, as illustrated in Examples~\ref{eg:non_discrete_fails} and \ref{eg:finite_fails}, respectively.

\begin{example}
     \label{eg:non_discrete_fails}
     Consider the non-discrete poset $P = \N \cup \{\omega\} \cup \{a\}$ where the order is defined as follows: for $m,n \in N$ we set $m < n$ if $m < n$ in $\N$; moreover, $\omega > m$ for all $m \in \N$ and $0 \prec a \succ \omega$. Similarly, consider the poset $Q = \N \cup \{a\} \cup \{b\}$ where the order extends the usual order on $\N$ and $0 \prec a \succ b$.
     \begin{center}
         \includegraphics[width=0.7\linewidth]{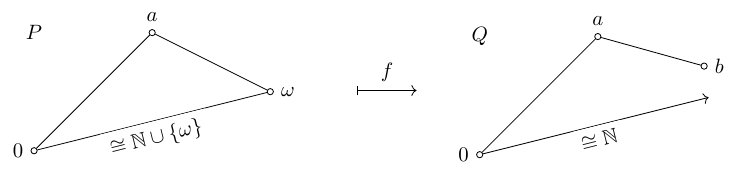}
     \end{center}

     Define a map $f: P \to Q$ given by $f(n) = n $ if $n \in \N$, $f(a) =a$ and $f(\omega) = b$. Then, $f$ is a bijection. Moreover, it preserves $\E_1$ and $\B_1$. Indeed, for any $x,y \in P$, let $\gamma = \{\tuple[0]{x}{n}\}$ be the unique path from $x$ to $y$. Then, $f(\gamma) = \{f(x_0), ... f(x_n)\}$ is the unique path from $f(x)$ to $f(y)$ in $Q$. However, $f$ is not an isomorphism as $0 < \omega$ in $P$, but, $f(0)$ and $f(\omega)$ are not comparable in $Q$.
\end{example}

\begin{example}
    \label{eg:finite_fails}
    Consider the posets $P = \{ x_i \mid i \in \Z \} \cup \{y_i \mid i \in \Z\}$ defined by the covering relations  $x_i \prec x_{i+1}$,  $y_i \prec y_{i+1}$ and  $x_i \prec y_{i}$. Similarly define $Q = \{ x'_i \mid i \in \Z \} \cup \{y'_i \mid i \in \Z\}$ with the covering relations $x'_i \prec x'_{i+1}$,  $y'_i \prec y'_{i+1}$ and  $y'_i \prec x'_{i+1}$.
    \begin{center}
        \includegraphics[width=0.5\textwidth]{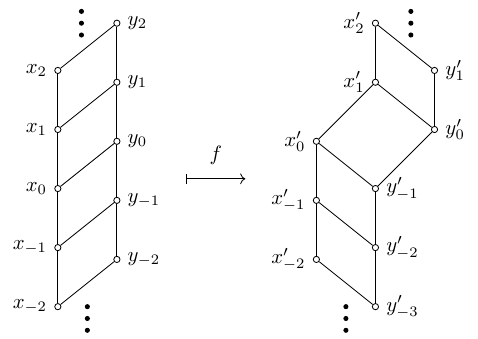}
    \end{center}

    Take $f: P \to Q$ given by $f(x_i) = x'_i$ and $f(y_i) = y'_i$ for all $i \in \Z$. Hence, the function $f$ is a bijection. Let us prove that $f$ preserves $\E_1$ and $\B_1$ but is not an isomorphism nor an anti-isomorphism since $x_0 \prec y_0$ but $f(x_0)$ and $f(y_0)$ are not comparable.

    Note that $P$ and $Q$ are modular posets, thus, let us calculate the 1-climber distance for each pairs of elements in each poset and the 1-diver distance will be the same. On one hand, for the pairs that have the same letter we can check that for any $i,j \in \Z$
    \begin{equation*}
        \E_1(x_i,x_j) =  \E_1(x'_i,x'_j)  = \E_1(y_i,y_j) =  \E_1(y'_i,y'_j) = |i-j|.
    \end{equation*}

    because if $i \leq j$, $x_i$ and $x_j$ are connected by the chain $x_i \prec x_{i+1} \prec \cdots \prec x_j$. The same reasoning applies to the elements with the letters  $x',y$ and $y'$.
    
    On the other hand, for any $i \in \Z$, $x_i \prec y_i$. Also the elements $x'_i$ and $y'_i$ are connected by the upward fence $x'_i \prec x'_{i+1} \succ y'_i$. Thus, 
    \[\E_1(x_i,y_i) = 1  \quad \text{and} \quad \E_1(x'_i, y'_i)  = 2 - \frac{1+1}{2} = 1.\]

    In the case that $j < i $, the elements $x_{i}$ and $y_j$ are connected by the upward path 
    \[x_{i} \prec y_{i} \succ y_{i-1} \succ  \cdots \succ y_j \]  
    of length $i-j+1$ with one alternation. 
    We can check that this path is minimal for $L_1$. Hence,
    \[
    \E_1(x_i, y_j) =  i-j+1 - \frac{1+1}{2} = i-j. 
    \]

    Besides, the elements $x'_{i}$ and $y'_j$ are connected by the chain $x'_i \succ y'_{i-1} \succ \cdots \succ y'_j$  of length $i - j $. Thus, we obtain that 
     \[
        \E_1(x'_i, y'_j) = i-j - \frac{0}{2} = i-j.
    \]
    
    In the case that $j > i$, the elements $x_{i}$ and $y_j$ are connected by the chain $x_i \succ y_i \succ \cdots \succ y_j$ of length $j - i + 1$ Hence
     \[
        \E_1(x_i, y_j) = j-i+1 - \frac{0}{2} = j-i + 1
    \]

    Meanwhile, $x'_{i}$ and $y'_j$ are connected  by the upward path 
    \[x'_{i} \prec x'_{i+1} \succ y'_{i} \prec  \cdots \prec y'_j.\]  
     of length $i-j+2$ with two alternations
    
    We can check that this path is minimal for $L_1$. Hence,
    \[
    \E_1(x'_i, y'_j) =  j-i+2 - \frac{2}{2} = j-i + 1 
    \]
 
    Thus, in summary 

    \begin{equation*}
        \E_1(x_i,y_j)   = \B_1(x_i,y_j) = \E_1(x'_i,y'_j)  = \B_1(x'_i,y'_j)  = 
        \begin{cases}
            1 & \text{ if } j = i  \\
           i-j  & \text{ if } j < i \\
           j-i + 1 & \text{ if } j > i 
        \end{cases}
    \end{equation*}
\end{example}

The main reason the conclusion fails in this second example is the existence of adjacent elements in $P$ for which their images are not adjacent in $Q$. In the next section we will prove that if $P$ does not contain a subposet isomorphic to an infinite diamond–pentagon ladder, then, for all adjacent $x,y \in P$, $f(x)$ and $f(y)$ must be adjacent in $Q$.

\subsection{Supporting theorem}

Let us start by defining a diamond poset and a concave pentagon poset. A \textit{diamond poset} is a 4 element poset $\{a,b,c,d\}$ with cover relations $a \prec b \prec d$ and $a \prec c \prec d$.
A \textit{up concave pentagon poset} is 5 element poset $\{a,b,c,d,e\}$ with cover relations $a \prec b \prec e $, $a \prec d$, $c \prec d$ and $c \prec e$. The dual of the up concave pentagon is the \textit{down concave pentagon poset}. 

\begin{center}
    \begin{tikzpicture}[
        node/.style={
        circle, draw=black,
        inner sep=0pt, minimum size=3pt,
        fill=white},
        ]
        
        \node (y) [node, label = right: $b$ ]  at (3,0){};
        \node (x) [node, label = left: $a$ ]  at (2,-0.5){};
        \node (z) [node, label = left: $c$ ]  at (2,0.5){};
        \node (w) [node, label = right: $d$]  at (3,1){};
        
        \draw (x) -- (y) -- (w) -- (z) -- (x);

        \node at (2.5,-1){Diamond};
    \end{tikzpicture}
    \hspace{5em}
    \begin{tikzpicture}[
        node/.style={
        circle, draw=black,
        inner sep=0pt, minimum size=3pt,
        fill=white},
        ]
        
        \node (y) [node, label = right: $b$ ]  at (2.5,0){};
        \node (x) [node, label = left: $a$ ]  at (1,-0.5){};
        \node (z) [node, label = below: $c$ ]  at (1.75,0.4){};
        \node (v) [node, label = left: $d$ ]  at (1,0.5){};
        \node (w) [node, label = right: $e$]  at (2.5,1){};
        
        \draw (x) -- (y) -- (w) -- (z) -- (v) -- (x);

        \node at (1.75,-1){Up concave pentagon};
    \end{tikzpicture}
    \hspace{5em}
    \begin{tikzpicture}[
        node/.style={
        circle, draw=black,
        inner sep=0pt, minimum size=3pt,
        fill=white},
        ]

        \node (a) [node, label = left: $a$ ]  at (1,1){};
        \node (b) [node, label = right: $b$ ]  at (2.5,0.5){};
        \node (c) [node, label = below: $c$ ]  at (1.75,0.1){};
        \node (d) [node, label = left: $d$ ]  at (1,0){};
        \node (e) [node, label = right: $e$]  at (2.5,-0.5){};
                
        \draw (a) -- (b) -- (e) -- (c) -- (d) -- (a);

        \node at (1.75,-1){Down concave pentagon};
    \end{tikzpicture}
\end{center}

Let $P_1$ and $P_2$ posets, each of which is either a diamond poset or an up concave pentagon poset. An \textit{up diamond–pentagon ladder with two steps} is a poset obtained by “gluing” $P_1$ and $P_2$ along a common covering relation, in a compatible orientation, yielding one of the following posets:

\begin{center}
    \begin{tikzpicture}[
        node/.style={
        circle, draw=black,
        inner sep=0pt, minimum size=3pt,
        fill=white},
        ]
        \node (b) [node]  at (3,0){};
        \node (a) [node]  at (2,-0.5){};
        \node (c) [node]  at (2,0.5){};
        \node (d) [node]  at (3,1){};
        \node (e) [node]  at (2,1.5){};
        \node (f) [node]  at (3,2){};

        \node at (2.5,0.25) {$P_1$}; 
         \node at (2.5,1.25) {$P_2$};
        
        \draw (a) -- (b) -- (d) -- (c) -- (a);
        \draw (c) -- (e) -- (f) -- (d);
        
    \end{tikzpicture}
    \hspace{3em}
     \begin{tikzpicture}[
        node/.style={
        circle, draw=black,
        inner sep=0pt, minimum size=3pt,
        fill=white},
        ]

        \node (b) [node]  at (3,0){};
        \node (a) [node]  at (2,-0.5){};
        \node (c) [node]  at (2,0.5){};
        \node (d) [node]  at (3,1){};
        \node (e) [node]  at (2,1.5){};
        \node (f) [node]  at (2.5,1.4){};
        \node (g) [node]  at (3,2){};

        \node at (2.5,0.25) {$P_1$}; 
        \node at (2.5,1.1) {$P_2$};
        
        \draw (a) -- (b) -- (d) -- (c) -- (a);
        \draw (c) -- (e) -- (f) -- (g) -- (d);
    \end{tikzpicture}
    \hspace{3em}
     \begin{tikzpicture}[
        node/.style={
        circle, draw=black,
        inner sep=0pt, minimum size=3pt,
        fill=white},
        ]

        \node (b) [node]  at (2.5,0){};
        \node (a) [node ]  at (1,-0.5){};
        \node (c) [node]  at (1.75,0.4){};
        \node (d) [node ]  at (1,0.5){};
        \node (e) [node]  at (2.5,1){};
        \node (g) [node]  at (1.75,1.4){};
        \node (f) [node ]  at (1,1.5){};

        \node at (1.75,0.05) {$P_1$}; 
        \node at (1.38,1) {$P_2$};
        
        \draw (a) -- (b) -- (e) -- (c) -- (d) -- (a);
        \draw (d) -- (f) -- (g) -- (c);
    \end{tikzpicture}
    \hspace{2em}
     \begin{tikzpicture}[
        node/.style={
        circle, draw=black,
        inner sep=0pt, minimum size=3pt,
        fill=white},
        ]

        \node (b) [node]  at (2.5,0){};
        \node (a) [node ]  at (1,-0.5){};
        \node (c) [node]  at (1.75,0.4){};
        \node (d) [node ]  at (1,0.5){};
        \node (e) [node]  at (2.5,1){};
        \node (g) [node]  at (1.75,1.4){};
        \node (f) [node ]  at (2.5,2){};

        \node at (1.75,0.05) {$P_1$}; 
        \node at (2.13,1.2) {$P_2$};
        
        \draw (a) -- (b) -- (e) -- (c) -- (d) -- (a);
        \draw (e) -- (f) -- (g) -- (c);
    \end{tikzpicture}
    \hspace{2em}
     \begin{tikzpicture}[
        node/.style={
        circle, draw=black,
        inner sep=0pt, minimum size=3pt,
        fill=white},
        ]

        \node (b) [node]  at (2.5,0){};
        \node (a) [node ]  at (1,-0.5){};
        \node (c) [node]  at (1.75,0.4){};
        \node (d) [node ]  at (1,0.5){};
        \node (e) [node]  at (2.5,1){};
        \node (g) [node]  at (1.75,1.4){};
        \node (f) [node]  at (1,1.5){};
        \node (h) [node] at (1.38, 1.25){};

        \node at (1.75,0.05) {$P_1$}; 
        \node at (1.38,0.9) {$P_2$};
        
        \draw (a) -- (b) -- (e) -- (c) -- (d) -- (a);
        \draw (d) -- (f) -- (h) -- (g) -- (c);
    \end{tikzpicture}
    \hspace{2em}
     \begin{tikzpicture}[
        node/.style={
        circle, draw=black,
        inner sep=0pt, minimum size=3pt,
        fill=white},
        ]

        \node (b) [node]  at (2.5,0){};
        \node (a) [node ]  at (1,-0.5){};
        \node (c) [node]  at (1.75,0.4){};
        \node (d) [node ]  at (1,0.5){};
        \node (e) [node]  at (2.5,1){};
        \node (g) [node]  at (1.75,1.5){};
        \node (f) [node ]  at (2.5,2){};
        \node (h) [node] at (2.13, 1.4){};

        \node at (1.75,0.05) {$P_1$}; 
        \node at (2.13,1.1) {$P_2$};
        
        \draw (a) -- (b) -- (e) -- (c) -- (d) -- (a);
        \draw (e) -- (f) -- (h) -- (g) -- (c);
    \end{tikzpicture}    
\end{center}

Recursively, an \textit{up diamond–pentagon ladder with $n$ steps} is a poset obtained by “gluing” a sequence of poset $\{P_i\}_{i=1}^n$, where each $P_i$  is either a diamond poset or an up concave pentagon poset, and where each consecutive pair $P_i$ and $P_{i+1}$ is an up diamond–pentagon ladder with two steps. In the limit, as $n$ tends to infinity, the resulting poset is an \textit{infinite up diamond–pentagon ladder}.

Dually, a \textit{down diamond–pentagon ladder with $n$ steps} is a sequence of poset $\{P_i\}_{i=1}^n$, where each $P_i$ is either a diamond poset or a down concave pentagon poset an each consecutive pair $P_i$ and $P_{i+1}$ is the dual of an up diamond–pentagon ladder with two steps. When $n$ tends to infinity, the resulting poset is an \textit{infinite down diamond–pentagon ladder}.

\begin{lemma}
    Let $f: P \to Q$ a bijection that preserves $\E_1$. If there are $x,y,z \in P$ such that $x \prec z$ and $y \prec z$ and $f(x) \prec f(y)$. Then, the set $\{f(x), f(y), f(z)\}$ together with one or two elements, is a subposet of $Q$ isomorphic to a diamond poset or an up concave pentagon poset respectively.
    \label{lemma:diamond_pentagon_shape}
\end{lemma}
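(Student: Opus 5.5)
Write $x'=f(x)$, $y'=f(y)$, $z'=f(z)$. The plan is to compute the $\E_1$-distances among $x,y,z$ in $P$, transport them to $Q$ (where they agree because $f$ preserves $\E_1$), and then use Theorem~\ref{th:1climber} to read off which short paths must exist in $Q$.

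\textbf{Distances in $P$.} Since $x\prec z$ and $y\prec z$, the path $x\prec z\succ y$ is upward with one alternation, so $L_1=2-1=1$. Hence $\E_1(x,y)\le 1$, and since $x\neq y$, $\E_1(x,y)=1$. Likewise $\E_1(x,z)=\E_1(y,z)=1$, because a single covering edge is a path of length $1$ with no alternations. All three pairwise distances are therefore $1$, and the same holds in $Q$.

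\textbf{Paths realising distance $1$ in $Q$.} By Theorem~\ref{th:1climber}, $\E_1(a,b)=1$ means some path $\gamma$ from $a$ to $b$ has $L_1(\gamma)=1$. I will classify all such paths.
\begin{itemize}
\item If $A(\gamma)=0$, then $\ell(\gamma)=1$, so $a,b$ are adjacent.
\item If $A(\gamma)$ is even and positive, then $\ell(\gamma)\ge A(\gamma)+1$, so $L_1\ge A(\gamma)/2+1\ge 2$.
\item If $A(\gamma)$ is odd and $\gamma$ is downward, then $L_1\ge A(\gamma)+1-(A(\gamma)-1)/2\ge 2$.
\item If $A(\gamma)$ is odd and $\gamma$ is upward, then $L_1=1$ forces $\ell(\gamma)=(A(\gamma)+3)/2$. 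Since every chain in the decomposition has length at least $1$, $\ell(\gamma)\ge A(\gamma)+1$, which gives $A(\gamma)=1$ and $\ell(\gamma)=2$.
\end{itemize}
So $\E_1(a,b)=1$ holds exactly when $a,b$ are adjacent or have a common upper cover. Applying this in $Q$: $x',z'$ are adjacent or share an upper cover, and the same holds for $y',z'$.

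\textbf{Case analysis.} Now $x'\prec y'$ is given. I will enumerate the possibilities for the pairs $(x',z')$ and $(y',z')$, and for each one either exhibit the required subposet or derive a contradiction.

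If $y'\prec z'$, then $x'<z'$ with $x'\prec y'\prec z'$. I expect this to be ruled out by comparing other $\E_1$ values, or by using $\B_1$-free information that the statement allows. The cleanest route is to note that $z'$ is adjacent to $x'$ or shares an upper cover with it. Adjacency is impossible, since $x'<y'<z'$ means $z'$ cannot cover $x'$. So there is $u$ with $x'\prec u\succ z'$, hence $u>z'>y'>x'$, contradicting $x'\prec u$ being a cover.

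Similarly I will dispose of $z'\prec y'$ and of $z'$ sharing an upper cover with $y'$, checking each against the options for $(x',z')$. The surviving configurations should be the following.
\begin{itemize}
\item $x'\prec z'$ and $y',z'$ have a common upper cover $w$. Then $\{x',y',z',w\}$ has $x'\prec y'\prec w$ and $x'\prec z'\prec w$, which is a diamond.
\item $z'$ is incomparable to $x'$ but shares upper covers with both. With $x'\prec u\succ z'$ and $y'\prec w\succ z'$, we get $x'\prec y'\prec w$, $x'\prec u$, $z'\prec u$ and $z'\prec w$, which is an up concave pentagon on $\{x',y',z',u,w\}$. If $u=w$, this collapses to a diamond.
\end{itemize}

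\textbf{Main obstacle.} The delicate part is the bookkeeping in this case analysis. I must verify that the listed relations really are covering relations, and that no extra comparabilities occur (for example $z'<y'$, or $u$ comparable with $w$) that would spoil the isomorphism type. I will handle this with the same trick as above: whenever an extra comparability would make one of the claimed covers a non-cover, it contradicts the defining covering relation. Any configurations that remain only partially excluded, such as $z'\prec y'$, I will try to rule out using antisymmetry together with the common-upper-cover conditions.
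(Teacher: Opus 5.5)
\textbf{The case you defer cannot be closed, because the statement is false as written.} Your overall route is the paper's: transport the three distances equal to $1$, use that $\E_1(a,b)=1$ exactly when $a,b$ are adjacent or share an upper cover, and split into cases. Your classification is correct, and it is more complete than the paper's, whose cases i)--iv) only allow the adjacencies $f(x)\prec f(z)$ and $f(y)\prec f(z)$.

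The gap is the configuration you leave for the end, $z'\prec y'$.
\begin{itemize}
\item Together with $x'\prec y'$, the cover argument excludes $x'\prec z'$ (it would give $x'\prec z'\prec y'$) and $z'\prec x'$ (it would give $z'\prec x'\prec y'$). So $x'$ and $z'$ are incomparable.
\item But then $\E_1(x',z')=1$ holds automatically, because $y'$ itself is a common upper cover of $x'$ and $z'$.
\item Neither antisymmetry nor the common-upper-cover conditions yields a contradiction here.
\end{itemize}

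A counterexample shows no contradiction exists. Take $P=\{x,y,z\}$ with $x\prec z$ and $y\prec z$, take $Q=\{a,b,c\}$ with $a\prec c$ and $b\prec c$, and set $f(x)=a$, $f(y)=c$, $f(z)=b$.
\begin{itemize}
\item In each poset, $\downarrow_1\uparrow_1$ of every singleton is the whole poset. Hence every pair of distinct elements is at $\E_1$-distance $1$, and $f$ preserves $\E_1$.
\item The hypotheses $x\prec z$, $y\prec z$ and $f(x)=a\prec c=f(y)$ all hold.
\item Yet $Q$ has no fourth element, so no diamond or up concave pentagon containing $\{f(x),f(y),f(z)\}$ can exist.
\end{itemize}
No amount of bookkeeping will finish your argument. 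An extra hypothesis excluding $f(z)\prec f(y)$ would be needed. The paper's own proof has exactly the same hole, since it never considers $f(z)\prec f(y)$ or $f(z)\prec f(x)$.

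\textbf{Two smaller points.}
\begin{itemize}
\item In your pentagon case, $u=w$ does not ``collapse to a diamond''; it is impossible. From $x'\prec y'\prec w$ we get $x'<y'<w$, which contradicts $x'\prec u=w$.
\item You did not list the case $z'\prec x'$, but it is excluded. Then $z'<x'<y'$, so $z'$ and $y'$ are neither adjacent nor have a common upper cover.
\end{itemize}
The remaining checks in your two surviving configurations do go through by the cover argument you describe. These include that $u$ and $w$ are incomparable, and that $y'$ is incomparable to $z'$ and to $u$.
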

\begin{proof}
    Take $f: P \to Q$ and $x,y,z \in P$ that satisfy the statement. Given that  $x \prec z$ and $y \prec z$ we obtain that $\E_1(x,z) = \E_1(y,z) = 1$. Since $f$ preserves $\E_1$  we have that $\E_1(f(x),f(z)) = \E_1(f(y),f(z)) = 1$. Thus, we have four possible cases:

    \begin{enumerate}[i) ]
        \item $f(y) \prec f(z)$ and $f(x) \prec f(z)$.
        \item $f(y) \prec f(z)$ and there is $w' \in Q$ such that $f(x) \prec w'$ and $f(z) \prec w'$.
        \item There is $w \in Q$ such that $f(y) \prec w$ and $f(z) \prec w$ and $f(x) \prec f(z)$.
        \item  There is $w \in Q$ such that $f(y) \prec w$ and $f(z) \prec w$ and there is $w' \in Q$ such that $f(x) \prec w'$ and $f(z) \prec w'$.
    \end{enumerate}

We see that case i) leads to a contradiction: $f(z)$ is an upper cover of $f(x)$ as $f(x) \prec f(z)$. However, $f(z)$ is not an upper cover of $f(x)$  because $f(x) \prec f(y) \prec f(z)$. Similarly, case ii) leads to a similar contradiction for $w'$ and $f(x)$ since $f(x) \prec w'$ and $f(x) \prec f(y) \prec f(z) \prec w'$ cannot hold simultaneously. Finally, cases iii) and iv) give us the desired subposets. Case iii) correspond to diamond poset and case iv) to an up concave pentagon poset.
\begin{center}
    \begin{tikzpicture}[
        node/.style={
        circle, draw=black,
        inner sep=0pt, minimum size=3pt,
        fill=white},
        ]

        \node at (1,1){iii)};
        
        \node (y) [node, label = right: $f(y)$ ]  at (3,0){};
        \node (x) [node, label = left: $f(x)$ ]  at (2,-0.5){};
        \node (z) [node, label = left: $f(z)$ ]  at (2,0.5){};
        \node (w) [node, label = right: $w$]  at (3,1){};
        
        \draw (x) -- (y) -- (w) -- (z) -- (x);
    \end{tikzpicture}
    \hspace{5em}
    \begin{tikzpicture}[
        node/.style={
        circle, draw=black,
        inner sep=0pt, minimum size=3pt,
        fill=white},
        ]

        \node at (0.5,1){iv)};
        
        \node (y) [node, label = right: $f(y)$ ]  at (2.5,0){};
        \node (x) [node, label = left: $f(x)$ ]  at (1,-0.5){};
        \node (z) [node, label = below: $f(z)$ ]  at (1.75,0.4){};
        \node (v) [node, label = left: $w'$ ]  at (1,0.5){};
        \node (w) [node, label = right: $w$]  at (2.5,1){};

        \draw (x) -- (y) -- (w) -- (z) -- (v) -- (x);
    \end{tikzpicture}
\end{center}
\end{proof}

Dually, let $f: P \to Q$ a bijection that preserves $\B_1$. If there are $x,y,z \in P$ such that $x \succ z$ and $y \succ z$ and $f(x) \succ f(y)$. Then, the set $\{f(x), f(y), f(z)\}$ together with one or two elements, is a subposet of $Q$ isomorphic to a diamond poset or a down concave pentagon poset respectively.

\begin{lemma}
   Let $P$ and $Q$ be posets and let $f: P \to Q$ be a bijection that preserves $\E_1$. If $P$ has elements $x$ and $y$ that are adjacent in $P$ but $f(x)$ and $f(y)$ are not adjacent in $Q$, then $P$ contains a subposet isomorphic to an infinite up diamond–pentagon ladder.
    \label{lemma:covers_preserved_E}
\end{lemma}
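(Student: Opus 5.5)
The plan is to turn one ``bad'' covering relation into a diamond or up concave pentagon in $P$, and then show that this shape always contains a new bad covering relation one level higher, glued along a covering relation.

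\textbf{Step 1 (what non-adjacency forces).} Suppose $x \prec y$ in $P$ but $f(x)$ and $f(y)$ are not adjacent. Then $\E_1(x,y)=1$, so $\E_1(f(x),f(y))=1$. By Theorem \ref{th:1climber}, $\E_1(f(x),f(y))=1$ means there is a path $\gamma$ with $L_1(\gamma)=1$. Inspecting the formula for $L_1$, this happens only when $\ell(\gamma)=1$, or when $\gamma$ is upward with $\ell(\gamma)=2$ and $A(\gamma)=1$. The first option is excluded by non-adjacency. Hence there is $w\in Q$ with $f(x)\prec w \succ f(y)$. Set $z=f^{-1}(w)$.

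\textbf{Step 2 (build the first rung).} By Lemma \ref{lemma_f-1_preserves}, $f^{-1}$ also preserves $\E_1$. In $Q$ we have $f(x)\prec w$ and $f(y)\prec w$, while $f^{-1}(f(x))=x\prec y=f^{-1}(f(y))$. This is exactly the hypothesis of Lemma \ref{lemma:diamond_pentagon_shape} applied to $f^{-1}$ and the triple $(f(x),f(y),w)$. So $\{x,y,z\}$, together with one or two extra elements, forms a diamond or an up concave pentagon $P_1\subseteq P$. In both shapes $x$ is the bottom, $y$ lies on one side, and $z$ lies on the other side. Concretely, $P_1$ is either $x\prec y\prec d$, $x\prec z\prec d$, or the pentagon with $x\prec y\prec e$, $x\prec d'$, $z\prec d'$, $z\prec e$.

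\textbf{Step 3 (the inductive step, the main obstacle).} I must exhibit a covering relation $u\prec v$ on the ``upper'' edge of $P_1$ whose images are again non-adjacent. The candidates are $z\prec d$ in the diamond case, and $z\prec d'$ or $z\prec e$ in the pentagon case. Applying Steps 1--2 to that pair then yields $P_2$ glued to $P_1$ along $u\prec v$ in the compatible orientation, i.e.\ an up diamond--pentagon ladder with two steps.

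To find such a pair, I would compute the $\E_1$-values among the four or five points of $P_1$; all of them lie in $\{1,2\}$ and are read off from short paths. I would then compare these with the positions of their images relative to $f(x)\prec w\succ f(y)$ in $Q$. The argument would mimic the case analysis in the proof of Lemma \ref{lemma:diamond_pentagon_shape}: if every covering relation of the upper edge were mapped to a covering relation, then $f(x)\prec w$ and $f(y)\prec w$ together with the preserved distances would force a covering chain through $f(x)\prec f(y)$-type configurations that contradicts the covering property, as in cases i) and ii) there. I expect this bookkeeping to be the hardest part, because it must be repeated for the diamond and pentagon cases and must choose the glued edge consistently.

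\textbf{Step 4 (infinitude).} Iterating Step 3 produces a sequence $P_1,P_2,\dots$ with consecutive pieces glued as two-step ladders. Each new bad pair sits strictly higher: its top element lies strictly above the previous one along a chain. Hence the elements are distinct and the process never closes up, so $P$ contains an infinite up diamond--pentagon ladder.
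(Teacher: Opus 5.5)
\textbf{Step 2 fails, and the statement itself is false.} Your Step 2 relies on Lemma~\ref{lemma:diamond_pentagon_shape}, whose proof is incomplete. In that lemma's own notation ($x\prec z$, $y\prec z$, $f(x)\prec f(y)$), the proof reads $\E_1(f(y),f(z))=1$ as ``$f(y)\prec f(z)$ or a common upper cover'' and overlooks the possibility $f(z)\prec f(y)$. In that case $f(y)$ is itself a common upper cover of $f(x)$ and $f(z)$, all three distances equal $1$, and no fourth element is forced. In your application (map $f^{-1}$, triple $(f(x),f(y),w)$) the overlooked case is $z\prec y$. Then there is no diamond or pentagon with $x$ at the bottom and $y,z$ on opposite sides.

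This case really occurs. Let $P=\{a,b,c\}$ with $a\prec c\succ b$ and $a,b$ incomparable, and let $Q=\{a',b',c'\}$ be a copy with $a'\prec c'\succ b'$. In either poset, any two distinct elements are adjacent or have the top as a common upper cover, so $\E_1\equiv 1$ off the diagonal and every bijection preserves $\E_1$. Define $f(a)=a'$, $f(b)=c'$, $f(c)=b'$. Then $a\prec c$, but $f(a)=a'$ and $f(c)=b'$ are incomparable, hence not adjacent. Yet $P$ is finite and contains no infinite ladder. Tracing your argument: $x=a$, $y=c$, $w=c'$, and $z=f^{-1}(c')=b\prec y$. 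The same $f$, with $(a,b,c)$ in place of $(x,y,z)$, refutes Lemma~\ref{lemma:diamond_pentagon_shape} directly: $\{f(a),f(b),f(c)\}$ is all of $Q$, so no extra elements exist.

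The paper's proof opens with exactly your Steps 1 and 2. It then builds the ladder by alternately applying Lemma~\ref{lemma:diamond_pentagon_shape} to $f$ and to $f^{-1}$. In particular, what you leave as a plan in Step 3 is done there by applying that lemma to $f$ with the triple $(y,z,t)$, where $t$ is the common upper cover of $y$ and $z$ (your $d$ or $e$), using $f(y)\prec f(z)=w$. This makes $z\prec t$ the next non-adjacent pair. So your route coincides with the paper's and inherits the same defect. No bookkeeping in Step 3 can repair it, since the missing case already breaks the first rung.

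A correct statement needs a hypothesis beyond $\E_1$-preservation, and the case $z\prec y$ would then have to be treated explicitly. One candidate is the $\B_1$-preservation available in Theorem~\ref{th:covers_preserved}; the example above violates it, since $\B_1(a,b)=2\neq 1=\B_1(f(a),f(b))$.
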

\begin{proof}
    Assume that there are $x \prec y$ such that $f(x)$ and $f(y)$ are not adjacent. Given that $\E_1(x,y) = 1$, $f(x)$ and $f(y)$ are not adjacent and $f$ preserves $\E_1$, then there is $z_1 \in Q$ such that  $f(x) \prec z_1$ and $f(y) \prec z_1$. Recall that $f^{-1}$ also preserves $\E_1$ (Lemma \ref{lemma_f-1_preserves}). Hence, by Lemma \ref{lemma:diamond_pentagon_shape}, we obtain that the set 
    $\{x, y, f^{-1}(z_1)\}$ together with one or two elements, is a subposet of $P$ isomorphic to a diamond poset or a concave pentagon poset respectively.  In particular call $w_1 \in P$ the element such that $y = f^{-1}(f(y)) \prec w_1$ and $f^{-1}(z_1) \prec w_1$. Define the  subset $A_1 = \{ x,y,f^{-1}(z_1), w_1 \} \subseteq P$. We use the figure of a trapeze with a left dashed vertical line to represent either a diamond or a up concave pentagon omitting one of the elements

    \begin{center}
    \begin{tikzpicture}[
        node/.style={
        circle, draw=black,
        inner sep=0pt, minimum size=3pt,
        fill=white},
        ]
        \node at (0,0){$A_1:$};
        
        \node (y) [node, label = right: $y$ ]  at (3,0){};
        \node (x) [node, label = left: $x$ ]  at (1.5,-0.5){};
        \node (z1) [node, label = left: $f^{-1}(z_1)$ ]  at (1.6,0.5){};
        \node (w1) [node, label = right: $w_1$]  at (3,1){};
        
        \draw (x) -- (y) (y) -- (w1) (z1) -- (w1);
        \draw [dashed] (z1) -- (x);
    \end{tikzpicture}
    \end{center}

    Consider $\hat{x} = y$, $\hat{y} = f^{-1}(z_1)$ and $\hat{z} = w_1$ in $P$. Given that 
    
    \[\hat{x} = y \prec w_1 = \hat{z}, \quad  \hat{y} = f^{-1}(z_1) \prec w_1 = \hat{z} \quad f(\hat{x}) = f(y) \prec z_1 = f(\hat{y}), \] 
    
    by Lemma \ref{lemma:diamond_pentagon_shape} we obtain there is $z_2 \in Q - f(A_1)$ such that $z_1 = f(\hat{y}) \prec z_2$ and $f(w_1) = f(\hat{z}) \prec z_2$. In an analogous way, we find $w_2 \in P - A_1$ with the covering relations $w_1 = f^{-1}(\hat{y}) \prec w_2$ and $f^{-1}(z_2) = f^{-1}(\hat{z}) \prec w_2$ considering in $Q$ the elements 
    \[\hat{x} = z_1, \quad \hat{y} = f(w_1), \quad \hat{z} = z_2, \] 
    again, by Lemma \ref{lemma:diamond_pentagon_shape},  Then, take $A_2 = A_1 \cup \{ f^{-1}(z_2) , w_2) \}$.

    \begin{center}
    \begin{tikzpicture}[
        node/.style={
        circle, draw=black,
        inner sep=0pt, minimum size=3pt,
        fill=white},
        ]

        \node at (-0.5,0){$A_2:$};
        
        \node (y) [node, label = right: $y$ ]  at (3,0){};
        \node (x) [node, label = left: $x$ ]  at (1.5,-0.5){};
        \node (z1) [node, label = left: $f^{-1}(z_1)$ ]  at (1.6,0.5){};
        \node (w1) [node, label = right: $w_1$]  at (3,1){};
        \node (z2) [node, label = left: $f^{-1}(z_2)$]  at (1.7,1.5){};
        \node (w2) [node, label = right: $w_2$]  at (3,2){};
        
        \draw (x) -- (y) (y) -- (w1) (z1) -- (w1) -- (w2) (z2) -- (w2);
        \draw [dashed] (z2) -- (z1) -- (x);
    \end{tikzpicture}
\hspace{5em}
    \begin{tikzpicture}[
        node/.style={
        circle, draw=black,
        inner sep=0pt, minimum size=3pt,
        fill=white},
        ]

        \node at (-0.5,0){$f(A_2 - \{w_2\}):$};
        
        \node (y) [node, label = right: $f(y)$ ]  at (4.5,0){};
        \node (x) [node, label = left: $f(x)$ ]  at (2,0){};
        \node (z1) [node, label = left: $z_1$]  at (3,0.5){};
        \node (z2) [node, label = left: $z_2$]  at (3,1.5){};
        \node (w1) [node, label = right: $f(w_1)$]  at (4.4,1){};
        \node at (1,-1){};
        
        \draw (x) -- (z1) (y) -- (z1) -- (z2) (w1) -- (z2);
        \draw [dashed] (w1) -- (y);
    \end{tikzpicture}
    \end{center}

    Recursively, for $n \geq 3$, define $A_n = A_{n-1} \cup \{ f^{-1}(z_n),w_n\}$ where $z_n \in Q - f(A_{n-1})$ is the result of Lemma \ref{lemma:diamond_pentagon_shape} for the elements in $P$
    \begin{equation*}
        \hat{x} = w_{n-2} \quad \hat{y} = f^{-1}(z_{n-1}) \quad \hat{z} = w_{n-1}, 
    \end{equation*}
    
    and $w_n \in P - A_{n-1}$ the result of Lemma \ref{lemma:diamond_pentagon_shape} considering in $Q$,
     \begin{equation*}
        \hat{x} = z_{n-1} \quad \hat{y} = f(w_{n-1}) \quad \hat{z} = z_{n}, 
    \end{equation*}

    \begin{center}
    \begin{tikzpicture}[
        node/.style={
        circle, draw=black,
        inner sep=0pt, minimum size=3pt,
        fill=white},
        point/.style={
        circle, draw=black,
        inner sep=0pt, minimum size=2pt,
        fill=black},
        ]

        \node at (-0.5,1){$A_n:$};
        
        \node (y) [node, label = right: $y$ ]  at (3,0){};
        \node (x) [node, label = right: $x$ ]  at (1.5,-0.5){};
        \node (z1) [node, label = left: $f^{-1}(z_1)$ ]  at (1.6,0.5){};
        \node (w1) [node, label = right: $w_1$]  at (3,1){};

        \node [point] at (2.4,1.2){};
        \node [point] at (2.4,1.4){};
        \node [point] at (2.4,1.6){};

        \node (w) [node, label = right: $w_{n-2}$]  at (3,2){};
        \node (z2) [node, label = left: $f^{-1}(z_{n-1})$]  at (1.6,2.5){};
        \node (w2) [node, label = right: $w_{n-1}$]  at (3,3){};
        \node (z3) [node, label = left: $f^{-1}(z_n)$]  at (1.7,3.5){};
        \node (w3) [node, label = right: $w_n$]  at (3,4){};
        
        \draw (x) -- (y) (y) -- (w1) (z1) -- (w1)  (z2) -- (w2) (z3) -- (w3) (w) -- (w2) -- (w3);
        \draw [dashed] (z3) -- (z2)  (z1) -- (x);
    \end{tikzpicture}
\hspace{5em}
    \begin{tikzpicture}[
        node/.style={
        circle, draw=black,
        inner sep=0pt, minimum size=3pt,
        fill=white},
        point/.style={
        circle, draw=black,
        inner sep=0pt, minimum size=2pt,
        fill=black},
        ]

        \node at (-0.5,2){$f(A_n - \{w_n\}):$};

        \node (x) [node, label = left: $f(x)$ ]  at (2,0){};
        \node (y) [node, label = right: $f(y)$ ]  at (4.5,0){};
        \node (z1) [node, label = left: $z_1$]  at (3,0.5){};
        \node (z2) [node, label =  left: $z_2$]  at (3,1.5){};
        \node (w1) [node, label = right: $f(w_1)$]  at (4.4,1){};
        
        \node [point] at (3.6,1.8){};
        \node [point] at (3.6,2){};
        \node [point] at (3.6,2.2){};

        \node (z) [node, label = left: $z_{n-2}$]  at (3,2.5){};
        \node (w2) [node, label = right: $f(w_{n-2})$ ]  at (4.4,3){};
        \node (w3) [node, label = right: $f(w_{n-1})$]  at (4.3,4){};
        \node (z3) [node, label = left: $z_{n-1}$]  at (3,3.5){};
        \node (z4) [node, label =  left: $z_n$]  at (3,4.5){};
        
        \draw (x) -- (z1) (y) -- (z1) -- (z2) (w1) -- (z2) (z) -- (z3) -- (z4) (z3) -- (w2) (z4) -- (w3);
        \draw [dashed] (w3) -- (w2) (w1) -- (y);
    \end{tikzpicture}
    \end{center}

    Let $A = \lim_{n \to \infty} A_n$.  Then $A$ is a subposet of $P$ isomorphic to an infinite up diamond–pentagon ladder.
\end{proof}

\begin{theorem}
    Let $P$ and $Q$ be posets such that $P$ does not contain a subposet isomorphic to an infinite diamond–pentagon ladder, and let $f: P \to Q$ be a bijection that preserves $\E_1$ and $\B_1$. Then for all adjacent elements $x$ and $y$ in $P$, the elements $f(x)$ and $f(y)$ are also adjacent in $Q$.
    \label{th:covers_preserved}
\end{theorem}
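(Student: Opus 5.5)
The plan is to argue by contradiction and reduce everything to Lemma~\ref{lemma:covers_preserved_E} and its dual. Suppose $x$ and $y$ are adjacent in $P$; after relabeling we may assume $x \prec y$. Assume that $f(x)$ and $f(y)$ are not adjacent in $Q$. Since $f$ preserves $\E_1$, Lemma~\ref{lemma:covers_preserved_E} applies directly to the bijection $f$ and the pair $x \prec y$. It yields a subposet of $P$ isomorphic to an infinite up diamond--pentagon ladder, which contradicts the hypothesis on $P$.

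Strictly speaking, Lemma~\ref{lemma:covers_preserved_E} only needs $\E_1$. Its proof, however, silently uses that when $\E_1(f(x),f(y)) = 1$ and $f(x), f(y)$ are not adjacent, they must have a common \emph{upper} cover. So I would first record the following small fact. For $u \neq v$ we have $\E_1(u,v)=1$ if and only if $u,v$ are adjacent or have a common upper cover. Dually, $\B_1(u,v)=1$ if and only if $u,v$ are adjacent or have a common lower cover. Both follow from Theorem~\ref{th:1climber} and Theorem~\ref{th:1diver}, since $L_1(\gamma)=1$ forces $\ell(\gamma)=1$, or $\ell(\gamma)=2$ with one upward alternation.

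Using $\B_1$ as well gives a cleaner route that also covers the down ladder. From $\E_1(f(x),f(y))=1$ and non-adjacency we get a common upper cover $z$ of $f(x), f(y)$. From $\B_1(f(x),f(y))=1$ we get a common lower cover $z'$. We then run the ladder construction of Lemma~\ref{lemma:covers_preserved_E}, namely repeated applications of Lemma~\ref{lemma:diamond_pentagon_shape} to $f^{-1}$ and $f$ alternately. Either one of the two ladders does not terminate, or both terminate. Either way the up ladder built from $z$ already gives the contradiction, so the dual ladder from $z'$ is not needed.

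The main obstacle is checking that the recursive construction in Lemma~\ref{lemma:covers_preserved_E} genuinely produces infinitely many \emph{distinct} elements. Only then is the limit $A$ an infinite ladder rather than one that wraps around to earlier elements. I would verify that each new $w_n$ lies strictly above $w_{n-1}$, so the $w_n$ form a strictly increasing chain and are pairwise distinct. The $z_n$ likewise form a strictly increasing chain in $Q$. Because $f$ is a bijection, the elements $f^{-1}(z_n)$ are distinct too, and they are distinct from the $w_m$ by the strict order relations. This distinctness is the point I expect to need the most care. Granting it, the conclusion is immediate.
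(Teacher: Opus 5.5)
Your argument is the paper's own one-line reduction, but the step it rests on fails. Lemma~\ref{lemma:covers_preserved_E} is false as stated, with $\E_1$ alone. So your remarks that it ``only needs $\E_1$'' and that the $\B_1$-ladder ``is not needed'' point the wrong way.

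Take $P=Q=\{p,q,r\}$ with $p\prec r\succ q$, and let $f$ fix $p$ and swap $q$ and $r$. All three pairs are at $\E_1$-distance $1$ in both posets, so $f$ preserves $\E_1$. The cover $p\prec r$ goes to the non-adjacent pair $p,q$. Yet $P$ is finite and contains no infinite ladder. This $f$ does not preserve $\B_1$, since $\B_1(p,q)=2\neq 1=\B_1(p,r)$, so it does not refute the theorem; it shows that $\B_1$ must be used.

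The error starts in Lemma~\ref{lemma:diamond_pentagon_shape}, which the same $f$ refutes (take $x=p$, $y=q$, $z=r$). By your own small fact, $\E_1(f(y),f(z))=1$ also allows $f(z)\prec f(y)$. The lemma's four cases omit this possibility, and it is not contradictory. In the first step of the ladder construction it reads $c:=f^{-1}(z_1)\prec y$. Then $x$ and $c$ are both lower covers of $y$, no $w_1$ covering both $y$ and $c$ exists, and the ladder never starts.

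At later steps the analogous case is ruled out by the element two steps back. For example, $f(w_1)\prec z_1$ would force $\E_1(w_1,x)=\E_1(f(w_1),f(x))=1$, although $x<y<w_1$ forces $\E_1(w_1,x)\ge 2$. So the real obstacle is the existence of $w_1$, not the distinctness you single out. Distinctness is routine: $y\prec w_1\prec w_2\prec\cdots$, and $f(w_m)\prec z_{m+1}\le z_n$ for $m<n$.

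Adding $\B_1$ as you do does not rescue the ``either way'' sentence. The case where both constructions stall really occurs when both metrics are preserved. On $P=Q=\Z\times\{0,1\}$ one has $\E_1=\B_1$, with $\E_1((i,k),(j,k))=|i-j|$ and $\E_1((i,0),(j,1))=\max\{i-j,\,j-i+1\}$. The bijection $f(i,0)=(-i,0)$, $f(i,1)=(-i-1,1)$ preserves both metrics and sends the cover $(0,0)\prec(0,1)$ to the incomparable pair $(0,0),(-1,1)$.

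In that example $z_1=(0,1)$ and $f^{-1}(z_1)=(-1,1)\prec(0,1)=y$. The common lower cover $(-1,0)$ pulls back to $(1,0)\succ(0,0)=x$. So neither the up-construction nor the down-construction gets past its first step. This is no counterexample only because $P$ is itself a ladder, and it is not the ladder the construction tries to build.

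What is missing is an argument for the configuration $f^{-1}(z_1)\prec y$, and dually $x\prec f^{-1}(z_1')$ for a common lower cover $z_1'$. It must use $\E_1$ and $\B_1$ jointly, either to derive a contradiction or to extract a ladder by some other mechanism. Neither your proposal nor the paper's one-line proof supplies it.
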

\begin{proof}
    Suppose, for contradiction, that there exist adjacent elements $x$ and $y$ in $P$ such that  $f(x)$ and $f(y)$ are not adjacent in $Q$. Since $f$ preserves $\E_1$, Lemma \ref{lemma:covers_preserved_E} implies that $P$ contains a subposet isomorphic to an infinite up diamond–pentagon ladder. Applying the dual statement of the same lemma and using that $f$ also preserves $\B_1$, we likewise obtain that $P$ contains a subposet isomorphic to an infinite down diamond–pentagon ladder.
    
    In either case, $P$ contains a subposet isomorphic to an infinite diamond–pentagon ladder, contradicting the hypothesis. Therefore, $f(x)$ and $f(y)$ must be adjacent in $Q$ for all adjacent $x,y \in P$.
\end{proof}

\subsection{Proof of the main result}

We are ready to prove Theorems \ref{th:f_iso_preserves_EB} and \ref{th:f_anti_preserves_EB}.

\begin{proof}[Proof of Theorem \ref{th:f_iso_preserves_EB}]

    Let $P$ and $Q$ be discrete path-connected posets, neither of which contains a subposet isomorphic to an infinite diamond–pentagon ladder, and let $f: P \to Q$ be a bijection.
    
    a) $\implies$ b). Assume that $f$ is an isomorphism. Let us prove the metrics $\E_1$ and $\B_1$ are invariant under $f$ and $f$ preserves at least one covering relation.
    Given that $f$ is an isomorphism, then it is clear that $f$ preserves covering relations. Take $x,y \in P$ and $\gamma$ a path connecting $x$ and $y$ in $P$. Since $f$ is a bijective order-preserving map, the image $f(\gamma)$ is a path in $Q$ connecting $f(x)$ and $f(y)$ with the same length and same number of alternations. Then, take $\gamma$ a path in $P$ with $L_1(\gamma) = \E_1(x,y)$.  Hence,
    \[\E_1(f(x),f(y)) \leq L_1(f(\gamma)) = L_1(\gamma) = \E_1(x,y).\]
    
    Similarly, a path connecting $f(x)$ and $f(y)$ in $Q$ yields a path connecting $x$ and $y$  in $P$ because $f^{-1}$ is also a bijective order-preserving map. Applying the same argument to $f^{-1}$ and a minimal path connecting $f(x)$ and $f(y)$ yields the reverse inequality \mbox{$\E_1(x,y) \leq \E_1(f(x),f(y))$}.  Therefore, $\E_1(x,y) = \E_1(f(x),f(y))$.  An analogous argument shows that $\B_1(x,y) = \B_1(f(x),f(y))$.

    b) $\implies$ c). Assume that $\E_1$ and $\B_1$ are invariant under $f$ and $f$ preserves at least one covering relation. We will prove that $f$ is an isomorphism. By hypothesis, take $x \prec y$ in $P$ such that $f(x) \prec f(y)$. Let us prove $f$ preserves all covering relations involving $x$ and $y$. Then we have four cases.
    
    \begin{minipage}{0.81\textwidth}
        i) $a \prec y$ and $a \neq x$. Then $a \nsim x$ otherwise the relations $x \prec y$ and $y \succ a$ cannot hold simultaneously. Since $a$ is a  lower cover of $y$, Lemma \ref{th:covers_preserved} implies that either $f(a) \prec f(y)$ or \mbox{$f(a) \succ f(y)$}. Suppose, for contradiction, that $f(a) \succ f(y)$. Then, $f(x) \prec f(y) \prec f(a)$. Given that \[\E_1(f(x),f(a)) = \E_1(x,a) =  1\] but $f(a)$ and $f(x)$ are not adjacent, there exist $z \in Q$ such that $f(x) \prec z$ and $f(a) \prec z$. However $z$ cannot be an upper cover of $f(x)$ because $f(x) < f(a) < z$. Hence, $f(a) \prec f(y)$.
    \end{minipage}
    \hfill
    \begin{minipage}{0.15\textwidth}
        \begin{tikzpicture}[
            node/.style={
            circle, draw=black,
            inner sep=0pt, minimum size=3pt,
            fill=white},
            ]
            \node (x) [node, label = right: $x$ ]  at (0,0){};
            \node (y) [node, label = right: $y$ ]  at (1,1){};
            \node (a) [node, label = right: $a$]  at (2,0){};
            \draw (x) -- (y) -- (a);
        \end{tikzpicture}
    \end{minipage}
    
     \begin{minipage}{0.81\textwidth}
        ii) $a \succ y$. Since $a$ is an upper cover of $y$, by Lemma \ref{th:covers_preserved}  we obtain that $f(a) \succ f(y)$ or $f(a) \prec f(y)$. Suppose, for contradiction, that $f(a) \prec f(y)$. Hence, $\E_1(f(x),f(a)) = 1$ because $f(x) \prec f(y) \succ f(a)$ is a minimal path for this metric. Given that $f^{-1}$ also preserves $\E_1$, it implies that $\E_1(x,a) = 1$. By the same argument from before, since  $x$ and  $a$ are not adjacent, such distance between them yields a contradictory element $z \in P$ that at the same time, is and is not an upper cover of $x$. Hence, $f(a) \succ f(y)$.
     \end{minipage}
      \hfill
     \begin{minipage}{0.15\textwidth}
          \begin{tikzpicture}[
            node/.style={
            circle, draw=black,
            inner sep=0pt, minimum size=3pt,
            fill=white},
            ]
            \node (x) [node, label = right: $x$ ]  at (0,0){};
            \node (y) [node, label = right: $y$ ]  at (0.8,0.8){};
            \node (a) [node, label = right: $a$]  at (1.6,1.6){};
            \draw (x) -- (y) -- (a);
        \end{tikzpicture}
     \end{minipage}
     
     \begin{minipage}{0.81\textwidth}
        iii) $b \succ x$ and $b \neq y$. Then $y \nsim b$, otherwise the relations $x \prec y$ and  $b \succ x$ can not hold  simultaneously. This case is analogous to i). Here, $\B_1(b,y) = 1$, then, it follows that  $\B_1(f(b),f(y)) = 1$. The assumption $f(b) \prec f(x)$ yields the contradiction that there is $z \in Q$ such that $ z \prec f(b) $ and $z \prec f(y)$ but $z < f(b) < f(y) $. Therefore, $f(b) \succ f(x)$.
     \end{minipage}
      \hfill
     \begin{minipage}{0.15\textwidth}
         \begin{tikzpicture}[
            node/.style={
            circle, draw=black,
            inner sep=0pt, minimum size=3pt,
            fill=white},
            ]
            \node (x) [node, label = right: $x$ ]  at (1,0){};
            \node (y) [node, label = right: $y$ ]  at (2,1){};
            \node (a) [node, label = right: $b$]  at (0,1){};
            \draw (y) -- (x) -- (a);
        \end{tikzpicture}
     \end{minipage}
     
     \begin{minipage}{0.81\textwidth}
        iv) $b \prec x$. This case is dual to ii). If we assume $f(b) \succ f(x)$, then $\B_1(f(b),f(y)) = 1$ and it implies $\B_1(b,y) = 1$. As in case ii), this distance produces a contradiction, now using lower rather than upper covers. Therefore, $f(b) \prec f(x)$.
     \end{minipage}
      \hfill
     \begin{minipage}{0.15\textwidth}
         \begin{tikzpicture}[
            node/.style={
            circle, draw=black,
            inner sep=0pt, minimum size=3pt,
            fill=white},
            ]
            \node (x) [node, label = right: $x$ ]  at (0,0){};
            \node (y) [node, label = right: $y$ ]  at (0.8,0.8){};
            \node (a) [node, label = right: $b$]  at (-0.8,-0.8){};
            \draw (y) -- (x) -- (a);
        \end{tikzpicture}
     \end{minipage}
     
    We now prove that for any $a \in P$, $f$ preserves every cover relation involving $a$. It suffices to show that for every $a \in P$ there exists an adjacent $b \in P$ such that the covering relation of $a$ and $b$ is preserved. Once such $b$ is found, the previous argument shows that indeed $f$ preserves all covering relations involving $a$, and the desired result follows.
    
    Let us proceed by induction. First, consider $A_1 = \{ a \in P \mid \E_1(a,y) \leq 1 \text{ or } \B_1(a,y) \leq 1\}$. We show that for each $a \in A_1$, there exists an adjacent element $b \in P$ such that the covering relation of $a$ and $b$ is preserved. Hence, for every $a \in A_1$, $f$ preserves the covering relation involving $a$. If $ a = y$, the claim is immediate. If $a \prec y$, then case i) implies $f(a) \prec f(y)$. Similarly, if $a \succ y$, case ii) gives $f(a) \succ f(y)$. Suppose now that $a$ and $y$ are not adjacent. Given that $\E_1(a,y) = 1$, then there is $z \in P$ with  $a \prec z$ and $y \prec z$. By case ii) we obtain $f(y) \prec f(z)$. Since $y \prec z$ and $f(y) \prec f(z)$, the previous argument ensures that all covering relations involving $z$ are preserved, in particular $a \prec z$. Hence, we have $a \prec z$ and $f(a) \prec f(z)$ as desired. Likewise, if $\B_1(a,y) = 1$, we can find $z \in P$ such that $a \succ z$ and $f(a) \succ f(z)$. 
    
    Let $A_n = \{ a \in P \mid \E_1(y,a) \leq n \text{ or } \B_1(y,a) \leq n\} $. Suppose that for every $a \in A_n$, all covering relations involving $a$ are preserved. Let $a \in A_{n+1}$ with $\E_1(y,a) = n+1$ and consider a minimal path $\gamma = \{ \tuple[0]{x}{m}\}$ from $x_0 = y$ to $x_m = a$ in $P$  such that $L_1(\gamma) = n + 1$. Take $\gamma' = \{ \tuple[0]{x}{m-2}\}$. Hence, $\gamma'$ decreases the length of $\gamma$ by 2 and has at most 2 less alternations than $\gamma$. If $A(\gamma')$ and $ A(\gamma)$  have the same parity,
    
    \[L_1(\gamma') = \ell(\gamma') - \frac{A(\gamma') + \cdot}{2} \leq   \ell(\gamma) - 2 + \frac{A(\gamma) - 2 + \cdot}{2} = \ell(\gamma) + \frac{A(\gamma) + \cdot}{2} - 1 = L_1(\gamma) - 1 = n.\]

    If $A(\gamma')$ and $ A(\gamma)$ have different parity, then $A(\gamma') = A(\gamma) - 1$. In the case that $A(\gamma)$ is even we obtain that 

    \[L_1(\gamma') \leq \ell(\gamma') - \frac{A(\gamma') -1}{2} =   \ell(\gamma) - 2 + \frac{A(\gamma) - 2}{2} = \ell(\gamma) + \frac{A(\gamma) }{2} - 1 = L_1(\gamma) - 1 = n.\]

    On the contrary, if  $A(\gamma)$ is odd,

     \[L_1(\gamma') = \ell(\gamma') - \frac{A(\gamma')}{2} =   \ell(\gamma) - 2 - \frac{A(\gamma) - 1}{2} = \ell(\gamma) + \frac{A(\gamma) + 1}{2} - 1 \leq L_1(\gamma) - 1 = n.\]

    Hence, in all cases $x_{m-2} \in A_n$ as $\E_1(y,x_{m-2}) \leq L_1(\gamma') \leq n$. This implies that $f$ preserves all covering relations involving $x_{m-2}$. In particular, $f$ preserves the covering relation with $x_{m-1}$.  Since the covering relation of $x_{m-2}$ and $x_{m-1}$ is preserved, the previous argument by cases ensures that all covering relations involving $x_{m-1}$ are preserved, in particular $x_{m-1}$ and $a$. Thus, $x_{m-1}$ is the desired element. By duality, if $\B_1(y,a) = n+1$ an analogous argument shows that $\B_1(y,x_{m-2}) \leq n$ ensuring that $x_{m-2} \in A_n$ and leading to the same conclusion.

    By induction, it follows that for all $n \in \N$ and every $ a \in A_n$, all covering relations involving $a$ are preserved. Since $P$ is path-connected, there exists $n$ such that $P = A_n$.  Therefore, $f$ preserves all covering relations in $P$ completing the argument.

    Symmetrically, the same argument proves that $f^{-1}$ preserves all covering relations on $Q$.
    
    c) $\implies$ a). Assume that $f$ and $f^{-1}$ preserve all covering relations. Let us show that $f$ is an isomorphism. We start by proving that $f$ is an order-preserving map.  Take $x < y $. If there is no $z \in P$ such that $x < z < y$, then $x \prec y$ and since $f$ preserves all covering relations, we obtain that $f(x) \prec f(y)$. Otherwise,  given that $P$ is discrete, we can find a chain $C = \{\tuple[0]{c}{n}\}$ such that $c_0 = x$, $c_n = y$ and $c_{i-1} \prec c_i$ for $i \in \{ 1, ..., n\}$. Because $f$ preserves covering relations we obtain that $f(c_{i-1}) \prec f(c_i)$. Hence, by transitivity of the order $f(x) < f(y)$. Symmetrically, if we assume that $f^{-1}$ preserves covering relations, the same argument proves that $f^{-1}$ is an order preserving map. 
\end{proof}

\begin{proof}[Proof of Theorem \ref{th:f_anti_preserves_EB}]
    Let $P$ and $Q$ be discrete path-connected posets, neither of which contains a subposet isomorphic to an infinite diamond–pentagon ladder.
    
    a) $\implies$ b). Assume there exists a bijection $f: P \to Q$ that preserves $\E_1$ and $\B_1$ and reverse at least one covering relation. We will prove that $f$ is an anti-isomorphism. By hypothesis, take $x \prec y$ in $P$ such that $f(y) \prec f(x)$. An analogous argument of b) $\implies$ c) from Theorem \ref{th:f_iso_preserves_EB} proves that $f$ reverses all covering relation involving $x$ and $y$. Let us prove it for the first case we considered  before.
    
   Take $a \in P$  such that $a \prec y$ and $a \neq x$. Since $a$ is a lower cover of $y$,  by Lemma \ref{th:covers_preserved} we know that $f(a)$ and $f(y)$ are adjacent. Suppose for contradiction that $f(a) \prec f(y)$. Then $f(a) \prec f(y) \prec f(x)$. Since 
    \[\B_1(f(x),f(a)) = \B_1(x,a) =1 \]
    but $f(a)$ and $f(x)$ are not adjacent, there exist $w \in Q$ such that $w \prec f(a)$ and $w \prec f(x)$. However, $w$ cannot be a lower cover of $f(x)$ as $w \prec f(a) < f(x)$. Thus, as we found this contradiction we conclude that $f(a) \succ f(y)$, i.e., $f$ reverses the covering relation $a  \prec y$.

    In a similar way, we can prove by induction on $A_n = \{ a \in P \mid \E_1(y,a) \leq n \text{ or } \B_1(y,a) \leq n\}$, that $f$ reverses all covering relations for this set. Since there exists $n$ such that $P = A_n$,  therefore, $f$ reverses all covering relations in $P$ completing the argument.

    Now take $x < y $. If there is no $z \in P$ such that $x < z < y$, then $x \prec y$ and since $f$ reverses all covering relations, we obtain that $f(x) \succ f(y)$. Otherwise,  given that $P$ is discrete, we can find a chain $C = \{\tuple[0]{c}{n}\}$ such that $c_0 = x$, $c_n = y$ and $c_{i-1} \prec c_i$ for $i \in \{ 1, ..., n\}$. Because  $f$ reverses covering relations we obtain that $f(c_{i-1}) \succ f(c_i)$. Hence, by transitivity of the order $f(x) > f(y)$. In this way, $f$ reverses all order relations of $P$.

    Symmetrically, the same argument proves that $f^{-1}$ reverses all covering relations on $Q$, and then, reverses all order relations. In this way $f$ is an anti-isomorphism that preserves $\E_1$ and $\B_1$.

    b) $\implies$ c). Assume $f: P \to Q $ is an anti-isomorphism that preserves $\E_1$ and $\B_1$.  We will prove that $P$ is a modular poset and $ P^{op} \cong Q$. Define $f^*:  P^{\op} \to Q$ by $f^*(x) = f(x)$. Is easy to check that $f^*$ is an isomorphism. By definition, this implies that $ P^{\op} \cong Q$.
    
    Let us prove that $P$ is lower semi-modular. Take $x$ and $y$ in $P$ such that $z$ is a common upper cover of $x$ and $y$. In particular, this implies that $x$ and $y$ are not comparable. Given that $f$ is an anti-isomorphism we obtain that $\{f(x),f(z),f(y)\}$ is a downward path with one alternation from $x$ to $y$. Then,  $\B_1(f(x),f(y)) = 1$. Since $f^{-1}$ preserves $\B_1$ (Lemma \ref{lemma_f-1_preserves}) this implies that $\B_1(x,y) = 1$. So, as $x$ and $y$ are not comparable, there is $w \in P$ that is a common lower cover of $x$ and $y$.

    Similarly, we can prove that $P$ is upper semi-modular by considering a common lower cover of $x$ and $y$ and showing that a common upper cover exists to preserve $\E_1$. Therefore, as $P$ is both upper an lower semi-modular, then, $P$ is modular.

    c) $\implies$ a). Assume that $P$ is a modular poset and $ P^{op} \cong Q$. We will prove that there exists a bijection $f: P \to Q$ that preserves $\E_1$ and $\B_1$ and reverse at least one covering relation. Indeed, given that $P^{op} \cong Q$, there is $f^*: P^{op} \to Q$ isomorphism, thus, $f: P \to Q$ defined by $f(x) = f^*(x)$ is an anti-isomorphism. In particular, $f$ is a bijection that reverses at least one covering relation. 
    
    Let us show that $f$ preserves $\E_1$, this is enough to show that $f$ preserves $\B_1$ as well since $P$ is a modular poset (Theorem \ref{th:E1_B1_equal_modular}). On one hand, by Theorem \ref{cor:modular_posets_distance} we know that for any $x,y \in P$, $\E_1^P(x,y) = \E_1^{P^{op}}(x,y)$. On the other hand, since $f^*$  is an isomorphism, then it preserves $\E_1$ as shown in a) $\implies$ b) from Theorem \ref{th:f_iso_preserves_EB}. Hence $\E_1^{P^{op}}(x,y) = \E_1^{Q}(f^*(x),f^*(y))$. Both equations imply that
    \begin{equation*}
        \E_1^P(x,y) = \E_1^{P^{op}}(x,y) = \E_1^{Q}(f^*(x),f^*(y)) = \E_1^{Q}(f(x),f(y)).
    \end{equation*}
    Proving that $f$ preserves $\E_1$.

\end{proof}

\section{Concluding Remarks and Future directions}
\label{Sec:concluding}

\subsection{Summary}

In this paper we have produced two new families of metrics for path connected posets called the $k$-climber distance $\E_k$ and the $k$-diver distance $\B_k$, one dual to the other.  In this context $k$ is a natural number. Given a path-connected poset $P$, such distances are defined on the power set of the poset and then it induces a metric on the poset itself. We show that the k-climber distance of $x,y \in P$ is equivalent to the minimal $k$-climber length over all paths from $x$ to $y$ and the $k$-climber length is a value that depend solely on the number of alternations and the length of the path (see Section \ref{Sec:k-distances}). 

In the case that $k=1$, the metrics $\E_1$ and $\B_1$ determine, up to isomorphism, discrete path-connected posets that do not contain a subposet isomorphic to an infinite diamond–pentagon ladder, with a single exception: modular posets. More precisely, if $P$ and $Q$ are such posets, $f:P \to Q$ is a bijection that preserves $\E_1$ and $\B_1$ if and only if $P \cong Q$. In the case that $P$ is a modular poset, the posets $P$ and $Q$  may instead be duals of one another (see Section \ref{Sec:characterization}).

In Section \ref{Sec:k-distances} we prove that when $k$ goes to infinity $\E_k$ converges to what we call the $\infty$-climber distance  $\E_\infty$. Similarly, we have a $\infty$-diver distance, $\B_\infty$, defined as the limit of $\B_k$. The $\infty$-distances between $x$ and $y$ are realized by a minimal length over all paths from $x$ to $y$ that depends only on the number of alternations of the path. This observation lead us to define the fence-climber and the fence-diver distances for fence-connected posets. The distances coincide with $\E_\infty$ and $\B_\infty$ respectively when calculated over a discrete poset. 

In Section \ref{Sec:notable_posets} we present explicit formulas for computing the climber and diver distances in some notable classes of posets, including linear orders and lattices. Likewise, we describe how these distances behave under standard constructions, such as direct product or lexicographic sums of posets. Also, we compare the shortest path, the shortest fence metrics and the Chebyshev distance with the climber and diver metrics. Additionally, we prove that our metrics are not induced by real valued functions defined on the poset. We prove instead, that the climber and diver metrics are instances of interleaving distance, a metric defined over categories equipped with a flow (Section \ref{Sec:concluding}).

Finally, in SageMath \cite{sagemath}, we implemented a collection of functions that, for any pair of elements in a poset, compute their $k$-climber and $k$-diver distance for arbitrary $k$ as well as determine the corresponding shortest paths in this setting (see Appendix \ref{Appendix}).


\subsection{A note in interleavings on categories}

Every metric presented here is an instance of an interleaving distance, one of the most used metrics to quantify the dissimilarity between a broad class of geometric–topological constructions commonly used in topological data analysis \cite{Flow}.

\subsubsection*{Definitions and notations}

We give basic definitions. For more categorical terminology; see \cite{riehl2017category}.  A \emph{category} $C$ consists of a collection of objects, e.g.~$X,Y,Z \in C$, and for any pair of objects $X,Y$ a collection (possibly empty) of morphisms denoted $C(X,Y)$. We can realize a poset $P$ as a category, where the elements $a,b$ of the poset are the objects and $a \leq b$ gives raise to a unique morphism in $P(a, b)$. 

An order-preserving self map of $P$, $f: P \to P$, is called an endomorphism of $P$. For example, the identity map $id_P: P \to P$ is an endomorphism. We denote the set of all endomorphisms of $P$ by $\End(P)$. The set $\End(P)$ is a poset as well. Take $f,g$ endomorphisms of $P$. We define that $f \leq g$ if $f(x) \leq g(x)$ for all $x \in P$. We can check that such order is indeed reflexive, antisymmetric, and transitive.
 
Given a poset $P$, we can equip it with a flow $\T$. A \textit{flow} is an order preserving map $\T : ([0,\infty), \leq ) \to \End(P)$ given by $\epsilon \mapsto \T_{\epsilon}$, such that $id_P \leq \T_0$ and  $\T_{\delta} \circ \T_{\epsilon} \leq \T_{\delta + \epsilon}$ for all $\delta, \epsilon \geq 0$. 

\begin{definition}
 Let $x,y$ be in $P$. The interleaving distance with respect to $\T$ for $x,y$ is defined as 
 \begin{equation*}
     d_{P,\T}(x,y) = \inf \{ \epsilon  \mid x \leq \T_\epsilon (y) \text{ and } y \leq \T_\epsilon(x) \}
 \end{equation*}
 And $ d_{P,\T}(x,y) = \infty$ if such infimum does not exist.

This interleaving distance, $d_{P,\T}$, is an extended pseudometric \footnote{The distance is pseudometric because there could be $x \neq y$ in $P$ with $d_{P,\T}(x,y) = 0$.} on $P$ \cite{Flow}.
\end{definition}

\subsubsection*{Realizing our metrics as interleaving distances}

Let $P$ be a poset. Remember that in Section \ref{Sec:k-distances} we defined the 1-climber distance on $\P(P)$, the power set of $P$. We will show there is a flow $\T: ([0,\infty), \leq ) \to \End(\P(P))$ such that for all $R,S \in \P(P)$, 
\begin{equation*}
    d_{\P(P),\T}(R,S) = \E_1(R,S)
\end{equation*}

Recall that for $R,S \in \P(P)$, the 1-climber distance between $R$ and $S$ is given by

 \begin{equation*}
    \E_1(R,S) = \min \{ n \mid R \subseteq (\updown[1])^{n} (S) \text{ and } S \subseteq (\updown[1])^{n} (R) \}
\end{equation*}
and $\E_1(R,S)= \infty $ if such $n$ does not exist.

Consider $\T: ([0,\infty), \leq ) \to \End(\P(P))$ defined by $\T(\epsilon) = (\updown[1])^{\lfloor \epsilon \rfloor}$. The map is well defined as $(\updown[1])^{\lfloor \epsilon \rfloor}$ is an order preserving self map of $\P(P)$. Also $\T$ is a flow given that it satisfies the next conditions: (1) $\T$ is an order preserving map as for $\delta \leq \epsilon$ we have that $(\updown[1])^{\lfloor \delta \rfloor} \leq (\updown[1])^{\lfloor \epsilon \rfloor} $, (2) $id_P  =   (\updown[1])^{\lfloor 0 \rfloor} = \T_0$ and (3) for all $ \delta, \epsilon \geq 0$, $\T_{\epsilon} \circ \T_{\delta} \leq \T_{\delta + \epsilon}$  given that 
\begin{equation*}
    \T_{\epsilon} \circ \T_{\delta} = (\updown[1])^{\lfloor \delta \rfloor} \circ (\updown[1])^{\lfloor \epsilon \rfloor}  = (\updown[1])^{\lfloor \delta \rfloor + \lfloor \epsilon \rfloor} \leq (\updown[1])^{\lfloor \delta + \epsilon \rfloor} 
\end{equation*}

In this way,  considering that the order on $\P(P)$ is given by inclusion we can rewrite the definition of $\E_1(R,S)$ by
 \begin{align*}
    \E_1(R,S) &= \min \{ n \mid R\leq (\updown[1])^{n} (S) \text{ and } S \leq (\updown[1])^{n} (R) \} \\
    &= \min \{ \lfloor \epsilon \rfloor \mid R\leq (\updown[1])^{\lfloor \epsilon \rfloor} (S) \text{ and } S\leq (\updown[1])^{\lfloor \epsilon \rfloor} (R) \} \\ &= \min \{ \epsilon \mid R \leq \T_\epsilon (S) \text{ and } S \leq \T_\epsilon(R) \} \\
    &= d_{\P(P),\T}(R,S)
\end{align*}

Equivalently, we have the following metric equalities for the $k$-climber, $k$-diver, fence-climber and fence-diver distances with respect to $\T: ([0,\infty), \leq ) \to \End(\P(P))$.
\begin{equation*}
    \begin{array}{lll}
          \E_k(R,S) =  d_{\P(P),\T}(R,S) & \text{ if } & \T(\epsilon) = (\updown[k])^{\lfloor \epsilon \rfloor} \\
          \B_k(R,S) =  d_{\P(P),\T}(R,S) & \text{ if } & \T(\epsilon) = (\downup[k])^{\lfloor \epsilon \rfloor} \\
          \E(R,S) =  d_{\P(P),\T}(R,S) & \text{ if } & \T(\epsilon) = (\updown)^{\lfloor \epsilon \rfloor} \\
          \B(R,S) =  d_{\P(P),\T}(R,S) & \text{ if } & \T(\epsilon) = (\downup)^{\lfloor \epsilon \rfloor} 
    \end{array}
\end{equation*}

\subsection{Future directions}

The climber and diver metrics address the theoretical and practical need for metrics that can be applied to general posets without structural constraints. Consequently, studying the computation and interpretation of these metrics on posets arising from real datasets constitutes a natural continuation of this project. An important question is what kinds of structural or hierarchical information these metrics are capable of detecting and highlighting within a dataset. 

From a computational perspective, an important open problem is the analysis of the algorithmic complexity of computing $k$-climber and $k$-diver distances. The SageMath implementation (Appendix \ref{Appendix}) is intended primarily as a proof of concept and may require optimization for large-scale computations. Future research  could focus on complexity estimates, optimized algorithms, and the identification of families of posets where these computations can be performed efficiently. The development of better implementations and computational experiments would also strengthen the practical applicability of these metrics.

In a similar vein, it is known that if a finite set with a partial ordering has known shortest-path and shortest-fence values, then both the partial order and its dual order can be recovered through a computable algorithm \cite{belding}. Since we have shown that finite posets are characterized by their 1-climber and 1-diver metrics, it is natural to conjecture that a similar reconstruction algorithm may be developed using these distance values. Such procedure could provide a new method for recovering  a poset directly from its metric information.

Another natural direction is the study of the metrics in broader classes of posets. While this work establishes explicit formulas and properties for some families of posets like linear and modular posets, many important classes remain unexplored, including distributive lattices, interval posets and graded posets. Such studies may reveal additional simplifications, structural properties, and new relationships satisfied by these metrics.

Finally, it would be interesting to determine whether richer classes of posets admit more diverse notions of categorical flow. Such flows could give rise to new families of interleaving-type distances and, consequently, to new metrics associated with ordered sets.

\paragraph{Acknowledgements}
AO is grateful to Elizabeth Munch and Avery St. Dizier for the helpful ideas, discussions and reviews for this project.
This work was supported in part by the National Science Foundation through grants CCF-2142713.

\newpage
\bibliographystyle{mystyle2}
\bibliography{references}

\appendix
\section{Computation}
\label{Appendix}
The code of the following algorithms and support functions are accessible online at \url{https://github.com/aaolaveh/climber_diver_metrics}.

\subsection{Calculating climber and diver metrics}

We propose Algorithm \ref{alg:dis_computation} to compute the distance matrix of all of our metrics over a finite connected poset $P$. 

Take $x,y \in P$. Recall that the $k$-climber distance between $x$ and $y$ is defined as the minimum $n$ such that \mbox{$x \in (\updown[k])^n(\{y\})$} and $y \in (\updown[k])^n(\{x\})$. By symmetry, is enough to find the minimum $n$ satisfying the second condition. 

Hence, our approach to computing $\E_k(x,y)$ consists of iteratively calculating $(\updown[k])^n(\{x\})$, starting with $n=0$ and increasing $n$ by $1$ at each step. At each step, we let $\E_k(x,z) = n$ for all $z \in (\updown[k])^n(\{x\})$. Given that $P$ is connected, $y$ would appear eventually for some $(\updown[k])^n(\{x\})$.

If we replace $\updown[k]$ by $\T$ it the process above, then, we can generalize this algorithm to any $\T: \P(P) \to \P(P) $ and the distance defined to be the minimum $n$ such that $x \in (\T)^n(\{y\})$ and $y \in (\T)^n(\{x\})$. So, the metric is fully characterized by $\T$. In particular, this is true for the $k$-diver distance with $\T =  \downup[k]$, the fence-climber distance with $\T =  \updown$ and the fence-diver distance with $\T =  \downup$. Moreover, as $P$ is finite, then the $\infty$-climber and $\infty$-diver distances coincide with the fence-climber and the fence-diver distances respectively.

\begin{algorithm}[h] 
\caption[Algorithm to compute distance]{Algorithm to compute distance matrix of finite connected poset $P$}
\label{alg:dis_computation}

\begin{algorithmic}[1]
\State \textbf{Given:} $P = \{\tuple{x}{m}\}$ finite connected poset and $\T: \P(P) \to \P(P) $
\State $M \gets |P| \times |P|$ matrix with all entrees 0
\For{$i \in \{1, ..., m\} $}
\State $n \gets 0$, $A \gets \{x_i\}$, $Q \gets \{x_{i+1}, ..., x_m\}$
\While{$Q \neq \emptyset$}
\State $n \gets n+1$
\State $B = \T(A) - A$
\State $A = \T(A)$
\For{$j$ with $x_j \in B \cap Q$}
\State $M[i,j] \gets n$
\State $M[j,i] \gets n$
\EndFor
\State $Q \gets Q - B$
\EndWhile
\EndFor
\State \Return $M$ 
\end{algorithmic}
\end{algorithm}

\subsection{Finding the climber and diver shortest path}

Given $P$ a connected poset and $x,y \in P$ we propose a modification of Dijkstra's algorithm \cite{Dijkstra1959} (Algorithm \ref{alg:path_computation}) to compute a shortest path between $x$ and $y$ in the context of our metric induced by a fixed $\T$ considered before. In this algorithm, $\texttt{LEN}_{\T}(\gamma)$ is a function that returns the length of the path. Also, we maintain a \texttt{parent} array during the algorithm to recover the shortest path from any $z \in P$ to $x$.

\begin{algorithm}[ht] 
\caption[Algorithm to compute shortest path]{Algorithm to compute a shortest path between $x$ and $y$ in a finite connected poset $P$}
\label{alg:path_computation}

\begin{algorithmic}[1]
\State \textbf{Given:} $P$ finite connected poset, $x,y \in P$ and $\T: \P(P) \to \P(P) $
\For{$z \in P$}
\State \texttt{dist}[$z$] $\gets$ Infinity
\State \texttt{parent}[$z$] $\gets$ None
\EndFor
\State \texttt{Visited} $\gets \emptyset$
\State \texttt{dist}[$x$] $\gets 0$
\While{$y \notin$ Visited}
\State $w \gets$ element in $P$ with min \texttt{dist}[$w$]
\State add $w$ to \texttt{Visited}
\State $N_w \gets$ the set of adjacent elements to $w$ that are not in \texttt{Visited}
\State $\gamma \gets$ the shortest path from $x$ to $w$ (using \texttt{parent}) 
\For{$z \in N_w$}
\State $\gamma_z \gets$ append $z$ to $\gamma$
\State \texttt{alt }$\gets \texttt{LEN}_\T(\gamma_z)$ 
\If{\texttt{alt }$ < \texttt{dist}[z]$}
\State $\texttt{dist}[z] \gets $ alt
\State $\texttt{parent}[z] \gets w$
\EndIf
\EndFor
\EndWhile
\State \Return(shortest path from $x$ to $y$ (using \texttt{parent}) )
\end{algorithmic}
\end{algorithm}

Note that, in the original Dijkstra's algorithm, for an adjacent element $z$ of $w$, the variable \texttt{alt} in Step 15 is assigned the value of \texttt{dist}[$w$] plus the edge weight between $w$ and $z$. In our case \texttt{dist}[$w$] corresponds to $ \texttt{LEN}_\T(\gamma)$ while \texttt{alt} corresponds to $ \texttt{LEN}_\T(\gamma_z)$ where $\gamma_z = \gamma \cup \{z\}$.

Since $\gamma_z$ extends $\gamma$ by one element, it increases the length of the path by 1 and it may introduce at most one additional alternation with respect to $\gamma$.
Consequently, by checking the possible cases, we obtain that either $\texttt{LEN}_\T(\gamma_z) =  \texttt{LEN}_\T(\gamma)$ or $\texttt{LEN}_\T(\gamma_z) =  \texttt{LEN}_\T(\gamma) + 1$. Therefore, our algorithm coincides with the original Dijkstra's algorithm after virtually assigning the edge between $w$ and $z$ a weight of either $0$ or $1$, respectively. Since Dijkstra's algorithm is guaranteed to recover shortest paths for non-negative edge weights, our algorithm is likewise guaranteed to recover the shortest path.

\end{document}